\documentclass[12pt]{amsart}

\usepackage{amsmath, amssymb, amsfonts, amsthm, amscd}
\usepackage{manfnt}
\usepackage{mathtools}
\usepackage{fullpage}
\usepackage{url}
\usepackage[all]{xy}
   \SelectTips{cm}{10}
\usepackage{txfonts}
\usepackage{enumitem}
\usepackage{bbm}
\setcounter{tocdepth}{1}

\newtheorem{thm}{Theorem}[section]
\newtheorem{lemma}[thm]{Lemma}
\newtheorem{prop}[thm]{Proposition}
\newtheorem{cor}[thm]{Corollary}

\newtheorem{prop-conj}[thm]{Proposition-Conjecture}

\theoremstyle{definition}
\newtheorem{defn}[thm]{Definition}

\theoremstyle{definition}
\newtheorem{rmk}[thm]{Remark}

\theoremstyle{definition}
\newtheorem{assumption}[thm]{Assumption}

\theoremstyle{definition}

\theoremstyle{definition}

\theoremstyle{definition}
\newtheorem{question}[thm]{Question}

\theoremstyle{definition}
\newtheorem{eg}[thm]{Example}

\DeclareFontFamily{U}{wncy}{}
    \DeclareFontShape{U}{wncy}{m}{n}{<->wncyr10}{}
    \DeclareSymbolFont{mcy}{U}{wncy}{m}{n}
    \DeclareMathSymbol{\Sha}{\mathord}{mcy}{"58}

\newcommand{\Q}{\mathbb{Q}}

\newcommand{\Z}{\mathbb{Z}}
\newcommand{\CC}{\mathbb{C}}
\newcommand{\RR}{\mathbb{R}}

\newcommand{\Ql}{\mathbb{Q}_{\ell}}

\newcommand{\Fp}{\mathbb{F}_p}

\DeclareMathOperator{\Hom}{Hom}

\DeclareMathOperator{\End}{End}
\DeclareMathOperator{\Aut}{Aut}
\DeclareMathOperator{\Out}{Out}

\DeclareMathOperator{\Sym}{Sym}
\DeclareMathOperator{\Ad}{Ad}

\DeclareMathOperator{\rec}{rec}
\DeclareMathOperator{\tr}{tr}
\DeclareMathOperator{\im}{im}
\DeclareMathOperator{\coker}{coker}

\DeclareMathOperator{\rk}{rk}
\DeclareMathOperator{\pr}{pr}

\DeclareMathOperator{\Spec}{Spec}

\DeclareMathOperator{\Lie}{Lie}

\newcommand{\gal}[1]{\Gamma_{#1}} 
\newcommand{\Gal}{\mathrm{Gal}} 

\newcommand{\into}{\hookrightarrow}
\newcommand{\onto}{\twoheadrightarrow}

\newcommand{\mc}{\mathcal}
\newcommand{\mf}{\mathfrak}
\newcommand{\mr}{\mathrm}

\newcommand{\mbb}{\mathbb}

\newcommand{\br}{\bar{\rho}} 
\newcommand{\fg}{\mathfrak{g}}
\newcommand{\fgm}{\mathfrak{g}_{\mu}}
\newcommand{\fgder}{\mathfrak{g}^{\mathrm{der}}}
\newcommand{\fb}{\mathfrak{b}}

\DeclareMathOperator{\Lift}{\mathrm{Lift}}
\DeclareMathOperator{\Def}{\mathrm{Def}}
\newcommand{\Sets}{\mathbf{Sets}}

\newcommand{\tv}{\tilde{v}}

\newcommand{\tF}{\widetilde{F}}
\newcommand{\Tan}{\mathrm{Tan}}
\newcommand{\wt}{\widetilde}
\newcommand{\wh}{\widehat}
\newcommand{\inv}{\mathrm{inv}}

\newcommand{\un}[1]{\underline{#1}}

\title{Lifting irreducible Galois representations}

\thanks{We would like to thank Prakash Belkale and Arvind Nair for useful conversations and correspondence. We are very grateful to Michael Larsen for proving Lemma \ref{larsen} for us. C.K. was supported by NSF grant DMS-1601692 and a Humboldt Research Award, and would like to thank TIFR, Mumbai for its hospitality, in periods when some of the work was carried out. S.P. was supported by NSF grants DMS-1700759 and DMS-1752313.}

\begin{document}
\author[N.~Fakhruddin]{Najmuddin Fakhruddin}
\address{School of Mathematics, Tata Institute of Fundamental Research, Homi Bhabha Road, Mumbai 400005, INDIA}
\email{naf@math.tifr.res.in}
\author[C.~Khare]{Chandrashekhar Khare}
\address{UCLA Department of Mathematics, Box 951555, Los Angeles, CA 90095, USA}
\email{shekhar@math.ucla.edu}
\author[S.~Patrikis]{Stefan Patrikis}
\address{Department of Mathematics, The University of Utah, 155 S 1400 E, Salt Lake City, UT 84112, USA}
\email{patrikis@math.utah.edu}

\begin{abstract}
We study irreducible odd mod $p$ Galois representations $\br \colon \Gal(\overline{F}/F) \to G(\overline{\mathbb{F}}_p)$, for $F$ a totally real number field and $G$ a general reductive group. For $p \gg_{G, F} 0$, we show that any $\br$ satisfying a certain multiplicity-free condition on its adjoint representation, and satisfying some local ramification hypotheses, has a geometric $p$-adic lift. We also prove non-geometric lifting results without any oddness or multiplicity-free assumptions. 
\end{abstract}

\maketitle

\section{Introduction}
Let $k$ be a finite extension of $\mathbb{F}_p$, and let $\mc{O}= W(k)$ be its ring of Witt vectors. Let $G$ be a smooth group scheme over $\mc{O}$ such that $G^0$ is a split connected reductive group. The starting point of this paper is the following basic question:
\begin{question}\label{question}
Let $F$ be a number field with algebraic closure $\overline{F}$ and absolute Galois group $\gal{F}= \Gal(\overline{F}/F)$, and let $\br \colon \Gal(\overline{F}/F) \to G(k)$ be a continuous homomorphism. Does there exist a lift $\rho$
\[
\xymatrix{
& G(\mc{O}) \ar[d] \\
\gal{F} \ar@{-->}[ur]^{\rho} \ar[r]_{\br} & G(k) 
}
\]
that is geometric in the sense of Fontaine-Mazur?
\end{question}
This question has attracted a great deal of attention, at least since Serre proposed his modularity conjecture (\cite{serre:conjectures}). We begin by recalling a few instances of this general problem, beginning with Serre's conjecture. Serre proposed that every absolutely irreducible representation 
\[
\br \colon \gal{\Q} \to \mr{GL}_2(k)
\]
that was moreover \textit{odd} in the sense that $\det \br(c)=-1$ for any complex conjugation $c \in \gal{\Q}$ should be isomorphic to the mod $p$ reduction of a $p$-adic Galois representation attached to a classical modular eigenform. In particular, such a $\br$ should a admit a geometric $p$-adic lift. The papers \cite{khare-wintenberger:serre0}, \cite{khare:serrelevel1}, \cite{khare-wintenberger:serre1}, \cite{khare-wintenberger:serre2} proved Serre's modularity conjecture.  The proof uses as a key ingredient the modularity lifting results of Wiles and Taylor (\cite{wiles:fermat}, \cite{taylor-wiles:fermat}). In contrast, prior to the resolution of Serre's conjecture, Ramakrishna (\cite{ramakrishna:lifting}, \cite{ramakrishna02}) developed a beautiful, purely Galois-theoretic, method that in most cases settled Question \ref{question} in the setting of Serre's conjecture ($F= \Q$, $G= \mr{GL}_2$, $\br$ odd and absolutely irreducible). 

We might then turn to asking Question \ref{question} for $\br \colon \gal{\Q} \to \mr{GL}_2(k)$ that are \textit{even}, in the sense that $\det(\br(c))=1$. For instance, suppose that the image of $\br$ is $\mr{SL}_2(\mathbb{F}_p)$. Any geometric lift would (for $p \neq 2$) itself be even, and so conjecturally would be the $p$-adic representation $\rho$ attached to an algebraic Maass form. Such a $\rho$ should up to twist have finite image (because up to twist the associated motive should have Hodge realization of type $(0,0)$); but for $p > 5$, Dickson's classification of finite subgroups of $\mr{PGL}_2(\CC)$ rules out the possibility of such a lift. Thus one expects that $\br$ has no geometric lift. We have no general means of translating this conjectural heuristic into a proof, but Calegari (\cite[Theorem 5.1]{calegari:even2}) has given an ingenious argument that proves unconditionally that certain such even $\br$ have no geometric lift. 

In other settings, Question \ref{question} is even more mysterious. For instance, if $\mr{G}= \mr{GL}_2$ and $F/\Q$ is quadratic imaginary, we do not even have a reliable heuristic for predicting whether $\br \colon \gal{F} \to \mr{GL}_2(k)$ should have a geometric lift! It is a remarkable and widely-tested phenomenon that torsion cohomology (Hecke eigen-) classes for the locally symmetric spaces associated to congruence subgroups of $\mr{GL}_2/F$ need not lift to characteristic zero; one might hope that after raising the level (passing to a finite covering space of the arithmetic 3-manifold) they lift, and that the corresponding Galois-theoretic statement holds as well. But we have little evidence to support this.

This paper addresses cases of Question \ref{question} for general $G$, but for $\br$ that are \textit{odd} in a sense generalizing Serre's formulation for $\mr{GL}_2$. The following definition is essentially due to Gross (\cite{gross:odd}), who suggested parallels between this class of Galois representations and the ``odd" representations of Serre's original conjecture:
\begin{defn}\label{odd}
We say $\br \colon \gal{F} \to G(k)$ is odd if for all choices of complex conjugation $c_v$ (for $v \vert \infty$),
\[
\dim_k (\fgder)^{\Ad(\br(c_v))=1}= \dim \mr{Flag}_{G^0},
\]
where $\fgder$ is the Lie algebra of the derived group $G^{\mr{der}}$ of $G^0$, and $\mr{Flag}_{G^0}$ is the flag variety of $G^0$.
\end{defn}
Note that for any involution of $\fgder$, the dimension of the space of invariants must be at least $\dim \mr{Flag}_{G^0}$. An adjoint group contains an order 2 element whose invariants have dimension $\dim \mr{Flag}_{G^0}$ if and only if $-1$ belongs to the Weyl group of $G$. When $-1$ does not belong to the Weyl group, we can (after choosing a pinning) find such an order two element in $G \rtimes \Out(G)$; for more details, see \cite[\S 4.5, \S 10.1]{stp:exceptional}. Also note that the definition implies that $F$ is totally real. That said, the ``odd" case does have implications in certain CM settings. For example, let $F$ be quadratic imaginary, and let $\br \colon \gal{F} \to \mr{GL}_n(k)$ be an absolutely irreducible representation such that
\[
\br^c \equiv \br^\vee \otimes \mu|_{\gal{F}},
\]
where $\mu \colon \gal{\Q} \to k^\times$ is a character. Moreover assume that when we realize this essential conjugate self-duality as a relation
\[
\br(cgc^{-1})= A {}^t \br(g)^{-1} A^{-1} \mu(g)
\]
for some $A \in \mr{GL}_n(k)$ (and all $g \in \gal{F}$), the scalar $A \cdot {}^t A^{-1}$ (which is easily seen to be $\pm1$) actually equals $+1$. Then the pair $(\br, \mu)$ can be extended to a homomorphism 
\[
\bar{r} \colon \gal{\Q} \to (\mr{GL}_n \times \mr{GL}_1)(k) \rtimes \{1, j\},
\]
where $j^2=1$ and $j(g, a)j^{-1}= (a\cdot {}^t g^{-1}, a)$, and this $\bar{r}$ is odd in the sense of Definition \ref{odd}.

There are essentially two techniques for approaching cases of Question \ref{question}. For classical groups, automorphy lifting and potential automorphy theorems, via a technique introduced in \cite{khare-wintenberger:serre0}, yield the most robust results. For instance, the strongest lifting results in the previous example ($\br$ essentially conjugate self-dual over a quadratic imaginary field) follow from the work of Barnet-Lamb, Gee, Geraghty, and Taylor (\cite{blggt:potaut}). For general $G$, however, we have no understanding of automorphic Galois representations, and we must rely on purely Galois-theoretic methods. Ramakrishna developed the first such method in the paper \cite{ramakrishna02}, which, as noted above, resolved Question \ref{question} in the setting of Serre's original modularity conjecture ($F=\Q$, $G= \mr{GL}_2$, $\br$ odd and absolutely irreducible). Our work develops a broad generalization of Ramakrishna's ideas, also crucially building on the ``doubling method" of \cite{klr} and the work of Hamblen-Ramakrishna (\cite{ramakrishna-hamblen}).   

The greatest challenge in extending Ramakrishna's ideas to general groups is that they break down as the image of $\br$ gets smaller. This phenomenon is not particularly noticeable when $G= \mr{GL}_2$, since by a theorem of Dickson any irreducible subgroup of $\mr{GL}_2(k)$ (for $p \geq 7$) either has order prime to $p$, in which case one can take the ``Teichm\"{u}ller" lift, or has projective image conjugate to a subgroup of the form $\mr{PSL}_2(k')$ or $\mr{PGL}_2(k')$ for some finite extension $k'/\Fp$. This allows Ramakrishna to restrict to the case where the adjoint representation $\mr{ad}^0(\br)$ is absolutely irreducible. For higher-rank $G$, the global arguments of \cite{ramakrishna02} work with little change under the corresponding assumption that the adjoint representation $\br(\fgder)$ (this will be our notation for the Galois module $\fgder$, equipped with the action of $\Ad \circ \br$) is absolutely irreducible. Such a generalization is carried out in \cite{stp:exceptional}. The paper \cite{stp:exceptional} also proves a variant with somewhat smaller image, in which $\im(\br)$ contains (approximately) $\varphi(\mr{SL}_2(k))$, where $\varphi \colon \mr{SL}_2 \to G$ is a principal $\mr{SL}_2$. In this case $\br(\fgder)$ decomposes into $r$ irreducible factors, where $r$ is the semisimple rank of $G$, and the final result depended on an explicit analysis of this decomposition, requiring case-by-case calculations depending on the Dynkin type, with the result only verified for the exceptional groups via a computer calculation. More seriously, the method did not apply to groups of type $D_{2m}$, for which $\fgder$ is not multiplicity-free as an $\mr{SL}_2$-module (one factor occurs with multiplicity two). 


The present paper proves a lifting result for odd irreducible representations. This is done by generalizing the strategy and arguments of \cite{ramakrishna-hamblen}. Our very general setting entails a number of complications, which we overcome under our multiplicity hypothesis (needed only in \S \ref{auxiliarysection}). This multiplicity hypothesis is presumably superfluous and an artifact of the strategy. The methods of \cite{stp:exceptional} generalized the arguments of \cite{ramakrishna02},  while ours in generalizing  the more elaborate arguments of \cite{ramakrishna-hamblen} prove a lifting result with significantly less stringent hypotheses on the residual representation. One might compare the progressive relaxation of global image hypotheses in higher-rank automorphy lifting and potential automorphy theorems. In \cite{clozel-harris-taylor}, Clozel-Harris-Taylor proved a theorem for $\br$ satisfying a very technical ``bigness" condition, which was then by explicit calculation shown to be sufficient for the desired applications to the Sato-Tate conjecture for elliptic curves over $\Q$. Thorne (\cite{thorne:adequate}) weakened this assumption to one of ``adequacy," and then some highly non-trivial finite group theory (Guralnick-Herzig-Thorne-Taylor, \cite[Appendix]{thorne:adequate}) shows that for $p$ sufficiently large, absolute irreducibility implies adequacy. The image difficulties are significantly more troublesome in the Ramakrishna-style lifting methods: in the Taylor-Wiles method, one chooses auxiliary primes to kill a dual Selmer group and allows the Selmer group to grow (and then the automorphic theory does much of the heavy-lifting). In the purely Galois-theoretic methods, one must choose auxiliary primes and (in contrast to Taylor-Wiles conditions) \textit{formally smooth} local conditions at these primes that kill both Selmer and dual Selmer groups. Balancing these demands becomes considerably more challenging for $\br$ with smaller image.

Before describing the method in more detail, we will state our main theorem, which we prove in Theorem \ref{mainthm}. First we emphasize what we do not do in this paper: Ramakrishna's method and its variants require both local arguments--the existence of certain formally smooth and ``sufficiently large" local deformation problems at the primes of ramification of $\br$--and the global (or if one prefers, local-to-global) arguments that produce the final lifts. We do not undertake any of the local theory here, except what is required by our new auxiliary prime arguments. The local theory--at least as is needed for the global applications--is essentially complete at primes $\ell \neq p$ for groups of classical type,\footnote{With a couple of caveats, one specific to our paper and one general: our arguments do not allow us to replace $k$ with a finite extensions, which is the generality in which \cite{clozel-harris-taylor} and \cite{booher:minimal} are written (see Remark \ref{lnotplocal}); and when $-1$ does not belong to the Weyl group of $G$ (i.e. in types $A_n$, $n \geq 2$, $D_n$, $n \geq 3$, and $E_6$), the oddness requirement means working with a group of the form $G^0 \rtimes \Out(G)$, and here the theory for classical groups would only be complete at primes $v$ of ramification of $\br$ such that $\br(\gal{F_v})$ is contained in $G^0(k)$.} by \cite[\S 2.4.4]{clozel-harris-taylor} and \cite[\S 6]{booher:minimal}, but while some examples of the theory we need are known for exceptional type (see e.g. \cite[\S 4.3, 4.4]{stp:exceptional} and \cite[\S 4.2]{stp:exceptional2}), there is as yet no general theory. For $\ell=p$, much work remains to be done, although again we have good results in classical type (\cite[\S 2.4.1]{clozel-harris-taylor}, \cite[\S 5]{booher:JNT}), and some results for all groups (\cite[4.3, 4.4]{stp:exceptional}). 

From now on we will require of $G$ that the component group $\pi_0(G)$ is finite \'{e}tale of order prime to $p$. The following is a somewhat specialized version of our main theorem; for the complete statement, see Theorem \ref{mainthm}.
\begin{thm}\label{mainthmintro}
Let $F$ be a totally real field, and let $\br \colon \gal{F, S} \to G(k)$ be a continuous representation unramified outside a finite set $S$ of finite places containing the places above $p$. Let $\tF$ be the smallest extension of $F$ such that $\br(\gal{\tF})$ is contained in $G^0(k)$. Assume that $p \gg_{G, F} 0$ and that $\br$ satisfies the following:
\begin{itemize}
 \item $\br$ is odd, i.e. for all infinite places $v$ of $F$, $h^0(\gal{F_v}, \br(\fgder))= \dim(\mr{Flag}_{G^{\mr{der}}})$.
 \item $\br|_{\gal{\tF(\zeta_p)}}$ is absolutely irreducible.
 \item For each $G$-orbit of simple factors $\br(\oplus_i \fg_i)$ of $\br(\fgder)$, each $\br(\fg_i)$ is multiplicity-free as $\Fp[\gal{\tF}]$-module. Moreover, each simple $\Fp[\gal{\tF}]$-constituent $W$ of $\br(\fgder)$ satisfies $\End_{\Fp[\gal{\tF}]}(W) \cong k$. (But see Remark \ref{intrormk}.)
 \item For all $v \vert p$, $F_v$ does not contain $\zeta_p$, and $\br|_{\gal{F_v}}$ either 
 \begin{itemize}
 \item is trivial; or
 \item is ordinary in the sense of \S \ref{ordsection} and satisfies the conditions (REG) and (REG*); or
 \item $G^0$ is a product of groups of the form $\mr{GL}_N$, $\mr{GSp}_N$, and $\mr{GO}_N$, and the projection of $\br|_{\gal{F_v}}$ to each factor is Fontaine-Laffaille with distinct Hodge-Tate weights in an interval of length less than $p-1$ in the $\mr{GL}$ case and less than $\frac{p-1}{2}$ in the $\mr{GSp}$ and $\mr{GO}$ cases. 
 \end{itemize}
 \item The field $K= \tF(\br(\fgder), \mu_p)$ does not contain $\mu_{p^2}$ (which follows in many cases from the preceding condition at $v \vert p$: see Remark \ref{mup2}).
 \item First assume $G$ is connected. Then for all $v \in S$ not above $p$, there is a formally smooth local deformation condition $\mc{P}_v$ for $\br|_{\gal{F_v}}$ whose tangent space $\Tan^{\mc{P}_v}_{\br|_{\gal{F_v}}} \subset H^1(\gal{F_v}, \br(\mf{g}_\mu))$ has dimension $h^0(\gal{F_v}, \br(\fgm))$. If $G$ is not connected, see Theorem \ref{mainthm} for the precise hypothesis (a minor variant of the condition just stated).
\end{itemize}
Then there exist a finite set of places $T \supset S$ and a geometric lift $\rho$ of $\br$:
 \[
 \xymatrix{
 & G(\mc{O}) \ar[d] \\
 \gal{F, T} \ar@{-->}[ur]^\rho \ar[r]_{\br} & G(k)
 }
 \]
 such that $\rho(\gal{F})$ contains $\wh{G^{\mr{der}}}(\mc{O})=\ker(G^{\mr{der}}(\mc{O}) \to G(k))$, and in particular the Zariski-closure of $\rho(\gal{F})$ contains $G^{\mr{der}}$.
\end{thm}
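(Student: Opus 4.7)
The plan is to prove Theorem \ref{mainthmintro} by a Ramakrishna-style Galois-theoretic lifting argument, adapted (following \cite{ramakrishna-hamblen} and \cite{klr}) to residual representations whose adjoint $\br(\fgder)$ need not be absolutely irreducible. One sets up the global deformation functor $\Def_{\br}$ with local conditions $\mc{P}_v$ at $v \in S$ (the prescribed formally smooth ones for $v \nmid p$, and trivial / ordinary / Fontaine--Laffaille at $v \mid p$). By the Greenberg--Wiles formula for the associated Selmer and dual Selmer groups $H^1_{\mc{L}}(\gal{F,T}, \br(\fgm))$ and $H^1_{\mc{L}^\perp}(\gal{F,T}, \br(\fgm)^*)$, combined with the oddness assumption (which supplies the correct archimedean contribution $\dim \mr{Flag}_{G^0}$) and the prescribed tangent-space dimensions of the local conditions, the problem reduces to producing a finite enlargement $T = S \cup Q$, with a formally smooth local condition at each $w \in Q$, such that the resulting dual Selmer group vanishes. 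Once this is achieved the global deformation functor becomes formally smooth over $\mc{O}$, and an inverse limit of lifts mod $p^n$ yields the required $\rho \colon \gal{F, T} \to G(\mc{O})$.

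The main obstacle is to construct the auxiliary set $Q$. For each class $[\phi] \in H^1_{\mc{L}^\perp}(\gal{F,S}, \br(\fgm)^*)$ one must find a prime $w$ and a formally smooth local deformation condition at $w$ that simultaneously (a) kills $[\phi]$, (b) does not enlarge Selmer uncontrollably, and (c) is compatible with the subsequent rounds of prime-killing. When $\br(\fgder)$ is absolutely irreducible a single Chebotarev prime suffices, as in \cite{ramakrishna02} and \cite{stp:exceptional}. In our setting, $\br(\fgder)$ is only multiplicity-free as a sum of simple $\Fp[\gal{\tF}]$-constituents, so a single auxiliary prime can fail to detect a dual Selmer class whose Frobenius projections onto different constituents cancel; this is the essential new difficulty. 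The plan is to import the \emph{doubling} construction of Khare--Larsen--Ramakrishna and Hamblen--Ramakrishna: for each class to be killed one adjoins an appropriately chosen \emph{pair} of primes, engineered so that the pair isolates the contribution of $[\phi]$ inside one simple constituent of $\br(\fgder)$ while maintaining matched controlled effects in the others. The hypothesis $\mu_{p^2} \not\subset K = \tF(\br(\fgder), \mu_p)$ enters here, separating $\br(\fgm)$ from its Cartier dual in the Chebotarev analysis, and the multiplicity-free condition is used to guarantee that the relevant $\Hom$-spaces between simple constituents are one-dimensional over $k$, so that the pairing calculation driving the doubling is actually controllable.

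Executing the doubling in this generality is the technical heart of the paper (the analogue of \S \ref{auxiliarysection}); this is also where the assumption $p \gg_{G,F} 0$ is exploited, to guarantee regular semisimple elements in $G(k)$ with prescribed adjoint action on $\br(\fgder)$, to preserve non-degeneracy of various tangent-space exact sequences under reduction, and to ensure that absolute irreducibility of $\br|_{\gal{\tF(\zeta_p)}}$ persists after restricting along auxiliary-prime extensions. After $Q$ has been produced and the lift $\rho$ assembled inductively, the remaining assertion $\rho(\gal{F}) \supset \wh{G^{\mr{der}}}(\mc{O})$ follows by a Frattini-type argument: the image already maps onto $\br(\gal{F})$, and the auxiliary primes can be arranged to produce cocycles whose reduction generates $\br(\fgder)$ as a $\gal{F}$-module, so the image of $\rho$ modulo each $p^n$ contains the full kernel $\ker(G^{\mr{der}}(\mc{O}/p^n) \to G^{\mr{der}}(k))$; passing to the inverse limit gives the claimed image statement. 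The hardest step throughout is the doubling construction of $Q$: every other hypothesis of the theorem, including the multiplicity-free and large-prime conditions, is calibrated precisely to make that step go through.
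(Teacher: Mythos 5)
Your outline reproduces the familiar Ramakrishna/Taylor skeleton (balanced Selmer and dual Selmer via Greenberg--Wiles and oddness, auxiliary primes to kill dual Selmer, inductive lifting), but the mechanisms you propose at the two decisive points are not the ones that can be made to work, and one of them is the approach the paper explicitly abandons. First, you say the auxiliary primes are found using $p \gg 0$ "to guarantee regular semisimple elements in $G(k)$ with prescribed adjoint action on $\br(\fgder)$." With image as small as is allowed here, no such choice of Frobenius exists in general; that is precisely why the argument must instead use \emph{trivial primes} $q$ (with $\br|_{\gal{F_q}}$ trivial, $N(q)\equiv 1 \pmod p$ but $\not\equiv 1 \pmod{p^2}$), where $\br(\mr{Frob}_q)$ lies in every maximal torus and one searches simultaneously over triples $(T,\alpha,q)$. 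This is also where the $\mu_{p^2}\not\subset K$ hypothesis actually enters (to get $N(q)\not\equiv 1\pmod{p^2}$), not to "separate $\br(\fgm)$ from its Cartier dual" --- that separation is a different hypothesis, supplied by Lemma \ref{cyclicq}. Moreover, the multiplicity-free condition is not about one-dimensionality of $\Hom$-spaces in a pairing computation; it is what controls the case where the fixed fields $K(\rho_2)$ and $K_\phi$ fail to be linearly disjoint over $K$, forcing the Galois-equivariant map $\eta$ of Proposition \ref{splitcase} to be essentially a projection so that Lemma \ref{larsen} applies. Your proposal contains no mechanism for this interaction between the mod $p^2$ lift and the Selmer class, which is the genuine obstruction.

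Second, your claim that once dual Selmer vanishes "the global deformation functor becomes formally smooth over $\mc{O}$" does not hold in this setting: the local conditions at trivial primes are not representable and their tangent spaces $\Tan^{\alpha}_{\br}$ are strictly too small, so the standard smoothness conclusion fails. The paper compensates with the "extra cocycles" of Lemmas \ref{extracocycles}, \ref{ramextracocycles} and \ref{ordextracocycles}, which enlarge the stabilizing space to the correct dimension only for lifts modulo $p^m$ with $m\ge 3$ whose mod $p^2$ reduction has a prescribed shape; this forces the two-stage structure of the argument, in which \S \ref{klrsection} first modifies the mod $p^2$ lift so that its restrictions at all primes of $T$ match chosen local lifts, and only then does \S \ref{auxiliarysection} kill Selmer and dual Selmer. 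Relatedly, you have misplaced the doubling method: in the paper it is not a device for killing a dual Selmer class with a \emph{pair} of primes, but the limiting upper-density argument (Proposition \ref{doublingprop}) used to interpolate the discrepancy classes $z_T^{\mr{der}}$ at the cost of controlled new ramification, choosing $\un{v},\un{v}'$ so that $(1+p(2h^{U}-ph^{U'}))\rho_2$ is well behaved both on $T$ and at the new primes. Each dual Selmer class is then killed by a \emph{single} trivial prime furnished by Proposition \ref{splitcase}. Finally, your sketch omits the treatment of classes supported on $\br(\mf{z}_\mu)$ (Lemmas \ref{adlocalcondition} and \ref{classgroup}), which trivial primes cannot kill and which is one source of the dependence of the bound on $F$.
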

\begin{rmk}\label{intrormk}
\begin{itemize}
\item The most serious constraint in the theorem is the multiplicity-free hypothesis on the decomposition of (suitable constituents of) $\br(\fgder)$. This intervenes only in \S \ref{auxiliarysection} of the paper; the other arguments uniformly treat any irreducible $\br$. In fact, we prove something more general than Theorem \ref{mainthmintro}, and can allow certain simple constituents $W$ to appear with arbitrary multiplicity: roughly speaking, our arguments fail when a simple constituent $W$ of $\br(\fgder)$ appears with multiplicity greater than 1 \textit{and} the sequence $1 \to W \to \Gamma_2 \to \im(\br) \to 1$ splits, where $\Gamma_2$ is formed by taking the preimage of $\im(\br)$ in $G(\mc{O}/p^2)$ and then quotienting out by an $\Fp[\gal{\tF}]$-complement to $W$ in $\widehat{G}(\mc{O}/p^2)$. We refer the reader to Theorem \ref{mainthm} for the precise set of conditions.
\item The arguments proceed from a somewhat different global image assumption--see Assumption \ref{multfree}--but for $p \gg_G 0$ the absolute irreducibility hypothesis implies the other conditions in Assumption \ref{multfree} (see Lemmas \ref{irr} and \ref{invariants}). 
\item The bound on $p$ can be made effective: for detailed remarks, explaining the contributions of $F$ and $G$, see Remark \ref{effective}.
\item In \S \ref{examples} we give some examples of the theorem.
\item All the lifts $\rho$ produced by the theorem have image whose Zariski closure contains $G^{\mr{der}}$; that is, we find lifts whose image is ``as large as possible" subject to the given $\im(\br)$.
\item The ability to allow $\br|_{\gal{F_v}}$ to be trivial for $v \vert p$ is a curious consequence of arguments similar to our new auxiliary prime arguments; even for $G=\mr{GL}_2$, this strengthens Ramakrishna's original results ((\cite{ramakrishna02})), which forbid $\br|_{\gal{\Q_p}}=1$. 
\end{itemize}
\end{rmk}
We mention two variants of the theorem that are straight-forward given our techniques. The first (see Theorem \ref{notodd}) is a non-geometric but finitely-ramified lifting theorem for $\br$ without any constraints on $\br(c_v)$ for $v \vert \infty$ (and in particular allowing $F$ to be any number field); this holds under the same image hypotheses as Theorem \ref{mainthmintro}. The second removes the multiplicity-free hypothesis from Theorem \ref{mainthmintro} but produces infinitely-ramified lifts, generalizing the main theorem of \cite{klr} from the case $\mr{SL}_2(\Fp) \subset \im(\br) \subset \mr{GL}_2(\Fp)$:
\begin{thm}[See Corollary \ref{infiniteram}]
Let $F$ be any number field. Assume $p \gg_G 0$, and let $\br \colon \gal{F, S} \to G(k)$ be a representation such that $\br|_{\gal{\tF(\zeta_p)}}$ is absolutely irreducible. Assume for simplicity that $G=G^0$, and fix a lift $\nu \colon \gal{F, S} \to A(\mc{O})$ of $\mu \circ \br$. Assume that for all $v \in S$, there are lifts $\rho_v \colon \gal{F_v} \to G(\mc{O})$ of $\br|_{\gal{F_v}}$ with multiplier $\nu$. Then there exists an infinitely ramified lift
\[
\xymatrix{
& G(\mc{O}) \ar[d] \\
\gal{F} \ar@{-->}[ur]^{\rho} \ar[r]_{\br} & G(k)
}
\]
such that $\rho|_{\gal{F_v}}= \rho_v$ for all $v \in S$, and $\rho(\gal{F})$ contains $\wh{G^{\mr{der}}}(\mc{O})$.
\end{thm}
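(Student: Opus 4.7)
The plan is to construct $\rho$ as an inverse limit $\varprojlim_n \rho_n$ of lifts $\rho_n \colon \gal{F, S_n} \to G(\mc{O}/p^{n+1})$ of $\br$, where $S \subset S_n$ is an increasing sequence of finite sets of primes, with $\rho_n|_{\gal{F_v}} \equiv \rho_v \pmod{p^{n+1}}$ for all $v \in S$ and with multiplier $\nu \bmod p^{n+1}$. Since we impose no oddness hypothesis, the Greenberg--Wiles formula will not in general let us control the cardinality of $S_n$, so the resulting $\rho$ is typically ramified at infinitely many primes; this extra flexibility over Theorem \ref{mainthmintro} is precisely what will allow us to dispense with the multiplicity-free hypothesis on $\br(\fgder)$.

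At the inductive step, suppose $\rho_n$ has been constructed. We consider the deformation problem with the rigid local condition $\{\rho_v \bmod p^{n+2}\}$ at each $v \in S$ and the unrestricted local condition $H^1(\gal{F_w}, \br(\fgm))$ at each auxiliary $w \in S_n \setminus S$. Because local lifts are prescribed at $S$ and any lift is allowed at the auxiliary primes, the local-at-$v$ obstructions to lifting $\rho_n$ to $G(\mc{O}/p^{n+2})$ with these constraints all vanish, so the global obstruction lies in the kernel of the localization map $H^2(\gal{F, S_n}, \br(\fgm)) \to \bigoplus_{v \in S_n} H^2(\gal{F_v}, \br(\fgm))$; by Poitou--Tate duality this kernel is dual to the dual Selmer group $H^1_{\mc{L}^\perp}(\gal{F, S_n}, \br(\fgm)(1))$ for the above local conditions. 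We enlarge $S_n$ to $S_{n+1} = S_n \cup T_{n+1}$ by a finite set of auxiliary primes so as to annihilate this dual Selmer group, whereupon the obstruction vanishes and $\rho_{n+1}$ exists.

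The key Chebotarev input is: for every nonzero class $\phi \in H^1_{\mc{L}^\perp}(\gal{F, S_n}, \br(\fgm)(1))$ there exists a prime $w \notin S_n$ such that $\br(\Frob_w)$ is regular semisimple with sufficiently generic eigenvalues, $\mr{Nm}(w) \not\equiv 1 \pmod p$, and $\phi|_{\gal{F_w}} \neq 0$; adjoining such a $w$ with unrestricted local condition strictly decreases the dual Selmer dimension without reintroducing local obstructions elsewhere. The existence of such $w$ follows from a Chebotarev argument applied jointly to $\br$ and to a representative of $\phi$ over the splitting field cut out by $\br$, $\br(\fgm)(1)$, and $\mu_p$, and relies on the absolute irreducibility of $\br|_{\gal{\tF(\zeta_p)}}$ together with the large-$p$ image lemmas developed earlier in the paper (see e.g.\ Lemmas \ref{irr} and \ref{invariants}), which guarantee that $\br(\fgm)(1)$ has no invariants after passage to the auxiliary splitting field and that $\phi$ cannot restrict trivially to every Frobenius class meeting the regularity constraint. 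Finitely many iterations of this procedure annihilate the dual Selmer group.

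To arrange $\rho(\gal{F}) \supset \wh{G^{\mr{der}}}(\mc{O})$, we use that at each step the unrestricted local conditions at the auxiliary primes yield a Selmer group $H^1_{\mc{L}}(\gal{F, S_n}, \br(\fgder))$ that can be made arbitrarily large by adjoining further auxiliary primes (we are never blocked by Euler characteristic considerations). Twisting $\rho_n$ by such cocycles and invoking the absolute irreducibility of $\br(\fgder)$ as a $\gal{F}$-module, we ensure inductively that the mod-$p^{n+1}$ image surjects onto $\ker(G^{\mr{der}}(\mc{O}/p^{n+1}) \to G^{\mr{der}}(k))$; a standard compactness argument then yields the claim for $\rho$. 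The main obstacle is the Chebotarev step of the previous paragraph: without oddness there is no numerical slack in Wiles' formula, so one must ensure both that suitable auxiliary primes always exist and that they do not interfere with the rigid local conditions at $S$---the latter is automatic since the $T_n$ are by construction disjoint from $S$, but the former is where the hypothesis $p \gg_G 0$ and the residual image results developed earlier in the paper are essential.
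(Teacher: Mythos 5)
Your proposed inductive step has a genuine internal conflict. You impose the \emph{unrestricted} local condition at each auxiliary prime $w$ and then claim that ``the local-at-$v$ obstructions to lifting $\rho_n$ to $G(\mc{O}/p^{n+2})$ with these constraints all vanish.'' This does not follow: ``all lifts allowed'' does not mean ``a lift exists.'' For $w\nmid p$, the local obstruction space is $H^2(\gal{F_w},\br(\fgm))\cong H^0(\gal{F_w},\br(\fgm)(1))^\vee$, and once $\rho_{n}|_{\gal{F_w}}$ has been made ramified at a previously-added auxiliary prime, there is no reason for the obstruction to vanish without imposing a genuine (formally smooth, hence \emph{restrictive}) local condition there. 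Worse, the natural fix is self-defeating: if you impose that $H^2(\gal{F_w},\br(\fgm))=0$ by choosing $\Frob_w$ generic (so $\br(\fgm)(1)^{\Frob_w}=0$), then $H^1_{\mr{unr}}(\gal{F_w},\br(\fgm)(1))$ also vanishes, and every dual Selmer class $\psi$ you are trying to annihilate is unramified at the new $w$ and therefore \emph{automatically} restricts to zero there. You cannot simultaneously have local unobstructedness in the generic sense you describe and $\psi|_{\gal{F_w}}\neq 0$. Your proposed ``regular semisimple with $\mr{Nm}(w)\not\equiv 1 \pmod p$'' \v{C}ebotarev input also does not follow from Lemmas \ref{irr} and \ref{invariants} (those give $H^0$/$H^1$ vanishing, not existence of regular semisimple elements in $\im(\br)$), and the paper explicitly resorts to trivial primes precisely because such Frobenius shapes need not exist when $\im(\br)$ is small.

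The paper takes a materially different route. It uses \emph{trivial primes} ($\br|_{\gal{F_w}}$ trivial and $N(w)\equiv 1\pmod p$), where the local $H^2$ is nonzero but the carefully constructed functor $\Lift^{\alpha}_{\br|_{\gal{F_w}}}$ (Definition \ref{triviallifts}, Lemma \ref{trivsmooth}) is formally smooth; so local liftability is guaranteed by the choice of local condition, not by vanishing of $H^2$. The key engine at each stage is Proposition \ref{doublingprop}, a version of the ``doubling method'' of \cite{klr}: one produces classes $h^{(v_n)}$ and $h^{(v'_n)}$ over \v{C}ebotarev sets and takes the combination $h^{\mr{old}} - \sum h^{(v_n)} + 2\sum h^{(v'_n)}$, then runs a Dirichlet-density limiting argument to simultaneously hit the prescribed restrictions at $T$ and control the restrictions at the new primes so that the resulting local lifts land in $\Lift^{\nu,\alpha_w}_{\br|_w,2,\mr{ram}}$. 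Your proposal makes no use of this two-sided control, which is exactly the mechanism that makes the induction in the paper's proof of Corollary \ref{infiniteram} close. Your description of the end goal (inverse limit of mod $p^{n}$ lifts, infinitely ramified, image containing $\wh{G^{\mr{der}}}(\mc{O})$) is correct, but the engine you supply to run the induction is not.
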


We end the introduction by outlining our techniques. We first briefly recall the original technique of Ramakrishna, as neatly formulated by Taylor (\cite{taylor:icos2}). Under the oddness hypothesis, one defines a global Galois deformation problem by imposing formally smooth local deformation conditions on the restriction of $\br$ to primes in $S$, and whose associated Selmer and dual Selmer groups have the same dimension (we will informally say that Selmer and dual Selmer are ``balanced"). In this setting an application of the Selmer group variant of the Poitou-Tate sequence (see \cite[Lemma 1.1]{taylor:icos2}) implies that if the dual Selmer group vanishes, then the corresponding universal deformation ring is $\mc{O}$ and therefore attests to the existence of a geometric deformation of $\br$. The task, then, is to allow ramification at a set $Q$ of auxiliary primes such that
\begin{itemize}
\item the allowed ramification at each $q \in Q$ is a formally smooth local condition;
\item the conditions at $q \in Q$ have large enough tangent space that the resulting new Selmer and dual Selmer groups remain ``balanced" as we add each $q \in Q$; and
\item when we have allowed the entire auxiliary set $Q$ of ramification, the dual Selmer group, and hence the Selmer group, vanish. 
\end{itemize}
Ramakrishna takes a Steinberg local condition at primes $q \not \equiv 1 \pmod p$ at which $\br$ is unramified with distinct Frobenius eigenvalues with ratio $q$. By comparing splitting conditions on Selmer and dual Selmer classes, he shows (when the projective image of $\br$ contains $\mr{PSL}_2(k)$) that such $q$ can be chosen that inductively decrease the size of Selmer and dual Selmer.

In higher rank, it is better to think of the $\mr{GL}_2$ ``Steinberg" condition as allowing unipotent ramification in the direction of a fixed root space and constraining Frobenius to act by the cyclotomic character on this root space, since a key point in controlling the Selmer and dual Selmer groups simultaneously is that the Ramakrishna deformation condition should intersect the unramified condition in a codimension one subspace. At each step of the inductive argument that decreases the size of the Selmer groups, one has to, given non-zero Selmer and dual Selmer cocycles, be able to choose this root space in a suitably general position with respect to the images of the cocycles. This is not always possible when (as in \cite{stp:exceptional}) the auxiliary primes are chosen so that Frobenius acts by a regular semisimple element. In higher rank, with small residual image, balancing these simultaneous demands---for auxiliary primes $q$ where the shape of $\br(\mr{Frob}_q)$ allows us to define a formally smooth local condition with rank 1 unipotent ramification in a direction adapted to killing fixed Selmer and dual Selmer classes, regardless of which constituents of the adjoint representation support them---seems to force on us an approach quite different from that of \cite{ramakrishna02} and \cite{stp:exceptional}. We resort to using auxiliary primes $q$ having the one behavior we are guaranteed to find in the image of any representation, namely, that $\br|_{\gal{F_q}}$ is trivial; note that as $\br(\mr{Frob}_q)$ is then contained in \textit{every} maximal torus of $G$, we win a great deal of flexibility in the choice of root space in which to allow ramification (contrast the condition (6) in \cite[\S 5]{stp:exceptional} with our Proposition \ref{splitcase}). 

More precisely, we generalize the notion of \textit{trivial primes} from the work of Hamblen and Ramakrishna (\cite{ramakrishna-hamblen}). Hamblen and Ramakrishna show how to deform a \textit{reducible} but indecomposable representation $\gal{\Q} \to \mr{GL}_2(k)$ to an irreducible $p$-adic representation by allowing Steinberg-type ramification at primes $q$ such that $q \equiv 1 \pmod p$, $q \not \equiv 1 \pmod {p^2}$, and $\br|_{\gal{\Q_q}}$ is trivial. The resulting local condition on lifts of $\br|_{\gal{\Q_q}}$ is liftable but is not representable, the latter point being reflected in the fact that the local condition behaves very differently modulo different powers of $p$: while its tangent space is ``too small" for the global applications, certain lifts mod $p^m$ for $m \geq 3$ do indeed witness that the condition is coming from a sufficiently large characteristic zero condition (see Lemma \ref{extracocycles} for a precise formulation of this distinction). 

The consequence of this distinction is that the global argument must treat separately the problems of lifting $\br$ to a mod $p^3$ representation and lifting it modulo higher powers of $p$. We treat these two problems in \S \ref{klrsection} and \S \ref{auxiliarysection}. In \S \ref{klrsection}, we start with any mod ${p^2}$ lift $\rho_2$ of $\br$ (easily seen to exist after enlarging $S$ by a set of trivial primes); to lift it mod $p^3$ we have to modify $\rho_2$ so that all of its local restrictions satisfy formally smooth (and ``large enough") local conditions. This leads to the following question: given local cohomology classes $z_T= (z_w)_{w \in T} \in \bigoplus_{w \in T} H^1(\gal{F_w}, \br(\fgder))$ (here $T$ will be a finite set of primes containing the original set $S$ of ramification), can we find a global class $h \in H^1(\gal{F, T}, \bf(\fgder))$ such that $h|_{\gal{F_w}}=z_w$ for all $w \in T$? The answer is no, so we aim for the next best thing: to enlarge $T$ to a finite set $T \cup U$, and to find a class $h^U \in H^1(\gal{F, T \cup U}, \br(\fgder))$ such that $h^U|_T= z_T$. This would allow us to modify $\rho_2$ to some $(1+ph^U)\rho_2$ that is well-behaved at primes in $T$. The problem here is that we sacrifice control at the primes in $U$, and this necessitates the use of an idea from \cite{klr} (as exploited in a simpler setting by \cite{ramakrishna-hamblen}), which we will refer to as the ``doubling method": roughly speaking, we consider two such sets $U$ and $U'$, with corresponding cocycles $h^U$ and $h^{U'}$. By considering all possibilities $(1+p(2h^U-ph^{U'}))\rho_2$ as $U$ and $U'$ vary (each through \v{C}ebotarev multi-sets of primes), we show by a limiting argument that there is a pair (not, in fact, a \v{C}ebotarev set!) of $U$ and $U'$ such that $\rho'_2= (1+p(2h^U-ph^{U'}))\rho_2$ both has the desired behavior at $T$ and is under enough control at $U$ and $U'$ (the detailed desiderata come out of Definition \ref{ramtrivlifts}, Lemma \ref{ramextracocycles}, and Lemma \ref{trivfixeddet}) that we can find a mod $p^3$ lift $\rho_3$ of $\rho_2'$. In these arguments, handling the case of general $\im(\br)$ poses a significant challenge beyond the $\mr{GL}_2$ arguments of \cite{klr} and \cite{ramakrishna-hamblen}; in particular, handling multiplicities (which we do in complete generality) in the $\Fp[\gal{F}]$-decomposition of $\br(\fgder)$ requires new techniques.

We then proceed to the argument of \S \ref{auxiliarysection}, which explains how, starting from a well-chosen mod $p^2$ representation (the $\rho_2'$ of the previous paragraph), to use trivial primes $q$, with the added requirement that $\rho_2'|_{\gal{F_q}}$ has the form demanded by Definition \ref{unrtrivlifts} and Lemma \ref{extracocycles}, to annihilate Selmer and dual Selmer classes. Again, handling the case of general $\im(\br)$ poses real difficulties and leads to the multiplicity constraint in our main theorem. The central argument here is Proposition \ref{splitcase}, which exploits the flexibility of using trivial primes to achieve simultaneous control of \v{C}ebotarev conditions in the fixed fields of $\rho_2$ and of a given Selmer class, even when these fixed fields are not linearly disjoint over the fixed field of $\br$. Namely, in Ramakrishna's original arguments, $\br(\mr{Frob}_q)$ is a regular semisimple element at the auxiliary primes $q$; it is therefore contained in a unique maximal torus, and (for $q \not \equiv \pm 1 \pmod p$) there is a unique choice of non-empty Steinberg deformation condition associated to $\br|_{\gal{F_q}}$. In contrast, at trivial primes, $\br|_{\gal{F_q}}$ is contained in \textit{every} maximal torus of $G$. It transpires that our search for auxiliary primes is actually a simultaneous search through triples $(T, \alpha, q)$ where $T$ is a maximal torus, $\alpha$ is a root with respect to $T$, and $q$ is a prime. These features of the problem do not appear in \cite{ramakrishna-hamblen}, where the relevant torus for the local theory is identified using the global theory (recall that Hamblen-Ramakrishna work with a non-split globally reducible 2-dimensional representation).

In \S \ref{groupsection} we present the group theory arguments needed to streamline some of the global hypotheses on $\br$ in \S \ref{klrsection} and \S \ref{auxiliarysection} to an irreducibility hypothesis. We combine the arguments of \S \ref{klrsection} and \S \ref{auxiliarysection} to complete the global argument in \S \ref{finallift}. This last step at least is fairly routine. In \S \ref{examples} we gather a few examples of the main theorem. Finally, we remark that a number of arguments are made technically more intricate by the fact that we have worked with groups $G$ having arbitrary (order prime to $p$) component group. The reader interested in the essential number-theoretic novelties of our arguments would do well to focus on the case of connected adjoint groups $G$.

\subsection{Notation and conventions}\label{notation}
We embed local Galois groups into global Galois groups by fixing embeddings $\overline{F} \into \overline{F}_v$. We write $\kappa$ for the $p$-adic cyclotomic character and $\bar{\kappa}$ for its mod $p$ reduction. We once and for all fix a primitive $p^{th}$ root of unity $\zeta \in \mu_p(\overline{F}) \xrightarrow{\sim} \mu_p(\overline{F}_v)$, and this allows us to identify the Tate dual $V^*= \Hom(V, \mu_p(\overline{F}))$ of an $\Fp[\gal{F}]$-module $V$ with $\Hom(V, \Fp(\bar{\kappa}))$. The reader should always assume we are doing this; only in the proof of Lemma \ref{localduality} will we make the identifications explicit.

For any finite set of primes $S$ of $F$, we let $\gal{F, S}$ denote $\Gal(F(S)/F)$, where $F(S)$ is the maximal extension of $F$ inside $\overline{F}$ that is unramified outside the primes in $S$; here we impose no constraint on the ``ramification" at $\infty$, but for notational convenience we do not want the set $S$ to contain the archimedean places (as would often be the convention for what we are referring to as $\gal{F, S}$).

\section{Deformation theory preliminaries}\label{defprelimsection}
Let $G$ be a smooth group scheme over $\mc{O}$ such that $G^0$ is split connected reductive, and $G/G^0$ is finite \'{e}tale of order prime to $p$. We will sometimes write $\pi_0(G)$ for this quotient $G/G^0$.
Write $\mu \colon G \to A$ for the map from $G$ to its maximal abelian quotient $A$, and let $G_{\mu}$ be $\ker(\mu) \subseteq G$. We let $G^{\mr{der}}$ denote the derived group of $G^0$; note that $G^{\mr{der}}$ is also the derived group of $G_{\mu}^0$, but that the latter is not necessarily semisimple. We denote by $\fgder$ and $\fgm$ the Lie algebras of $G^{\mr{der}}$ and $G_{\mu}$, and we let $\mf{z}_{\mu}$ be the Lie algebra of the center $Z_{G_{\mu}^0}$ of $G_{\mu}^0$. \textbf{The following assumptions on $p$ will implicitly be in effect for the remainder of the paper:}
\begin{assumption}\label{minimalp}
We assume that $p \neq 2$ is very good (\cite[\S 1.14]{carter:finitelie}) for $G^{\mr{der}}$, which in particular holds if $p \geq 7$ and $p \nmid n+1$ whenever $G^{\mr{der}}$ has a simple factor of type $A_n$. We also assume for simplicity that the canonical central isogenies $G^{\mr{der}} \times Z_G^0 \to G^0$, and similarly for $G_{\mu}^0$, have kernels of order prime to $p$ (in particular are \'{e}tale). Finally, we assume $p$ does not divide the order of the torsion subgroup of $\coker(X^\bullet(A^0) \to X^\bullet(Z^0_{G^0}))$.
\end{assumption}
Then in particular we have $G$-equivariant direct sum decompositions $\fgm= \fgder \oplus \mf{z}_\mu$ and $\fg= \fgm \oplus \mf{a}$, $\fgder$ is irreducible as $G_\mu^0$-representation, and there is a non-degenerate $G$-invariant trace form $\fgder \times \fgder \to k$ (\cite[1.16]{carter:finitelie}). The isogeny $G^{\mr{der}} \to G^{\mr{ad}}$ to the adjoint group of $G^{\mr{der}}$ also induces an isomorphism on Lie algebras. We will moreover assume that $\fgm^G=0$: if $G$ is connected, this condition says that $(\fgder)^G=0$, which follows from the ``very good" hypothesis; but in general it is an additional condition (at least on $p$), as can be seen by taking $G$ to be the normalizer of a maximal torus in $\mr{SL}_2$ (then $\fgm^G \neq 0$ if $\mr{char}(k)=2$, whereas all primes are very good for $G$, since the root system of $G^0$ is trivial).

Let $\Gamma$ be a profinite group, and let $\br \colon \Gamma \to G(k)$ be a continuous homomorphism. Set $\bar{\nu}= \mu \circ \br$, and once and for all fix a lift $\nu \colon \Gamma \to A(\mc{O})$ of $\bar{\nu}$. Let $\mc{C}_{\mc{O}}$ be the category of complete local noetherian algebras $R$ with $\mc{O} \to R$ inducing an isomorphism of residue fields (and morphisms the local homomorphisms), and let $\mc{C}_{\mc{O}}^f$ be the full subcategory of those algebras that are artinian.  Note that for any $R \in \mc{C}_{\mc{O}}$, $\pi_0(G)(R) \xrightarrow{\sim} \pi_0(G)(k)$, so we will just identify any $\pi_0(G)(R)$ to this fixed finite group $\pi_0(G)$. 

Define the lifting and deformation functors 
\[
\Lift_{\br}, \Def_{\br}, \Lift_{\br}^\nu, \Def_{\br}^\nu \colon \mc{C}_{\mc{O}} \to \Sets 
\]
by letting $\Lift_{\br}(R)$ be the set of lifts of $\br$ to $G(R)$, and by letting $\Lift_{\br}^\nu(R) \subset \Lift_{\br}(R)$ be the subset of lifts $\rho$ such that $\mu \circ \rho= \nu$; and then letting the corresponding deformation functors be the quotients by the equivalence relation 
\[
\text{$\rho \sim \rho' \iff \rho= g \rho' g^{-1}$ for some $g \in \wh{G}(R)= \ker(G(R) \to G(k))$.} 
\]
The tangent spaces of the lifting functors are canonically isomorphic to $Z^1(\Gamma, \br(\fg))$ and $Z^1(\Gamma, \br(\fg_{\mu})$; the tangent spaces of the corresponding deformation functors (note that $\wh{G} \subset G^0$) are canonically isomorphic to $H^1(\Gamma, \br(\fg))$ and $H^1(\Gamma, \br(\fgm))$, and (by the remarks in the first paragraph of this section) the latter is a direct summand of the former. In some cases we will have a global Galois representation valued in a non-connected group $G$, but it will be convenient to develop certain local deformation conditions only for the group $G^0$: since $\wh{G}$ is contained in $G^0$ (and as above $\pi_0(G)$ has order prime to $p$), a $G^0$-deformation of a $G^0(k)$-valued $\br$ is exactly the same thing as a $G$-deformation of a $G^0$-valued $\br$. 

As usual, when $R \to R/I$ is a small extension the obstruction to lifting a $\rho \in \Lift_{\br}(R/I)$ to a $\tilde{\rho} \in \Lift_{\br}(R)$ is a class in $H^2(\Gamma, \br(\fg) \otimes_k I)$ (the two-cocyle one defines by choosing a topological lift of $\rho$ to $G(R)$ takes values in $\ker(G(R) \to G(R/I))= \ker( G^0(R) \to G^0(R/I))= \exp(\fg \otimes_k I)$).
\section{Local deformation theory}
\subsection{Trivial primes}\label{trivialsection}
Let $F/\Q_{\ell}$ be a finite extension with residue field of order $q$. Assume $q$ is 1 mod $p$ but not 1 mod $p^2$, and let $\br \colon \gal{F} \to G(k)$ be the trivial homomorphism; in particular, all lifts of $\br$ land in $G^0$. Moreover, all lifts of $\br$ factor through the quotient of $\gal{F}$ topologically generated by a lift $\sigma$ of (arithmetic) Frobenius and a generator $\tau$ of the $p$-part of the tame inertia group. At one point we will invoke a calculation (Lemma \ref{localduality}) that depends on the normalization of $\tau$. Suppose we have a fixed $p^{th}$ root of unity $\zeta \in \mu_p(F_v)$ (in the global setting, this will come from a global choice, as in \S \ref{notation}). We then choose $\tau$ such that for any uniformizer $\varpi$ of $F$, $\frac{\tau(\varpi^{1/p})}{\varpi^{1/p}}= \zeta$.

We will now define the kinds of local lifts of $\br$ that we will make use of at auxiliary primes. First, we introduce some notation. For a split maximal torus $T$ of $G^0$ (over $\mc{O}$) and an $\alpha \in \Phi(G^0, T)$, we let $U_{\alpha} \subset G^0$ denote the root subgroup that is the image of the root homomorphism (``exponential mapping") $u_{\alpha} \colon \mf{g}_{\alpha} \to G$. The homomorphism $u_{\alpha}$ is a $T$-equivariant isomorphism $\mf{g}_{\alpha} \to U_{\alpha}$ (see \cite[Theorem 4.1.4]{conrad:luminy}), and its characterizing properties (\textit{loc. cit.}) imply that $Z_{G^0}(\mf{g}_{\alpha})= Z_{G^0}(U_{\alpha})$.
\begin{defn}\label{triviallifts}
Fix a split maximal torus $T$ of $G^0$ (over $\mc{O}$) and an $\alpha \in \Phi(G^0, T)$. Define $\Lift^{\alpha}_{\br}(R)$ to be the set of lifts $\widehat{G}(R)$-conjugate to one satisfying 
\begin{itemize}
\item $\rho(\sigma) \in T \cdot Z_{G^0}(\mf{g}_{\alpha})(R)$
\item Under the composite (note that $T$ normalizes the centralizer) 
\[
T \cdot Z_{G^0}(\mf{g}_{\alpha})(R) \to T(R)/(T(R) \cap Z_{G^0}(\mf{g}_{\alpha})(R)) \xrightarrow{\alpha} R^\times,
\] 
$\rho(\sigma)$ maps to $q$.
\item $\rho(\tau) \in U_{\alpha}(R)$.
\end{itemize}
\end{defn}
\begin{lemma}\label{trivsmooth}
 For any pair $(T, \alpha)$ consisting of a split maximal torus $T$ of $G^0$ and an $\alpha \in \Phi(G^0, T)$, the functor $\Lift^{\alpha}_{\br}$ is formally smooth, i.e. for all maps $R \to R/I$ in $\mc{C}^f_\mc{O}$ with $I \cdot \mf{m}_R=0$, $\Lift_{\br}^{\alpha}(R) \to \Lift^{\alpha}_{\br}(R/I)$ is surjective.
\end{lemma}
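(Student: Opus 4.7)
The idea is to lift the images of the two topological generators $\sigma$ and $\tau$ independently within their prescribed subschemes, and to observe that the tame relation $\sigma \tau \sigma^{-1} = \tau^q$ is automatically respected thanks to the defining condition $\alpha(\rho(\sigma)) = q$.

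Fix a small surjection $R \to R/I$ in $\mc{C}^f_\mc{O}$ and $\rho \in \Lift^\alpha_{\br}(R/I)$. The conjugating element $g \in \wh{G}(R/I)$ implicit in the definition of $\Lift^\alpha_{\br}(R/I)$ may be lifted to $\wh{G}(R)$ by smoothness of $G^0$, so after conjugating we may assume that $\rho$ itself satisfies the three bullet-point conditions. Set $H = T \cdot Z_{G^0}(\mf{g}_\alpha)$, a smooth closed subgroup scheme of $G^0$ under the very good prime hypothesis (Assumption \ref{minimalp}). The restriction $\alpha \colon H \to \mathbb{G}_m$ is split by the coroot $\alpha^\vee$ and has kernel $Z_{G^0}(\mf{g}_\alpha)$; in particular it is smooth, and its fiber $\alpha^{-1}(q) \subset H$ is a smooth closed subscheme. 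Thus $\rho(\sigma) \in \alpha^{-1}(q)(R/I)$ lifts to some $s \in \alpha^{-1}(q)(R)$, and $\rho(\tau) \in U_\alpha(R/I)$ lifts to some $t = u_\alpha(X) \in U_\alpha(R)$ by smoothness of $U_\alpha \cong \mathbb{G}_a$.

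It remains to verify that $s t s^{-1} = t^q$, so that sending $\sigma \mapsto s$ and $\tau \mapsto t$ defines a homomorphism from the (tame) quotient of $\gal{F}$ through which all lifts factor. This is automatic: by $T$-equivariance of $u_\alpha$ and the trivial action of $Z_{G^0}(\mf{g}_\alpha)$ on $\mf{g}_\alpha$, the element $s$ acts on $\mf{g}_\alpha$ by multiplication by $\alpha(s) = q$; combined with $u_\alpha$ being an additive homomorphism, this yields $s u_\alpha(X) s^{-1} = u_\alpha(qX) = u_\alpha(X)^q$. The resulting $\tilde{\rho} \colon \gal{F} \to G(R)$ satisfies the three bullet-point conditions by construction, so $\tilde{\rho} \in \Lift^\alpha_{\br}(R)$. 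I foresee no serious obstacle; the only non-formal input is the smoothness of $Z_{G^0}(\mf{g}_\alpha)$, which is precisely the sort of fact that the very good prime hypothesis is designed to guarantee, and the rest is bookkeeping around the precise shape of the local condition—which has been defined exactly so that the tame relation is automatic.
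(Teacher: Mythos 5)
Your outline (reduce to the bullet conditions, lift the images of $\sigma$ and $\tau$ inside the prescribed smooth loci, observe that the tame relation is automatic because $\rho(\sigma)$ acts on $U_\alpha$ through $\alpha=q$) is the same as the paper's, and your explicit verification of $s\,u_\alpha(X)\,s^{-1}=u_\alpha(X)^q$ is correct. But the step you dismiss as "precisely the sort of fact that the very good prime hypothesis is designed to guarantee" is exactly where the real content lies, and as written it is a gap. What the very good hypothesis gives, via Richardson's criterion, is smoothness of the \emph{special fiber} $Z_{G^0_k}(\mf{g}_{\alpha}\otimes_{\mc O}k)$ over $k$; it does not by itself give smoothness (equivalently flatness plus fiberwise smoothness, or formal smoothness) of $Z_{G^0}(\mf{g}_\alpha)$ over $\mc{O}$, which is what your lifting of $\rho(\sigma)$ requires. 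Smoothness of the special fiber alone does not imply that $R$-points lift along small extensions: already $\Spec \mc{O}[x]/(px)$ has smooth special fiber but is not formally smooth over $\mc{O}$. This is why the paper does not argue with $Z_{G^0}(\mf{g}_\alpha)$ directly: it passes to the open subscheme $Z_\alpha$ obtained by removing the non-identity components of the special fiber, shows (using smoothness of $\ker(\alpha|_T)$, i.e.\ that $X^\bullet(T)/\Z\alpha$ has no $p$-torsion) that the lifting functor can equivalently be defined with $Z_\alpha$, and then invokes \cite[Remark 4.3, Lemma 4.4]{booher:minimal} to get smoothness of $Z_\alpha$ over $\mc{O}$. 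Without some such argument (or a non-effective spreading-out argument, as the paper's remark notes), your assertion that $H=T\cdot Z_{G^0}(\mf{g}_\alpha)$ is smooth over $\mc{O}$ is unproved.

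A second, smaller error: the map $\alpha\colon H\to\mathbb{G}_m$ is \emph{not} split by $\alpha^\vee$, since $\alpha\circ\alpha^\vee$ is the squaring map ($\langle\alpha,\alpha^\vee\rangle=2$), and in general no cocharacter pairs to $1$ with $\alpha$ (e.g.\ for $\mr{SL}_2$). So your justification that the fiber $\alpha^{-1}(q)$ is formally smooth does not stand as stated; the paper's substitute is to lift $t_\sigma$ and $c_\sigma$ separately and then correct the value of $\alpha$ by multiplying by $\alpha^\vee(1-\tfrac{i}{2})$, which is where the hypothesis $p\neq 2$ enters (alternatively one could argue via surjectivity of $d\alpha$ on $\mf{t}_k$ for odd $p$). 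Relatedly, lifting $\rho(\sigma)$ as an $R$-point of the subscheme $H$ does not immediately exhibit it as an element of the product of point-sets $T(R)\cdot Z_{G^0}(\mf{g}_\alpha)(R)$, which is how the first bullet of Definition \ref{triviallifts} is phrased and used; the paper's componentwise lifting avoids this issue. The relation check, by contrast, is the easy part and is left implicit in the paper.
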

\proof
It is convenient to begin with a slightly different description of $\Lift_{\br}^{\alpha}$ that will circumvent the need to know that $Z_{G^0}(\mf{g}_{\alpha})$ is smooth over $\mc{O}$. To that end, let $Z_{\alpha}$ be the open subscheme of $Z_{G^0}(\mf{g}_{\alpha})$ obtained by removing all non-identity components of the special fiber $Z_{G^0_k}(\mf{g}_{\alpha} \otimes_{\mc{O}} k)$. Set $\mf{g}_{\alpha, k}=\mf{g}_{\alpha} \otimes_{\mc{O}} k$. We first claim the special fiber $Z_{\alpha, k} \to \Spec k$ is smooth. By our assumptions on $p$, $Z_{G^0_k}(\mf{g}_{\alpha, k})$ is smooth if and only if $Z_{G^{\mr{der}}_k}(\mf{g}_{\alpha, k})$ is smooth, and then the assumption that $p$ is very good for $G^{\mr{der}}$ implies, by a criterion of Richardson (\cite[Theorem 2.5]{jantzen:nilporbits}), that $Z_{G^{\mr{der}}_k}(\mf{g}_{\alpha, k})$ is smooth (recall that $\fgder$ has a non-degenerate trace form). In particular, $Z_{\alpha, k}$ is smooth. Since $Z_{\alpha, k}$ has a single irreducible component, we can now apply \cite[Remark 4.3, Lemma 4.4]{booher:minimal} to deduce that $Z_{\alpha} \to \Spec \mc{O}$ is smooth.

We next claim that $\Lift^{\alpha}_{\br}$ is equivalently defined by replacing $Z_{G^0}(\mf{g}_{\alpha})$ with $Z_{\alpha}$ in Definition \ref{triviallifts}. First note that for any object $R$ of $\mc{C}^f_{\mc{O}}$, the fiber over the identity of $Z_{G^0}(\mf{g}_{\alpha})(R) \to Z_{G^0}(\mf{g}_{\alpha})(k)$ is contained in $Z_{\alpha}(R)$, and that $T$ normalizes $Z_{\alpha}$ (as functors of Artin rings). Now let $x \in T(R)Z_{G^0}(\mf{g}_{\alpha})(R)$ be an element in the fiber over $1 \in G(k)$, and correspondingly write $x=t\cdot c$. Writing $\bar{c}$ for the image of $c$ in $G(k)$, we have $\bar{c} \in \ker(\alpha|_T)$. This kernel is smooth (our assumptions on $p$ imply that $X^{\bullet}(T)/\Z \alpha$ has no $p$-torsion), so we can lift $\bar{c}$ to an element $t' \in \ker(\alpha|_T)(R)$. Then writing $x=(tt')({t'}^{-1}c)$ we have exhibited $x$ as an element of $T(R) Z_{\alpha}(R)$. Since $\br$ is trivial, we conclude that $\Lift^{\alpha}_{\br}$ can equivalently be defined with $Z_{\alpha}$ in place of $Z_{G^0}(\mf{g}_{\alpha})$.

With this reinterpretation, we can now check formal smoothness of $\Lift_{\br}^{\alpha}$. Let $\rho$ be any element of $\Lift^{\alpha}_{\br}(R/I)$. Since $\widehat{G}$ is formally smooth, we may assume $\rho$ satisfies the three bulleted items of Definition \ref{triviallifts}. Write $\rho(\sigma)=t_{\sigma}c_{\sigma}$ and $\rho(\tau)=u_{\alpha}(x)$ for some $t_{\sigma} \in T(R/I)$ satisfying $\alpha(t_{\sigma})=q$, $c_{\sigma} \in Z_{\alpha}(R/I)$, and $x \in R/I$. Since $T$ and $Z_{\alpha}$ are formally smooth, we can choose lifts $\wt{t}_{\sigma} \in T(R)$, $\wt{c}_{\sigma} \in Z_{\alpha}(R)$, and $\wt{x} \in R$. We can write $\alpha(\wt{t}_{\sigma})=q+i$ for some $i \in I$, and then we replace $\wt{t}_{\sigma}$ by $\wt{t}_{\sigma} \alpha^\vee(1-\frac{i}{2})$ ($p \neq 2$). Since $I \cdot \mf{m}_R=0$, we then find that $\tilde{\rho}(\sigma)=\wt{t}_{\sigma} \wt{c}_{\sigma}$, $\tilde{\rho}(\tau)=u_{\alpha}(\wt{x})$ defines a lift $\tilde{\rho} \in \Lift^{\alpha}_{\br}(R)$ of $\rho$.
\endproof
\begin{rmk}
The lemma does not rely on the assumption (relevant for later arguments in this section) that $q \not \equiv 1 \pmod{p^2}$. We will make use of this flexibility in Corollary \ref{infiniteram} (see too Remark \ref{infinitetriv}).
\end{rmk}
\begin{rmk}
We could have argued directly with the original definition using $Z_{G^0}(\mf{g}_{\alpha})$, but lacking a generalization to all groups of \cite[\S 4.4]{booher:minimal}---namely, sections of $Z_{G^0}(\mf{g}_{\alpha})$ hitting any irreducible component in the special fiber---we would only have obtained the smoothness for $p \gg_G 0$ but non-effective (resorting to a spreading-out argument).
\end{rmk}

Write $\Tan^{\alpha}_{\br}$ for the tangent space of this lifting functor. It is easy to see that $\Tan^{\alpha}_{\br}$ identifies to the set of homomorphisms $\phi \colon \Gamma_F \to \mf{g}$ such that 
\begin{align*}
\phi(\sigma) &\in \ker(\alpha|_{\mf{t}}) + \mr{Cent}_{\mf{g}}(\mf{g}_{\alpha}) \\
\phi(\tau) &\in \mf{g}_{\alpha}.
\end{align*}
The dimension will depend on $\alpha$, but it is certainly less than $\dim \mf{g}= h^0(\gal{F}/I_F, \br(\fg))$, so for our global argument we will need to show that there are ``extra cocycles" stabilizing certain subsets of $\Lift^{\alpha}_{\br}(\mc{O}/p^m)$ for $m \geq 3$. We will in fact need two such constructions, given in Lemma \ref{extracocycles} and Lemma \ref{ramextracocycles}. 

First, note that $\oplus_{\beta \in \Phi(G, T)} \mf{g}_{\beta}$ is actually the direct sum 
\[
 \bigoplus_{\beta \in \Phi(G, T)} \mf{g}_{\beta}= \left( \bigoplus_{\beta \in \Phi(G, T)} \mf{g}_{\beta} \cap \mr{Cent}_{\mf{g}}(\mf{g}_{\alpha})\right) \oplus \bigoplus_{\beta \in \Phi^{\alpha}} \mf{g}_{\beta},
\]
where $\Phi^{\alpha}$ is the subset of roots $\beta$ such that $[\mf{g}_{\alpha}, \mf{g}_{\beta}] \neq 0$.
\begin{defn}\label{unrtrivlifts}
 For any $m \geq 2$, let $\Lift^{\alpha}_{\br, 2}(\mc{O}/p^m) \subset \Lift^{\alpha}_{\br}(\mc{O}/p^m)$ be the subset of lifts $\rho$ such that $\rho_2 := \rho \pmod {p^2}$ satisfies:
\begin{itemize}
 \item $\rho_2$ is unramified, with $\rho_2(\sigma) \in T(\mc{O}/p^2)$.
 \item For all $\beta \in \Phi^{\alpha}$, $\beta(\rho_2(\sigma)) \neq 1 \pmod {p^2}$.
\end{itemize}
\end{defn}
Note that for all $m \geq 2$, $\Lift^{\alpha}_{\br, 2}(\mc{O}/p^m)$ is $\wh{G}(\mc{O}/p^m)$-stable. The next result generalizes \cite[Proposition 24, Corollary 25]{ramakrishna-hamblen}
\begin{lemma}\label{extracocycles}
 For $m \geq 3$, let $\rho$ be an element of $\Lift^{\alpha}_{\br,2}(\mc{O}/p^m)$. 
Consider the subgroup of unramified cocycles $\mc{S}^{\alpha}_{\br} \subset H^1(\gal{F}, \br(\mf{g}))$ of the form
\[
 \mc{S}^{\alpha}_{\br}= \left\{ c \in H^1(\gal{F}, \br(\mf{g})): c(\sigma) \in \bigoplus_{\beta \in \Phi^{\alpha}} \mf{g}_{\beta}, c(\tau)=0\right\}.
\]
Then for all $c \in \mc{S}^{\alpha}_{\br}$, $\exp(c \otimes_k p^{m-1})\rho$ is $\widehat{G}(\mc{O}/p^m)$-conjugate to $\rho$, hence is an element of $\Lift^{\alpha}_{\br,2}(\mc{O}/p^m)$. In particular, the space $L^{\alpha}_{\br}= \Tan^{\alpha}_{\br}+ \mc{S}^{\alpha}_{\br}$ has dimension $\dim \mf{g}$ and preserves $\Lift^{\alpha}_{\br, 2}(\mc{O}/p^m)$ for $m \geq 3$.
\end{lemma}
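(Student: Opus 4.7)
The plan is to exhibit, for each $c \in \mc{S}^{\alpha}_{\br}$, an explicit element $g \in \wh{G}(\mc{O}/p^m)$ such that $g\rho g^{-1} = \exp(c\otimes_k p^{m-1})\rho$. After replacing $\rho$ by a $\wh{G}(\mc{O}/p^m)$-conjugate I may assume it is in the standard form of Definition \ref{triviallifts}, and I read off from the $\Lift^{\alpha}_{\br,2}(\mc{O}/p^m)$ hypothesis: the unramifiedness of $\rho_2$ yields $\rho(\tau) = u_{\alpha}(x)$ with $x \in p^2\mc{O}/p^m$, while $\rho_2(\sigma) \in T(\mc{O}/p^2)$ gives $\rho(\sigma) \equiv 1 + pY \pmod{p^2}$ for some $Y \in \mf{t}\otimes k$. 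The key numerical input is that $d\beta(Y) \in k^\times$ for every $\beta \in \Phi^{\alpha}$, which is exactly the translation of $\beta(\rho_2(\sigma)) \neq 1 \pmod{p^2}$.

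Next I look for the conjugator in the form $g = \exp(p^{m-2}X)$, $X \in \mf{g}$. Because $2(m-2)+1 \geq m$ for $m \geq 3$, only the linear term of $\Ad(g) = \exp(p^{m-2}\ad X)$ can contribute modulo $p^m$ when applied to an element of positive $p$-order. Applied to $\log\rho(\tau) \in p^2 \mf{g}_{\alpha}$ this shows $g$ centralizes $\rho(\tau)$ for \emph{any} $X$; applied to $\log\rho(\sigma) \equiv pY \pmod{p^2}$ it yields $\Ad(g)(\log\rho(\sigma)) \equiv \log\rho(\sigma) + p^{m-1}[X, Y] \pmod{p^m}$, and BCH (whose higher corrections vanish modulo $p^m$ for the same $p$-order reasons) then reduces the equation $g\rho(\sigma)g^{-1} = \exp(c(\sigma)p^{m-1})\rho(\sigma)$ to the linear equation $[X, Y] \equiv c(\sigma) \pmod p$ in $\mf{g}\otimes k$. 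Decomposing $c(\sigma) = \sum_{\beta \in \Phi^{\alpha}} c_\beta$ and setting $X = \sum_{\beta \in \Phi^{\alpha}} \bigl(-c_\beta/d\beta(Y)\bigr) \in \bigoplus_{\beta \in \Phi^{\alpha}}\mf{g}_\beta$ solves this, precisely because the $\Lift^{\alpha}_{\br,2}$ hypothesis makes each $d\beta(Y)$ a unit. The conjugacy gives $\exp(cp^{m-1})\rho \in \Lift^{\alpha}_{\br,2}(\mc{O}/p^m)$ and, combined with the fact that acting by $\Tan^{\alpha}_{\br}\otimes_k p^{m-1}$ preserves $\Lift^{\alpha}_{\br}$ (Lemma \ref{trivsmooth}) while being invisible modulo $p^2$, yields the preservation of $\Lift^{\alpha}_{\br,2}(\mc{O}/p^m)$ by all of $L^{\alpha}_{\br}$.

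For the dimension count, since the residual action is trivial and $q \equiv 1 \pmod p$ the cocycle relation imposes no condition, so a cocycle is freely specified by the pair $(\phi(\sigma), \phi(\tau)) \in \mf{g}\times \mf{g}$. Using the decomposition $\mr{Cent}_{\mf{g}}(\mf{g}_{\alpha}) = \ker(d\alpha|_{\mf{t}}) \oplus \bigoplus_{\beta\in\Phi\setminus \Phi^{\alpha}} \mf{g}_\beta$, the $\sigma$-coordinates of $\Tan^{\alpha}_{\br} + \mc{S}^{\alpha}_{\br}$ span $\ker(d\alpha|_{\mf{t}}) \oplus \bigoplus_{\beta\in\Phi}\mf{g}_\beta$, of dimension $\dim\mf{g} - 1$, while the $\tau$-coordinates span the one-dimensional $\mf{g}_{\alpha}$, for a total $\dim L^{\alpha}_{\br} = \dim\mf{g}$. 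The main obstacle I anticipate is the $p$-adic accounting in the BCH/$\Ad$ expansions at the boundary case $m = 3$, where the quadratic corrections only just vanish: the argument works precisely because $\log\rho(\sigma) \in p\mf{g}$, $\log\rho(\tau) \in p^2 \mf{g}_{\alpha}$, and $g \in 1 + p^{m-2}\mf{g}$ conspire to push every quadratic-or-higher BCH/$\Ad$ term past $p^m$.
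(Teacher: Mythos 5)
Your proposal is correct and follows essentially the same route as the paper: conjugate by $g=\exp(p^{m-2}X)$ with $X$ supported on the root spaces $\mf{g}_\beta$, $\beta\in\Phi^{\alpha}$, note that only the first-order commutator survives modulo $p^m$ (the paper packages this as its matrix identity, Equation (\ref{matrix})), use $\beta(\rho_2(\sigma))\neq 1\pmod{p^2}$ to make the resulting linear equation solvable, and observe that $\rho(\tau)\equiv 1 \pmod{p^2}$ forces $g$ to centralize $\rho(\tau)$. The only cosmetic difference is that you solve $[X,Y]=c(\sigma)$ for a given cocycle all at once, while the paper conjugates one root direction at a time and spans; the dimension count and the preservation by $\Tan^{\alpha}_{\br}$ are handled at the same level of detail as in the paper.
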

\proof
We may assume $c(\sigma)$ lies in a particular root space $\fg_{\beta}$, for some $\beta \in \Phi^{\alpha}$. Write $u_{\beta} \colon \mathbb{G}_a \to G$ for an associated root homomorphism (over $\mc{O}$). It is convenient to fix an embedding $G \into \mr{GL}_N$ so as to be able to do ``matrix calculations,'' but we will leave this embedding implicit. Note that the embedding maps $\exp(c \otimes p^{m-1})$ to $(1+p^{m-1}c)$, so we will use these expressions interchangeably. First note that for any $A, B, X \in M_N(\mc{O})$, we have, by a straightforward calculation, the following identity in $\mr{GL}_N(\mc{O}/p^m)$ (for $m \geq 3$):
\begin{equation}\label{matrix}
 (1+p^{m-2}X)(1+pA+p^2B)(1-p^{m-2}X+p^{2m-4}X^2)= (1+p^{m-1}[X, A])(1+pA+p^2B).
\end{equation}
(Here $[\cdot, \cdot]$ is the Lie bracket, which of course we can compute either in $\fg$ or in $\mf{gl}_N$.) Observe that the term $1+p^{m-1}[X, A]$ is independent of $B$ and depends only on $X \mod p$, and that $p^{m-1}[X, A]$ depends linearly on $X \mod p$ (this last observation shows that if a set of cocycles preserves some class of lifts, then so does its $k$-span). We now compute $g \rho(\sigma) g^{-1}$, where $g= u_\beta(zp^{m-2})$ for some element $z \in \mc{O}^\times$. By Equation (\ref{matrix}),  the result depends only on $\rho_2(\sigma)$ and on $z \pmod p$; we can therefore replace $\rho(\sigma)$ by some lift of $\rho_2(\sigma)$ to $T(\mc{O})$, and then we find
\[
 g \rho(\sigma)g^{-1}\rho(\sigma)^{-1}=u_\beta\left(p^{m-2}z-p^{m-2}z\beta(\rho_2(\sigma))\right), 
\]
so that
\[
 g \rho(\sigma) g^{-1}= u_{\beta}\left(p^{m-1}z \frac{1-\beta(\rho_2(\sigma))}{p}\right) \rho(\sigma).
\]
Likewise, $g \rho(\tau) g^{-1}= \rho(\tau)$, so the cocycle $c$ translates $\rho$ to a $\wh{G}(\mc{O}/p^m)$-conjugate.

\endproof
We will use a number of times the following calculation of the local duality pairing at trivial primes:
\begin{lemma}\label{localduality}
Let $W$ be a trivial $k[\gal{F}]$-module. Then the local duality pairing
\[
\inv_F(\cdot \cup \cdot) \colon H^1(\gal{F}, W) \times H^1(\gal{F}, W^*) \to k
\]
has the following properties: if $\phi$ is unramified, then
\[
\inv_F(\phi \cup \psi)=- \langle \phi(\sigma), \psi(\tau) \rangle,
\]
and if $\psi$ is unramified, then
\[
\inv_F(\phi \cup \psi)= \langle \phi(\tau), \psi(\sigma) \rangle.
\]
\end{lemma}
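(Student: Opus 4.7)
The plan is to reduce to the scalar case and then verify both formulas by explicit computation via Kummer theory and local class field theory. Both sides of each identity are $\Fp$-bilinear in $(\phi, \psi)$---the left-hand side via the bilinearity of the cup product composed with the $\Fp$-linear sequence $H^2(\gal{F}, W \otimes W^*) \to H^2(\gal{F}, \mu_p) \xrightarrow{\inv_F} \Fp \hookrightarrow k$ (using the fixed $\zeta$), and the right-hand side via the bilinearity of the evaluation pairing $W \times W^* \to \mu_p$ composed with the same $\zeta$-identification. Since the $\gal{F}$-action on $W$ and $W^*$ is trivial, a choice of $\Fp$-basis of $W$ (and the corresponding dual basis of $W^*$) reduces everything to the scalar case $W = \Fp$, $W^* = \mu_p$.

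In this case, the hypothesis $q \equiv 1 \pmod p$ forces the residual characteristic of $F$ to differ from $p$, so wild inertia acts trivially on all $p$-torsion coefficients and one may replace $\gal{F}$ by its tame quotient, presented topologically by $\langle \sigma, \tau \mid \sigma \tau \sigma^{-1} = \tau^q \rangle$. Both $H^1(\gal{F}, \Fp)$ and $H^1(\gal{F}, \mu_p)$ are then two-dimensional over $\Fp$, with each cohomology class uniquely determined by its values on $\sigma$ and $\tau$; by bilinearity it suffices to verify each identity on a basis. Via Kummer theory, $H^1(\gal{F}, \mu_p) \cong F^\times/(F^\times)^p$ is spanned by the classes of a uniformizer $\varpi$ and a generator $\eta \in \mu_p(F)$, and the associated cocycles $\psi_u(g) = g(u^{1/p})/u^{1/p}$ are explicit: the normalization $\tau(\varpi^{1/p})/\varpi^{1/p} = \zeta$ gives $\psi_\varpi(\tau) = \zeta$ and, for a choice of $\sigma$ fixing $\varpi^{1/p}$, $\psi_\varpi(\sigma) = 1$; while $\psi_\eta$ is unramified, with $\psi_\eta(\sigma)$ a primitive $p$-th root of unity arising from the Frobenius action on $\zeta_{p^2}$ (nontriviality here uses $q \not\equiv 1 \pmod{p^2}$).

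To conclude, invoke the standard identity $\inv_F(\phi \cup \psi_u) = -\phi(\rec_F(u))$ from local class field theory (with arithmetic reciprocity $\rec_F(\varpi) = \sigma$), evaluate against the basis above, and match term-by-term: for the first identity take $\phi$ unramified and expand $\psi$ in the basis $(\psi_\varpi, \psi_\eta)$; the second formula then follows either by a parallel direct calculation or, more efficiently, from the first via the graded-commutativity $\phi \cup \psi = -\psi \cup \phi$, together with the observation that the evaluation pairings $W \otimes W^* \to \mu_p$ and $W^* \otimes W \to \mu_p$ agree under the swap. The main obstacle, and the source of the opposite signs between the two identities, is careful bookkeeping of the three normalization choices: the generator $\tau$, the direction of local reciprocity, and the sign in the Tate pairing formula.
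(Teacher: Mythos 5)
Your proposal is correct and follows essentially the same route as the paper: reduce by $\Fp$-bilinearity and triviality of the action to the scalar case $W=\Fp$, compute the pairing via Kummer theory and the class-field-theoretic identity $\inv_F(\phi \cup \delta(a))=\pm\phi(\rec_F(a))$ (the sign being a matter of the normalization of $\rec_F$, which you and the paper handle compatibly), and deduce the second formula from the first by anticommutativity of the cup product together with $W=W^{**}$. The only cosmetic difference is that you evaluate on the explicit Kummer basis $\{\varpi,\eta\}$ rather than computing $\phi(\rec_F(a))=-v(a)\phi(\sigma)$ for general $a$ as the paper does; this changes nothing of substance.
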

\proof
First we recall the description of the $k$-linear version of the local duality pairing. Since $W^*= \Hom_k(W, \mu_p \otimes_{\Fp} k)$, cup-product and the invariant map together give a canonical pairing 
\[
H^1(\gal{F}, W) \times H^1(\gal{F}, W^*) \to H^2(\gal{F}, \mu_p \otimes_{\Fp} k)= H^2(\gal{F}, \mu_p)\otimes_{\Fp} k\xrightarrow[\sim]{\inv_F} \frac{1}{p} \Z \big/\Z \otimes_{\Fp} k =k.
\]
Since $W$ is trivial, the lemma reduces to the case $W=k$, and the above description of the $k$-linear duality pairing, which for $W=k$ is the $k$-linear extension of the $\Fp$-linear duality pairing on the trivial module $\Fp$, shows we can further reduce to the case $W=k= \Fp$.

Then the calculation can be performed, for instance, using the identity
\[
 \inv_F(\phi \cup \delta(a))= \phi(\rec_F(a))
\]
for any $\phi \in H^1(\gal{F}, W)= \Hom(\gal{F}^{\mr{ab}}, \Fp)$ and $a \in F^{\times}/(F^{\times})^p \xrightarrow[\sim]{\delta} H^1(\gal{F}, \mu_p)= H^1(\gal{F}, \Fp^*)$ (the last identification is the canonical one). If $\phi$ is unramified, then $\phi(\rec_F(a))$ is simply $-v(a)\phi(\sigma)$ (writing $v$ for the normalized valuation, and normalizing $\rec_F$ to take uniformizers to geometric frobenii). On the other hand, if $\psi= \delta(a)$, then 
\[
\psi(\tau)= \delta(a)(\tau)=\frac{\tau(a^{1/p})}{a^{1/p}}= \left(\frac{\tau(\varpi^{1/p})}{\varpi^{1/p}}\right)^{v(a)}= \zeta^{v(a)}.
\]
Then $\langle \phi(\sigma), \psi(\tau)\rangle= \zeta^{v(a)\phi(\sigma)}$, and via our isomorphism $\zeta \colon \Fp \to \mu_p$ we thus identify $\langle \phi(\sigma), \psi(\tau)\rangle= -\inv_F(\phi \cup \psi)$, as desired. Now suppose $\psi$ is unramified. Then we identify $W=W^{**}$ and apply the previous step to find 
\[
\inv_F(\phi \cup \psi)=-\inv_F(\psi \cup \phi)= \langle \psi(\sigma), \phi(\tau) \rangle= \langle \phi(\tau), \psi(\sigma) \rangle.
\]
\endproof
In particular:
\begin{cor}
 The orthogonal complement $(L^{\alpha}_{\br})^{\perp}$ of the space $L_{\br}^{\alpha}$ under the local duality pairing is the space of $\psi \colon \gal{F} \to \mf{g}^*$ such that $\langle \psi(\sigma), \fg_{\alpha} \rangle=0$ and $\langle \psi(\tau), \ker(\alpha|_{\mf{t}}) \oplus \oplus_{\beta \in \Phi(G, T)} \mf{g}_{\beta}\rangle=0$.
\end{cor}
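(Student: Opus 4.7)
The plan is to combine the description of $L^\alpha_{\br}$ with a bilinear extension of Lemma~\ref{localduality}. First I would unpack the subspace $\mr{Cent}_{\fg}(\fg_\alpha)$ appearing in $\Tan^\alpha_{\br}$: since root spaces are one-dimensional, our running assumptions on $p$ ensure
\[
\mr{Cent}_{\fg}(\fg_\alpha)=\ker(\alpha|_{\mf{t}})\oplus \bigoplus_{\beta\notin\Phi^\alpha}\fg_\beta,
\]
and folding in the root directions $\bigoplus_{\beta\in\Phi^\alpha}\fg_\beta$ from $\mc{S}^\alpha_{\br}$ shows that a general $\phi\in L^\alpha_{\br}$ is a cocycle whose two values vary independently, with $\phi(\sigma)\in\ker(\alpha|_{\mf{t}})\oplus\bigoplus_{\beta\in\Phi(G,T)}\fg_\beta$ and $\phi(\tau)\in\fg_\alpha$. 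Here I use that, since $\br(\fg)$ is a trivial module and the tame relation $\sigma\tau\sigma^{-1}=\tau^q$ becomes trivial modulo $p$ (because $q\equiv 1\pmod p$), one has $H^1(\gal{F},\br(\fg))\cong\fg\oplus\fg$ via $\phi\mapsto(\phi(\sigma),\phi(\tau))$, and similarly for $\br(\fg)^*$.

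Next I would upgrade Lemma~\ref{localduality} to the general bilinear formula
\[
\inv_F(\phi\cup\psi)\;=\;\langle\phi(\tau),\psi(\sigma)\rangle-\langle\phi(\sigma),\psi(\tau)\rangle
\]
valid on all of $H^1(\gal{F},\br(\fg))\times H^1(\gal{F},\br(\fg)^*)$. Split each class as $\phi=\phi_{\mr{ur}}+\phi_{\mr{ram}}$ and $\psi=\psi_{\mr{ur}}+\psi_{\mr{ram}}$, where the unramified summand kills $\tau$ and the purely ramified summand kills $\sigma$. The two mixed cross-terms are then handled directly by the two cases of Lemma~\ref{localduality}, leaving the diagonal term $\inv_F(\phi_{\mr{ram}}\cup\psi_{\mr{ram}})$; vanishing of this "both purely ramified" term is the main obstacle. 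I would verify it by reducing (by the argument in the proof of Lemma~\ref{localduality}) to the case of $\Fp$-coefficients and computing via the Kummer-theoretic identity $\inv_F(\phi\cup\delta(a))=\phi(\rec_F(a))$. Concretely, choose the lift $\sigma$ of Frobenius so that $\sigma(\varpi^{1/p})=\varpi^{1/p}$ (the asserted formula is invariant under the replacement $\sigma\mapsto\sigma\tau^a$, so this choice is harmless); then $\delta(\varpi)$ is purely ramified with $\delta(\varpi)(\tau)=\zeta$, and $\rec_F(\varpi)=\sigma^{-1}$ gives $\inv_F(\phi\cup\delta(\varpi))=-\phi(\sigma)$, which vanishes on every purely ramified $\phi$. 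Bilinearity extends the vanishing to all purely ramified pairs in the coefficient-tensored setting.

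Finally, I would test membership in $(L^\alpha_{\br})^\perp$ against the two independent coordinates of $L^\alpha_{\br}$ identified in the first step. Requiring the general formula to vanish for $\phi$ with $\phi(\tau)=0$ and $\phi(\sigma)$ ranging over $\ker(\alpha|_{\mf{t}})\oplus\bigoplus_{\beta\in\Phi(G,T)}\fg_\beta$ is equivalent to
\[
\langle\psi(\tau),\ker(\alpha|_{\mf{t}})\oplus\bigoplus_{\beta\in\Phi(G,T)}\fg_\beta\rangle=0,
\]
while requiring it to vanish for $\phi$ with $\phi(\sigma)=0$ and $\phi(\tau)\in\fg_\alpha$ is equivalent to $\langle\psi(\sigma),\fg_\alpha\rangle=0$, which are exactly the conditions in the statement. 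A dimension check ($\dim L^\alpha_{\br}=\dim\fg$ by Lemma~\ref{extracocycles}, so $\dim(L^\alpha_{\br})^\perp=\dim\fg$ by perfectness of local duality, while the two orthogonality conditions cut out a subspace of $H^1(\gal{F},\br(\fg)^*)$ of dimension $1+(\dim\fg-1)=\dim\fg$) then shows these necessary conditions are also sufficient.
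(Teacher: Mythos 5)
Your argument is correct and follows essentially the same route as the paper: the paper's own proof also rests on the explicit description of the pairing in Lemma \ref{localduality} (giving one containment) together with the dimension count $\dim (L^{\alpha}_{\br})^{\perp} = \dim \mf{g}$. The only difference is one of completeness: you spell out the bilinear extension of Lemma \ref{localduality}, in particular the vanishing of $\inv_F(\phi_{\mr{ram}} \cup \psi_{\mr{ram}})$ via the reciprocity computation with $\delta(\varpi)$ (equivalently, the triviality of the symbol $(\varpi,\varpi)$ for odd $p$), a point the paper's one-line proof leaves implicit.
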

\begin{proof}
 That such $\psi$ are contained in $(L^{\alpha}_{\br})^\perp$ is clear from Lemma \ref{localduality}. The result follows since $\dim (L^{\alpha}_{\br})^\perp= \dim \fg$.
\end{proof}

Next we give an analogue of \cite[Proposition 28, Corollary 29]{ramakrishna-hamblen}.
\begin{defn}\label{ramtrivlifts}
For any $m \geq 2$, let $\Lift^{\alpha}_{\br,2, \mr{ram}}(\mc{O}/p^m) \subset \Lift^{\alpha}_{\br}(\mc{O}/p^m)$ be the subset of lifts $\rho$ such that $\rho_2$ satisfies
\begin{itemize}
 \item $\rho_2(\tau)= u_{\alpha}(py)$ where $y \in \mc{O}^\times$, and where as before $u_{\alpha} \colon \mf{g}_{\alpha} \to G$ denotes the root group homomorphism over $\mc{O}$.
 \item $\rho_2(\sigma) \in T(\mc{O}/p^2)$, and for all $\beta \in \Phi^{\alpha}$, $\beta(\rho_2(\sigma)) \neq 1 \pmod {p^2}$.
\end{itemize}
\end{defn}
Again, $\Lift^{\alpha}_{\br, 2, \mr{ram}}(\mc{O}/p^m)$ is $\wh{G}(\mc{O}/p^m)$-stable.
\begin{lemma}\label{ramextracocycles}
 For $m \geq 3$, let $\rho$ be a lift in $\Lift^{\alpha}_{\br,2, \mr{ram}}(\mc{O}/p^m)$. Then there exists a subspace of cocycles $\mc{S}^{\alpha}_{\br} \subset H^1(\gal{F}, \br(\mf{g}))$ such that
\begin{itemize} 
 \item for all $c \in \mc{S}^{\alpha}_{\br}$, $\exp(c \otimes_k p^{m-1}) \rho$ is $\wh{G}(\mc{O}/p^m)$-conjugate to $\rho$; and
 \item $\mc{S}^{\alpha}_{\br}$ has a basis of cocycles $\{c_{\beta}\}_{\beta \in \Phi^{\alpha}}$ such that $c_{\beta}(\sigma)$ spans $\mf{g}_{\beta}$, and $c_{\beta}(\tau)$ is determined by $\rho_2$, $\alpha$, $\beta$, and $c_{\beta}(\sigma)$.
\end{itemize} 
In particular, $L^{\alpha}_{\br}= \Tan^{\alpha}_{\br}+ \mc{S}^{\alpha}_{\br}$ has dimension $\dim (\mf{g})$ and preserves $\Lift^{\alpha}_{\br, 2, \mr{ram}}(\mc{O}/p^m)$ for any $m \geq 3$.
\end{lemma}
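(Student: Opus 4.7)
The plan is to adapt the proof of Lemma \ref{extracocycles} to the ramified setting, where now $\rho(\tau) = u_\alpha(py)$ with $y \in \mc{O}^\times$. The key change is that conjugation of $\rho$ by an element of the form $g = u_\beta(z p^{m-2}) \in \widehat{G}(\mc{O}/p^m)$ will now alter $\rho(\tau)$ as well, and this must be accounted for in the resulting cocycle.

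Fix $\beta \in \Phi^\alpha$, and for $z \in \mc{O}$ set $g = u_\beta(z p^{m-2})$; note $g \in \widehat{G}(\mc{O}/p^m)$ since $m \geq 3$. I would compute $g\rho(\sigma) g^{-1}$ and $g\rho(\tau) g^{-1}$ using the matrix identity (\ref{matrix}) of the previous lemma. For $\sigma$, the computation is identical to that in Lemma \ref{extracocycles}: replacing $\rho(\sigma)$ by a lift of $\rho_2(\sigma)$ to $T(\mc{O})$, one obtains
\[
g \rho(\sigma) g^{-1} = u_\beta\!\left( p^{m-1} z \tfrac{1 - \beta(\rho_2(\sigma))}{p}\right) \rho(\sigma),
\]
with $(1-\beta(\rho_2(\sigma)))/p \in k^\times$ by Definition \ref{ramtrivlifts}. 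For $\tau$, lift $\rho(\tau) = u_\alpha(py)$ to $U_\alpha(\mc{O})$, so that in the notation of (\ref{matrix}) we have $A \equiv y u_\alpha \pmod p$ and $X = z u_\beta$; higher-order terms in the expansion of $u_\alpha(py) = \exp(py u_\alpha)$ contribute only to $B$, which drops out of the right-hand side of (\ref{matrix}). This yields
\[
g \rho(\tau) g^{-1} = \bigl(1 + p^{m-1} z y [u_\beta, u_\alpha]\bigr) \rho(\tau) \pmod{p^m},
\]
and $[u_\beta, u_\alpha]$ is a nonzero element of $\mf{g}_{\alpha+\beta}$ since $\beta \in \Phi^\alpha$.

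Let $c_\beta$ denote the cocycle determined by choosing $z = z_\beta$ such that $z_\beta \cdot (1-\beta(\rho_2(\sigma)))/p = 1$, so that $c_\beta(\sigma) = u_\beta$ spans $\mf{g}_\beta$, while $c_\beta(\tau) = z_\beta y [u_\beta, u_\alpha] \in \mf{g}_{\alpha+\beta}$ is visibly determined by $\rho_2$, $\alpha$, $\beta$, and $c_\beta(\sigma)$. Since $c_\beta$ arises from honest conjugation by $g \in \widehat{G}(\mc{O}/p^m)$, the cocycle identity is automatic (equivalently, the tame relation $(q-1)c_\beta(\tau) = 0$ holds mod $p$ since $q \equiv 1 \pmod p$), and by construction $\exp(c_\beta \otimes p^{m-1}) \rho$ is $\widehat{G}(\mc{O}/p^m)$-conjugate to $\rho$.

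I would then set $\mc{S}^\alpha_{\br}$ to be the $k$-span of $\{c_\beta\}_{\beta \in \Phi^\alpha}$: its dimension is $|\Phi^\alpha|$, since the $\sigma$-values $u_\beta$ lie in distinct root spaces, and its intersection with $\Tan^\alpha_{\br}$ is trivial, because cocycles in $\Tan^\alpha_{\br}$ have $\sigma$-value in $\mr{Cent}_\fg(\mf{g}_\alpha) = \ker(\alpha|_{\mf{t}}) \oplus \bigoplus_{\beta \notin \Phi^\alpha} \mf{g}_\beta$, complementary to $\bigoplus_{\beta \in \Phi^\alpha} \mf{g}_\beta$. Since $\dim \Tan^\alpha_\br = \dim \mr{Cent}_\fg(\mf{g}_\alpha) + \dim \mf{g}_\alpha = (\dim \fg - 1 - |\Phi^\alpha|) + 1$, we conclude $\dim L^\alpha_\br = \dim \fg$. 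Preservation of $\Lift^\alpha_{\br,2,\mr{ram}}(\mc{O}/p^m)$ is then immediate: cocycles in $L^\alpha_\br$ act via terms of order $p^{m-1} \geq p^2$, so they leave $\rho_2$ unchanged, and the conditions of Definition \ref{ramtrivlifts} depend only on $\rho_2$. The main obstacle will be the careful bookkeeping on the $\tau$-side, namely verifying that the quadratic and higher terms in $\exp(p y u_\alpha)$ safely absorb into the $B$-term of (\ref{matrix}) and do not interfere with the commutator $[u_\beta, u_\alpha]$ modulo $p^m$.
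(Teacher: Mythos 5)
Your proof is correct and matches the paper's argument essentially verbatim: both conjugate $\rho$ by $g=u_\beta\bigl(p^{m-2}\tfrac{p}{1-\beta(\rho_2(\sigma))}\bigr)$, read off the $\sigma$-component from the same computation as in Lemma~\ref{extracocycles}, and read off the $\tau$-component via Equation~(\ref{matrix}) applied to $\rho(\tau)=u_\alpha(py+O(p^2))$, giving $c_\beta(\tau)=\tfrac{py}{1-\beta(\rho_2(\sigma))}[X_\beta,X_\alpha]$. The concern you flag at the end about higher-order terms in $\exp(pyX_\alpha)$ is indeed harmlessly absorbed, exactly because the right-hand side of (\ref{matrix}) depends only on $A\bmod p$ and $X\bmod p$; your added remarks on the cocycle identity and the dimension count fill in steps the paper leaves implicit but introduce no new ideas. (Minor slip: when $\beta=-\alpha$ the bracket $[X_\beta,X_\alpha]$ lands in $\mf{t}$ rather than a root space $\mf{g}_{\alpha+\beta}$, but nothing in the argument depends on this.)
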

\proof
Fix $\beta \in \Phi^{\alpha}$, and as in Lemma \ref{extracocycles} fix an embedding $G \into \mr{GL}_N$ that will remain implicit in the calculations. Consider basis vectors $X_{\beta}= d(u_{\beta})(1)$ of $\mf{g}_{\beta}(\mc{O})$ and $X_{\alpha}= d(u_{\alpha})(1)$ of $\fg_{\alpha}(\mc{O})$. Set $g= u_{\beta}\left(p^{m-2}\frac{p}{1-\beta(\rho_2(\sigma))}\right)$, so that just as in Lemma \ref{extracocycles} we find
\[
 g \rho(\sigma) g^{-1}=u_{\beta}(p^{m-1}) \rho(\sigma).
\]
We now perform the analogous calculation of $g \rho(\tau) g^{-1}$, again using the basic matrix Equation (\ref{matrix}) from Lemma \ref{extracocycles}. Equation (\ref{matrix}) implies that
\begin{align*}
 g \rho(\tau) g^{-1}&= \left(1+p^{m-1}\left[\frac{p}{1-\beta(\rho_2(\sigma))}X_{\beta}, y X_{\alpha}\right]\right)\rho(\tau)\\
&=\left(1+p^{m-1}\frac{py}{1-\beta(\rho_2(\sigma))}[X_\beta, X_{\alpha}]\right)\rho(\tau).
\end{align*}
Thus if we define $c_{\beta}$ by $c_{\beta}(\sigma)= X_{\beta}$ and $c_{\beta}(\tau)= \frac{py}{1-\beta(\rho_2(\sigma))}[X_\beta, X_\alpha]$, then we find that $(1+p^{m-1}c_{\beta})\rho$ is $\wh{G}(\mc{O}/p^m)$-conjugate to $\rho$.

\endproof
Finally, in the application we will use the fixed-multiplier variant of the above constructions. Fix a character $\nu \colon \gal{F} \to A(\mc{O})$ lifting $\bar{\nu}= \mu \circ \br= 1$. Let $\Lift^{\nu, \alpha}_{\br} \subset \Lift^\alpha_{\br}$ be the subfunctor of fixed multiplier lifts.
\begin{lemma}\label{trivfixeddet}
Under the hypotheses of Lemma \ref{trivsmooth}, $\Lift^{\nu, \alpha}_{\br}$ is formally smooth with tangent space $\Tan^{\alpha}_{\br} \cap H^1(\gal{F}, \br(\fg_{\mu}))$. Under the hypotheses of Lemmas \ref{extracocycles} (in which case we assume $\nu$ is unramified) and \ref{ramextracocycles}, and in the notation of those lemmas we obtain spaces $L^{\nu, \alpha}_{\br}$ of cocycles preserving $\Lift^{\nu, \alpha}_{\br, 2}(\mc{O}/p^m)$ (for $m \geq 3$) such that $L^{\nu, \alpha}_{\br}= L_{\br}^{\alpha} \cap H^1(\gal{F}, \br(\fgm))$ has dimension $\dim \fgm$.
\end{lemma}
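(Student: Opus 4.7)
The plan is to reduce the multiplier-fixed statements to Lemmas \ref{trivsmooth}, \ref{extracocycles}, and \ref{ramextracocycles} by exploiting the $G$-equivariant decomposition $\fg = \fgm \oplus \mf{a}$. Here $\mf{a}$ is a trivial $G$-module: $G$ acts on $\mf{a}$ through the abelian quotient $A$, which acts trivially on its own Lie algebra, so $\mf{a}$ lies in the center of $\fg$ and the exponential identifies $\mf{a} \otimes_k I$ with a central subgroup of $G^0(R)$ for every small extension $R \to R/I$. The one thing to verify in earnest is that in both Lemma \ref{extracocycles} and Lemma \ref{ramextracocycles} the cocycles in $L^\alpha_{\br}$ take their values on inertia in $\fgder \subseteq \fgm$; everything else will be formal.

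For formal smoothness of $\Lift^{\nu, \alpha}_{\br}$, I would begin with any lift $\tilde\rho \in \Lift^\alpha_{\br}(R)$ furnished by Lemma \ref{trivsmooth}. The ratio $\nu \cdot (\mu \circ \tilde\rho)^{-1} \colon \gal{F} \to A(R)$ is trivial modulo $I$, so it factors through $\ker(A(R) \to A(R/I))$, which via the Lie-algebra isomorphism $\mf{a} \xrightarrow{d\mu} \Lie A$ is identified with the central subgroup $1 + \mf{a} \otimes_k I \subseteq Z_G^0(R)$. Multiplying $\tilde\rho$ by the inverse of the resulting central homomorphism $\gal{F} \to Z_G^0(R)$ then corrects the multiplier to $\nu$ without changing either the reduction modulo $I$ or the $\alpha$-conditions on $\sigma$ and $\tau$, since the correction is central and lifts the identity. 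Linearizing this argument yields the tangent space description $\Tan^\alpha_{\br} \cap H^1(\gal{F}, \br(\fgm))$ (and for the trivial module $\br(\fg)$ the $Z^1$-description of $\Tan^\alpha_{\br}$ following Lemma \ref{trivsmooth} coincides with its image in $H^1$).

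For the dimension of $L^{\nu, \alpha}_{\br}$ in the settings of Lemmas \ref{extracocycles} and \ref{ramextracocycles}, the key observation is that every $\phi \in L^\alpha_{\br}$ satisfies $\phi(\tau) \in \fgder$: for $\phi \in \Tan^\alpha_{\br}$ one has $\phi(\tau) \in \fg_\alpha$, and the generators $c_\beta$ of $\mc{S}^\alpha_{\br}$ have $c_\beta(\tau)$ either zero (Lemma \ref{extracocycles}) or contained in $\fg_{\alpha + \beta}$ (Lemma \ref{ramextracocycles}). Consequently $d\mu(\phi)$ is always unramified, so the image $d\mu(L^\alpha_{\br})$ is contained in the $\dim \mf{a}$-dimensional space of unramified homomorphisms $\gal{F} \to \mf{a}$. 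Conversely, any such unramified $\phi$ with $\phi(\sigma) \in \mf{a}$ already lies in $\Tan^\alpha_{\br}$, since $\mf{a} \subseteq \mr{Cent}_\fg(\fg_\alpha) \subseteq \ker(\alpha|_\mf{t}) + \mr{Cent}_\fg(\fg_\alpha)$ and $\phi(\tau) = 0 \in \fg_\alpha$; this realizes the whole unramified subspace inside $d\mu(L^\alpha_{\br})$. The exact sequence
\[
0 \to L^{\nu, \alpha}_{\br} \to L^\alpha_{\br} \xrightarrow{d\mu} d\mu(L^\alpha_{\br}) \to 0
\]
therefore gives $\dim L^{\nu, \alpha}_{\br} = \dim \fg - \dim \mf{a} = \dim \fgm$. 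Preservation of $\Lift^{\nu, \alpha}_{\br, 2}(\mc{O}/p^m)$ is then automatic: the quoted lemmas give preservation of $\Lift^\alpha_{\br, 2}(\mc{O}/p^m)$, and for $c$ valued in $\fgm$ the element $\exp(c \otimes p^{m-1})$ lies in the kernel of the multiplier map, so conjugation by it does not change $\mu \circ \rho$. No step looks like a serious obstacle; the containment of inertial values in $\fgder$ is the only input specific to the present setting.
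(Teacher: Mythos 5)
Your proof is correct and takes essentially the same route as the paper: lift via Lemma \ref{trivsmooth} and correct the multiplier by a central homomorphism lifting $\nu\cdot(\mu\circ\tilde\rho)^{-1}$ through the splitting $\fg=\fgm\oplus\mf{a}$. The paper only spells out the formal smoothness step and declares the rest "similarly straightforward"; your dimension count via the observation that $d\mu(L^\alpha_{\br})$ is exactly the unramified subspace of $\Hom(\gal{F},\mf{a})$ (because all cocycles in $L^\alpha_{\br}$ take inertia into $\fgder$) is a correct way to fill that in, and the preservation claim is right (though note the operation is a cocycle twist, not conjugation — for $c$ valued in $\fgm$ one has $\mu(\exp(c\otimes p^{m-1})\rho)=\mu\circ\rho$ directly).
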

\proof
Let $R \to R/I$ be a small extension, and let $\rho \in \Lift^{\nu, \alpha}_{\br}(R/I)$. By Lemma \ref{trivsmooth}, $\rho$ lifts to an element $\tilde{\rho} \in \Lift^{\alpha}_{\br}(R)$. The character $\nu \cdot (\mu \circ \tilde{\rho})^{-1}$ lands in $\ker(A(R) \to A(R/I))= \exp(\mf{a} \otimes_k I)$. Since $\mf{g} = \fgm \oplus \mf{g}^G \cong \fgm \oplus \mf{a}$ is an isomorphism, $\nu \cdot (\mu \circ \tilde{\rho})^{-1}$ canonically lifts to a character $\psi \colon \gal{F} \to Z_{G}^0(R)$ (whose reduction mod $I$ is trivial), and replacing $\tilde{\rho}$ by $\psi \cdot \tilde{\rho}$ we obtain a lift of $\rho$ to $\Lift^{\nu, \alpha}_{\br}(R)$. The rest of the lemma is similarly straightforward.
\endproof
\subsection{The case $\ell = p$}\label{ordsection}
In this subsection as well $\br$ will be valued in $G^0(k)$, so that all lifts automatically factor through $G^0$. To simplify the notation we will allow ourselves to refer abusively to a ``Borel subgroup of $G$" or a ``maximal torus of $G$" as the corresponding objects for $G^0$. Our goal is to describe two kinds of ordinary local conditions at primes $v \vert p$. The first will be essentially as in \cite[\S 4.1]{stp:exceptional}, while the second will be a variant modeled on \S \ref{trivialsection} that will also allow us to include the case $\br|_{\gal{F_v}}=1$. To handle both cases, we first introduce some notation. Fix a finite extension $F/\Q_p$. Fix a Borel subgroup $B \subset G$ with unipotent radical $N$, and let $T_G$ be the ``canonical maximal torus'' $B/N$. We write $\Phi^+$ and $\Phi^-$ for the positive and negative roots of $(G, T)$ with respect to $B$. For any residual representation $\br \colon \gal{F} \to G(k)$ with image contained in $B(k)$, let $\br_{T_G} \colon \gal{F} \to T_G(k)$ be its push-forward to $T_G(k)$, and let $\overline{\chi}_{T_G}$ be the inertial restriction $\br_{T_G}|_{I_F}$. Fix a lift $\chi_{T_G} \colon I_F \to T_G(\mc{O})$ that extends to $\gal{F}$ (we do not choose an extension), and likewise write $\chi_{T_G}$ for its image in $T_G(R)$ for any $\mc{O}$-algebra $R$.
\begin{defn}\label{ordinary}
 Let $\Lift_{\br}^{\chi_{T_G}} \colon \mc{C}^f_\mc{O} \to \Sets$ be the subfunctor of $\Lift_{\br}$ such that $\Lift_{\br}^{\chi_{T_G}}(R)$ consists of all lifts $\rho \colon \gal{F} \to G(R)$ of $\br$ such that
\begin{enumerate}
 \item there exists a $g \in \wh{G}(R)$ such that ${}^g \rho(\gal{F}) \subset B(R)$; and
 \item the composite $I_F \xrightarrow{{}^g \rho} B(R) \to T_G(R)$ is equal to $\chi_{T_G}$.
\end{enumerate}
If we moreover fix a similitude character $\nu \colon \gal{F} \to A(\mc{O})$ and require that $\chi_{T_G}$ pushes forward to $\nu$ under the natural map $T_G \to A$ (factoring the restriction of $\mu$ to $B$), then we can similarly define the subfunctor of lifts with fixed multiplier character $\Lift^{\nu, \chi_{T_G}}_{\br}$.
\end{defn}
Recall from \cite[\S 2]{tilouine:defs} and \cite[\S 4.1]{stp:exceptional} the following two conditions on $\br$:
\begin{align*}
 &\text{(REG) $H^0(\gal{F}, \br(\fg/\fb))=0.$}\\ 
 &\text{(REG*) $H^0(\gal{F}, \br(\fg/\fb)(1))=0.$}
\end{align*}
We will either consider $\br$ that satisfy both (REG) and (REG*) or the trivial representation $\br$. In the first case we recall:
\begin{lemma}\label{regord}
 Assume $\br$ satisfies (REG) and (REG*), and that $F$ does not contain $\zeta_p$. Then $\Lift_{\br}^{\chi_{T_G}}$ is formally smooth and pro-represented by a power series ring over $\mc{O}$ in $\dim_k(\fg)+\dim_k(\mf{n})[F:\Q_p]$ variables. Moreover, if $\chi_{T_G}$ is chosen so that for all simple roots $\alpha$, $\alpha \circ \chi_{T_G}= \kappa^{r_{\alpha}}$ for a \emph{positive} integer $r_{\alpha}$, then any element of $\Lift^{\chi_{T_G}}_{\br}$ is de Rham. Similarly, $\Lift^{\nu, \chi_{T_G}}_{\br}$ is formally smooth with tangent space $L^{\nu, \chi_{T_G}}_{\br}$ of dimension $h^0(\gal{F}, \br(\fgm))+\dim_k \mf{n}[F:\Q_p]$.
\end{lemma}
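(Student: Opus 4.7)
The argument follows the template of Tilouine \cite{tilouine:defs} and \cite[\S 4.1]{stp:exceptional}; the plan is to reduce to lifting through a fixed Borel, prove obstruction vanishing using (REG*) and local Tate duality, then perform a tangent space computation. The main obstacle I expect is the de Rham assertion, which requires genuine $p$-adic Hodge-theoretic input beyond the pure Galois cohomology of the earlier steps.

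First I would introduce the auxiliary functor $\mc{L}$ on $\mc{C}^f_\mc{O}$ parametrizing pairs $(\rho, B_R)$, where $B_R \subset G^0_R$ is a Borel lifting $B$ and $\rho \colon \gal{F} \to B_R(R)$ has inertial push-forward to $T_G$ equal to $\chi_{T_G}$. The forgetful map $\mc{L} \to \Lift_{\br}^{\chi_{T_G}}$ is formally smooth of relative dimension $\dim(\fg/\fb)$, because (REG) guarantees that infinitesimally the set of Borels of $G^0_R$ containing $\rho(\gal{F})$ is a torsor under the formal group with Lie algebra $\fg/\fb$. It thus suffices to prove that $\mc{L}$ is formally smooth with tangent space of dimension $\dim_k\fb + \dim_k(\fn)[F\colon\Q_p]$.

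For this, after rigidifying $B_R = B \otimes_{\mc{O}} R$ (allowed inside a smooth subcover), one reduces to lifting $\br \colon \gal{F} \to B(k)$ to $B(R)$ with the prescribed inertial torus character. Decomposing through the descending root-height filtration of $N$, the total obstruction assembles into a class in $H^2(\gal{F}, \br(\fn))$. By local Tate duality this is dual to $H^0(\gal{F}, \br(\fn)^\vee(1))$; since $p$ is very good the Killing form on $\fg$ is non-degenerate, one computes $\fb^\perp = \fn$, and hence $\fn^\vee \cong \fg/\fb$ as $B$-modules, so the dual group identifies with $H^0(\gal{F}, \br(\fg/\fb)(1))$, which vanishes by (REG*). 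The torus component, with inertial character $\chi_{T_G}$ fixed in advance, is unobstructed because $H^2(\gal{F}/I_F, \mf{t}) = 0$ (cohomological dimension one). This gives formal smoothness, and the tangent space of $\mc{L}$ consists of the 1-cocycles $Z^1(\gal{F}, \br(\fb))$ whose composite with $\fb \to \mf{t}$ is unramified. A local Euler characteristic count through the graded pieces $\fg_\alpha$, exploiting $H^2(\gal{F}, \br(\fn)) = 0$ at the top, together with the contribution $\dim_k\mf{t}$ from unramified $\mf{t}$-valued homomorphisms, gives the asserted tangent space dimension $\dim_k\fb + \dim_k(\fn)[F\colon\Q_p]$. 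Adding the $\dim(\fg/\fb)$-dimensional fiber yields $\dim_k\fg + \dim_k(\fn)[F\colon\Q_p]$, and formal smoothness plus finite-dimensional tangent space give pro-representability by a power series ring in this many variables.

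Finally, when $\chi_{T_G}$ satisfies $\alpha \circ \chi_{T_G} = \kappa^{r_\alpha}$ with $r_\alpha > 0$ on simple roots, the graded pieces of any lift are crystalline characters of strictly positive Hodge--Tate weights ordered by root-height; the standard argument of \cite[\S 2]{tilouine:defs} and \cite[\S 4.1]{stp:exceptional}, in which $\zeta_p \notin F$ kills trivial cyclotomic obstructions and ensures suitably non-split extensions remain de Rham, then shows $\rho$ is de Rham. For $\Lift^{\nu, \chi_{T_G}}_{\br}$ I would rerun the above with $\fgm$ replacing $\fg$; the decomposition $\fg = \fgm \oplus \mf{a}$ from Assumption \ref{minimalp} shows that the multiplier condition is formally smooth and cuts the unrestricted $\Def$ tangent space $h^0(\gal{F}, \br(\fg)) + \dim_k\fn[F\colon\Q_p]$ down by $\dim_k\mf{a} = h^0(\gal{F}, \br(\mf{a}))$, yielding $L^{\nu, \chi_{T_G}}_{\br}$ of dimension $h^0(\gal{F}, \br(\fgm)) + \dim_k\fn[F\colon\Q_p]$ as claimed.
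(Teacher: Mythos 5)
Your overall template is the one the paper itself relies on: the paper's proof of this lemma is just a citation to \cite[Lemma 4.2, Proposition 4.4, Corollary 4.5, Lemma 4.8]{stp:exceptional}, and your reduction to $B$-valued lifts with prescribed inertial $T_G$-pushforward, the obstruction-killing via $H^2(\gal{F}, \br(\fn))^\vee \cong H^0(\gal{F}, \br(\fg/\fb)(1)) = 0$ using the trace form and (REG*), and the fixed-multiplier variant via $\fg = \fgm \oplus \mf{a}$ all match that argument. However, there is a genuine error at the step connecting the Borel-rigidified problem to $\Lift^{\chi_{T_G}}_{\br}$: you assert that (REG) makes the set of Borels containing $\rho(\gal{F})$ infinitesimally a torsor under $\fg/\fb$, so that $\mc{L} \to \Lift^{\chi_{T_G}}_{\br}$ is formally smooth of relative dimension $\dim(\fg/\fb)$. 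This is exactly backwards. (REG) says $H^0(\gal{F}, \br(\fg/\fb)) = 0$, which forces the Borel containing the image of a lift to be \emph{unique}: the fibres of the forgetful map are singletons and $\mc{L} \to \Lift^{\chi_{T_G}}_{\br}$ is an isomorphism. Your bookkeeping is also internally inconsistent: if the forgetful map really had $\dim(\fg/\fb)$-dimensional fibres, the tangent space of the target would be \emph{smaller} than that of $\mc{L}$ by $\dim(\fg/\fb)$, i.e.\ $\dim_k\fb + \dim_k(\fn)[F:\Q_p] - \dim(\fg/\fb)$, not larger; "adding the fibre" at the end lands on the right number only by accident.

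The correct accounting is this: the quantity $\dim_k\fb + \dim_k(\fn)[F:\Q_p]$ you compute (correctly, using $H^2(\gal{F}, \br(\fn))=0$) is the tangent space of the \emph{rigidified} functor of $B$-valued lifts with inertial pushforward $\chi_{T_G}$, not of $\mc{L}$. The tangent space of $\mc{L} \cong \Lift^{\chi_{T_G}}_{\br}$ also contains the directions in which the Borel moves, equivalently the $\wh{G}(k[\epsilon])$-conjugation directions of $B$-valued lifts, and these contribute exactly $\bigl(\dim_k\fg - h^0(\gal{F}, \br(\fg))\bigr) - \bigl(\dim_k\fb - h^0(\gal{F}, \br(\fb))\bigr) = \dim(\fg/\fb)$ new dimensions precisely because (REG) forces $h^0(\gal{F}, \br(\fb)) = h^0(\gal{F}, \br(\fg))$ and the uniqueness of the Borel rules out overcounting; this rigidity is also what makes the condition well-defined and representable. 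With that correction your dimension count and smoothness argument go through. A minor further point: the de Rham assertion is not where $\zeta_p \notin F$ enters; it follows from the standard ``ordinary implies de Rham'' argument for $B(\mc{O})$-valued representations whose inertial torus character pairs with every simple root to a positive power of $\kappa$, while $\zeta_p \notin F$ is used in the smoothness and dimension analysis (it controls the cohomology of the trivial modules $\br(\fb/\fn)$ and the centre).
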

\begin{proof}
 This is proven in \cite[Lemma 4.2, Proposition 4.4, Corollary 4.5, Lemma 4.8]{stp:exceptional}.
\end{proof}

Next we consider the case in which $\br$ is the trivial representation; in particular, the condition (REG) is not satisfied. Consider an ordinary lifting condition $\Lift_{\br}^{\chi_{T_G}}$ as in Definition \ref{ordinary}. We do not claim the functor $\Lift_{\br}^{\chi_{T_G}}$ is pro-representable, but it is still formally smooth:
\begin{lemma}\label{ptrivsmooth}
 Assume that $\br$ is trivial, and that $F$ does not contain $\zeta_p$. Then $\Lift_{\br}^{\chi_{T_G}}$ and $\Lift^{\nu, \chi_{T_G}}_{\br}$ are formally smooth.
\end{lemma}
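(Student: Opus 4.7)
The plan is to decouple the $T_G$-pushforward from the unipotent part, exploiting the triviality of $\br$ (which trivializes all adjoint actions on the mod-$I$ kernels) and the hypothesis $\zeta_p \notin F$ (which kills the relevant $H^2$).

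Let $R \to R/I$ be a small extension in $\mc{C}^f_\mc{O}$ and let $\rho \in \Lift_{\br}^{\chi_{T_G}}(R/I)$. First I would reduce to the case where $\rho$ itself lands in $B(R/I)$: pick $g \in \wh{G}(R/I)$ conjugating $\rho$ into $B$, lift $g$ to some $\tilde g \in \wh{G}(R)$ via the formal smoothness of $\wh{G}$, and note that a lift of ${}^g\rho$ inside the functor conjugates back to one of $\rho$. Once $\rho$ is $B(R/I)$-valued, I would lift it to some $\tilde\rho_0 \colon \gal{F} \to B(R)$, ignoring the inertial constraint. The obstruction lies in $H^2(\gal{F}, \br(\fb) \otimes_k I)$; triviality of $\br$ makes this $(\fb \otimes_k I) \otimes_k H^2(\gal{F}, k)$, and local duality identifies $H^2(\gal{F}, k)$ with the $k$-dual of $H^0(\gal{F}, k(\bar\kappa))$, which vanishes precisely because $\bar\kappa|_{\gal{F}}$ is nontrivial when $\zeta_p \notin F$.

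Next I would correct the inertial pushforward. Writing $\tilde\chi_0$ for the pushforward of $\tilde\rho_0$ to $T_G(R)$, the character $\eta := \chi_{T_G} \cdot (\tilde\chi_0|_{I_F})^{-1}$ lands in $\ker(T_G(R) \to T_G(R/I)) = \Lie(T_G) \otimes_k I$, hence is a continuous homomorphism from $I_F^{\mr{ab}}$ to an $\Fp$-vector space with trivial coefficient action. By local class field theory, $\gal{F}^{\mr{ab}} \cong \wh{F^\times}$, and, after choosing a uniformizer, $\mc{O}_F^\times$ appears as a direct summand corresponding to $I_F^{\mr{ab}}$, so $\eta$ extends (with no Frobenius-equivariance constraint) to a character $\tilde\eta \colon \gal{F} \to \Lie(T_G) \otimes_k I$. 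Choose any maximal torus $T \subset B$: the splitting $\fb = \Lie(T) \oplus \fn$, together with the identification $\Lie(T) \cong \Lie(T_G)$, lifts $\tilde\eta$ to a homomorphism $\psi \colon \gal{F} \to \fb \otimes_k I$. Because the $\gal{F}$-action on $\fb \otimes_k I$ induced by $\mr{Ad} \circ \tilde\rho_0$ factors through $\br$ and is therefore trivial, the pointwise product $\tilde\rho := \exp(\psi) \cdot \tilde\rho_0$ is a homomorphism into $B(R)$, reduces to $\rho$ modulo $I$, and has $T_G$-pushforward $\tilde\eta \cdot \tilde\chi_0$ whose inertial restriction equals $\chi_{T_G}$. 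This produces the desired element of $\Lift_{\br}^{\chi_{T_G}}(R)$. For the fixed-multiplier variant, after constructing $\tilde\rho$ as above, $\mu \circ \tilde\rho|_{I_F} = \nu|_{I_F}$ holds automatically, since $\chi_{T_G}$ was assumed to push forward to $\nu|_{I_F}$ under $T_G \to A$; then $\xi := \nu \cdot (\mu \circ \tilde\rho)^{-1} \colon \gal{F}/I_F \to \exp(\mf{a} \otimes_k I)$ is unramified, and the canonical decomposition $\fg = \fgm \oplus \mf{a}$ lifts it to a central unramified character $\gal{F} \to Z_G^0(R)$ (exactly as in the proof of Lemma \ref{trivfixeddet}), by which multiplying $\tilde\rho$ fixes the multiplier without disturbing the inertial pushforward.

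The main obstacle is conceptual rather than technical: because $\br$ is trivial, condition (REG) fails and Lemma \ref{regord} does not apply, and one should not expect pro-representability. The argument instead trades pro-representability for a direct two-step construction whose key ingredients are the triviality of all adjoint actions on $I$-torsion, the vanishing of $H^2(\gal{F}, k)$ coming from $\zeta_p \notin F$, and the LCFT extendability of characters of $I_F^{\mr{ab}}$---the one mildly non-obvious input, which supplies the $\tilde\eta$ needed to correct the inertial pushforward.
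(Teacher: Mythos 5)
Your overall strategy is sound and amounts to an unwound, self-contained version of the paper's one-line reduction to \cite[Proposition 4.4]{stp:exceptional}: you reduce to a $B(R/I)$-valued lift, kill the obstruction in $H^2(\gal{F}, \fb \otimes_k I)$ using triviality of $\br$ together with $H^2(\gal{F}, \Fp) \cong H^0(\gal{F}, \mu_p)^\vee = 0$ (this is where $\zeta_p \notin F$, i.e.\ (REG*), enters), and then correct the inertial pushforward by an abelian twist; the fixed-multiplier case via a central unramified twist as in Lemma \ref{trivfixeddet} is also fine. The paper instead quotes the two inputs of that argument directly, namely (REG*) and the surjectivity of $H^1(\gal{F}, \br(\fb)) \to H^1(I_F, \br(\fb/\fn))^{\gal{F}/I_F}$.

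However, the justification of the step you yourself single out as the key input is wrong as stated: it is not true that a continuous homomorphism $\eta \colon I_F^{\mr{ab}} \to \Lie(T_G)\otimes_k I$ extends to $\gal{F}$ ``with no Frobenius-equivariance constraint.'' Under local class field theory, $\mc{O}_F^\times$ is identified with the \emph{image} of $I_F$ in $\gal{F}^{\mr{ab}}$, not with $I_F^{\mr{ab}}$: since $\gal{F}/I_F$ is abelian one has $\overline{[\gal{F},\gal{F}]}=\overline{[\gal{F},I_F]}\subset I_F$, so that image is the quotient of $I_F^{\mr{ab}}$ by the closure of $(\Frob-1)I_F^{\mr{ab}}$, and only Frobenius-invariant characters of $I_F$ extend; indeed $\Hom(I_F,\Fp)$ is far larger than $\Hom(\mc{O}_F^\times,\Fp)$ (wild inertia over $F^{\mr{ur}}$ contributes infinitely many classes). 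This is precisely why the surjectivity statement in the paper's proof carries the invariants superscript $\gal{F}/I_F$. The gap is repairable, but only by invoking a hypothesis your write-up never uses: in \S\ref{ordsection} the character $\chi_{T_G}$ was fixed so that it \emph{extends to} $\gal{F}$, and your $\tilde\chi_0$ is by construction the restriction of a character of $\gal{F}$; hence $\eta=\chi_{T_G}\cdot(\tilde\chi_0|_{I_F})^{-1}$ kills $\overline{[\gal{F},I_F]}$, factors through the image of $I_F$ in $\gal{F}^{\mr{ab}}\cong \mc{O}_F^\times\times\wh{\Z}$, and then extends (e.g.\ trivially on the $\wh{\Z}$-factor) as you need. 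With that correction your argument is complete.
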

\proof
This follows from the argument of \cite[Proposition 4.4]{stp:exceptional}: to see this, note that (REG*) holds, and that the natural map $H^1(\gal{F}, \br(\fb)) \to H^1(I_F, \br(\fb/\mf{n}))^{\gal{F}/I_F}$ is surjective (in \cite{stp:exceptional}, the latter fact was deduced from a combination of conditions (REG) and (REG*), but here it is clear). 
\endproof
In general, the tangent space of $\Lift_{\br}^{\chi_{T_G}}$ is
\[
 \mr{Tan}_{\br}^{\chi_{T_G}}= \im \left( \ker(H^1(\gal{F}, \br(\mf{b})) \to H^1(I_F, \br(\mf{b}/\mf{n})) ) \to H^1(\gal{F}, \br(\mf{g}))\right),
\]
which for trivial $\br$ simplifies to
\[
 \mr{Tan}_{\br}^{\chi_{T_G}}= \Hom(\gal{F}, \mf{n}) \oplus \Hom(\gal{F}/I_F, \br(\mf{b}/\mf{n})).
\]
Thus $\mr{Tan}_{\br}^{\chi_{T_G}}$ has dimension $\dim \mf{n}+[F:\Q_p]\dim \mf{n}+ \dim \mf{b}/\mf{n}= \dim \mf{b}+ [F:\Q_p]\dim \mf{n}$. We need a space of cocycles $L^{\chi_{T_G}}_{\br}$ preserving a certain subset (lifts with favorable mod $p^2$ reduction)  of $\Lift^{\chi_{T_G}}_{\br}(\mc{O}/p^m)$ for $m \geq 3$ and having $\dim L^{\chi_{T_G}}_{\br}= 
h^0(\gal{F}, \br(\mf{g}))+[F:\Q_p]\dim \mf{n}= \dim \mf{g}+ [F:\Q_p] \dim \mf{n}$. Thus in this case we need $\dim \mf{n}$ independent extra cocycles. 
\begin{defn}\label{trivord}
Fix a maximal torus $T$ of $B$, i.e. a lift of $T_G$ to $B$. Via the canonical isomorphism $T \xrightarrow{\sim} T_G$, identify $\chi_{T_G}$ to a character $\chi_T \colon I_F \to T(\mc{O})$. Let $\Lift_{\br, 2}^{\chi_T}$ be the subset of $\Lift_{\br}^{\chi_{T_G}}$ of lifts whose mod $p^2$ reductions factor through $T$, and thus are identified with $\chi_T \pmod {p^2}$. Note that $\Lift_{\br, 2}^{\chi_T}$ is $\wh{G}$-stable: indeed, since $\br=1$, $\wh{G}$-conjugation preserves the mod $p^2$ reduction of any lift. 
\end{defn}
\begin{lemma}\label{ordextracocycles}
 Continue to assume $\br=1$. Choose $\chi_T$ such that $\beta \circ \chi_T$ is non-trivial modulo $p^2$ for all $\beta \in \Phi^-$. For $m \geq 3$, let $\rho$ be an element of $\Lift^{\chi_T}_{\br, 2}(\mc{O}/p^m)$, with reduction $\rho_2 \colon \gal{F} \to T(\mc{O}/p^2)$. Consider for each $\beta \in \Phi^-$, the cocycle 
\[
c_{\beta}(\sigma)= \frac{1- \beta(\chi_T(\sigma))}{p} X_\beta,
\]
with $X_\beta= (du_\beta)(1)$ a $\mc{O}$-basis of $\mf{g}_\beta$. Then for all $\beta \in \Phi^-$, $(1+p^{m-1}c_{\beta})\rho$ is $\wh{G}(\mc{O}/p^m)$-conjugate to, and thus is an element of, $\Lift^{\chi_T}_{\br, 2}(\mc{O}/p^m)$. In particular, the space
\[
 L^{\chi_T}_{\br}= \Tan^{\chi_T}_{\br}+ \sum_{\beta \in \Phi^-} k \cdot c_\beta
\]
has dimension $\dim \mf{g}+ [F:\Q_p] \dim \mf{n}$ and preserves the set $\Lift^{\chi_T}_{\br, 2}(\mc{O}/p^m)$. 

The corresponding statements hold for $\Lift^{\nu, \chi_T}_{\br, 2}$ and the $(\dim(\fgm)+[F:\Q_p]\dim(\mf{n}))$-dimensional space of stabilizing cocycles $L^{\nu, \chi_T}_{\br}$. 
\end{lemma}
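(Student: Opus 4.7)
The plan is to follow the template of Lemma \ref{extracocycles} closely, using matrix identity (\ref{matrix}) to compute the effect of conjugating $\rho$ by $g = u_\beta(p^{m-2}) \in \wh{G}(\mc{O}/p^m)$. The crucial new input is that on $\Lift^{\chi_T}_{\br,2}$ the mod-$p^2$ reduction $\rho_2$ factors through $T$, and since $\br = 1$ we can write $\rho_2(\gamma) = 1 + p A(\gamma) \pmod{p^2}$ with $A(\gamma) \in \mf{t}$ for every $\gamma \in \gal{F}$. By definition $\rho_2|_{I_F} = \chi_T|_{I_F}$, and we fix the (previously unspecified) extension of $\chi_T$ to $\gal{F}$ by requiring $\chi_T \equiv \rho_2 \pmod{p^2}$, so that the formula in the lemma extends consistently to all of $\gal{F}$.

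Via a fixed embedding $G \into \mr{GL}_N$, identity (\ref{matrix}) with $X = X_\beta$ and $\rho(\gamma) = 1 + pA(\gamma) + p^2(\cdots)$ yields, using $[X_\beta, H] = -\beta(H) X_\beta$ for $H \in \mf{t}$ and $\beta(A(\gamma)) = (\beta(\rho_2(\gamma)) - 1)/p \pmod p$, the identity
\[
g\, \rho(\gamma)\, g^{-1} = \bigl(1 + p^{m-1}\, c_\beta(\gamma)\bigr)\,\rho(\gamma), \qquad c_\beta(\gamma) := \frac{1-\beta(\chi_T(\gamma))}{p}\, X_\beta \in \fg_\beta \otimes k,
\]
for every $\gamma \in \gal{F}$. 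Since $\br$ is trivial, the cocycle condition reduces to additivity of $c_\beta$, which follows from the multiplicativity of $\beta \circ \chi_T$ to first order in $p$. Hence $(1 + p^{m-1} c_\beta)\rho$ is $\wh{G}(\mc{O}/p^m)$-conjugate to $\rho$ and therefore lies in $\Lift^{\chi_T}_{\br,2}(\mc{O}/p^m)$ by the $\wh{G}$-stability recorded in Definition \ref{trivord}.

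For the dimension statement, note that because $\br = 1$ we have $H^1(\gal{F}, \br(\fg)) = \Hom(\gal{F}, \fg)$ with no coboundaries, and $\Tan^{\chi_T}_{\br}$ is precisely the subspace $\Hom(\gal{F}, \fn) \oplus \Hom(\gal{F}/I_F, \mf{t}) \subset \Hom(\gal{F}, \fb)$, of dimension $\dim\fb + [F:\Q_p]\dim\fn$ (as recorded just before Definition \ref{trivord}). By the hypothesis $\beta \circ \chi_T \not\equiv 1 \pmod{p^2}$, each $c_\beta$ is a non-zero element of $\Hom(\gal{F}, \fg_\beta)$, and the root spaces $\fg_\beta$ for $\beta \in \Phi^-$ are linearly independent and disjoint from $\fb$. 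Therefore the sum $L^{\chi_T}_{\br} = \Tan^{\chi_T}_{\br} \oplus \bigoplus_{\beta \in \Phi^-} k\, c_\beta$ is direct, of dimension $\dim\fb + |\Phi^-| + [F:\Q_p]\dim\fn = \dim\fg + [F:\Q_p]\dim\fn$, as required.

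For the fixed-multiplier variant, each $c_\beta$ takes values in $\fg_\beta \subset \fgder \subset \fgm$, so the $c_\beta$'s automatically land in $H^1(\gal{F}, \br(\fgm))$. Combining with the fixed-multiplier version of $\Tan^{\chi_T}_{\br}$ constructed exactly as in the proof of Lemma \ref{trivfixeddet} (using the formal smoothness of $\Lift^{\nu, \chi_T}_{\br}$ given by Lemma \ref{ptrivsmooth}, and projecting away the $\mf{a}$-direction via $\fg = \fgm \oplus \mf{a}$), one obtains $L^{\nu,\chi_T}_{\br}$ of dimension $\dim\fgm + [F:\Q_p]\dim\fn$. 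The main obstacle is really modest given the template of Lemma \ref{extracocycles}: the only slightly delicate point is the consistent extension of $c_\beta$ from $\sigma$ to all of $\gal{F}$, which is forced by the computation above together with the choice of extension of $\chi_T$.
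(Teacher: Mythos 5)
Your proof is correct and follows essentially the same route as the paper: conjugating $\rho$ by $g = u_\beta(p^{m-2})$, applying identity (\ref{matrix}) with the observation that $[X_\beta, A(\gamma)] = -\beta(A(\gamma))X_\beta$ for $A(\gamma) \in \mf{t}$, checking additivity of $c_\beta$ from multiplicativity of $\beta\circ\chi_T$ to first order in $p$, and counting dimensions. Your explicit remark about fixing the extension of $\chi_T$ to $\gal{F}$ by requiring $\chi_T \equiv \rho_2 \pmod{p^2}$ is a useful clarification of a point the paper's notation glosses over (the paper simply computes with $\rho_2(\sigma)$ in the conjugation and writes $\chi_T(\sigma)$ in the cocycle formula), but it does not change the substance of the argument.
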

\proof
First note that each $c_{\beta}$ is in fact a cocycle (homomorphism): for all $\sigma, \tau \in \gal{F}$,
\[
 c_{\beta}(\sigma \tau)-c_{\beta}(\sigma)-c_{\beta}(\tau)= \frac{(1-\beta(\chi_T(\sigma)))(\beta(\chi_T(\tau))-1)}{p}X_\beta =0 \pmod p.
\]
Let $g= u_\beta(p^{m-2}) \in \wh{G}(\mc{O}/p^m)$. Again calculating with Equation (\ref{matrix}), we find
\begin{align*}
 g \rho(\sigma) g^{-1}&= \left(1+p^{m-2}(X_{\beta}-\Ad(\rho_2(\sigma))X_{\beta}) \right) \rho(\sigma)\\
&= \left(1+p^{m-1} \frac{1-\beta(\rho_2(\sigma))}{p} X_{\beta} \right) \rho(\sigma).
\end{align*}
By our assumptions on $\chi_T$, all the cocycles $c_{\beta}$, $\beta \in \Phi^-$, are non-trivial; they are clearly linearly independent from one another and from $\Tan^{\chi_T}_{\br}$, so the result follows.
\endproof
We also want to know that a compatible collection $(\rho_m)_{m \geq 2}$, with $\rho_m \in \Lift^{\chi_T}_{\br, 2}(\mc{O}/p^m)$ gives rise to a de Rham $p$-adic limit $\rho= \varprojlim \rho_m$ when $\chi_T$ is chosen appropriately:
\begin{lemma}\label{trivordlim}
 Let $\rho \colon \gal{F} \to G(\mc{O})$ be a continuous homomorphism lifting $\br$ such that for all $m \geq 2$, the reduction $\rho_m= \rho \pmod {p^m}$ is an element of $\Lift_{\br}^{\chi_{T_G}}(\mc{O}/p^m)$. Then there exists $g \in \wh{G}(\mc{O})$ such that ${}^g \rho$ is valued in $B(\mc{O})$, and such that the composite $I_F \xrightarrow{{}^g \rho} B(\mc{O}) \to T_G(\mc{O})$ is equal to $\chi_{T_G}$. In particular, if for all simple roots $\alpha$, $\alpha \circ \chi_{T_G}= \kappa^{r_{\alpha}}$ for some positive integer $r_{\alpha}$, then $\rho$ is de Rham.
\end{lemma}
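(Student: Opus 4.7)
The plan is to locate a Borel $B' \subset G$ over $\mc{O}$ that contains $\rho(\gal{F})$, reduces to $B$ modulo $p$, and whose composition $I_F \xrightarrow{\rho} B'(\mc{O}) \to T_G(\mc{O})$ equals $\chi_{T_G}$; conjugating $B'$ back to $B$ via $\wh{G}(\mc{O})$ then produces the desired $g$. I work in the open big cell: fix a maximal torus $T \subset B$ and let $N^-$ denote the unipotent radical opposite to $N$, so that the map $n \mapsto nBn^{-1}$ identifies $N^-$ with the open subscheme $U = N^- B / B$ of $G/B$. Borels arising from points of $U(\mc{O})$ that reduce to $B$ modulo $p$ correspond exactly to elements of
\[
N_1^-(\mc{O}) := \ker\bigl(N^-(\mc{O}) \to N^-(k)\bigr) \subset \wh{G}(\mc{O}),
\]
which is compact (homeomorphic to $(p\mc{O})^{\dim N^-}$, since $k$ is finite).

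For each $m \geq 2$, the hypothesis supplies $g_m \in \wh{G}(\mc{O}/p^m)$ with ${}^{g_m}\rho_m \in B(\mc{O}/p^m)$ and $T_G$-pushforward on $I_F$ equal to $\chi_{T_G}$ modulo $p^m$. Since $g_m \equiv 1 \pmod{p}$, the point $g_m^{-1} \cdot B \in (G/B)(\mc{O}/p^m)$ reduces to the identity coset and hence, by openness of $U$, equals $n_m \cdot B$ for a unique $n_m \in N_1^-(\mc{O}/p^m)$. Writing $g_m^{-1} = n_m b_m$ for some $b_m \in B(\mc{O}/p^m)$, one sees that $n_m^{-1} \rho_m n_m$ lies in $B(\mc{O}/p^m)$ with the same $T_G$-pushforward as ${}^{g_m} \rho_m$, namely $\chi_{T_G}$ modulo $p^m$. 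Smoothness of $N^-$ allows $n_m$ to be lifted to $N_1^-(\mc{O})$, so the set
\[
Z_m := \bigl\{\, n \in N_1^-(\mc{O}) : n^{-1} \rho_m n \in B(\mc{O}/p^m) \text{ with } T_G\text{-pushforward on } I_F \equiv \chi_{T_G} \pmod{p^m}\,\bigr\}
\]
is non-empty. Each $Z_m$ is a union of cosets of $N_1^-(p^m \mc{O})$, hence closed in $N_1^-(\mc{O})$, and clearly $Z_{m+1} \subset Z_m$. By compactness, $\bigcap_m Z_m$ is non-empty; choose $n$ in the intersection and set $g := n^{-1} \in \wh{G}(\mc{O})$. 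The conditions defining $Z_m$, applied for all $m$, then imply that ${}^g \rho$ factors through $B(\mc{O})$ and has $T_G$-pushforward on $I_F$ equal to $\chi_{T_G}$.

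For the final de Rham assertion, once one has such a $B$-valued lift whose inertial torus character is $\chi_{T_G}$ with $\alpha \circ \chi_{T_G} = \kappa^{r_\alpha}$ for positive integers $r_\alpha$, the argument of Lemma \ref{regord} applies verbatim: the (REG) and (REG*) hypotheses of \cite[\S 4.1]{stp:exceptional} intervene only in the formal smoothness of the lifting functor, not in the Hodge-theoretic verification that any ordinary lift with the specified inertial type is de Rham. The main obstacle is the coherence step---matching up the conjugating elements $n_m$ across different levels. It is resolved by the compactness argument above, which succeeds precisely because the search can be confined to the formal neighborhood $N_1^-(\mc{O})$ of the identity coset in $G/B$, a compact space with finite mod-$p^m$ quotients; this is the crucial use of the assumption $\br = 1$, without which one would have to work with a larger neighborhood in $G/B$ lacking this compactness.
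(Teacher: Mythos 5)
Your proof is correct and rests on the same compactness principle as the paper's. The paper simply takes $U_m \subset \wh{G}(\mc{O}/p^m)$ to be the set of all conjugators at level $m$, observes that reduction makes $\{U_m\}$ an inverse system of non-empty finite sets, and takes an inverse limit; you instead first normalize each conjugator into $N_1^-(\mc{O})$ via the big cell and then apply the finite intersection property in that compact group. The big-cell normalization is a correct maneuver but not needed: $\wh{G}(\mc{O})$ is already profinite with finite quotients $\wh{G}(\mc{O}/p^m)$, so the nested-closed-subsets argument applies directly there without choosing a slice through $G/B$.

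One remark to flag: your closing claim that confinement to $N_1^-(\mc{O})$ ``is the crucial use of the assumption $\br = 1$'' is not right. Any $g_m \in \wh{G}(\mc{O}/p^m)$ reduces to the identity in $G(k)$, so the Borel $g_m^{-1} B g_m$ lies in the big cell over $\mc{O}/p^m$ regardless of what $\br$ is; likewise the paper's argument with $U_m$ nowhere uses triviality of $\br$. The lemma (and both proofs) hold verbatim for any $\br$ with image in $B(k)$; the role of $\br = 1$ in this subsection is elsewhere, namely in producing the extra cocycles of Lemma \ref{ordextracocycles}, not in this limiting step.
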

\begin{proof}
 The final claim follows as in Lemma \ref{regord}. To construct the element $g$, consider the inverse system of sets
\begin{align*}
 U_m= \{ &\text{$g \in \wh{G}(\mc{O}/p^m)$ such that $g \rho_m g^{-1}(\gal{F}) \subset B(\mc{O}/p^m)$ and}\\ 
 &\text{$I_F \xrightarrow{{}^g \rho_m} B(\mc{O}/p^m) \to T_G(\mc{O}/p^m)$ equals $\chi_{T_G}$} \}.
\end{align*}
Since $\rho_m$ reduces to $\rho_{n}$ for all $m \geq n$, reduction mod $p^{n}$ induces compatible maps $U_m \to U_{n}$. By assumption (the very definition of $\Lift^{\chi_{T_G}}_{\br}$), each $U_m$ is non-empty, and each is clearly finite. It follows that $\varprojlim U_m$ is non-empty. Let $g_{\bullet}= (g_m)_{m \geq 1}$ be an element of $\varprojlim U_m$, and take $g$ to be the element $g= \varprojlim g_m \in \wh{G}(\mc{O})$.
\end{proof}

Finally, in the application we will require that our global residual representations $\br \colon \gal{F} \to G(k)$, with $F$ now a number field, have the property that $K= F(\br(\fg), \mu_p)$ does not contain $\mu_{p^2}$. In practice we will deduce this condition from the analogous local statement, using our local hypotheses on $\br|_{\gal{F_v}}$ for $v \vert p$:
\begin{lemma}\label{p2}
 Let $F/\Q_p$ be a finite extension that is unramified at $p$. Assume that $\br$ is ordinary in the sense of Definition \ref{ordinary} and moreover for all $\alpha \in \Phi^+$, $\alpha \circ \overline{\chi}_T$ is a non-trivial power of $\bar{\kappa}$. Then $\mu_{p^2}$ is not contained in $K= F(\br(\fg), \mu_p)$. Of course, if we instead assume that $\br$ is trivial, then the same conclusion holds.
\end{lemma}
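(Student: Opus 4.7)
The plan is to handle the two cases separately, with the trivial case being immediate and the ordinary case requiring a careful eigenspace analysis under the tame cyclotomic quotient.

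For the trivial case: if $\br = 1$, then $\ad\br$ acts trivially on $\fg$, so $F(\br(\fg)) = F$ and $K = F(\mu_p)$. Since $F/\Q_p$ is unramified, $F(\mu_p)/F$ is tamely ramified of degree dividing $p-1$, whereas $F(\mu_{p^2})/F$ carries wild ramification of degree $p$, giving $\mu_{p^2} \not\subseteq K$.

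For the ordinary case, suppose for contradiction that $\mu_{p^2} \subseteq K$. After conjugation assume $\br(\gal{F}) \subseteq B(k)$, and write $\br = \br_T \cdot \br_N$ with $\br_T \colon \gal{F} \to T(k)$ and $\br_N \in Z^1(\gal{F}, N(k))$ for the twisted action. Define $\eta \colon \gal{F(\mu_p)} \to \Fp$ by $\kappa(\sigma) \equiv 1 + p\,\eta(\sigma) \pmod{p^2}$; the inclusion $\mu_{p^2} \subseteq K$ forces $\eta$ to factor through $\ad\br|_{\gal{F(\mu_p)}}$. Now use the hypothesis: all $\alpha \circ \bar\chi_T = \bar\kappa^{n_\alpha}$ with $n_\alpha \not\equiv 0 \pmod{p-1}$, and $\bar\kappa|_{I_{F(\mu_p)}} = 1$, so $\bar\chi_T(I_{F(\mu_p)}) \subseteq \bigcap_\alpha \ker(\alpha) = Z(G^0)(k)$. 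Since $Z(G^0)$ acts trivially on $\fg$, $\ad\br|_{I_{F(\mu_p)}}$ is valued in the $p$-group $\ad N(k)$, so it factors through the wild inertia $P_F$ (the prime-to-$p$ tame quotient must die). The problem reduces to showing $\eta|_{P_F}$ does \emph{not} factor through $\ad\br|_{P_F}$; equivalently, using that $\Fp$-valued characters of $N(k)$ factor through $N^{\mathrm{ab}}(k) = \bigoplus_{\alpha \in \Delta} \fg_\alpha(k)$, that $\eta|_{P_F}$ is not of the form $\sum_{\alpha \in \Delta} L_\alpha \circ c_\alpha|_{P_F}$ for $\Fp$-linear $L_\alpha \colon k \to \Fp$, where $c_\alpha$ denotes the simple-root component of $\br_N$, a cocycle in $Z^1(\gal{F}, k(\chi_\alpha))$ with $\chi_\alpha|_{I_F} = \bar\kappa^{\pm n_\alpha}$.

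The contradiction arises from the $\gal{F(\mu_p)/F}$-eigenspace decomposition of $\Hom(P_F, \Fp)$, which is semisimple since $\gal{F(\mu_p)/F}$ has order dividing $p-1$, prime to $p$. Via inflation-restriction, the extension class of $\mu_{p^2}$ (an element of $\mathrm{Ext}^1_{\gal{F}}(\mu_p,\mu_p) = H^1(\gal{F}, \Fp)$ with trivial coefficients) places $\eta|_{\gal{F(\mu_p)}}$ in the trivial ($\bar\kappa^0$) eigenspace, and this remains true after restriction to $P_F$. Similarly, applying inflation-restriction to $c_\alpha$ places $c_\alpha|_{I_{F(\mu_p)}}$ in the $\bar\kappa^{\mp n_\alpha}$-eigenspace (the equation $c_\alpha(\tilde\tau^{-1}\sigma\tilde\tau) = \bar\kappa^{\mp n_\alpha}(\tilde\tau)\, c_\alpha(\sigma)$ holds strictly since trivial-coefficient coboundaries vanish), and post-composition with $\Fp$-linear $L_\alpha$ preserves this eigenspace, because the twist scalars lie in $\Fp^\times$ and hence commute with $L_\alpha$. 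Since every $n_\alpha \not\equiv 0 \pmod{p-1}$, these eigenspaces are disjoint from the trivial one, so the purported identity $\eta|_{P_F} = \sum_\alpha L_\alpha \circ c_\alpha|_{P_F}$ forces both sides to vanish, contradicting the surjectivity of $\eta|_{P_F} \colon P_F \twoheadrightarrow \Fp$ (which follows from the totally wild ramification of $F(\mu_{p^2})/F(\mu_p)$).

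The main obstacle is verifying the eigenspace positions cleanly; this requires checking that inflation-restriction places the cocycles in the stated eigenspaces (direct from the cocycle condition and the vanishing of relevant coboundaries), and confirming that restriction to $P_F$ and post-composition with $\Fp$-linear projections respect the eigenspace decomposition—both of which are routine once the setup is in place.
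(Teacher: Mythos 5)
Your trivial case is fine, and the eigencharacter mechanics at the end (conjugation-invariance of $\eta|_{P_F}$, the twist of $c_\alpha|_{P_F}$ by $\bar{\kappa}^{\mp n_\alpha}$, the fact that the scalars lie in $\Fp^\times$ and so commute with the $\Fp$-linear $L_\alpha$, surjectivity of $\eta|_{P_F}$) are all correct. The gap is the step you label ``equivalently'': from ``$\eta|_{P_F}$ factors through $\ad\br|_{P_F}$'' you pass to ``$\eta|_{P_F}=\sum_{\alpha\ \mathrm{simple}}L_\alpha\circ c_\alpha|_{P_F}$.'' What the hypothesis $\mu_{p^2}\subseteq K$ actually gives you is a homomorphism $\Psi$ defined only on the subgroup $D=\ad\br(P_F)\subseteq \Ad(N(k))$ with $\eta|_{P_F}=\Psi\circ\ad\br|_{P_F}$. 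Characters of the full group $N(k)$ do factor through $N^{\mathrm{ab}}(k)$, but $\Psi$ lives on $D$, and an $\Fp$-valued character of a subgroup of a $p$-group need not extend to the whole group; so there is no reason $\eta|_{P_F}$ has the asserted shape. Concretely, if the wild-inertia image $\br(P_F)$ lands inside $[N,N](k)$ --- say inside a single root group $U_{\alpha_1+\alpha_2}$ for $\mr{SL}_3$ --- then every $\sum_\alpha L_\alpha\circ c_\alpha|_{P_F}$ vanishes identically while $\eta|_{P_F}$ could still a priori factor through $\ad\br|_{P_F}$, and your argument concludes nothing. A telltale sign is that your proof only ever uses non-triviality of $\alpha\circ\overline{\chi}_T$ for \emph{simple} roots, whereas the lemma (deliberately) assumes it for all $\alpha\in\Phi^+$: the deeper commutator levels of $N$ genuinely matter.

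The paper's proof avoids this by not passing to $N^{\mathrm{ab}}$ at all: it filters the image of inertia by the subgroups $G_i=\{g:\br(g)\in N_{\geq i}(k)\}$ given by root height, observes that $\Delta=\Gal(F(\mu_p)/F)$ (of prime-to-$p$ order) acts semisimply on each graded piece $G_i/G_{i+1}$ through non-trivial powers of $\bar{\kappa}$ (here is where all positive roots enter), and then, taking $i$ maximal with $G_i$ contained in the subgroup cutting out the degree-$p$ subextension of $F(\mu_{p^2})$, produces a non-trivial $\Delta$-equivariant surjection $G_{i-1}/G_i\to\Z/p$ with trivial $\Delta$-action on the target --- a contradiction. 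Your eigencharacter idea is the same in spirit, but to make it correct you must run it at the deepest filtration level where the would-be quotient is detected (i.e.\ work with the graded pieces $N_{\geq i}/N_{\geq i+1}$ rather than only $N^{\mathrm{ab}}$), which is exactly the paper's argument.
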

\proof
We will show that under the assumptions on $\br$, the fixed field of the kernel of $\br$ cannot contain the unique $p$-extension $F(\varepsilon)/F$ inside $F(\mu_{p^2})$. Let $L= F(\mu_p)$; it is the fixed field of $\overline{\chi}_T$, and we set $\Delta= \Gal(L/F) \cong (\Z/p)^\times$. Let $L_{\br}$ be the the fixed field of $\br|_{I_{F}}$, and filter $G_0= \Gal(L_{\br}/F)$ by the subgroups $G_i= \{g \in G_0: \br(g) \in N_{\geq i}(k)\}$, where $N_{\geq i}$ is the closed subgroup of $B$ whose Lie algebra is spanned by root spaces of height at least $i$. Since $T$ acts on each $N_{\geq i}$, and on each quotient $N_{\geq i}/N_{\geq i+1}$, $\Delta$ acts, necessarily semi-simply, on each of the $\Fp$-vector spaces $G_i/G_{i+1}$. Our assumption that $\alpha \circ \overline{\chi}_T$ is non-trivial for all $\alpha$ implies that $\Delta$ in fact acts by a direct sum of non-trivial powers of $\bar{\kappa}$ on each graded piece $G_i/G_{i+1}$. Now, we are assuming that $F(\varepsilon)$ is contained in $L_{\br}$, and therefore corresponds to a subgroup $H \subset G_0$. Let $i$ be maximal such that $H$ contains $G_i$. We then get a well-defined, non-trivial, $\Delta$-equivariant map $G_{i-1}/G_i \to G/H$, necessarily surjective since $G/H \cong \Z/p$. But $F(\mu_{p^2})/F$ is abelian, so $\Delta$ acts trivially on $G/H$, whereas it acts by a sum of non-trivial characters on $G_{i-1}/G_i$. Contradiction.
\endproof
When $G^0= \mr{GL}_N$ (or a product of copies of general linear groups), we can also impose the condition that $\br$ lie in the image of the Fontaine-Laffaille functor, i.e. be torsion-crystalline with Hodge-Tate weights in an interval of length less than $p-1$. In this case ramification bounds for torsion-crystalline representations due to Abrashkin and Fontaine (\cite[\S 2, Assertion 8.1]{abrashkin:modular}, \cite[Th\'eor\`eme 2]{fontaine:overZ}; a convenient reference is \cite{hattori:survey}) imply that $\mu_{p^2}$ is not contained in $F(\br(\fg), \mu_p)$. For Fontaine-Laffaille theory, we follow the notation of \cite[\S 2]{hattori:survey}.
\begin{lemma}\label{FLp2}
Assume $F/\Q_p$ is unramified. Let $L$ be a $k[\gal{F}]$-module isomorphic to $T_{\mr{cris}}(M)$ for some Fontaine-Laffaille module $M$ with Hodge-Tate weights in an interval of length less than $p-1$. Then $\mu_{p^2}$ is not contained in the fixed field $F(L, \mu_p)$ of the set of $\sigma \in \gal{F}$ acting trivially on both $L$ and $\mu_p$.
\end{lemma}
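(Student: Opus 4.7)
The plan is to compare the upper-numbering ramification of $F(L,\mu_p)/F$ with that of $F(\mu_{p^2})/F$, exploiting the ramification bound for Fontaine--Laffaille representations of Abrashkin and Fontaine, in the form recalled in Hattori's survey cited just above.

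First I would apply the Abrashkin--Fontaine estimate to $L$. Since $F/\Q_p$ is absolutely unramified and $L$ is killed by $p$ and of the form $T_{\mathrm{cris}}(M)$ for a Fontaine--Laffaille module $M$ with Hodge--Tate weights in an interval of length $h<p-1$, one obtains an explicit constant $u_L$ such that $\gal{F}^{(u)}$ acts trivially on $L$ for all $u>u_L$; the strict inequality $h<p-1$ is precisely what translates into $u_L<1$. Combined with the tameness of $F(\mu_p)/F$, which gives $\gal{F}^{(u)}\cdot\mu_p=0$ for all $u>0$, this yields a single constant $u_0:=\max(u_L,0)<1$ such that $\gal{F}^{(u)}$ fixes $F(L,\mu_p)$ for every $u>u_0$.

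Then I would argue by contradiction. Suppose $\mu_{p^2}\subset F(L,\mu_p)$. The restriction surjection $\Gal(F(L,\mu_p)/F)\twoheadrightarrow \Gal(F(\mu_{p^2})/F)$, together with Herbrand's theorem on compatibility of upper-numbering filtrations with quotients, would imply $\Gal(F(\mu_{p^2})/F)^{(u)}=1$ for all $u>u_0$. But the standard local-class-field-theory computation --- using that $F/\Q_p$ is unramified to identify the inertia of $\Gal(F(\mu_{p^\infty})/F)$ with $\mc{O}_F^\times$ and its upper-numbering filtration with the principal-unit filtration $\{1+p^n\mc{O}_F\}_{n\geq 0}$ --- shows that $\Gal(F(\mu_{p^2})/F(\mu_p))$ is a nontrivial subgroup sitting at the upper-numbering break $u=1$, so $\Gal(F(\mu_{p^2})/F)^{(u)}\neq 1$ for every $u\leq 1$. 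Since $u_0<1$, this is a contradiction.

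The main obstacle is the sharpness of the Abrashkin--Fontaine estimate: namely, extracting from the hypothesis $h<p-1$ the strict inequality $u_L<1$, which is the precise margin needed to separate the Fontaine--Laffaille ramification of $L$ from the cyclotomic break of $\mu_{p^2}$ at $u=1$. Everything else is routine Herbrand and local class field theory.
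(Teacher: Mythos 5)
Your proposal is correct and is essentially the paper's argument: the paper's proof simply cites the Abrashkin--Fontaine ramification bound (Theorem 2.19 of the Hattori survey) together with the standard computation of the upper-numbering filtration of cyclotomic extensions, which is exactly the comparison you spell out (bound $u_L<1$ for $L$, tameness of $\mu_p$, Herbrand, and the cyclotomic break at $u=1$ for $\mu_{p^2}$). The only point worth noting is that the strict inequality $u_L<1$ is indeed what the cited theorem gives for a mod $p$ module with weights in an interval of length at most $p-2$ over an absolutely unramified base, so your "main obstacle" is already handled by the reference.
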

\begin{proof}
This follows immediately from \cite[Theorem 2.19]{hattori:survey} (due to Abrashkin and Fontaine independently) and the familiar calculation of the upper ramification filtration of cyclotomic extensions.
\end{proof}

\section{Modifying the mod $p^2$ lift}\label{klrsection}
Let $F$ be a number field, let $S$ be a finite set of places of $F$ containing all places above $p$, and let $\br \colon \gal{F, S} \to G(k)$ be a continuous homomorphism. In this section we explain how to construct a mod $p^2$ lift of $\br$ that will in \S \ref{finallift} be the starting point for applying the arguments of \S \ref{auxiliarysection}. We may assume $\br$ surjects onto $\pi_0(G)$ (if not, we replace $G$ by the preimage in $G$ of the image of $\br$ in $\pi_0(G)$; the deformation theory of $\br$ is unchanged by this replacement). There is then a unique finite Galois extension $\tF/F$ such that $\br$ induces an isomorphism $\Gal(\tF/F) \to \pi_0(G)$. We make the following assumptions on $\br$:
\begin{assumption}\label{multfree}
Assume $p \gg_G 0$, and let $\br \colon \gal{F, S} \to G(k)$ be a continuous representation unramified outside a finite set of finite places $S$; we may and do assume that $S$ contains all places above $p$. Assume that $\br$ satisfies the following:
\begin{itemize} 
 \item The field $K= \tF(\br(\fgder), \mu_p)$ does not contain $\mu_{p^2}$.
 \item $H^1(\Gal(K/F), \br(\fgder))=0$.
 \item $H^1(\Gal(K/F), \br(\fgder)^*)$=0.
 \item $\br(\fgder)$ and $\br(\fgder)^*$ are semisimple $\mathbb{F}_p[\gal{F}]$-modules (equivalently, semisimple $k[\gal{F}]$-modules) having no common $\Fp[\gal{F}]$-sub-quotient, and neither contains the trivial representation.
\end{itemize}
How large $p$ must be given (the root datum of) $G$ can be extracted from the arguments of this section, but we do not make it explicit.
\end{assumption}
\begin{rmk}\label{infinitetriv}
We could carry out the analysis of this section without the assumption that $K$ does not contain $\mu_{p^2}$; the difference is that the sets of ``trivial" primes $w$ that we produce would not necessary satisfy $N(w) \not \equiv 1 \pmod {p^2}$. In particular, Corollary \ref{infiniteram} does not require this assumption. In the next section, when we use the results of this section as input for the \cite{ramakrishna-hamblen} approach to annihilating dual Selmer groups, we will need the additional hypothesis, and we will need the auxiliary primes produced in this section to be ``trivial" in the sense used in \S \ref{trivialsection}.
\end{rmk}
We decompose 
\[
\br(\fgder)= \oplus_{i \in I} W_i^{\oplus m_i}
\] 
where each $W_i$ is an irreducible $\Fp[\gal{F}]$-module, and $W_i \not \cong W_j$ for $i \neq j$. Dually we obtain the decomposition $\br(\fgder)^*= \oplus (W_i^*)^{m_i}$, where $W^*= \Hom_{\Fp}(W, \Fp)(1)$ is the $\Fp$-dual. Each $W_i$ is a $k_{W_i}= \End_{\Fp[\gal{F}]}(W_i)$-module, and since $\mr{Br}(\Fp)=0$ $k_{W_i}$ is a finite field extension of $\Fp$. We may then also regard $W_i^*$ as the $k_{W_i}$-dual, with the trace identifying the $k_{W_i}$-vector spaces
\[
\tr_{k_{W_i}/\Fp} \colon \Hom_{k_{W_i}}(W_i, k_{W_i}) \xrightarrow{\sim} \Hom_{\Fp}(W_i, \Fp).
\]  

We begin by finding some mod $p^2$ lift of $\br$. First note that since $G/G^{\mr{der}}(k)$ has order prime to $p$ (being an extension of the prime-to-$p$ component group by the $k$-points of a torus), the reduction map $G/G^{\mr{der}}(\mc{O}/p^2) \to G/G^{\mr{der}}(k)$ has a (group-theoretic) section $s$. In particular $(\br \mod G^{\mr{der}})$ has a lift $s(\br \mod G^{\mr{der}})$ to $G/G^{\mr{der}}(\mc{O}/p^2)$. The argument of Lemma \ref{trivfixeddet} shows that we can replace it with a lift having multiplier character $\nu$. In particular, the obstruction $\mr{obs}_{\br} \in H^2(\gal{F, S}, \br(\mf{g}_{\mu}))$ to lifting $\br$ to $G(\mc{O}/p^2)$ with multiplier $\nu$ has trivial $\mf{z}_{\mu}$-component under the decomposition $\fgm= \mf{z}_{\mu} \oplus \fg^{\mr{der}}$. Next let $T \supset S$ be a finite set of places with $T \setminus S$ consisting of trivial primes such that $\Sha^1_T(\gal{F, T}, \br(\mf{g}^{\mr{der}}))=0$ and $\Sha^1_T(\gal{F, T}, \br(\mf{g}^{\mr{der}})^*)=0$. That $T$ can be so arranged follows from the first three items of Assumption \ref{multfree}: the cocycles in question restrict non-trivially to $\gal{K}$, and then we choose places $v$ that are split in $K$ and non-split in both $K(\mu_{p^2})$ and the fixed field (over $K$) of the cocycle (the latter two conditions are compatible whether or not the fixed field is disjoint from $K(\mu_{p^2})$, since they are both just the condition of being non-trivial). By global duality, $\Sha^2_T(\gal{F, T}, \br(\mf{g}^{\mr{der}}))$ also vanishes, so to produce some lift $\rho_2 \colon \gal{F, T} \to G(\mc{O}/p^2)$ of $\br$ with $\mu \circ \rho_2= \nu$ it suffices to check there are no local lifting obstructions: 
\begin{lemma}\label{localmodp^2}
Assume $p \gg_G 0$. Then for all finite places $v$, the set of mod $p^2$ local lifts $\Lift_{\br|_{\gal{F_v}}}(\mc{O}/p^2)$ is non-empty (and similarly for fixed-similitude character lifts).
\end{lemma}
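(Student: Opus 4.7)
The obstruction to lifting $\br|_{\gal{F_v}}$ from $G(k)$ to $G(\mc{O}/p^2)$ is a canonical class in $H^2(\gal{F_v}, \br(\fg))$, and the fixed-multiplier variant follows from the unrestricted one by the argument already used in Lemma~\ref{trivfixeddet}: given an unrestricted lift $\tilde\rho$, the discrepancy $\nu \cdot (\mu \circ \tilde\rho)^{-1}$ lands in the pro-$p$ kernel $\wh A(\mc{O}/p^2)$ and lifts canonically to a central character of $\gal{F_v}$ via the splitting $\fg = \fgm \oplus \mf{a}$ from Assumption~\ref{minimalp}. The strategy is therefore to exhibit an unrestricted mod-$p^2$ lift, handling the cases $v \nmid p$ and $v \mid p$ separately.

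For $v \nmid p$, the wild inertia $P_v$ is pro-$\ell$ with $\ell$ the residue characteristic of $v$, so $\br(P_v)$ has order prime to $p$ in $G(k)$. Applying Schur--Zassenhaus to the extension $1 \to \wh G(\mc{O}/p^2) \to G(\mc{O}/p^2) \to G(k) \to 1$, whose kernel is a $p$-group, I obtain a lift $\tilde\rho_{P_v}$ of $\br|_{P_v}$, unique up to $\wh G(\mc{O}/p^2)$-conjugation. To extend along the tame presentation $\gal{F_v}/P_v = \langle \sigma, \tau : \sigma\tau\sigma^{-1}=\tau^q \rangle$, I would choose any set-theoretic lifts $\tilde\sigma, \tilde\tau \in G(\mc{O}/p^2)$ of $\br(\sigma), \br(\tau)$ and adjust them within $\wh G(\mc{O}/p^2)$ so that the tame braid relation and compatibility with $\tilde\rho_{P_v}$ both hold modulo $p^2$. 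This adjustment is solvable because the relevant centralizers in $G^0$ are smooth for $p$ very good by Richardson's criterion (\cite[Theorem 2.5]{jantzen:nilporbits}, the same input used in the proof of Lemma~\ref{trivsmooth}). In essence this recovers, for mod-$p^2$ lifts, the characteristic-zero local lifting results of \cite[\S 2.4.4]{clozel-harris-taylor} and \cite[\S 6]{booher:minimal}.

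The main obstacle is the case $v \mid p$, where the cohomological behavior is richer and the tame/wild decomposition above does not apply. I would handle it by first embedding $G \into \mr{GL}_N$ via a faithful representation over $\mc{O}$ (available for $p \gg_G 0$), and lifting the composite $\br \colon \gal{F_v} \to \mr{GL}_N(k)$ to $\mr{GL}_N(\mc{O}/p^2)$ using the explicit finite presentation of $\gal{F_v}$ of Jannsen--Wingberg: its defining relations can always be solved modulo $p^2$ after adjusting the chosen generator lifts within $\wh{\mr{GL}_N}(\mc{O}/p^2)$. The remaining step is to check that this $\mr{GL}_N$-valued lift can be arranged to land in $G(\mc{O}/p^2)$; the obstruction lives in a Galois cohomology group built from $\Lie(\mr{GL}_N)/\fg$, and vanishes for $p \gg_G 0$ since the quotient $\mr{GL}_N/G$ is then smooth over $\mc{O}$ and the normalizer of $G$ in $\mr{GL}_N$ has prime-to-$p$ component group by Assumption~\ref{minimalp}. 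The effective lower bound on $p$ demanded by the lemma is principally absorbed in this last step.
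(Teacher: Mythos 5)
There is a genuine gap, and it lies exactly where your proposal waves its hands. The paper's proof is uniform in $v$ and rests on three points: (i) for $p \gg_G 0$ one can choose a faithful $r \colon G \into \mr{GL}_N$ so that $\fg$ is a \emph{direct summand} of $\mf{gl}_N$ as a $G$-module, hence $H^2(\gal{F_v}, \br(\fg)) \to H^2(\gal{F_v}, r\circ\br(\mf{gl}_N))$ is injective; (ii) functoriality sends the obstruction class for lifting $\br$ to $G(\mc{O}/p^2)$ to the obstruction class for lifting $r\circ\br$ to $\mr{GL}_N(\mc{O}/p^2)$; (iii) the latter vanishes by B\"ockle's theorem, which applies to all finite places. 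Your proposal never uses (i), and this is not a cosmetic omission: merely knowing that $r\circ\br$ admits a mod $p^2$ lift does not imply that $\br$ does, because without the summand property the $G$-obstruction could be a nonzero class that dies in $H^2(\gal{F_v}, \mf{gl}_N)$. Your substitute — produce a $\mr{GL}_N(\mc{O}/p^2)$-valued lift and then ``arrange'' it to land in $G(\mc{O}/p^2)$, with an obstruction allegedly in cohomology of $\Lie(\mr{GL}_N)/\fg$ killed by smoothness of $\mr{GL}_N/G$ and a prime-to-$p$ component group of the normalizer — is not a valid mechanism. A specific lift of $r\circ\br$ need not be conjugate into $G(\mc{O}/p^2)$ (the relevant $H^1(\gal{F_v}, \mf{gl}_N/\fg)$-type groups are typically nonzero, and smoothness of the quotient variety says nothing about them); the existence of \emph{some} lift landing in $G$ is exactly the vanishing of the $G$-obstruction, i.e.\ the thing to be proved. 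Transferring the obstruction \emph{class}, not the lift, is what makes the argument work, and that requires the direct-summand injectivity.

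Two further steps are asserted rather than proved. At $v \nmid p$, after Schur--Zassenhaus on wild inertia you claim the adjustment making the tame relation $\tilde\sigma\tilde\tau\tilde\sigma^{-1}=\tilde\tau^{q}$ hold mod $p^2$ is ``solvable because the relevant centralizers are smooth''; smoothness of centralizers is not the right input here, and the statement is nontrivial since $H^2(\gal{F_v}, \br(\fg))$ is dual to $H^0(\gal{F_v}, \br(\fg)^*(1))$, which need not vanish — this is precisely the kind of case-by-case local analysis (as in Clozel--Harris--Taylor and Booher for classical groups) that the paper deliberately avoids by the $\mr{GL}_N$ reduction. At $v \mid p$, your claim that the Jannsen--Wingberg relations ``can always be solved modulo $p^2$'' for $\mr{GL}_N$ is essentially a restatement of B\"ockle's theorem without proof. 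If you instead cite B\"ockle for $\mr{GL}_N$ at all finite places and add the direct-summand/injectivity step, your argument collapses to the paper's one-paragraph proof.
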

\proof
For $p \gg_G 0$, there exists a faithful representation $r \colon G \into \mr{GL}_N$ such that $\mf{g}$ is a direct summand of $\mf{gl}_n$. The induced map $H^2(\gal{F_v}, \br(\fg)) \to H^2(\gal{F_v}, r\circ\br(\mf{gl}_n))$ is thus injective, and it clearly sends the obstruction to lifting $\br$ (to $G(\mc{O}/p^2)$) to the obstruction to lifting $r \circ \br$. But the latter is unobstructed by \cite[Theorem 1.1]{bockle:modp2}). The fixed multiplier character analogue follows similarly.
\endproof
In fact, in the application we will make a stronger assumption on $\br_{\gal{F_v}}$ for $v \in S$, obviating the need for this lemma; for now we are trying to proceed without superfluous hypotheses.

In what follows, it will be technically convenient to enlarge the set $T$ by trivial primes, beyond what is necessary to annihilate $\Sha^1_T(\gal{F, T}, \br(\fgder))$ (and the dual version). We may and do assume that our $T$ strictly contains whatever initial choice of $T$ was used to annihilate the Shafarevich-Tate groups; more precise enlargements of this set $T$ will follow. We can compute such an enlargement's effect on global Galois cohomology. More generally, if $W$ is any $\Fp[\gal{F, T}]$-module, we have the analogous notion of a trivial prime $v$ for $W$: $W|_{\gal{F_v}}$ is trivial, and $N(v) \equiv 1 \pmod p$ but $N(v) \not \equiv 1 \pmod {p^2}$. If moreover $W$ satisfies $\Sha^1_T(\gal{F, T}, W)=0$, then for any trivial prime $v \not \in T$ we have an exact sequence
\[
0 \to H^1(\gal{F, T}, W) \to H^1(\gal{F, T \cup v}, W) \to H^1(\gal{F_v}, W)/H^1_{\mr{unr}}(\gal{F_v}, W) \to 0,
\]
where the second map is given by evaluation at $\tau_v$; surjectivity of this map follows from the Greenberg-Wiles Euler-characteristic formula (\cite[Theorem 2.19]{DDT2}). In particular, the cokernel of the inflation map has dimension $\dim W$.

We must modify our initial $\rho_2$ so that its local behavior allows further lifting. We will now fix certain local lifts to $G(\mc{O}/p^2)$ that we would like to interpolate into a global mod $p^2$ representation. In \S \ref{finallift}, we will be more particular about what lifts we choose, and we will make additional assumption on the local behavior of $\br$; so as to be clear about what assumptions are used at what point in the paper, we delay imposing these additional hypotheses. Thus, for the present section, we require the following:
\begin{itemize}
\item For $w \in S$, fix any lift $\lambda_w \in \Lift_{\br|_{\gal{F_w}}}^{\nu}(\mc{O}/p^2)$. 
\item For $w \in T\setminus S$, fix a lift $\lambda_w$ in the set $\Lift^{\nu, \alpha}_{\br|_w, 2}(\mc{O}/p^2)$ of Definition \ref{unrtrivlifts} (and see Lemma \ref{trivfixeddet}), for some pair $(T_w, \alpha_w)$ of a maximal torus and root (we will frequently omit the $w$-dependence from the notation). Moreover, we if necessary enlarge the set $T$ and choose the lifts $\lambda_w$ so that the elements $\lambda_w(\sigma_w)$ generate $\wh{G}^{\mr{der}}(\mc{O}/p^2)$. That this is possible relies on the hypothesis $p \gg_G 0$ and is explained in Lemma \ref{bigimage} 
below. 
\end{itemize}
\begin{lemma}\label{bigimage}
As in our running Assumption \ref{multfree}, we assume $p \gg_G 0$. For any trivial prime $w \not \in S$, there are lifts $\lambda_w \in \Lift^{\alpha_w}_{\br|_{\gal{F_w}}, 2}(\mc{O}/p^2)$  (i.e., satisfying the conditions in Definition \ref{unrtrivlifts}) for any choice of pair $(T_w, \alpha_w)$ of a split maximal torus and root. Moreover, we can choose the set $T$, with $T \setminus S$ a finite set of trivial primes, and the lifts $(\lambda_w)_{w \in T \setminus S}$, such that the elements $(\lambda_w(\sigma_w))_{w \in T \setminus S}$ generate $\wh{G}^{\mr{der}}(\mc{O}/p^2)$.
\end{lemma}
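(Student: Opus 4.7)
The proof consists of a local existence step and a global arrangement step, tied together by a Chebotarev density argument.

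\textbf{Local existence.} Fix a trivial prime $w$ and any pair $(T_w, \alpha_w)$, and write $N(w) \equiv 1 + a_w p \pmod{p^2}$ with $a_w \in \Fp^\times$. Define $\lambda_w$ by $\lambda_w(\tau_w) = 1$ (so the tame relation $\sigma_w \tau_w \sigma_w^{-1} = \tau_w^{N(w)}$ is automatic) and $\lambda_w(\sigma_w) = \exp(p t_w)$ for some $t_w \in \mf{t}_w \otimes_{\mc{O}} k$. The conditions of Definition \ref{unrtrivlifts} reduce to $\alpha_w(t_w) = a_w$ and $\beta(t_w) \neq 0$ for $\beta \in \Phi^{\alpha_w}$. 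These cut out a nonempty cofinite subset of the affine $k$-hyperplane $\{\alpha_w = a_w\} \subset \mf{t}_w \otimes k$: for $\beta = -\alpha_w$, $\beta(t_w) = -a_w \neq 0$ automatically; for $\beta \neq \pm \alpha_w$, $\beta$ is $k$-linearly independent of $\alpha_w$, so $\{\beta = 0\}$ meets $\{\alpha_w = a_w\}$ properly. Nonemptiness follows once $|k| > |\Phi(G^0, T_w)|$, which holds for $p \gg_G 0$.

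\textbf{Global arrangement.} The exponential furnishes an $\Fp$-vector space isomorphism $\wh{G}^{\mr{der}}(\mc{O}/p^2) \cong \fgder \otimes_{\mc{O}} k$, so the task is to produce trivial primes and lifts whose $t_w$'s $\Fp$-span $\fgder \otimes k$. Fix a split maximal torus $T_0 \subset G^{\mr{der}}$ over $\mc{O}$. By Zariski density of the regular semisimple locus, one finds $g_1, \ldots, g_s \in G^{\mr{der}}(\mc{O})$ with $\sum_j \Ad(g_j)(\mf{t}_0) = \fgder$; set $T_j := g_j T_0 g_j^{-1}$, again split over $\mc{O}$. It suffices to $\Fp$-span each $\mf{t}_j \otimes k$ by admissible $t_w$'s. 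The key linear-algebra observation is that for any two linearly independent roots $\alpha, \alpha' \in \Phi(G^{\mr{der}}, T_j)$, the $\Fp$-subspaces $\alpha^{-1}(\Fp)$ and $(\alpha')^{-1}(\Fp)$ together span $\mf{t}_j \otimes k$: completing $\{\alpha, \alpha'\}$ to a $k$-basis of $X^\bullet(T_j) \otimes k$ and taking the dual basis $e_1, \ldots, e_r$ of $\mf{t}_j \otimes k$, one has $\alpha^{-1}(\Fp) \supset \Fp e_1 + k e_2 + k e_3 + \ldots + k e_r$ and $(\alpha')^{-1}(\Fp) \supset k e_1 + \Fp e_2 + k e_3 + \ldots + k e_r$, whose sum is the full $k$-space. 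The further restrictions $\alpha(t) \in \Fp^\times$ and $\beta(t) \neq 0$ ($\beta \in \Phi^\alpha$) remove only a proper Zariski-closed subset and thus preserve the $\Fp$-span for $p \gg_G 0$.

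\textbf{Chebotarev and conclusion.} Having chosen admissible data $(T_j, \alpha_{j, \ell}, t_{j, \ell})$ with $t_{j, \ell}$'s collectively $\Fp$-spanning $\fgder \otimes k$ and $a_{j, \ell} := \alpha_{j,\ell}(t_{j,\ell}) \in \Fp^\times$, Chebotarev density supplies, for each $(j, \ell)$, infinitely many trivial primes $w$ (all automatically disjoint from $S$ and any chosen finite set) with $N(w) \equiv 1 + a_{j, \ell} p \pmod{p^2}$. The realizability of the Frobenius class rests, as in the earlier paragraph constructing auxiliary primes to annihilate $\Sha^1_T$, on the hypothesis $\mu_{p^2} \not\subset K$ from Assumption \ref{multfree} (which guarantees that $M \cap F(\mu_{p^2}) \subset F(\mu_p)$, where $M$ is the fixed field of $\br$, using the prime-to-$p$ bound on $[M:K]$ afforded by Assumption \ref{minimalp} and the fixed-multiplier constraint on $\br$). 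Setting $(T_{w_{j,\ell}}, \alpha_{w_{j,\ell}}, \lambda_{w_{j,\ell}})$ from the local step yields $\lambda_{w_{j, \ell}}(\sigma_{w_{j, \ell}}) = \exp(p t_{j, \ell})$, and these elements $\Fp$-generate $\wh{G}^{\mr{der}}(\mc{O}/p^2)$.

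The main obstacle is the linear-algebra point of the second paragraph: the Chebotarev conditions force $\alpha(t) \in \Fp^\times$ (not merely $k^\times$), and it is nonobvious a priori that such rationality restrictions, together with the fact that the roots of a single torus cannot distinguish $k$-lines from $\Fp$-lines individually, still permit $\Fp$-spanning of the $k$-vector space $\mf{t}_j \otimes k$ when $k \neq \Fp$—this is precisely what the two-root trick delivers.
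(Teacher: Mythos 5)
Your local-existence step is essentially the paper's: you work in the affine slice $\{\alpha_w = a_w\}$ of the Cartan and avoid the finitely many bad hyperplanes $\{\beta = 0\}$, $\beta \in \Phi^{\alpha_w}$, which is the same hyperplane-avoidance argument the paper runs (there phrased via $t_b = (1+pb)\alpha^\vee(q^{1/2})$ with $b \in \ker(\alpha)$). Your Chebotarev bookkeeping with prescribed $N(w) \bmod p^2$ is also fine in substance (the paper instead fixes the primes first and lets $c = \frac{N(w)-1}{p}$ be whatever it is, which is slightly cleaner but equivalent).

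The global-generation step, however, has a genuine gap. Your reduction is "it suffices to $\Fp$-span each $\mf{t}_j \otimes k$ by admissible elements," and the two-root trick you use to do this requires two $k$-linearly independent roots of the same torus. When $G^{\mr{der}}$ has semisimple rank one (e.g.\ $G = \mr{GL}_2$, $G^{\mr{der}} = \mr{SL}_2$, which is squarely within the scope of the lemma) and $k \neq \Fp$, this reduction cannot be carried out: every admissible element $t$ for the pair $(T,\alpha)$ satisfies $\alpha(t) \in \Fp^\times$, so all admissible elements of a given Cartan lie in the proper $\Fp$-subspace $\alpha^{-1}(\Fp)$ of the $k$-line $\mf{t}$, and there is no second linearly independent root available. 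So in that case no choice of lifts can $\Fp$-span a single $\mf{t}_j \otimes k$, and your strategy stalls even though the lemma is true there. The paper avoids this by never trying to span individual Cartans: it exhibits one admissible element $x$ for one pair $(T,\alpha)$, observes that $\Ad(g)x$ is admissible for $(\Ad(g)T, \Ad(g)\alpha)$ for every $g \in G(k)$, and notes that the $\Fp$-span of this orbit is a nonzero $\Fp[G(k)]$-submodule of $\fgder$, hence all of $\fgder$ because $\fgder$ is irreducible as an $\Fp[G(k)]$-module; the last point uses $k$-irreducibility together with the fact that the $\mr{Gal}(k/\Fp)$-twists ${}^\sigma\fgder$ are pairwise non-isomorphic as $k[G(k)]$-modules (the paper cites \cite[Lemma 3.8]{stp:exceptional2}). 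That is the missing ingredient: your argument needs either this orbit-plus-$\Fp$-irreducibility input (which handles all ranks uniformly, and in particular supplies the rank-one case by combining the $\Fp$-lines $\alpha^{-1}(\Fp)$ from sufficiently many conjugate Cartans), or a separate treatment of the rank-one, $k \neq \Fp$ case. For semisimple rank at least $2$ (or $k = \Fp$) your two-root argument does work, and it is a genuinely more elementary route than the paper's, at the cost of the case distinction.
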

\proof
Fix a trivial prime $w \not \in S$, and let $q= N(w)$. To construct some lift $\lambda_w$, fix any pair $(T, \alpha)$. Let $q^{1/2}$ denote the square-root of $q$ in $\mc{O}/p^2$ that is congruent to 1 modulo $p$. Consider elements of $\wh{T}^{\mr{der}}(\mc{O}/p^2)$ of the form $t_b= (1+pb)\alpha^\vee(q^{1/2})$ for $b \in \ker(\alpha)$. Any such $t_b$ satisfies $\alpha(t_b)=q$ and reduces to $1 \in G(k)$; we will find $b$ such that $\beta(t_b) \neq 1$ for all $\beta \in \Phi^{\alpha}$. If there is no $b_{\beta} \in \ker(\alpha)$ such that $\beta(b_{\beta})= -\frac{q-1}{2p}\langle \beta, \alpha^\vee \rangle$, then the condition $\beta(t_b) \neq 1$ will be satisfied automatically for any $b \in \ker(\alpha)$, so we now restrict to the subset $\Phi^{\alpha, *}$ of $\Phi^{\alpha}$ for which such $b_{\beta}$ do exist (and we fix one such $b_{\beta}$ for each $\beta$). Then we choose $b$ in the complement of the union of hyperplanes
\[
\bigcup_{\beta \in \Phi^{\alpha, *} \setminus \{-\alpha\}} \left(b_{\beta}+ \ker(\beta|_{\ker(\alpha)}) \right)
\]
inside $\ker(\alpha)$ (in the case where $\fg^{\mr{der}}$ is a product of simple factors, note that $\beta \in \Phi^\alpha$ implies $\beta$ is in the same simple factor as $\alpha$, and so $\ker(\beta|_{\ker(\alpha)})$ is indeed a hyperplane in $\ker(\alpha)$, since we take $\beta \neq -\alpha$). The total number of such hyperplanes is bounded in a way depending only on the Dynkin type of $G^{\mr{der}}$, so for $p \gg_G 0$, this complement is non-empty. Clearly for such a $b$, using avoidance of the hyperplanes $b_{\beta}+ \ker(\beta|_{\ker(\alpha)})$, we have $\beta(t_b) \neq 1$ for all $\beta \in \Phi^{\alpha}$.

For the second claim, it suffices to show that the union
\[
\bigcup_{T} \bigcup_{\alpha \in \Phi(G, T)} \{x \in \mf{t}: \alpha(x)= \frac{q-1}{p}\} \cap \left(\mf{t} \setminus \cup_{\beta \in \Phi^{\alpha}} \ker(\beta) \right)
\]
spans $\mf{g}^{\mr{der}}$ as $\mathbb{F}_p$-vector space, where the first union is taken over all $k$-rational split maximal tori of $G$. We easily reduce to the case where $\fgder$ is simple. It then suffices to show that for some pair $(T, \alpha)$, the intersection $\{x \in \mf{t}: \alpha(x)= \frac{q-1}{p}\} \cap \left(\mf{t} \setminus \cup_{\beta \in \Phi^{\alpha}} \ker(\beta) \right)$ is non-empty: given such an $x$, we get a corresponding element $\Ad(g)x$ (associated to the pair $(\Ad(g)\mf{t}, \Ad(g)\alpha)$) for each $g \in G(k)$, and then the result follows from irreducibility of $\wh{G}(\mc{O}/p^2) \cong \mf{g}^{\mr{der}}$ as an $\mathbb{F}_p[G(k)]$-module. This irreducibility claim follows from the irreducibility of $\fgder$ as $k[G(k)]$-module (a consequence of $p>3$ being very good) and the fact that for any $\sigma \in \Gal(k/\Fp)$, $\sigma(\fgder)$ is not isomorphic to $\fgder$ as $k[G(k)]$-module for $p>3$ (see \cite[Lemma 3.8]{stp:exceptional2}). 
\endproof

Having fixed the local mod $p^2$ lifts $\lambda$ as in the discussion preceding Lemma \ref{bigimage}, we have that for each $w \in T$ there is a class $z_w \in H^1(\gal{F_w}, \br(\mf{g}_{\mu}))$ such that 
\begin{equation}\label{lambdas}
(1+pz_w)\rho_2|_w \sim \lambda_w
\end{equation}
(with $\sim$ denoting strict equivalence). We wish to modify $\rho_2$ by a global cohomology class so that the resulting lift of $\br$ matches the specified local lifts $\lambda_w$. In fact, we will only need a somewhat weaker result that globally interpolates the $\br(\fg^{\mr{der}})$-components $z_w^{\mr{der}}$ of the classes $z_w$. Thus for the remainder of this section we focus on this problem; we explain the reduction to this setting in the proof of Theorem \ref{mainthm}.

If there exists a global class $h \in H^1(\gal{F, T}, \br(\fgder))$ mapping to $z_T^{\mr{der}}= (z_w^{\mr{der}})_{w \in T}$ under the localization map
\[
\Psi_T \colon H^1(\gal{F, T}, \br(\fgder)) \to \oplus_{w \in T} H^1(\gal{F_w}, \br(\fgder)),
\]
then we proceed to \S \ref{auxiliarysection}. For the remainder of this section, we assume there is no such $h$. Denote by $\Psi_T^*$ the corresponding localization map for $\br(\fgder)^*$.
To construct auxiliary primes, we will need the following lemma:
\begin{lemma}\label{lindisjoint}
Let $W$ be an irreducible $\mathbb{F}_p[\gal{F, T}]$-module such that $H^1(\Gal(F(W)/F), W)=0$.
Set $k_W= \End_{\mathbb{F}_p[\gal{F, T}]}(W)$ (a finite extension of $\mathbb{F}_p$, since $\mr{Br}(\Fp)=0$), and set $K= F(W, \mu_p)$. Let $\psi_1, \ldots, \psi_s$ be a $k_W$-basis of $H^1(\gal{F, T}, W)$. Then the fixed field $K_{\psi_1}, \ldots, K_{\psi_s}$ of the cocycles $\psi_i$ are strongly linearly disjoint over $K$, and for each $i$, $\Gal(K_{\psi_i}/K) \xrightarrow[\psi_i]{\sim} W$. If moreover $\mu_{p^2}$ is not contained in $K$, and $W$ is not isomorphic to the trivial representation, then for any $w \in W$ and any non-zero class $\psi \in H^1(\gal{F, T}, W)$, there exists a \v{C}ebotarev set of trivial primes $v$ such that $\psi(\sigma_v)=w$.
\end{lemma}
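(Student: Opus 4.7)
My plan is to combine inflation-restriction with the simple-module structure of $W$, and to conclude via \v{C}ebotarev. Let $K_0 = F(W)$, so that $K = K_0(\mu_p)$ and $[K:K_0]$ divides $p-1$. The inflation-restriction sequence for $K/K_0$,
\[
0 \to H^1(\Gal(K_0/F), W) \to H^1(\Gal(K/F), W) \to \Hom(\Gal(K/K_0), W)^{\Gal(K_0/F)},
\]
has leftmost term vanishing by hypothesis and rightmost term vanishing because $\Gal(K/K_0)$ has order prime to $p$ while $W$ is $p$-torsion; hence $H^1(\Gal(K/F), W) = 0$. Inflation-restriction for $K/F$ inside $\gal{F, T}$ then gives an injection $H^1(\gal{F, T}, W) \hookrightarrow \Hom(\gal{K, T}, W)^{\Gal(K/F)}$. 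In particular each $\psi_i|_{\gal{K, T}}$ is a nonzero $\Gal(K/F)$-equivariant homomorphism whose image is a nonzero $\Gal(K/F)$-submodule of the irreducible $W$, hence all of $W$. This yields the isomorphism $\Gal(K_{\psi_i}/K) \xrightarrow{\sim} W$.

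For the strong linear disjointness, I would establish surjectivity of the combined map $\Psi \colon \gal{K, T} \to W^{\oplus s}$, $g \mapsto (\psi_1(g), \ldots, \psi_s(g))$. Since $W$ is a simple $\Fp[\Gal(K/F)]$-module with endomorphism ring $k_W$, the double-centralizer theorem identifies the $\Fp[\Gal(K/F)]$-submodules of the isotypic module $W^{\oplus s}$ with $k_W$-subspaces of $k_W^s$, and $\Gal(K/F)$-equivariant linear forms $W^{\oplus s} \to W$ with tuples $(a_1, \ldots, a_s) \in k_W^s$ acting by $(w_1, \ldots, w_s) \mapsto \sum_i a_i w_i$. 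If $\Psi$ were not surjective, some nonzero such form would annihilate its image, giving $\sum_i a_i \psi_i|_{\gal{K, T}} = 0$; by the injectivity just proved, this would force $\sum_i a_i \psi_i = 0$ in $H^1(\gal{F, T}, W)$, contradicting the $k_W$-linear independence of the $\psi_i$.

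For the final assertion, I would produce $v$ whose Frobenius in $\Gal(K_\psi \cdot K(\mu_{p^2})/K)$ restricts to a generator of $\Gal(K(\mu_{p^2})/K)$ (giving $N(v) \equiv 1 \pmod p$ but $N(v) \not\equiv 1 \pmod{p^2}$) and to the element of $\Gal(K_\psi/K) \cong W$ corresponding to $w$. \v{C}ebotarev supplies such a Frobenius provided $K_\psi$ and $K(\mu_{p^2})$ are linearly disjoint over $K$, and verifying this is the main obstacle. The intersection $K_\psi \cap K(\mu_{p^2})$ is Galois over $F$ and corresponds to a $\Gal(K/F)$-stable subgroup of $\Gal(K_\psi/K) \cong W$; by irreducibility it equals either $K$ or $K_\psi$. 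The latter alternative would force $W$ to embed $\Gal(K/F)$-equivariantly into $\Gal(K(\mu_{p^2})/K)$, but the latter is a trivial $\Gal(K/F)$-module: the hypothesis $\mu_{p^2} \not\subset K$ identifies $\Gal(K(\mu_{p^2})/K)$ with $\Gal(F(\mu_{p^2})/F(\mu_p))$, a normal subgroup of the abelian group $\Gal(F(\mu_{p^2})/F)$ and therefore conjugation-invariant under $\Gal(F(\mu_p)/F)$, through which the $\Gal(K/F)$-action factors. Since we assumed $W$ is not the trivial representation, this case is excluded, and \v{C}ebotarev then produces the desired set of trivial primes.
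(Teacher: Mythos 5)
Your proof is correct and follows essentially the same route as the paper's: both rest on injectivity of restriction to $\gal{K}$ (you make explicit, via inflation-restriction and the prime-to-$p$ degree of $K/F(W)$, the vanishing $H^1(\Gal(K/F),W)=0$ that the paper uses implicitly) and on identifying $\Gal(K/F)$-equivariant maps out of copies of $W$ with $k_W$-tuples, your one-shot surjectivity of $(\psi_1,\ldots,\psi_s)$ onto $W^{\oplus s}$ being just a non-inductive packaging of the paper's induction. Your treatment of the last claim---that failure of linear disjointness of $K_{\psi}$ and $K(\mu_{p^2})$ over $K$ would force $W$ to be the trivial representation---is likewise the paper's argument.
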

\begin{proof}
We must show that restriction gives an isomorphism
\[
 \Gal(K_{\psi_1}\cdots K_{\psi_{s}}/K) \xrightarrow{\sim} \prod_{i=1}^{s} \Gal(K_{\psi_i}/K) \xrightarrow{\sim} \prod_{i=1}^{s} W.
\]
To see this, we induct on the number of factors. For $s=1$, the isomorphism follows from simplicity of the $\mathbb{F}_p[\gal{F}]$-module $W$ (note that $\psi_i|_{\gal{K}} \neq 0$). If the linear disjointness is known for $\psi_1, \ldots, \psi_i$, and if $K_{\psi_{i+1}}$ is contained in the composite $K_{\psi_1}\cdots K_{\psi_i}$, then we have a map of $\mathbb{F}_p[\gal{F, T}]$-modules
\[
 W^{\oplus i} \xleftarrow[\psi_1, \ldots, \psi_i]{\sim} \Gal(K_{\psi_1}\cdots K_{\psi_i}/K) \onto \Gal(K_{\psi_{i+1}}/K) \xrightarrow[\psi_{i+1}]{\sim} W.
\]
Since $W$ is irreducible, the composite $W^{\oplus i} \to W$ has the form $(a_1, \ldots, a_i)$ for some $a_i \in k_W$, and we deduce that $\psi_{i+1}= \sum_{j=1}^i a_j \psi_j$, contradicting linear independence. We conclude that $K_{\psi_{i+1}}$ is not contained in $K_{\psi_1} \cdots K_{\psi_i}$, but again since $W$ is irreducible this forces these fields to be linearly disjoint over $K$.

The last claim is clear if $K_{\psi}$ and $K(\mu_{p^2})$ are linearly disjoint over $K$. Otherwise, $K(\mu_{p^2})$ is contained in $K_{\psi}$, and so $W$ has the $\Fp[\gal{F}]$-quotient $\Gal(K(\mu_{p^2})/K)$ with trivial $\gal{F}$-action; by assumption, this quotient is non-zero, so that $W$ itself must be the trivial representation. 

%
\end{proof}

We next explain in Proposition \ref{klrstep1''} how to interpolate the class $z_T^{\mr{der}}$ by a global class after allowing ramification at a finite number of additional primes. An important technical point in the proof of Proposition \ref{doublingprop} requires that we impose an additional (at this point rather unmotivated) condition on our trivial primes. Let $K'$ be the composite of all abelian $p$-extensions $L$ of $K$ that are Galois over $F$ and that satisfy
\begin{itemize}
\item $L/F$ is unramified outside $T$;
\item and the $\Fp[\Gal(K/F)]$-module $\Gal(L/K)$ is isomorphic to one of the $W_i$.
\end{itemize}
Because the extensions $L/F$ are unramified outside $T$ with absolutely bounded degree, $K'$ is a finite extension of $F$. Primes split in $K'$ are of course also split in $K$, and $K'$ and $K(\mu_{p^{2}})$ are linearly disjoint over $K$ since no $W_i$ is the trivial representation. In what follows, we will refer to trivial primes split in $K'$ as $K'$-trivial primes.
\begin{prop}\label{klrstep1''} Continue to assume that  $\Sha^1_T(\gal{F, T},\br(\fgder))=0$ and $\Sha^1_T(\gal{F, T},\br(\fgder)^*)=0$, and hence by duality that $\Sha^2_T(\gal{F, T}, \br(\fgder))=0$.
Then there is an $\Fp$-basis $\{Y_i\}_{i=1}^r$ of the cokernel of the restriction map $\Psi_T \colon H^1(\gal{F, T}, \br(\fgder)) \rightarrow \oplus_{v \in T}  H^1(\gal{F_v}, \br(\fgder))$, and, for each $i$, a \v{C}ebotarev set $C_i$ of $K'$-trivial primes $v \not \in T$, a maximal torus $T_i$ and root $\alpha_i \in \Phi(G^0, T_i)$, and for each $v \in C_i$ a class $h^{(v)} \in H^1(\gal{F, T\cup v}, \br(\fgder))$ such that
\begin{itemize}
\item $h^{(v)}|_T= Y_i$; and
\item $h^{(v)}(\tau_v)$ spans $\mf{g}_{\alpha_i}$.
\end{itemize}
\end{prop}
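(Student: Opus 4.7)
Set $V=\br(\fgder)$. By Poitou-Tate and the assumed vanishing of $\Sha^1_T(V)$, $\Sha^1_T(V^*)$, and (by duality) $\Sha^2_T(V)$, the nine-term sequence identifies $\coker(\Psi_T) \cong \ker\bigl(H^1(\gal{F,T}, V^*)^\vee \to H^2(\gal{F,T}, V)\bigr)$, with a class $Y$ corresponding to the functional $\lambda_Y(\psi) = \sum_{w \in T} \inv_w(Y|_w \cup \psi|_w)$. For a $K'$-trivial prime $v \notin T$, $\Sha^1_T(V) = 0$ yields the short exact sequence $0 \to H^1(\gal{F,T}, V) \to H^1(\gal{F, T \cup v}, V) \xrightarrow{h \mapsto h(\tau_v)} V \to 0$, and global reciprocity together with Lemma \ref{localduality} (using that any extension of $\psi \in H^1(\gal{F, T}, V^*)$ to $\gal{F, T \cup v}$ is unramified at $v$) shows that the image in $\coker(\Psi_T)$ of any $h \in H^1(\gal{F, T \cup v}, V)$ equals $\phi_v(h(\tau_v))$, where $\phi_v\colon V \to \coker(\Psi_T)$ is given by $\phi_v(t)(\psi) = -\langle t, \psi(\sigma_v)\rangle$. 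Producing $h^{(v)}$ with $h^{(v)}(\tau_v) = X_{\alpha_i}$ spanning $\fg_{\alpha_i}$ and $h^{(v)}|_T \equiv Y_i \pmod{\im(\Psi_T)}$ therefore reduces to arranging $\phi_v(X_{\alpha_i}) = Y_i$ in $\coker(\Psi_T)$.

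The nine-term sequence splits along the isotypic decomposition $V^* = \bigoplus_i (W_i^*)^{m_i}$, giving $\coker(\Psi_T) = \bigoplus_i \coker(\Psi_T)_i$ with each summand embedded in $H^1(\gal{F, T}, (W_i^*)^{m_i})^\vee$. Choose an $\Fp$-basis $\{Y_i\}_{i=1}^r$ of $\coker(\Psi_T)$ adapted to this decomposition, so that each $\lambda_{Y_i}$ is supported on a single isotypic $(W^*_{\eta(i)})^{m_{\eta(i)}}$. For each $i$, pick a maximal torus $T_i$ of $G^0$ and a root $\alpha_i \in \Phi(G^0, T_i)$ such that the projection of $X_{\alpha_i}$ to the $W_{\eta(i)}$-isotypic of $V$ is nonzero; such $(T_i, \alpha_i)$ exists because $\bigcap_T \mf{t}^{\mr{der}}$, which is the center of $\fgder$, vanishes since $p$ is very good, so no non-zero $\gal{F}$-constituent of $V$ lies in every Cartan.

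Fix an $\Fp$-basis $\psi_1, \ldots, \psi_s$ of $H^1(\gal{F, T}, V^*)$ with each $\psi_j$ taking values in a single isotypic constituent. The condition $\phi_v(X_{\alpha_i}) = Y_i$ becomes $\langle X_{\alpha_i}, \psi_j(\sigma_v)\rangle = -\lambda_{Y_i}(\psi_j)$ for all $j$: for $\psi_j$ in the $\eta(i)$-th isotypic this is a codimension-one constraint on $\psi_j(\sigma_v) \in (W^*_{\eta(i)})^{m_{\eta(i)}}$; for $\psi_j$ in any other isotypic the right-hand side is zero and the left-hand side constraint on $\psi_j(\sigma_v)$ is either trivial (if $X_{\alpha_i}$ has zero projection to that isotypic) or codimension-one. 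By Lemma \ref{lindisjoint}, applied constituent-by-constituent to $V^*$ using $H^1(\Gal(K/F), V^*) = 0$, the fixed fields $K_{\psi_j}$ are strongly linearly disjoint over $K$; moreover $K'$ is linearly disjoint over $K$ from each $K_{\psi_j}$ (since $V$ and $V^*$ share no irreducible $\Fp[\gal{F}]$-constituent) and from $K(\mu_{p^2})$ (since $\Gal(K(\mu_{p^2})/K)$ is a trivial $\Gal(K/F)$-module, whereas no constituent of $\Gal(K'/K)$ is trivial by Assumption \ref{multfree}). A Chebotarev density argument then produces a positive-density set $C_i$ of $K'$-trivial primes $v$ whose Frobenius data realizes all the prescribed values.

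For each $v \in C_i$, any lift of $X_{\alpha_i}$ through the surjection $H^1(\gal{F, T \cup v}, V) \twoheadrightarrow V$ yields a class $h^{(v)}$ with $h^{(v)}(\tau_v) = X_{\alpha_i}$ spanning $\fg_{\alpha_i}$ and $h^{(v)}|_T \equiv Y_i \pmod{\im(\Psi_T)}$; subtracting a suitable global class from $H^1(\gal{F, T}, V)$ (which is unramified at $v$, hence does not change $h^{(v)}(\tau_v)$) achieves $h^{(v)}|_T = Y_i$ exactly. The main technical obstacle is the construction in the second paragraph of $(T_i, \alpha_i)$ adapted to the isotypic of $\lambda_{Y_i}$, along with the verification that the various Chebotarev conditions (splitting in $K'$, non-splitting in $K(\mu_{p^2})$, and prescribing the $\psi_j(\sigma_v)$) can be imposed simultaneously; both depend essentially on the no-common-constituent and no-trivial-constituent hypotheses of Assumption \ref{multfree}, together with the very-good-prime condition on $p$.
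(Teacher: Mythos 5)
Your overall framework is the right one — identifying $\coker(\Psi_T)$ with $H^1(\gal{F,T},\br(\fgder)^*)^\vee$ via Poitou--Tate, and computing the image of a class $h \in H^1(\gal{F,T\cup v},\br(\fgder))$ in the cokernel as $\psi \mapsto -\langle h(\tau_v),\psi(\sigma_v)\rangle$ using reciprocity and Lemma \ref{localduality} — but there is a genuine gap at the central step, and it is exactly the multiplicity issue this proposition is designed to dodge. You prescribe the basis $\{Y_i\}$ (adapted to the isotypic decomposition) \emph{in advance} and then try to realize each functional $\lambda_{Y_i}$ exactly as $\psi \mapsto -\langle X_{\alpha_i},\psi(\sigma_v)\rangle$ for a single prime $v$, justified only by (i) strong linear disjointness of the $K_{\psi_j}$ over $K$ and (ii) a root vector with nonzero projection to the relevant isotypic. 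Both fail once a constituent $W$ occurs with multiplicity $m>1$. For (i), Lemma \ref{lindisjoint} is a statement about a $k_W$-basis of $H^1(\gal{F,T},W^*)$ for a single irreducible $W^*$; a basis of $H^1(\gal{F,T},(W^*)^{\oplus m})$ contains classes that are the same underlying class placed in different copies, whose fixed fields coincide, so the values $\psi_j(\sigma_v)$ cannot be prescribed independently. For (ii), writing the $W$-isotypic of $\br(\fgder)$ as $W\otimes\Fp^m$ and $X_{\alpha_i}=\sum_t X_t\otimes e_t$, the functionals you can realize with this fixed $X_{\alpha_i}$ (for any choice of $v$) are constrained: for each basis class $\psi_a$ of $H^1(\gal{F,T},W^*)$ the tuple $\bigl(\lambda(\psi_a\otimes e_t)\bigr)_t\in\Fp^m$ must lie in the image of $w^*\mapsto(\langle X_t,w^*\rangle)_t$, whose dimension is the rank of $X_1,\dots,X_m$. ``Nonzero projection to the isotypic'' only gives rank one, so for $m>1$ most prescribed $\lambda_{Y_i}$ are simply unreachable unless you produce a root vector whose copy-projections are linearly independent — which you do not address and which is precisely the kind of statement that fails in the problematic cases (compare Remark \ref{multfreermk} and the remark following Proposition \ref{splitcase}: hitting prescribed targets with single primes is what forces multiplicity-freeness, and this proposition exists to avoid that restriction). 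Your justification for the choice of $(T_i,\alpha_i)$ via ``the intersection of all Cartans is the center'' is also not quite the needed statement (what is needed is that root vectors span $\fgder$, so some root vector avoids the sum of the other isotypics), though that is minor.

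The paper avoids the problem by never prescribing the $Y_i$: it runs an induction producing auxiliary primes $v_1,\dots,v_r$ subject only to \emph{non-vanishing} conditions $\psi_i(\sigma_{v_i})\notin(\Fp X_{\alpha_i})^\perp$ (easy to realize by Chebotarev), takes $Y_i$ to be \emph{whatever} vector in the image of the auxiliary Selmer group $H^1_{L_{v_i}}(\gal{F,T\cup v_i},\br(\fgder))$ happens to fall outside $\im(\Psi_T)$, and then defines $\mc{C}_i$ by the conditions ``$\psi_i(\sigma_v)\notin(\Fp X_{\alpha_i})^\perp$ and $\omega_{i,k}(\sigma_v)\in(\Fp X_{\alpha_i})^\perp$,'' whose simultaneous consistency is certified by the witness $v_i\in\mc{C}_i$; the identity $H^1_{L_{v}^\perp}=H^1_{L_{v_i}^\perp}$ (both equal the span of the $\omega_{i,k}$) then guarantees $Y_i$ is hit at every $v\in\mc{C}_i$. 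If you want to keep your functional-theoretic formulation, the fix is structurally the same: first show the classes $\phi_v(X_\alpha)$, over all $K'$-trivial $v$ and root vectors $X_\alpha$, span $\coker(\Psi_T)$ (this follows from the fact that root vectors span $\fgder$ together with the no-common-constituent and $H^1(\Gal(K/F),\cdot)=0$ hypotheses), choose the basis $Y_i$ from among these \emph{achievable} classes with witness primes $v_i$, and define $C_i$ as the set of $K'$-trivial primes whose Frobenius matches that of $v_i$ in the composite of $K'$, $K(\mu_{p^2})$ and the fixed fields of a basis of $H^1(\gal{F,T},\br(\fgder)^*)$ — nonempty because $v_i$ witnesses it. As written, however, the proposal's key realizability claim is unsupported and false in the presence of multiplicities.
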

\begin{rmk}\label{multfreermk}
 One can ask whether it is possible to hit the class $z_T^{\mr{der}}$ by allowing only one additional prime of ramification; this is how the analogous argument in \cite{ramakrishna-hamblen} (for reducible two-dimensional $\br$) works. We have only been able to show such a statement when $\br(\fgder)$ is multiplicity-free as an $\Fp[\gal{F}]$-module, and even then only at the expense of arguments considerably more technical than those given here. Proposition \ref{klrstep1''} allows us to avoid this image restriction.
\end{rmk}

\begin{proof}

Under the assumptions on $T$, the Poitou-Tate sequence yields a short exact sequence
\[
0 \to H^1(\gal{F, T}, \br(\fgder)) \xrightarrow{\Psi_T} \bigoplus_{v \in T} H^1(\gal{F_v}, \br(\fgder)) \to \left( H^1(\gal{F, T}, \br(\fgder)^*)\right)^\vee \to 0.
\]
In particular, if $\dim_{\Fp} \coker(\Psi_T)=r$ is non-zero, then $H^1(\gal{F, T}, \br(\fgder)^*)$ contains a non-zero class $\psi_1$. We claim that we can choose a triple $(T_1, \alpha_1, X_{\alpha_1})$ consisting of a maximal torus $T_1$, a root $\alpha_1 \in \Phi(G^0, T_1)$, and a root vector $X_{\alpha_1} \in \mf{g}_{\alpha_1}$ such that $\psi_1(\gal{K'})$ is not contained in $(\Fp X_{\alpha_1})^\perp$ (note that we work with the $\Fp$-span of $X_{\alpha_1}$ rather than the full root space). Indeed, for any $\Fp$-subspace $U$ not equal to the whole of $\br(\fgder)$, there is a root vector not in $U$. To check this, we must check that the $\Fp$-span, or equivalently the $k$-span, of all root vectors in $\fgder$ is equal to the whole of $\fgder$. This claim in turn reduces to the case in which $\fgder$ is simple, where again (using $p \gg_G 0$) it follows from irreducibility of $\fgder$ as a $k[G(k)]$-module. Thus to find the desired triple it suffices to note that $\psi_1(\gal{K'})$ is non-trivial, using the fact that $\br(\fgder)$ and $\br(\fgder)^*$ have no common sub-quotient.

Now we let $\mc{C}_1$ be the collection of $K'$-trivial primes $v$ such that $\psi_1(\sigma_v)$ is not in $(\Fp X_{\alpha_1})^\perp$, and for each $v_1 \in \mc{C}_1$ let $L_{v_1}= \{\phi \in H^1(\gal{F_{v_1}}, \br(\fgder)): \phi(\tau_{v_1}) \in \Fp X_{\alpha_1}\}$. A few applications of the Greenberg-Wiles formula imply that 
\[
\ker\left(H_{L_{v_1}}(\gal{F, T \cup v_1}, \br(\fgder)) \to \bigoplus_{v \in T} H^1(\gal{F_v}, \br(\fgder))\right)=0
\]
(this Selmer group notation means that we impose no condition at the places in $T$), the cokernel of this restriction map has dimension $r-1$, $h^1_{L_{v_1}}(\gal{F, T \cup v_1}, \br(\fgder))-h^1(\gal{F, T}, \br(\fgder))=1$, and $h^1_{L_{v_1}^\perp}(\gal{F, T}, \br(\fgder)^*)=r-1$. Now, if $r-1>0$, then for each $v_1 \in \mc{C}_1$ we can choose a non-zero $\psi_2 \in H^1_{L_{v_1}^\perp}(\gal{F, T}, \br(\fgder)^*)$. Note that $\psi_2$ depends on $v_1$. Then we can repeat the above argument, choosing $(T_2, \alpha_2, X_{\alpha_2})$ such that $\psi_2(\gal{K'})$ is not contained in $(\Fp X_{\alpha_2})^\perp$, and then define a \v{C}ebotarev set $\mc{C}_2(v_1)$ (the notation includes the dependence on the initial choice of $v_1$) as the set of $K'$-trivial $v$ such that $\psi_2(\sigma_v) \not \in (\Fp X_{\alpha_2})^\perp$. The same argument with the Greenberg-Wiles formula shows that 
\[
\ker\left(H^1_{L_{v_1}, L_{v_2}}(\gal{F, T \cup v_1 \cup v_2}, \br(\fgder)) \to \bigoplus_{v \in T} H^1(\gal{F_v}, \br(\fgder)) \right)=0,
\]
and consequently that the dimension of the cokernel of this map is now $r-2$. Proceeding inductively, we obtain \v{C}ebotarev sets $\mc{C}_s(v_{s-1})$, depending on $v_{s-1} \in \mc{C}_{s-1}(v_{s-2})$ (and so on), for $s=1, \ldots, r$, such that for all tuples $(v_1, \ldots, v_r)$ with each $v_s \in \mc{C}_s(v_{s-1})$, the restriction map
\[
H^1_{L_{v_1}, \ldots, L_{v_r}}(\gal{F, T \cup v_1, \ldots, v_r}, \br(\fgder)) \to \bigoplus_T H^1(\gal{F_v}, \br(\fgder))
\]
is surjective. 

In particular, the above argument produces an $\Fp$-basis $\psi_1, \ldots, \psi_r$ of $H^1(\gal{F, T}, \br(\fgder)^*)$, a collection of root vectors $X_{\alpha_1}, \ldots, X_{\alpha_r}$, and a collection of elements $Y_1, \ldots, Y_r \in \oplus_T H^1(\gal{F_v}, \br(\fgder))$ that map to a basis of $\coker(\Psi_T)$: for $Y_i$, we take \textit{any} vector in the image of $H^1_{L_{v_i}}(\gal{F, T \cup v_i}, \br(\fgder)) \to \oplus_T H^1(\gal{F_{v_i}}, \br(\fgder))$ that is not in $\im(\Psi_T)$. (These still span $\coker(\Psi_T)$ because if $\wt{Y}_i$ denotes a lift to $H^1_{L_{v_i}}(\gal{F, T}, \br(\fgder))$ of $Y_i$, the $\{\wt{Y}_i\}_i$ span $\coker(H^1(\gal{F, T}, \br(\fgder))\to H^1_{L_{v_1}, \ldots, L_{v_r}}(\gal{F, T \cup v_1, \ldots, v_r}, \br(\fgder))$, since they are independent for ramification reasons.) For each $i$, we also can fix an $\Fp$-basis $\omega_{i, 1}, \ldots, \omega_{i, r-1}$ of $H^1_{L_{v_i}^\perp}(\gal{F, T}, \br(\fgder)^*)$. Now we define the following \v{C}ebotarev condition:
\[
\mc{C}_i=\left \{\text{$K'$-trivial primes $v$ such that $\psi_i(\sigma_v) \not \in (\Fp X_{\alpha_i})^\perp$ and $\omega_{i, k}(\sigma_v) \in (\Fp X_{\alpha_i})^\perp$ for all $k=1, \ldots, r-1$}\right\}.
\]
We know that $v_i \in \mc{C}_i$, so each $\mc{C}_i$ is in fact a non-empty \v{C}ebotarev condition (without this observation, the conditions defining $\mc{C}_i$ could be incompatible). Now, for all $v \in \mc{C}_i$, we define $L_v$ as before to be those classes $\phi \in H^1(\gal{F_v}, \br(\fgder))$ such that $\phi(\tau_v) \in \Fp X_{\alpha_i}$ and deduce an exact sequence
\[
0 \to H^1_{L_v}(\gal{F, T \cup v}, \br(\fgder)) \to \bigoplus_{w \in T} H^1(\gal{F_w}, \br(\fgder)) \to (H^1_{L_v^\perp}(\gal{F, T}, \br(\fgder)^*))^\vee \to 0;
\]
indeed, injectivity of the first map follows from the same Euler-characteristic argument as above (using that $\psi_i|_v \not \in L_v^\perp$), the composite is clearly zero, and then exactness follows from counting dimensions. We claim that $Y_i$ lies in the image of $H^1_{L_v}(\gal{F, T \cup v}, \br(\fgder))$, for which it suffices to check that $Y_i$ annihilates $H^1_{L_v^\perp}(\gal{F, T}, \br(\fgder)^*)$. We know that $Y_i$ annihilates $H^1_{L_{v_i}^\perp}(\gal{F, T}, \br(\fgder)^*)$ (using exactness of the above sequence for $v_i$), so it suffices (and is in fact necessary) to observe that 
\[
H^1_{L_{v_i}^\perp}(\gal{F, T}, \br(\fgder)^*)= H^1_{L_{v}^\perp}(\gal{F, T}, \br(\fgder)^*);
\]
this holds because both subspaces of $H^1(\gal{F, T}, \br(\fgder)^*)$ are equal to the span of $\omega_{i, 1}, \ldots, \omega_{i, r-1}$.
\end{proof}
We will also need the following simpler variant of Proposition \ref{klrstep1''}.
\begin{lemma}\label{gens}
Continue with the hypotheses of Proposition \ref{klrstep1''}. Let $Z \in \fgder$ be any non-zero element. There is a \v{C}ebotarev set $\mc{C}$ of $K'$-trivial primes and for each $v \in \mc{C}$ a class $h^{(v)} \in H^1(\gal{F, T \cup v}, \br(\fgder))$ such that
\begin{itemize}
\item the restriction $h^{(v)}|_T$ is independent of $v \in \mc{C}$; and
\item $h^{(v)}(\tau_v)$ spans the line $\Fp Z$.
\end{itemize}
\end{lemma}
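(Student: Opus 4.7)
The plan is to mirror the Poitou--Tate and dual Selmer argument of Proposition \ref{klrstep1''}, but organized around the prescribed direction $Z$ rather than around root vectors that we get to choose. Fix an $\Fp$-basis $\psi_1,\ldots,\psi_r$ of $H^1(\gal{F, T}, \br(\fgder)^*)$, and let $(\Fp Z)^\perp \subset \br(\fgder)^*$ denote the annihilator of $Z$ under the canonical pairing. Define $\mc{C}$ to be the set of $K'$-trivial primes $v \notin T$ for which $\psi_i(\sigma_v) \in (\Fp Z)^\perp$ for all $i$. By Lemma \ref{lindisjoint} the fixed fields $K_{\psi_1},\ldots,K_{\psi_r}$ are strongly linearly disjoint over $K$, and together with the disjointness from $K'$ and $K(\mu_{p^2})$ afforded by Assumption \ref{multfree} (no common sub-quotient between $\br(\fgder)$ and $\br(\fgder)^*$, no trivial constituent), this shows $\mc{C}$ is a nonempty \v{C}ebotarev set; for example, imposing $\psi_i(\sigma_v) = 0$ for every $i$ is compatible.

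For each $v \in \mc{C}$, choose a cocycle $\xi_v \in H^1(\gal{F_v}, \br(\fgder))$ with $\xi_v(\sigma_v) = 0$ and $\xi_v(\tau_v) = Z$, and set
\[
L_v := H^1_{\unr}(\gal{F_v}, \br(\fgder)) \oplus \Fp \cdot \xi_v.
\]
Using Lemma \ref{localduality} one computes
\[
L_v^\perp = \{\psi \in H^1_{\unr}(\gal{F_v}, \br(\fgder)^*) : \psi(\sigma_v) \in (\Fp Z)^\perp\}.
\]
Consider the Selmer system $\mc{L}$ on $\gal{F, T \cup v}$ with $\mc{L}_w = 0$ for $w \in T$ and $\mc{L}_v = L_v$. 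A dual Selmer class is forced to be unramified at $v$ (since $L_v^\perp \subset H^1_{\unr}$) and so lies in $H^1(\gal{F, T}, \br(\fgder)^*)$; the remaining condition at $v$ is automatic on this subspace by the defining property of $\mc{C}$. Hence $H^1_{\mc{L}^\perp}(\gal{F, T \cup v}, \br(\fgder)^*) = H^1(\gal{F, T}, \br(\fgder)^*)$ has $\Fp$-dimension $r$.

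Comparing with the baseline in which $L_v$ is replaced by $L_v^{0} := H^1_{\unr}(\gal{F_v}, \br(\fgder))$, the primal Selmer group becomes $\Sha^1_T(\gal{F, T}, \br(\fgder))$, which vanishes by hypothesis, while the dual Selmer group remains $H^1(\gal{F, T}, \br(\fgder)^*)$ of dimension $r$. The Greenberg--Wiles formula shows that enlarging $L_v^0$ to $L_v$ (increasing $\dim L_v$ by $1$ while leaving $\dim H^0(\gal{F_v}, \br(\fgder))$ unchanged) increases $\dim H^1_{\mc{L}} - \dim H^1_{\mc{L}^\perp}$ by exactly $1$. Therefore $\dim H^1_{\mc{L}} = 1$.

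Finally, take $h^{(v)}$ to be any nonzero element of $H^1_{\mc{L}}$. The condition $\mc{L}_w = 0$ forces $h^{(v)}|_T = 0$, which is independent of $v \in \mc{C}$. If $h^{(v)}$ were unramified at $v$, it would lie in $\Sha^1_T(\gal{F, T}, \br(\fgder)) = 0$, a contradiction; so its image in $L_v / H^1_{\unr}$ is a nonzero multiple of the class of $\xi_v$, and hence $h^{(v)}(\tau_v)$ is a nonzero scalar multiple of $Z$, spanning $\Fp Z$. The main technical point is ensuring that the combined \v{C}ebotarev condition defining $\mc{C}$ is nonempty, which relies on Lemma \ref{lindisjoint} together with Assumption \ref{multfree}; once that is secured, the dimension count is a routine Greenberg--Wiles computation.
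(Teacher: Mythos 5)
Your proof is correct, but it takes a genuinely different route from the paper's, and it is worth spelling out the difference. The paper defines the Selmer system with \emph{no} condition at the places of $T$, selects one $\psi \in H^1(\gal{F, T}, \br(\fgder)^*)$ whose image on $\gal{K'}$ escapes $(\Fp Z)^\perp$, and chooses auxiliary primes $v$ with $\psi(\sigma_v) \notin (\Fp Z)^\perp$ (and $\omega_j(\sigma_v) \in (\Fp Z)^\perp$ for a basis $\omega_j$ of the codimension-one dual Selmer subspace). This kills one dual Selmer dimension, so the restriction-to-$T$ map becomes \emph{strictly} larger than $\im(\Psi_T)$ and one extracts a $Y \notin \im(\Psi_T)$; the $\omega_j$ conditions are there precisely to pin down $Y$ independently of $v$. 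You instead impose the \emph{zero} condition at $T$ and choose the \emph{opposite} \v{C}ebotarev condition, $\psi_i(\sigma_v) \in (\Fp Z)^\perp$ for all $i$ in a basis of the global $H^1$ of $\br(\fgder)^*$. Then the dual Selmer group stays equal to the full $H^1(\gal{F, T}, \br(\fgder)^*)$, and a Greenberg--Wiles count forces the primal Selmer group with these conditions to be exactly one-dimensional; any nonzero element must be ramified at $v$ (else it lands in $\Sha^1_T = 0$) and its restriction to $T$ vanishes by construction. This produces $Y = 0$, which is permitted by the statement (only independence of $v$ is required, and the way the lemma is used downstream only uses this). Your version is somewhat cleaner: it avoids having to locate a $\psi$ with $\psi(\gal{K'}) \not\subset (\Fp Z)^\perp$ (hence no appeal to the enlargement ensuring $H^1(\gal{F, T}, W_i^*) \neq 0$), and it avoids introducing the basis $\omega_j$ and the compatibility concern the paper flags for the joint \v{C}ebotarev condition, because your conditions are trivially simultaneously satisfiable. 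What you give up is the extra information $Y \notin \im(\Psi_T)$, which the paper obtains but which, as you observe, the downstream argument in Proposition \ref{doublingprop} never uses. The nonemptiness of your $\mc{C}$ does still rely on the same disjointness input (Lemma \ref{lindisjoint} and the no-common-subquotient hypothesis) to ensure $K(\mu_{p^2})$ is not captured by $K' \prod_i K_{\psi_i}$; you gesture at this correctly but it is the one place where the one-line justification should be expanded.
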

\begin{proof}
Recall from the discussion preceding Lemma \ref{localmodp^2} that we have enlarged $T$ to ensure that for all $i \in I$, $H^1(\gal{F, T}, W_i^*) \neq 0$. Since $(\Fp Z)^\perp \subset (\fgder)^*$ is a proper subspace, it does not contain some isotypic piece $(W_i^*)^{\oplus m_i}$, hence it does not contain some $\gal{F, T}$-equivariantly embedded $W_i^* \hookrightarrow (W_i^*)^{\oplus m_i}$, and so there is a $\psi \in H^1(\gal{F, T}, \br(\fgder)^*)$ such that $\psi(\gal{K'})$ is not contained in $(\Fp Z)^\perp$ (namely, a $\psi$ supported on a suitable copy of $W_i^*$). We can now repeat the argument of Proposition \ref{klrstep1''}. In brief, fix a $K'$-trivial prime $v_1$ such that $\psi(\sigma_{v_1})$ does not belong to $(\Fp Z)^\perp$, and as before define $L_{v_1}$ to be the set of $\phi \in H^1(\gal{F_{v_1}}, \br(\fgder))$ such that $\phi(\tau_{v_1}) \in \Fp Z$. The same analysis shows that there is an element
\[
Y \in \im(H^1_{L_{v_1}}(\gal{F, T \cup v_1}, \br(\fgder)) \to \bigoplus_{w \in T} H^1(\gal{F_w}, \br(\fgder)) \setminus \im(\Psi_T),
\]
and that if we let $\omega_1, \ldots, \omega_s$ be a basis of the (codimension 1) subspace $H^1_{L_{v_1}^\perp}(\gal{F, T}, \br(\fgder)^*)\subset H^1(\gal{F, T}, \br(\fgder)^*)$, then
\[
\mc{C}_Z= \{\text{$K'$-trivial primes $v: \psi(\sigma_v) \not \in (\Fp Z)^\perp$ and $\omega_j(\sigma_v) \in (\Fp Z)^\perp$ for all $i=1, \ldots, s$}\}
\]
is a \textit{non-empty} (because $v_1 \in \mc{C}_Z$) \v{C}ebotarev condition. Then as in Proposition \ref{klrstep1''}, we also see that for all $v \in \mc{C}_Z$ there is a class $h^{(v)} \in H^1_{L_v}(\gal{F, T \cup v}, \br(\fgder))$ such that $h^{(v)}|_T= Y$, and $h^{(v)}(\tau_v)$ spans $\Fp Z$.
\end{proof}
Now we fix any finite set of root vectors (for possibly different split maximal tori) $\{X_{\alpha_a}\}_{a \in A}$ such that
\[
\sum_{a \in A} \Fp[\gal{F}]X_{\alpha_a}= \fgder.
\]
(Such a collection $\{X_{\alpha_a}\}$ clearly exists, since for any proper subspace $U$ of $\fgder$, there is some root vector not in $U$: see the proof of Proposition \ref{klrstep1''}.) Lemma \ref{gens} yields \v{C}ebotarev sets $\mc{C}_a= \mc{C}_{X_{\alpha_a}}$ and classes $Y_a \in \bigoplus_{w \in T} H^1(\gal{F_w}, \br(\fgder))$ such that for all $v \in \mc{C}_a$, there is a class $h^{(v)} \in H^1(\gal{F, T \cup v}, \br(\fgder))$ satisfying $h^{(v)}(\tau_v) \in \Fp X_{\alpha_a} \setminus 0$ and $h^{(v)}|_T=Y_a$. Consider the class
\[
(z_T^{\mr{der}})'= z_T^{\mr{der}}- \sum_{a \in A} Y_a.
\]
This new element may or may not be in the image of $\Psi_T$, but we can in any case invoke Proposition \ref{klrstep1''} to produce a finite set $\{Y_b\}_{b \in B} \subset \bigoplus_{w \in T} H^1(\gal{F_w}, \br(\fgder))$ that spans $\coker(\Psi_T)$ over $\Fp$, and, for each $b\in B$, a \v{C}ebotarev set $\mc{C}_b$ of $K'$-trivial primes and a root vector $X_{\alpha_b}$, and for each $v \in \mc{C}_b$ a class $g^{(v)}\in H^1(\gal{F, T \cup v}, \br(\fgder))$ such that $g^{(v)}|_T=Y_b$ and $g^{(v)}(\tau_v) \in \Fp X_{\alpha_b} \setminus 0$ (the reason for the shift in notation to $g^{(v)}$ will become apparent at the end of this paragraph). In particular, 
we can write
\[
(z_T^{\mr{der}})'= h^{\mr{old}}+ \sum_{b \in B} c_bY_b
\]
for some class $h^{\mr{old}} \in H^1(\gal{F, T}, \br(\fgder))$ and some $c_b \in \Fp$. We discard those $b \in B$ such that $c_b=0$. Thus, for all tuples 
\[
(v_a)_{a \in A} \times (v_b)_{b \in B} \in \prod_{a \in A} \mc{C}_a \times \prod_{b \in B} \mc{C}_b,
\]
we can write 
\[
z_T^{\mr{der}}= h^{\mr{old}}|_T+\sum_{a \in A} h^{(v_a)}|_T + \sum_{b \in B} c_b g^{(v_b)}|_T.
\]
Note that the vectors $h^{(v_a)}(\tau_{v_a})$ are non-zero multiples of $X_{\alpha_a}$ for all $a \in A$, so the collection $\{h^{(v_a)}(\tau_{v_a})\}_{a \in A}$ is a set of $\Fp[\gal{F}]$-generators of $\fgder$. Having made note of this, we will in the argument that follows not need to preserve the distinction between the sets $A$ and $B$, so we set $N= A \cup B$. In order to preserve this uniformity of notation, for all $b \in B$ and $v \in \mc{C}_b$ we set $h^{(v)}= c_b g^{(v)}$, so we can re-express the above equality as
\[
z_T^{\mr{der}}= h^{\mr{old}}|_T+ \sum_{n \in N} h^{(v_n)}|_T
\]
for any $\un{v}=(v_n)_{n \in N} \in \prod_{n \in N} \mc{C}_n$.

We will need to argue in terms of Dirichlet densities of $N$-tuples of primes. In what follows, we define the Dirichlet density of a subset $P$ of $\{\text{primes of $F$}\}^N$ to be (if it exists)
\[
\delta(P)= \lim_{s \to 1^+} \frac{\sum_{\underline{v} \in P} N(\underline{v})^{-s}}{\sum_{\text{all $\underline{v}$}} N(\underline{v})^{-s}},
\]
where $N(\underline{v})= \prod_{n \in N} N(v_n)$. In particular, the density of a product $P= \prod_{n \in N} P_n$ of sets $P_n$ of primes exists if each $P_n$ has a density, and in this case $\delta(P)= \prod_{n \in N} \delta(P_n)$. We make a corresponding definition of upper Dirichlet density $\delta^+(P)$ of a set of $N$-tuples of primes. In particular, the preceding discussion yields a \v{C}ebotarev set $\mc{C}= \prod_{n \in N} \mc{C}_n$ of positive Dirichlet density.

The following argument substantially uses global duality, and we need to preface with a technical clarification of what coefficients we can take in the duality pairings. We have the $\Fp[\gal{F}]$-isotypic decomposition 
\[
\br(\fgder)= \bigoplus_{i \in I} V_i = \bigoplus_{i \in I} W_i^{\oplus m_i},
\]
where the various $W_i$ are mutually non-isomorphic irreducible $\Fp[\gal{F}]$-modules with endomorphism algebras $k_{W_i} = \mr{End}_{\Fp[\gal{F}]}(W_i)$ (a finite extension of $\Fp$). We may (and do) fix an isomorphism of $V_i$
with $W_i \otimes_{\Fp}\mbb{F}_{p^{m_i}}$ as
$k_{W_i}[\gal{F}]$-modules (with trivial Galois action on
$\mbb{F}_{p^{m_i}}$). This gives $V_i$ the structure of an
$A_i[\gal{F}]$-module, where
$A_i := k_{W_i} \otimes_{\Fp} \mbb{F}_{p^{m_i}}$, with $V_i$ being
finite free as an $A_i$-module. In the sequel, duals and duality
pairings will be considered with respect to this fixed structure.\footnote{We
  use the trace maps to identify duals over the various etale
  $\Fp$-algebras that we consider.} 

\begin{prop}\label{doublingprop}
 There is a finite set of $K'$-trivial
  primes $Q$ disjoint from $T$ and a class
  $h \in H^1(\gal{F, T \cup Q}, \br(\fgder))$ such that
  $h|_T = z_T^{\mr{der}}$ and $(1+ph)\rho_2|_w$ belongs to
  $\Lift^{\nu, \alpha_w}_{\br|_w,2, \mr{ram}}(\mc{O}/p^2)$ for all
  $w \in Q$ (for some $\alpha_w \in \Phi(G^0,T_w)$, which may depend on $w \in Q$).
\end{prop}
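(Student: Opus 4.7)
The plan is to seek a tuple $\un{v} = (v_n)_{n \in N} \in \mc{C} := \prod_{n \in N} \mc{C}_n$, set $Q = \{v_n : n \in N\}$, and take $h$ to be $h_{\un{v}} := h^{\mr{old}} + \sum_{n \in N} h^{(v_n)}$, possibly modified by a correction class coming from a second tuple $\un v' \in \mc{C}$ (the doubling). The restriction $h_{\un v}|_T = z_T^{\mr{der}}$ is automatic, and at each $w = v_n \in Q$ the only ramified summand is $h^{(v_n)}$, so $h_{\un v}(\tau_w) = h^{(v_n)}(\tau_w) \in \Fp X_{\alpha_n} \setminus \{0\}$, which automatically secures the $\tau$-part of Definition \ref{ramtrivlifts}.

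The central task is to enforce the $\sigma$-part at each $w = v_n$. Since $\br|_{\gal{F_w}} = 1$, write $\rho_2(\sigma_w) = 1 + pM_w$ for some $M_w \in \mf{g}$; then
\[
(1+p h_{\un v}) \rho_2|_w(\sigma_w) \equiv 1 + p\bigl(h_{\un v}(\sigma_w) + M_w\bigr) \pmod{p^2}.
\]
One must choose a pair $(T_w, \alpha_w)$ with $X_{\alpha_n} \in \mf{g}_{\alpha_w}$, and arrange that $h_{\un v}(\sigma_w) + M_w$ lies in $\mf{t}_w$ with $\beta(h_{\un v}(\sigma_w) + M_w) \neq 0$ for every $\beta \in \Phi^{\alpha_w}$. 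Mod $p^2$ there is no freedom from $\wh{G}$-conjugation, since it acts trivially on $\wh{G}(\mc{O}/p^2)$, so the condition must hold on the nose.

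To produce the required tuple I use the doubling method. Varying $\un v$ alone is not enough, as the joint condition across all components $v_n$ is not a single \v{C}ebotarev condition---in particular because the cocycle $h^{(v_n)}$ itself varies with $v_n$, so its fixed field varies, and one cannot encode all the simultaneous constraints in the Frobenius in a fixed extension. Instead I also introduce a second tuple $\un v' \in \mc{C}$: the difference $h_{\un v'} - h_{\un v}$ vanishes on $T$ and is ramified only at $\{v_n\} \cup \{v_n'\}$, so adding a suitable multiple of it to $h_{\un v}$ provides a candidate $h$ with unchanged $T$-restriction but altered $\sigma$-values at the enlarged set $Q = \{v_n\} \cup \{v_n'\}$. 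The strong linear disjointness of the fields $K_{\psi_i}$ over $K$ supplied by Lemma \ref{lindisjoint}, combined with the \v{C}ebotarev design of each $\mc{C}_n$ from Proposition \ref{klrstep1''} (in which $\psi_i(\sigma_{v_n}) \notin X_{\alpha_n}^\perp$ while the orthogonal-complement cocycles $\omega_{i,k}$ evaluate into $X_{\alpha_n}^\perp$), gives enough independence in the relevant evaluations as $\un v'$ varies. An upper-Dirichlet-density argument over pairs $(\un v, \un v') \in \mc{C} \times \mc{C}$, bounding the failure densities for the individual $\sigma$-conditions, then produces the required pair.

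The main obstacle is handling multiplicities in $\br(\fgder) = \bigoplus_i W_i^{\oplus m_i}$. When $m_i > 1$, cocycles supported on distinct copies of $W_i$ are correlated, and no direct \v{C}ebotarev selection can control their evaluations independently. Here the $A_i[\gal{F}]$-module structure on $V_i = W_i \otimes_{\Fp} \mathbb{F}_{p^{m_i}}$ (with $A_i := k_{W_i} \otimes_{\Fp} \mathbb{F}_{p^{m_i}}$), set up just above the statement, organises the duality pairings correctly, while the inclusion of the auxiliary field $K'$ in the \v{C}ebotarev conditions (ruling out extra $p$-abelian extensions of $K$ of $W_i$-type) prevents accidental dependencies among the fixed fields of the various cocycles. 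It is precisely this multiplicity phenomenon that makes the upper-density/limiting argument, rather than a direct \v{C}ebotarev selection, unavoidable.
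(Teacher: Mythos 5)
Your overall architecture matches the paper's (start from $h^{\mr{old}}+\sum_n h^{(v_n)}$, double with a second tuple $\un{v}'$, finish with an upper-density limiting argument over $K'$), but there is a genuine gap at the heart of the argument: you never explain how to control the quantities $\sum_{n} h^{(v'_n)}(\sigma_{v_m})$, i.e.\ the evaluations of the \emph{new}, $\un{v}'$-dependent cocycles at the \emph{already fixed} Frobenius elements $\sigma_{v_m}$. This is exactly the non-\v{C}ebotarev half of the problem: the class $h^{(v'_n)}$ changes with $v'_n$, so its value on a fixed $\sigma_{v_m}$ is not dictated by the splitting behaviour of $v'_n$ in any fixed extension, and the independence you invoke (Lemma \ref{lindisjoint} for the $\psi_i$ and the design of the sets $\mc{C}_n$ in Proposition \ref{klrstep1''}) says nothing about these values. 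The paper's resolution is a global reciprocity computation: it introduces auxiliary dual classes $\eta^{(v_m)}_{i,j}$ lifting a basis of $H^1(\gal{F,T\cup v_m},W_i^*)/H^1(\gal{F,T},W_i^*)$ (so ramified at $v_m$), and uses the sum-of-invariants formula together with Lemma \ref{localduality} to rewrite $\langle \eta^{(v_m)}_{i,j}(\tau_{v_m}), h^{(v'_n)}_i(\sigma_{v_m})\rangle$ in terms of $\langle \eta^{(v_m)}_{i,j}(\sigma_{v'_n}), X_{n,i}\rangle$ plus terms that are already fixed. Since the $\eta^{(v_m)}_{i,j}$ depend only on $\un{v}$, the latter pairings \emph{are} \v{C}ebotarev conditions on $\un{v}'$, and their fixed fields are disjoint from those governing the other condition $\sum_n h^{(v_n)}(\sigma_{v'_m})=C'_m$. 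Without this conversion (or a substitute for it), your "suitable multiple of $h_{\un v'}-h_{\un v}$" cannot be steered into the form required by Definition \ref{ramtrivlifts} at the old primes $v_m$.

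Two secondary omissions feed into the same point. First, before doubling one must pass to a positive upper-density subset $\mf{l}\subset\mc{C}$ on which the tuples $(\sum_n h^{(v_n)}(\sigma_{v_m}))_m$, $(h^{\mr{old}}(\sigma_{v_m}))_m$ and $(h^{(v_n)}(\tau_{v_n}))_n$ are constant; this is what pins down the "diagonal" terms $\sum_n h^{(v'_n)}(\sigma_{v'_m})$ when $\un{v}'$ is also taken in $\mf{l}$, and it is why the limiting argument must produce a pair with $\un{v}'\in\mf{l}\cap\mf{l}_{\un{v}}$ (a set that is \emph{not} \v{C}ebotarev), rather than merely "bounding failure densities of the individual $\sigma$-conditions" as you suggest. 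Second, you should justify that admissible target values $C_m, C'_m$ exist at all, i.e.\ that $h(\sigma_{v_m})+M_{v_m}$ can be made to land in a maximal torus with $\beta\neq 1$ on all of $\Phi^{\alpha_m}$ for $p\gg_G 0$; the paper does this by the hyperplane-avoidance argument of Lemma \ref{bigimage}. Your treatment of the $\tau$-part, of the triviality of $\wh{G}$-conjugation mod $p^2$, and of the role of $K'$ and the $A_i$-module structure is consistent with the paper, but as written the proposal does not close the central step.
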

\begin{proof}
We have seen that there is a class $h^{\mr{old}} \in H^1(\gal{F, T}, \br(\fgder))$ such that for any $N$-tuple $\un{v}= (v_n)_{n \in N} \in \mc{C}= \prod_{n \in N} \mc{C}_n$, the global class $h(\un{v})=h^{\mr{old}}+ \sum_{n \in N} h^{(v_n)}$ satisfies $h(\un{v})|_T= z^{\mr{der}}_T$. Since we cannot say anything about the restrictions $h(\un{v})|_{v_n}$, we will use the ``doubling method" of \cite{klr} to find the desired $Q$ and $h$. To that end, for any two $N$-tuples $\un{v}, \un{v}' \in \mc{C}$, we consider the class
\begin{equation}\label{doublingeqtn}
h= h^{\mr{old}}- \sum_{n \in N} h^{(v_n)}+ 2 \sum_{n \in N} h^{(v'_n)} \in H^1(\gal{F, T \cup \{v_n\} \cup \{v'_n\}}, \br(\fgder)),
\end{equation}
which still satisfies $h|_T= z_T^{\mr{der}}$ (and the inertial conditions dictated by the construction of the classes $h^{(v_n)}$). The argument will show that for a suitable choice of $\un{v}$ and $\un{v}'$, $h$ will satisfy the conclusion of the Proposition with the set $Q$ equal to $\{v_n\}_{n \in N} \cup \{v'_n\}_{n \in N}$.

We first restrict to a positive upper-density subset $\mf{l} \subset \mc{C}$ (now no longer necessarily a product of \v{C}ebotarev sets) such that the $N$-tuples $(\sum_{n \in N} h^{(v_n)}(\sigma_{v_m}))_{m \in N}$, $(h^{\mr{old}}(\sigma_{v_m}))_{m \in N}$, and $(h^{(v_n)}(\tau_{v_n}))_{n \in N}$ are independent of the choice of $\un{v} \in \mf{l}$; this is possible since as we vary over $\mc{C}$, these $N$-tuples all take finitely many values. In particular, we write $X_n$ for the now independent-of-$\un{v}$ value $h^{(v_n)}(\tau_{v_n})$ (for all $n \in N$, this is a non-zero multiple of $X_{\alpha_n}$). Recall the decomposition $\br(\fgder)= \bigoplus_{i \in I} V_i = \bigoplus_{i \in I} W_i^{\oplus m_i}$
into $\Fp[\gal{F}]$-isotypic components. For $i \in I$ we let $X_{n, i}$ denote the $V_i$-component of $X_n$. By construction, therefore, we have
\[
\sum_{n \in N} \Fp[\gal{F}] X_{n, i}= V_i.
\]

We will show that for any fixed $N$-tuples $(C_m)_{m \in N}$ and $(C'_m)_{m \in N}$ of elements of $\fgder$, there exist $\un{v}, \un{v}' \in \mf{l}$ such that
\begin{align}\label{doubling}
&\sum_{n \in N} h^{(v'_n)}(\sigma_{v_m})= C_m, \\ \label{doubling2}
&\sum_{n \in N} h^{(v_n)}(\sigma_{v'_m})= C'_m,
\end{align}
for all $m \in N$. This will suffice to prove the Proposition, since, by Equation (\ref{doublingeqtn}), it will allow us to prescribe the values $h(\sigma_{v_m})$ and $h(\sigma_{v'_m})$ for all $m \in N$; we then 
choose the $C_m$ and $C'_m$ such that the values $(1+ph)\rho_2(\sigma_{v_m})$ and $(1+ph)\rho_2(\sigma_{v'_m})$ satisfy the conditions of Definition \ref{ramtrivlifts} (with $\alpha= \alpha_m$, of course). We can make such a choice by the argument of Lemma \ref{bigimage}.   

We will now study the condition, for \textit{fixed} $\un{v}= (v_n)_{n \in N} \in \mf{l}$, imposed on $\un{v'}$ by Equations (\ref{doubling}) and (\ref{doubling2}), beginning with Equation (\ref{doubling2}). For each $n \in N$, consider the maximal Galois extension $K^{(v_n)}$ of $F$ inside $K_{h^{(v_n)}}$ that is unramified at $v_n$; this contains $K$, and 
\begin{equation}\label{span}
\sum_{n \in N} h^{(v_n)}(\Gal(K_{h^{(v_n)}}/K^{(v_n)}))= \fgder,
\end{equation}
since the $n$-component of this sum contains $X_n$ and is $\Fp[\gal{F}]$-stable (only the $n \in A$ are needed to guarantee Equation (\ref{span}) holds).

For each $m \in N$, we consider the \v{C}ebotarev condition $\mf{w}_m$ on trivial primes $w$ requiring that $w$ split in all $K^{(v_n)}$ and that
\[
\sum_{n \in N} h^{(v_n)}(\sigma_w)= C_m.
\]
Since the composite of the fields $K^{(v_n)}$ is still unramified at each $v_n$, Equation (\ref{span}) implies that this condition is non-empty. Moreover, since $K_{h^{(v_n)}} \cap K'=K^{(v_n)}$, $\mf{w}_m$ induces a non-empty \v{C}ebotarev condition $\mf{w}'_m$ where we further impose the condition that all primes in $\mf{w}'_m$ are split in $K'$.

Now we turn to the condition needed to satisfy Equation (\ref{doubling}). For all $m \in N$ and $i \in I$, letting $\{\eta^{(v_m)}_{i, j}\}_{j=1}^{d_i}$ be elements of $H^1(\gal{F, T \cup v_m}, W_i^*)$ that lift a $k_{W_i}$-basis of $H^1(\gal{F, T \cup v_m}, W_i^*)/H^1(\gal{F, T}, W_i^*)$, we have for all $m, n, i, j$ the global duality relation
\begin{align*}
\langle \eta^{(v_m)}_{i, j}(\tau_{v_m}), h_i^{(v'_n)}(\sigma_{v_m}) \rangle&= -\sum_{x \in T} \langle \eta^{(v_m)}_{i, j}, h_i^{(v'_n)}\rangle_x - \langle \eta^{(v_m)}_{i, j}(\sigma_{v'_n}) , h_i^{(v'_n)}(\tau_{v'_n})\rangle,\\
&= -\sum_{x \in T} \langle \eta^{(v_m)}_{i, j}, h_i^{(v_n)}\rangle_x - \langle \eta^{(v_m)}_{i, j}(\sigma_{v'_n}) , X_{n, i}\rangle,
\end{align*}
where we systematically work with the $A_i$-linear pairings. Summing over $n$, we want to show that for all $m \in N$, $i \in I$, $j=1, \ldots, d_i$, we can prescribe by a \v{C}ebotarev condition (depending on our fixed $\un{v}$) on $\un{v}' \in \mf{l}$ the values
\[
\sum_{n \in N} \langle \eta^{(v_m)}_{i, j}(\sigma_{v'_n}), X_{n, i} \rangle \in A_i,
\]
for then we can achieve the same for the values
\[
\langle \eta^{(v_m)}_{i, j}(\tau_{v_m}), \sum_{n \in N} h_i^{(v'_n)}(\sigma_{v_m}) \rangle.
\]
Prescribing these values for varying $m, i, j$ will allow us to achieve the equality of Equation (\ref{doubling}).

The splitting fields $K_{\eta^{(v_m)}_{i, j}}$ are strongly linearly disjoint over $K$ as we vary $m\in N$, $i\in I$, and $j=1, \ldots, d_i$,\footnote{Since the $W_i^*$ are irreducible, the fields are disjoint as $m$ varies because $K_{\eta^{(v_m)}_{i, j}}$ is ramified at $v_m$ (and not at $v_{m'}$ for $m' \neq m$); they are disjoint as $i$ varies because the $W_i^*$ are mutually non-isomorphic; and they are disjoint as $j$ varies by Lemma \ref{lindisjoint}.} so it suffices for this last claim to note that for any fixed non-zero vector $w_i^* \in W_i^*$ we have
\[
\sum_{n \in n} \langle W_i^*, X_{n, i} \rangle= \sum_{n \in n} \langle \Fp[\gal{F}]w_i^*, X_{n, i} \rangle = \sum_{n \in n} \langle w_i^*, \Fp[\gal{F}]X_{n, i} \rangle= \langle w_i^*, V_i \rangle= A_i.
\]

The \v{C}ebotarev condition on $\un{v}'$ thus obtained is independent from the \v{C}ebotarev condition $\prod_{m \in N} \mf{w}'_m$ determined above, since $K'$ and the $K_{h^{(v_n)}}$ are strongly linearly disjoint from the $K_{\eta^{(v_m)}_{i, j}}$ ($\br(\fgder)$ and $\br(\fgder)^*$ have no $\Fp[\gal{F}]$-subquotient in common). In sum, we obtain a non-empty \v{C}ebotarev condition $\mf{l}_{\un{v}}$ on tuples $\un{v}'$ of $K'$-trivial primes such that the desired equalities
\begin{align*}
&\sum_{n \in N} h^{(v'_n)}(\sigma_{v_m})= C_m \\
&\sum_{n \in N} h^{(v_n)}(\sigma_{v'_m})= C'_m
\end{align*}
hold for any $\un{v}' \in \mf{l} \cap \mf{l}_{\un{v}}$. We have no assurance that this intersection is non-empty, so now we must invoke the limiting logic of \cite{klr} and \cite{ramakrishna-hamblen} that allows the doubling method to succeed. 
If for each member of a finite subset $\{\un{v}_1, \ldots, \un{v}_s\} \subset \mf{l}$, the intersection $\mf{l} \cap \mf{l}_{\un{v}_k}$ is empty, then $\mf{l} \setminus \{\un{v}_1, \ldots, \un{v}_s\}$ is contained in $\mf{l} \cap \bigcap_{k=1}^s \overline{\mf{l}_{\un{v}_k}}$. We will control the upper-density of this latter intersection. For each $k=1, \ldots, s$, let $K_{h^{(\underline{v}_k)}}$ denote the composite of the fields $K_{h^{(\un{v}_{k, n})}}$ for $n \in N$, and let $K_{\eta^{(\un{v}_k)}}$ denote the composite of the fields $K_{\eta^{(\un{v}_{k, n})}_{i, j}}$ for $n \in N$, $i \in I$, $j=1, \ldots, d_i$. For fixed $k$, the fields $K_{h^{(\underline{v}_k)}}$ and $K_{\eta^{(\un{v}_k)}}$ are linearly disjoint over $K$, and $\mf{l}_{\un{v}_k}$ is a \v{C}ebotarev condition in their composite. As $k$ varies, the $K_{\eta^{(\un{v}_k)}}$ will be strongly linearly disjoint, but the $K_{h^{(\un{v}_k)}}$ may not be disjoint over $K$. This is where the field $K'$ becomes significant.

We now replace the fields $K_{h^{(\underline{v}_k)}}$ and $K_{\eta^{(\un{v}_k)}}$ with their composites $K'_{h^{(\underline{v}_k)}}$ and $K'_{\eta^{(\un{v}_k)}}$ with $K'$. 
We will finally be able to make the limiting argument by observing that, even as $k$ varies, the fields $K'_{h^{(\underline{v}_k)}}$ and $K'_{\eta^{(\un{v}_k)}}$ are now all strongly linearly disjoint over $K'$.
In the intersection $\mc{C} \cap \bigcap_{k=1}^s \overline{\mf{l}_{\un{v}_k}}$, each term $\mc{C} \cap \overline{\mf{l}_{\un{v}_k}}$ is for some finite extension $L_k/K'$ a \v{C}ebotarev condition on primes in $F$ picking out a proper subset of elements of $\Gal(L_k/K')$: for the properness, note that each $\overline{\mf{l}_{\un{v}_k}}$ is a union of complements of the proper conditions we have imposed on each $K'_{h^{(\un{v}_{k, n})}}$ for $n \in N$ and $K'_{\eta^{(\un{v}_k)}}$ for $n \in N$, $i \in I$, $j=1, \ldots, d_i$, so we get a proper condition by the disjointness of these fields over $K'$. Also note that the degrees of the extensions $L_k/K'$ are bounded independently of $\un{v}_k \in \mf{l}$, in terms of $\# \fgder$, $\# N$, and $\sum_{i \in I} d_i$. Finally, we can conclude that 
\[
\delta^+(\mf{l} \setminus \{\un{v}_1, \ldots, \un{v}_s\}) \leq \delta(\mc{C})(1- \varepsilon)^s
\]
for some $\varepsilon > 0$. Letting $s$ tend to infinity, 
we see that $\delta^+(\mf{l})$ is less than any positive number, contradicting the fact that $\mf{l}$ has positive upper-density. We conclude that for some $\un{v} \in \mf{l}$, there is a $\un{v}' \in \mf{l} \cap \mf{l}_{\un{v}}$, and so the proof is complete.
\end{proof}
The arguments of this section yield a generalization to any reductive group of the main theorem of \cite{klr}. We sketch here a somewhat simplified version:
\begin{cor}\label{infiniteram}
Let $\br \colon \gal{F, S} \to G(k)$ satisfy Assumption \ref{multfree}, except we do not require that $K$ does not contain $\mu_{p^2}$. (In particular, the results of \S \ref{groupsection} will show that for $p \gg_G 0$, it suffices here to assume $\br|_{\gal{\tF(\zeta_p)}}$ is absolutely irreducible.) Assume for simplicity that $G=G^0$, and fix a lift $\nu \colon \gal{F, S} \to A(\mc{O})$ of $\mu \circ \br$. Assume that for all $v \in S$, there are lifts $\rho_v \colon \gal{F_v} \to G(\mc{O})$ of $\br|_{\gal{F_v}}$ with multiplier $\nu$. Then there exists an infinitely ramified lift
\[
\xymatrix{
& G(\mc{O}) \ar[d] \\
\gal{F} \ar@{-->}[ur]^{\rho} \ar[r]_{\br} & G(k)
}
\]
such that $\rho|_{\gal{F_v}}= \rho_v$ for all $v \in S$, and $\rho(\gal{F})$ contains $\wh{G^{\mr{der}}}(\mc{O})$.
\end{cor}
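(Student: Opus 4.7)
The plan is to construct $\rho$ as the inverse limit of a compatible system of mod $p^n$ lifts $\rho_n \colon \gal{F, T_n} \to G(\mc{O}/p^n)$ with similitude character $\nu \pmod{p^n}$, satisfying $\rho_n|_{\gal{F_v}} \equiv \rho_v \pmod{p^n}$ for every $v \in S$, where $(T_n)_{n \geq 2}$ is a strictly increasing sequence of finite sets of primes with $T_n \setminus S$ consisting of trivial primes in the broader sense of Remark \ref{infinitetriv} (only requiring $N(w) \equiv 1 \pmod p$, and split in the analogue of the field $K'$ introduced before Proposition \ref{klrstep1''}). Because $T_n$ strictly grows, the limit $\rho = \varprojlim \rho_n$ will be infinitely ramified. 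The key tool at every stage is the doubling machinery of Propositions \ref{klrstep1''} and \ref{doublingprop}; weakening the trivial-prime condition as in Remark \ref{infinitetriv} is precisely what removes the need for the $\mu_{p^2}$ hypothesis, since the refined mod $p^2$ structure used in \S \ref{auxiliarysection} plays no role in this corollary.

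For the base case $n=2$, first enlarge $S$ to a set $T_2$ by trivial primes so that $\Sha^1_{T_2}(\gal{F, T_2}, \br(\fgder))$ and $\Sha^1_{T_2}(\gal{F, T_2}, \br(\fgder)^*)$ vanish (Assumption \ref{multfree} plus Lemma \ref{lindisjoint}), and at each such new prime $v$ fix a Ramakrishna-type local lift $\lambda_v \in \Lift^{\nu, \alpha_v}_{\br|_{\gal{F_v}}}(\mc{O}/p^2)$ (Lemma \ref{trivfixeddet}). Applying Lemma \ref{localmodp^2} at $v \in S$ and using the vanishing of $\Sha^2_{T_2}(\gal{F,T_2}, \br(\fgm))$ produces some mod $p^2$ lift $\rho_2'$ with multiplier $\nu$. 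Comparison with $\rho_v \pmod{p^2}$ at each $v \in S$ (and with $\lambda_v$ at $v \in T_2 \setminus S$) produces classes in $H^1(\gal{F_v}, \br(\fgm))$ whose $\mf{z}_\mu$-components vanish because multipliers agree. Applying the doubling method of Proposition \ref{doublingprop}, with the ramified-form requirement at auxiliary primes dropped (it is needed only for \S \ref{auxiliarysection}), interpolates the $\br(\fgder)$-parts by a global class $h \in H^1(\gal{F, T_2}, \br(\fgder))$ after possibly enlarging $T_2$ further; set $\rho_2 = (1 + ph)\rho_2'$.

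For the inductive step, suppose $\rho_n$ has been built with $\rho_n|_{\gal{F_v}} \in \Lift^{\nu, \alpha_v}_{\br|_{\gal{F_v}}}(\mc{O}/p^n)$ for each auxiliary $v \in T_n \setminus S$. Local obstructions to lifting $\rho_n|_{\gal{F_v}}$ to $G(\mc{O}/p^{n+1})$ with multiplier $\nu$ vanish at $v \in S$ (where $\rho_v$ itself provides such a lift) and at trivial primes in $T_n \setminus S$ (by the formal smoothness of Lemma \ref{trivsmooth}). The global obstruction therefore lies in the appropriate $\Sha^2_{T_n}(\gal{F, T_n}, \br(\fgm))$, which we kill by enlarging $T_n$ to $T_{n+1}$ via further trivial primes. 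Choose some lift $\rho_{n+1}' \colon \gal{F, T_{n+1}} \to G(\mc{O}/p^{n+1})$ of $\rho_n$ with multiplier $\nu$ and satisfying the prescribed Ramakrishna-type conditions. The local discrepancies between $\rho_{n+1}'|_{\gal{F_v}}$ and $\rho_v \pmod{p^{n+1}}$ at $v \in S$ are encoded by classes in $H^1(\gal{F_v}, \br(\fgder))$ (the $n$-th graded piece of the filtration on $\wh{G}$ is $\br(\fgm) = \br(\fgder) \oplus \mf{z}_\mu$, with trivial $\mf{z}_\mu$-part because multipliers agree). A second application of the doubling method produces $h_n \in H^1(\gal{F, T_{n+1}}, \br(\fgder))$ matching these local classes (after possibly enlarging $T_{n+1}$), and we set $\rho_{n+1} = (1 + p^n h_n)\rho_{n+1}'$.

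The limit $\rho = \varprojlim \rho_n$ satisfies $\rho|_{\gal{F_v}} = \rho_v$ for all $v \in S$ by construction, and is infinitely ramified because $T_n$ strictly grows. For the image assertion, at each inductive stage we further enlarge $T_{n+1}$ by additional trivial primes $w$ and, using the flexibility at trivial primes together with the argument of Lemma \ref{bigimage}, prescribe $\rho_{n+1}(\sigma_w)$ so that collectively these Frobenius images surject onto $\ker\!\bigl(\wh{G^{\mr{der}}}(\mc{O}/p^{n+1}) \to \wh{G^{\mr{der}}}(\mc{O}/p^n)\bigr)$; a standard compactness / lifting-of-surjections argument then gives $\rho(\gal{F}) \supseteq \wh{G^{\mr{der}}}(\mc{O})$. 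The main obstacle is verifying that Proposition \ref{doublingprop} transfers cleanly to every level: the mod-$p$ cohomology is level-independent, but one must ensure that the various \v{C}ebotarev density assertions remain compatible with the image-prescription step and the accretion of $T_n$, and that replacing ``trivial prime'' by the weaker notion of Remark \ref{infinitetriv} does not disturb the linear-disjointness and cohomological vanishing inputs used throughout.
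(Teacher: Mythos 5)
Your overall strategy --- iterate the doubling method of Proposition \ref{doublingprop}, using trivial primes only in the relaxed sense of Remark \ref{infinitetriv}, accreting ramification at each level and passing to the inverse limit --- is exactly the paper's, but your inductive step has a genuine gap. At each stage you match the prescribed lifts only at the places of $S$, and you handle the previously introduced auxiliary primes by decreeing that $\rho_{n+1}'$ ``satisfies the prescribed Ramakrishna-type conditions'' there; this is not something you can simply choose (arranging it is itself an interpolation problem of doubling type), and in any case the subsequent twist by $1+p^n h_n$ has uncontrolled restrictions at those old auxiliary primes, so it can push the local lift out of the formally smooth family $\Lift^{\nu,\alpha}_{\br|_{\gal{F_w}}}$. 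Mere membership of $\rho_n|_{\gal{F_w}}$ in $\Lift^{\nu,\alpha}_{\br|_{\gal{F_w}}}(\mc{O}/p^n)$ does not make every further lift of it unobstructed --- $H^2(\gal{F_w}, \br(\fg))\neq 0$ at these primes --- so the induction can stall at a later level. The paper avoids this by fixing, at the moment each auxiliary prime $w$ is introduced, a characteristic-zero local lift $\rho_w \in \Lift^{\nu,\alpha}_{\br|_{\gal{F_w}}}(\mc{O})$ (possible by the formal smoothness of Lemma \ref{trivsmooth}, which, as noted in the remark following it, does not require $N(w)\not\equiv 1 \pmod{p^2}$), and then at every level applying Proposition \ref{doublingprop} with target the tuple of reductions of these $\rho_v$ over the \emph{entire} current ramification set, not just over $S$; the new primes created at that level receive their own $\rho_w$ and join the target set at the next level. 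With that modification your induction becomes the paper's proof.

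Two smaller points. For the image statement the paper does nothing at higher levels: once $\im(\rho_2) \supseteq \wh{G^{\mr{der}}}(\mc{O}/p^2)$ (arranged as in Lemma \ref{bigimage} together with the doubling step), the group-theoretic argument at the end of the proof of Theorem \ref{mainthm} (the $p$-th power trick) shows that any subgroup of $\wh{G^{\mr{der}}}(\mc{O}/p^{n+1})$ surjecting onto $\wh{G^{\mr{der}}}(\mc{O}/p^n)$ already contains the kernel, so $\im(\rho_n) \supseteq \wh{G^{\mr{der}}}(\mc{O}/p^n)$ propagates automatically; your level-by-level prescription of Frobenius elements generating each kernel is unnecessary, and its compatibility with the constraints imposed in the doubling step would need a separate check. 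Finally, since $G=G^0$ here we have $\mf{z}_{\mu}=0$ and $\fgm=\fgder$, so the discrepancy classes are $\br(\fgder)$-valued for that reason; agreement of multipliers only places them in $H^1(\gal{F_v}, \br(\fgm))$ and does not by itself kill a $\mf{z}_{\mu}$-component.
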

\begin{rmk}
In this degree of generality, it is not known, but certainly expected, that local lifts $\rho_v$ as above always exist.
\end{rmk}
\begin{proof}
See Remark \ref{infinitetriv} for an explanation of the slight modification of our hypotheses. For any $G(\mc{O})$-valued representation $\lambda$, write $\lambda_n$ for its reduction modulo $p^n$. Applying Proposition \ref{doublingprop}, we can find a lift $\rho_2 \colon \gal{F, T \cup Q_1} \to G(\mc{O}/p^2)$ such that $\rho_2|_{\gal{F_v}}= \rho_{v, 2}$ for all $v \in S$, and for all $v \in T \cup Q_1 \setminus S$, $\rho_2|_{\gal{F_v}}$ admits a lift $\rho_v$ to $G(\mc{O})$ (by Lemma \ref{trivsmooth}). We then iterate the argument of Proposition \ref{doublingprop}: there are no obstructions to lifting $\rho_2$ to $G(\mc{O}/p^2)$, and then by introducing further trivial primes $Q_2$ of ramification we may find a lift $\rho_3 \colon \gal{F, T \cup Q_1 \cup Q_2} \to G(\mc{O}/p^3)$ such that $\rho_3|_{\gal{F_v}}= \rho_{v, 3}$ for all $v \in T \cup Q_1$, and for $v \in Q_2$, $\rho_3|_{\gal{F_v}}$ lies on some $\Lift_{\br|_{\gal{F_v}}}^{\alpha}$, so again by Lemma \ref{trivsmooth} admits a lift $\rho_v$ to $G(\mc{O})$. We thus inductively construct $\rho= \varprojlim_n \rho_n$ having the desired properties (the statement about $\im(\rho)$ is proven by inductively showing that $\im(\rho_2)$ contains $\wh{G^{\mr{der}}}(\mc{O}/p^2)$ implies $\im(\rho_n)$ contains $\wh{G^{\mr{der}}}(\mc{O}/p^n)$ for all $n$: see the conclusion of the proof of Theorem \ref{mainthm}).
\end{proof}

\section{Construction of auxiliary primes for lifting past $\mc{O}/p^3$}\label{auxiliarysection}
In this section we explain how to construct the auxiliary primes that will allow us (in \S \ref{finallift}) to lift a suitable $\rho_3 \colon \gal{F} \to G(\mc{O}/p^3)$ to characteristic zero. We return to our general hypotheses on $G$ from \S \ref{defprelimsection}, and as in \S \ref{klrsection} we assume that $\br$ surjects onto $\pi_0(G)$. We will need to make some assumptions both about $\br$ and about $\rho_2:= \rho_3 \pmod {p^2}$. The natural setting of this section will therefore be to work with a fixed $\rho_2$. Section \ref{klrsection} has explained under what hypotheses we can begin with a $\br$ and produce a lift $\rho_2$ to which the results of this section apply. Section \ref{finallift} will then combine the results of the current section and \S \ref{klrsection} to prove the main theorem.

We thus for the rest of this section fix a continuous homomorphism
\[
 \rho_2 \colon \gal{F, T} \to G(\mc{O}/p^2)
\]
unramified outside a finite set of finite primes $T$, with $\mu \circ \rho_2= \nu$, and whose mod $p$ reduction we denote by $\br$. 
The arguments of this section will require a more restrictive assumption on $\im(\br)$ than those of \S \ref{klrsection}, and so from now on the following assumptions on $\rho_2$ and $\br$ will be in effect:
\begin{assumption}\label{big}
 Assume that $p$ is greater than a suitable absolute bound depending only on $G$.\footnote{A precise bound can be extracted from our arguments, but we do not do this.} Assume that $\rho_2$ satisfies the following:
\begin{enumerate}
 \item $\im(\rho_2)$ contains $\wh{G}^{\mr{der}}(\mc{O}/p^2)$. 
 \item The field $K= \tF(\br(\fg^{\mr{der}}), \mu_p)$ does not contain $\mu_{p^2}$.
 \item $H^1(\Gal(K/F), \br(\fg^{\mr{der}}))=0$.
 \item $H^1(\Gal(K/F), \br(\fg^{\mr{der}})^*)$=0.
 \item $\br(\fgder)$ and $\br(\fgder)^* \cong \br(\fgder)(1)$ are semisimple $\mathbb{F}_p[\gal{\tF}]$-modules (equivalently, $\Fp[\gal{F}]$, $k[\gal{\tF}]$, or $k[\gal{F}]$-modules) having no common $\Fp[\gal{\tF}]$-sub-quotient, and neither contains the trivial representation of $\gal{\tF}$.
 \item Let $W$ be any simple $\Fp[\gal{\tF}]$-quotient $\br(\fgder)/W' \xrightarrow{\sim} W$. If the group extension
 \[
 1 \to \fg/W' \to \im(\rho_2)/W' \to \im(\br) \to 1
 \]
 splits, then we assume:
 \begin{enumerate}
 \item For each $G$-orbit of simple factors $\br(\oplus_i \fg_i)$ of $\br(\fgder)$, $W$ appears in $\br(\fg_i)$ with $\Fp[\gal{\tF}]$-multiplicity at most 1. 
 \item Moreover, each such $W$ satisfies $\End_{\Fp[\gal{\tF}]}(W) \cong k$ (see Lemma \ref{multfreelemma} below).
 \end{enumerate}
\end{enumerate}
\end{assumption}
\begin{rmk}
Hypotheses (3)-(5) are familiar from the arguments of Ramakrishna or of Taylor-Wiles. The second will for us be an automatic consequence of the local structure of $\br$ at primes above $p$ (Lemmas \ref{p2} and \ref{FLp2}). We have explained how to arrange the first in \S \ref{klrsection}. Hypothesis (6) is our most serious restriction. We have phrased it in this rather technical way to emphasize that the difficulties arise not from high multiplicities or group-theoretic splittings individually, but from their confluence. 

Note that $F(\br(\mf{g}^{\mr{der}})) \subseteq F(\br(\fgm)) \subseteq \tF(\br(\mf{g}^{\mr{der}}))= \tF(\br(\fgm))$, and the first field in this chain has index prime to $p$ in the last field. In particular, the assumptions could equally well be rephrased with $\br(\fgm)$ instead of $\br(\fg^{\mr{der}})$. 
\end{rmk}
\begin{lemma}\label{multfreelemma}
Let $\Gamma$ be a group, and let $V$ be a finite-dimensional $k[\Gamma]$-module, assumed to be multiplicity-free as $\Fp[\gal{F}]$-module. Decompose $V= \oplus_{i \in I} U_i$ into irreducible $k[\Gamma]$-modules. Then each $U_i$ is in fact an irreducible $\Fp[\Gamma]$-module, and $\End_{\Fp[\Gamma]}(U_i)= \End_{k[\Gamma]}(U_i)$ is a finite extension of $k$.
\end{lemma}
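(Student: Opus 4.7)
The plan is to establish the $\Fp[\Gamma]$-irreducibility of each $U_i$ first, and then deduce the endomorphism-algebra identification formally by combining Schur's lemma with Wedderburn's theorem on finite division rings.

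First I would fix $U = U_i$ and pick a minimal nonzero $\Fp[\Gamma]$-submodule $W \subseteq U$. Because scalar multiplication by any $a \in k$ commutes with the $\Gamma$-action on $V$, the $k$-span $kW \subseteq U$ is simultaneously $k$-stable and $\Gamma$-stable, hence a nonzero $k[\Gamma]$-submodule of $U$; by $k$-irreducibility of $U$ this forces $kW = U$. Choosing an $\Fp$-basis $a_1, \ldots, a_n$ of $k$, the map $W^n \onto U$ sending $(w_1, \ldots, w_n) \mapsto \sum_j a_j w_j$ is a surjective $\Fp[\Gamma]$-homomorphism, because each component $w \mapsto a_j w$ is $\Fp$-linear and commutes with $\Gamma$. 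Thus $U$ is a quotient of the semisimple $\Fp[\Gamma]$-module $W^n$, so $U$ is semisimple isotypic with every Jordan--H\"older constituent isomorphic to $W$.

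The multiplicity-free hypothesis on $V$ now pins things down: $W$ can appear as an $\Fp[\Gamma]$-constituent of the submodule $U \subseteq V$ at most once, so $U = W$, establishing $\Fp[\Gamma]$-irreducibility of $U_i$. For the endomorphism statement, Schur's lemma applied to this irreducibility shows that $D := \End_{\Fp[\Gamma]}(U_i)$ is a division algebra, finite-dimensional over $\Fp$; Wedderburn's theorem on finite division rings then makes $D$ a field. Scalar multiplication embeds $k \into D$, so $D$ is a finite field extension of $k$. The inclusion $\End_{k[\Gamma]}(U_i) \subseteq D$ is tautological, while the commutativity of $D$ forces every $\Fp[\Gamma]$-endomorphism of $U_i$ to be $k$-linear, yielding the reverse inclusion and hence the desired equality.

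The only mildly subtle point is the step that exhibits $U_i$ as an isotypic sum of copies of $W$: this relies on using the $k$-scalar action as an $\Fp[\Gamma]$-equivariant operation to pass from $k$-linear data to $\Fp$-linear data. Once this step is carried out cleanly, the rest of the argument is essentially formal and I do not anticipate any serious obstacle.
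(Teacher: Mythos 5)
Your proof is correct. For the irreducibility of $U_i$ over $\Fp[\Gamma]$ you and the paper take essentially the same route: both exhibit $U_i$ as an isotypic sum of copies of an irreducible $\Fp[\Gamma]$-constituent and then invoke the multiplicity-free hypothesis to cut that sum down to a single copy. (The paper phrases this via surjectivity of $U \otimes_{\Fp} k \to U_i$; you phrase it via surjectivity of $W^n \to kW = U_i$ using an $\Fp$-basis of $k$ — the same argument in different clothes.) Where you genuinely diverge from the paper is the endomorphism claim. You apply Schur and Wedderburn directly to the $\Fp[\Gamma]$-irreducible $U_i$ to see that $D = \End_{\Fp[\Gamma]}(U_i)$ is a finite field, embed $k \hookrightarrow D$, and then observe that commutativity of $D$ forces every element of $D$ to commute with the $k$-scalar multiplications and hence be $k$-linear, giving $D = \End_{k[\Gamma]}(U_i)$ immediately. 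The paper instead decomposes $\End_{\Fp[\Gamma]}(U_i) \cong \bigoplus_{\sigma \in \Aut(k)}\Hom_{k[\Gamma]}({}^\sigma U_i, U_i)$, appeals to the Deligne--Serre descent lemma to rule out the $\sigma \neq \id$ summands by producing a contradiction with the multiplicity-free hypothesis, and only then uses triviality of $\mr{Br}(k)$. Your argument is shorter, avoids the external reference, and notably only needs the multiplicity-free hypothesis once (for the irreducibility step) rather than twice.
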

\proof
Let $U \subset U_i$ be any $\Fp[\Gamma]$-stable subspace. Since $U_i$ is irreducible as $k[\Gamma]$-module, the natural map $U \otimes_{\Fp} k \to U_i$ is surjective. Now, as $\Fp[\Gamma]$-module, any quotient of $U \otimes_{\Fp} k$ is isomorphic to a direct sum of copies of $U$. Since $U_i$ is multiplicity-free as $\Fp[\Gamma]$-module, we conclude that $U= U_i$. For the second point, note that 
\[
\End_{\Fp[\Gamma]}(U_i)= \Hom_{k[\Gamma]}(U_i \otimes_{\Fp} k, U_i)= \oplus_{\sigma \in \Aut(k)} \Hom_{k[\Gamma]}({}^\sigma U_i, U_i).
\]
If there were a non-trivial $k[\Gamma]$-map (necessarily an isomorphism) ${}^\sigma U_i \to U_i$ for some non-identity $\sigma \in \Aut(k)$, then $U_i$ would descend to the fixed field $k^{\sigma}$ (by \cite[Lemme 6.13]{deligne-serre}), i.e. $U_i \cong U'_i \otimes_{k^\sigma} k$ for some $k^{\sigma}[\Gamma]$-module $U'_i$. As in the previous paragraph, this contradicts the multiplicity-free hypothesis on $U_i$. Thus the natural map $\End_{k[\Gamma]}(U_i) \to \End_{\Fp[\Gamma]}(U_i)$ is an isomorphism. Finally, this common endomorphism ring is a finite field extension $k_{U_i}$ of $k$, since the Brauer group of $k$ is trivial.
\endproof
In light of the lemma, the problematic simple factors $W$ arising in part (6) of Assumption \ref{big} are automatically $k[\gal{F}]$-stable for the given $k$-multiplication on $\fgder$, and we after the fact see that this ambient $k$-multiplication exhausts $\End_{\Fp[\gal{\tF}]}(W)$.

Our aim now is to explain how to allow ramification at trivial primes satisfying the conditions in Definition \ref{unrtrivlifts} to kill Selmer and dual Selmer classes for $\br(\fgder)$; a technical point arises in the argument when $G_{\mu}$ is not semisimple, and we will deal with cohomology classes supported on $\br(\mf{z}_{\mu})$, the center of $\fgm$, by a different method in \S \ref{finallift}. The first, easy, step is the following:
\begin{lemma}\label{disjoint}
Assumption \ref{big} is in effect. Suppose we are given non-zero elements $\phi \in H^1(\gal{F, T}, \br(\fgder))$ and $\psi \in H^1(\gal{F, T}, \br(\fgder)^*)$. By hypothesis, their restrictions to $\gal{K}$ cut out Galois extensions $K_{\phi}/F$ and $K_{\psi}/F$ that both strictly contain $K$. Then
\[
\Gal(K(\rho_2(\fg^{\mr{der}}))K_{\phi}K_{\psi}K(\mu_{p^2})/K) \xrightarrow{\sim} \Gal(K(\rho_2(\fg^{\mr{der}}))K_{\phi}/K) \times \Gal(K_{\psi}/K) \times \Gal(K(\mu_{p^2})/K).
\]
\end{lemma}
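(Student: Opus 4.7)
The plan is to identify each of the four Galois groups over $K$ as an $\Fp$-vector space with $\Gal(K/F)$-action, match their Jordan-Hölder constituents, and deduce the isomorphism from pairwise disjointness of constituents via a semisimple Goursat-style argument. The strict inclusions $K \subsetneq K_\phi, K_\psi$ given in the statement follow from Assumption \ref{big}(3)--(4) and inflation-restriction; the same restrictions $\phi|_{\gal{K}}$ and $\psi|_{\gal{K}}$ yield $\gal{F}$-equivariant injections $\Gal(K_\phi/K) \hookrightarrow \br(\fgder)$ and $\Gal(K_\psi/K) \hookrightarrow \br(\fgder)^*$ of $\Fp[\Gal(K/F)]$-modules. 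Next, since $\gal{K}$ acts trivially on $\fgder \otimes k$, the map $\Ad \rho_2|_{\gal{K}} \colon \gal{K} \to \Aut_{\mc{O}/p^2}(\fgder \otimes \mc{O}/p^2)$ lands in the kernel of reduction modulo $p$; using $\wh{G}(\mc{O}/p^2) \cong \fg \otimes k$ and that $\ad$ has kernel $\mf{z}_{\mu} \oplus \mf{a}$, this factors through an injection into $\ad(\fg) \cong \fgder$, exhibiting $\Gal(K(\rho_2(\fgder))/K)$ as an $\Fp[\Gal(K/F)]$-subquotient of $\br(\fgder)$. Finally $\Gal(K(\mu_{p^2})/K) \cong \Fp(\bar{\kappa})$ is non-trivial by Assumption \ref{big}(2).

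Second, since $\Gal(K/\tF)$ has index $|\pi_0(G)|$ (prime to $p$) inside $\Gal(K/F)$, averaging upgrades semisimplicity of $\br(\fgder)$ and $\br(\fgder)^*$ from $\Fp[\gal{\tF}]$-modules to $\Fp[\Gal(K/F)]$-modules. The crucial observation is then that $\bar{\kappa}$ is a constituent of neither module: any embedding $\Fp(\bar{\kappa}) \hookrightarrow \br(\fgder)$ would split off as a summand by semisimplicity, and applying $\Hom(-, \Fp(\bar{\kappa}))$ would exhibit $\Fp$ as a summand of $\br(\fgder)^*$, contradicting Assumption \ref{big}(5); the symmetric argument using $\br(\fgder)^{**} \cong \br(\fgder)$ rules out $\Fp(\bar{\kappa}) \hookrightarrow \br(\fgder)^*$.

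Setting $E_1 := K(\rho_2(\fgder))K_\phi$, $E_2 := K_\psi$, $E_3 := K(\mu_{p^2})$, the Jordan-Hölder constituents of the three $\Fp[\Gal(K/F)]$-modules $\Gal(E_i/K)$ lie in the pairwise disjoint collections (a) constituents of $\br(\fgder)$, (b) constituents of $\br(\fgder)^*$, and (c) $\{\Fp(\bar{\kappa})\}$, using Assumption \ref{big}(5) and the previous step. The natural restriction map $\Gal(E_1 E_2 E_3 / K) \hookrightarrow \prod_{i=1}^{3} \Gal(E_i/K)$ is an $\Fp[\Gal(K/F)]$-submodule surjecting onto each factor, and since each simple summand of the ambient semisimple product lies in exactly one factor by pairwise disjointness, an isotypic-component argument forces this submodule to be the entire product, yielding the claimed isomorphism.

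The trickiest verification I anticipate is the identification in the first step of $\Gal(K(\rho_2(\fgder))/K)$ as a subquotient of $\br(\fgder)$, requiring careful tracking of the additive structure on $\wh{G}(\mc{O}/p^2)$ and compatibility of the $\ad$ map with $\Gal(K/F)$-conjugation under the very-good-prime hypotheses of Assumption \ref{minimalp}; once this is in place, the remaining steps reduce to routine semisimple-module bookkeeping around Assumption \ref{big}(5).
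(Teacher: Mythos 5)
Your argument has essentially the right shape and follows the same route as the paper's proof—both reduce the linear disjointness to a statement about the (absence of) common Jordan--H\"older constituents among the $\Fp[\Gal(K/F)]$-modules $\Gal(K(\rho_2(\fgder))/K)$, $\Gal(K_\phi/K)$, $\Gal(K_\psi/K)$, and $\Gal(K(\mu_{p^2})/K)$. Your identification of the first three as submodules of $\br(\fgder)$, $\br(\fgder)$, $\br(\fgder)^*$ respectively (including the computation for $\Gal(K(\rho_2(\fgder))/K)$ via $\wh{G}(\mc{O}/p^2)\cong \fg_k$ and $\ad$) is correct, as is the final isotypic-component step.

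However, there is a genuine error in the cyclotomic step: $\Gal(K(\mu_{p^2})/K)$ is \emph{not} isomorphic to $\Fp(\bar\kappa)$ as a $\Gal(K/F)$-module. Since $K \supset F(\mu_p)$, this Galois group is canonically identified with $\Gal(\Q(\mu_{p^2})/\Q(\mu_p))$, and the conjugation action of $\Gal(K/F)$ on it is \emph{trivial}: writing $\tau(\zeta_{p^2}) = \zeta_{p^2}^{1+ap}$ and any lift $\tilde\sigma(\zeta_{p^2}) = \zeta_{p^2}^d$, one computes $\tilde\sigma\tau\tilde\sigma^{-1}(\zeta_{p^2}) = \zeta_{p^2}^{d\cdot d^{-1}(1+ap)} = \zeta_{p^2}^{1+ap}$, so $\tilde\sigma\tau\tilde\sigma^{-1} = \tau$. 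Thus $\Gal(K(\mu_{p^2})/K)\cong \Fp$ with trivial action. Consequently, the constituent you must exclude from $\br(\fgder)$ and $\br(\fgder)^*$ is the \emph{trivial} representation, not $\Fp(\bar\kappa)$, and this exclusion is \emph{directly} part of Assumption \ref{big}(5) (``neither contains the trivial representation of $\gal{\tF}$'', and a $\Gal(K/F)$-constituent that is trivial as $\gal{F}$-module is a fortiori trivial as $\gal{\tF}$-module); the duality manipulation you give is not needed. Your argument that $\Fp(\bar\kappa)$ fails to occur is, as it happens, also a correct deduction from Assumption \ref{big}(5), but it is the wrong thing to check: as written the proof asserts the disjointness of $E_3$ from $E_1E_2$ on the basis of a mistaken identification of $\Gal(E_3/K)$. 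Once you replace $\Fp(\bar\kappa)$ by the trivial module $\Fp$ and invoke Assumption \ref{big}(5) directly, the proof is sound and matches the paper's.
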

\proof

First note that $\Gal(K(\rho_2(\fg^{\mr{der}})/K)$ is isomorphic (as $\gal{F}$-module) to $\br(\fg^{\mr{der}})$, using the assumption $\im(\rho_2) \supset \wh{G}^{\mr{der}}(\mc{O}/p^2)$. The assumption on no common sub-quotient implies $K(\rho_2(\fg^{\mr{der}}))K_{\phi}$ and $K_{\psi}$ are linearly disjoint. To show that $K(\mu_{p^2})$ and $K(\rho_2(\fg^{\mr{der}}))K_{\phi}K_{\psi}$ are linearly disjoint over $K$, note that $\Gal(K/F)$ acts trivially on $\Gal(K(\mu_{p^2})/K) \xrightarrow{\sim} \Gal(\Q(\mu_{p^2})/\Q(\mu_p))$ (since $\mu_{p^2}$ is not contained in $K$), whereas $\Gal(K(\rho_2(\fg^{\mr{der}}))K_{\phi}K_{\psi}/K)$ has no proper quotient that is trivial as $\Gal(K/F)$-module, by assumption on $\br(\fgder)$ and $\br(\fgder)^*$.

\endproof

We continue with the assumptions of Lemma \ref{disjoint} and now study the relation between the fields $K(\rho_2)$ and $K_{\phi}$. The main difficulty that arises in our lifting method is that the fields $K(\rho_2)$ and $K_{\phi}$ need not be linearly disjoint over $K$. The multiplicity-free hypothesis in Assumption \ref{big} is a way to control the interaction between these two fields even when, for instance, $K(\rho_2)$ contains $K_{\phi}$. Before proceeding to our arguments, we give an example 
\begin{eg}\label{sl2eg}
Assume for simplicity that $G^0$ is semisimple. Then our assumption on $\im(\rho_2)$ implies that there is a short exact sequence
\[
1 \to \fg \to \im(\rho_2) \to \im(\br) \to 1.
\]
Now let $W'$ be any $\Fp[\gal{F}]$-submodule of $\br(\fg)$, and suppose that the sequence
\begin{equation}\label{projection}
1 \to \fg/W' \to \im(\rho_2)/W' \to \im(\br) \to 1
\end{equation}
splits. Then $(\rho_2 \mod {W'})|_{\gal{K}}$ is an element of $\ker\left(H^1(\gal{K, T}, \br(\fg)/W')^{\gal{F}}\to H^2(\Gal(K/F), \br(\fg)/W')\right)$, hence arises as the restriction of some class $\phi \in H^1(\gal{F, T}, \br(\fg)/W')$. Clearly then $K_{\phi}$ is contained in $K(\rho_2)$. 

It can happen that even when $\br$ is irreducible, and $\im(\rho_2) \to \im(\br)$ does not split, that the sequence (\ref{projection}) does split for some non-trivial constituent $W'$. For instance, consider $\br= \varphi \circ \bar{r}$ where $\bar{r}$ is two-dimensional (with big image), and $\varphi \colon \mr{PGL}_2 \to G$ is a principal $\mr{SL}_2$ (taking $G=G^0$ to be an adjoint group, for simplicity). Then $\br(\mf{g}) = \bigoplus_{i} \Sym^{2m_i} \otimes \det^{-m_i}(\bar{r})$, where the sum ranges over the exponents $m_i$ of $G$. If we take $W$ to be any of the factors other than $m_i=1$ (there is always a unique $\Sym^2$ factor in this decomposition), then sequence (\ref{projection}) splits. Indeed, it is given by a class in $H^2(\mr{PGL}_2(\mathbb{F}_p), \br(\fg)/W')$, which splits if and only if the restriction to a $p$-Sylow subgroup $\begin{pmatrix} 1 & \ast \\ 0 & 1 \end{pmatrix}$ splits. Thus we need only check that 
\[
\varphi\left( \begin{pmatrix} 1 & 1 \\ 0 & 1 \end{pmatrix} \right)^p = \exp \left(p \sum_{\alpha \in \Delta} X_{\alpha}\right)
\]
is the identity in $G(\mc{O}/p^2)/W'$. This is clear, since we have quotiented out by the $\Sym^2$ component. (On the other hand, if $W'$ is complementary to the $\Sym^2$ component, then by the same reasoning sequence (\ref{projection}) does not split.)
\end{eg}

\begin{prop}\label{splitcase}
Assumption \ref{big} is in effect. Let $\phi \in H^1(\gal{F, T}, \br(\fgder))$ and $\psi \in H^1(\gal{F, T}, \br(\fgder)^*)$ be non-zero elements whose support is contained in a common $G$-orbit of simple factors of $\fgder$.\footnote{To be precise, the canonical isogeny $G_1 \times G_2 \times \cdots \times G_s \to G^{\mr{der}}$ from the distinct minimal (non-finite) normal subgroup varieties $G_i$ of $G^{\mr{der}}$ induces a decomposition $\fgder \cong \oplus_i \mf{g}_i$, and a corresponding decomposition $(\fgder)^* \cong \oplus_i \mf{g}_i^*$. We assume $\phi$ is supported on some orbit $\Ad(G)\cdot \fg_i$ and $\psi$ is supported on the corresponding $\Ad(G)\cdot \mf{g}_i^*$.} Then 
there exist a trivial prime $q$, a split maximal torus $T$, and a root $\alpha \in \Phi(G^0, T)$ such that
\begin{enumerate}
\item $\rho_2|_{\gal{F_q}}$ belongs to $\Lift_{\br,2 }^{\alpha}(\mc{O}/p^2)$, i.e. it is in $\Lift_{\br}^{\alpha}(\mc{O}/p^2)$ and moreover satisfies
\begin{enumerate}
\item $\rho_2$ is unramified at $q$;
\item $\rho_2(\sigma_q) \in \wh{T}(\mc{O}/p^2)$;
\item for all $\beta \in \Phi^{\alpha}$, $\beta(\rho_2(\sigma_q)) \neq 1 \pmod {p^2}$.
\end{enumerate}
\item $\phi(\sigma_q) \not \in \ker(\alpha|_{\mf{t}}) \oplus \bigoplus_{\beta \in \Phi(G, T)} \mf{g}_{\beta}$.
\item $\langle \psi(\sigma_q), \mf{g}_{\alpha} \rangle \neq 0$.
\end{enumerate}
That is, $q$ is a trivial prime at which extra cocycles as in Lemma \ref{extracocycles} exist, $\phi|_{\gal{F_q}}$ does not belong to $L^{\alpha}_q:= L^{\alpha}_{\br|_{\gal{F_q}}}$, and $\psi|_{\gal{F_q}}$ does not belong to $(L^{\alpha}_q)^\perp$. (See Lemma \ref{extracocycles} for the description of $L^{\alpha}_{\br|_{\gal{F_q}}}$.)
\end{prop}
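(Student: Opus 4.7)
The plan is a \v{C}ebotarev argument in $\mr{Gal}(L/F)$ for the compositum $L = K(\rho_2)K_\phi K_\psi K(\mu_{p^2})$, choosing the triple $(T,\alpha,q)$ jointly. A first simplification comes from Lemma \ref{disjoint}: $K_\psi$ and $K(\mu_{p^2})$ are strongly linearly disjoint from $K(\rho_2)K_\phi$ over $K$, so the cyclotomic condition $q \not\equiv 1 \pmod{p^2}$ and condition (3) on $\psi(\sigma_q) \in V_\psi := \psi(\gal{K}) \subset \br(\fgder)^*$ can be imposed independently of everything else once $\alpha$ has been chosen. The core of the problem is then to pick $\sigma_q|_{K(\rho_2)K_\phi}$ jointly with $(T,\alpha)$ to achieve conditions (1) and (2). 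The common $G$-orbit hypothesis on the supports of $\phi$ and $\psi$ lets us restrict attention to a single orbit $\Ad(G)\cdot \mf{g}_i$ and to seek $\alpha$ in the root system of that one factor.

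I would first dispose of the easy case $K(\rho_2) \cap K_\phi = K$. Here \v{C}ebotarev independently prescribes $\rho_2(\sigma_q) = 1+pX$ for any $X$ in the image of $\rho_2|_{\gal{K}}$ (which contains all of $\fgder$ by Assumption \ref{big}(1)) and $\phi(\sigma_q) \in V_\phi := \phi(\gal{K})$. Fix a split maximal torus $T_0$ and pick a regular $X \in \mf{t}_0^{\mr{der}}$, avoiding the finitely many kernels of roots, which is possible for $p \gg_G 0$; this gives condition (1) for any $\alpha$. Since $V_\phi$ and $V_\psi$ are nonzero $\gal{\tF}$-stable subspaces of the $G$-orbit $\Ad(G)\cdot\mf{g}_i$ and its dual, and the root spaces in all $G(k)$-conjugate tori span the simple factor $\mf{g}_i$ (by irreducibility of $\mf{g}_i$ as $k[G(k)]$-module for $p \gg_G 0$), one can, after possibly replacing $T_0$ by a $G(k)$-conjugate, find a root $\alpha$ in $\mf{g}_i$ such that both the projection of $V_\phi$ onto $\mf{t}/\ker(\alpha|_\mf{t})$ and the pairing $\langle V_\psi, \mf{g}_\alpha\rangle$ are non-zero. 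Choosing $Y \in V_\phi$ and $Z \in V_\psi$ realizing these non-vanishings yields the desired \v{C}ebotarev data for $q$.

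The main obstacle is the case $M := K(\rho_2) \cap K_\phi \supsetneq K$. Let $W' \subset \fgder$ be the kernel of the composite $\fgder \cong \mr{Gal}(K(\rho_2)/K) \twoheadrightarrow \mr{Gal}(M/K)$. The fact that $\phi \equiv \rho_2 \pmod{W'}$ on $\gal{K}$ (up to a $\gal{F}$-equivariant map to $\fgder/W'$) forces the extension $1 \to \fgder/W' \to \im(\rho_2)/W' \to \im(\br) \to 1$ to split; hence Assumption \ref{big}(6) applies to each $\Fp[\gal{\tF}]$-simple constituent $W$ of $N := \fgder/W'$, giving multiplicity $1$ in its $G$-orbit and $\End_{\Fp[\gal{\tF}]}(W) = k$. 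By Schur, the induced $\gal{F}$-equivariant map $f\colon \fgder \to \fgder$ with $\phi|_{\gal{K}} \equiv f \circ \rho_2|_{\gal{K}} \pmod{W'}$ acts as a non-zero $k$-scalar on each simple factor of $N$; once $X$ is prescribed, the $N$-component of $\phi(\sigma_q)$ is rigidly determined as $c \cdot \overline{X} \in N$ for a fixed scalar $c \in k^\times$, while the $W'$-component of $\phi(\sigma_q)$ remains \v{C}ebotarev-free. I would then rerun the easy-case argument while tracking the decomposition $\fgder = W' \oplus N$: find $X \in \mf{t}^{\mr{der}}$ regular and $\alpha$ in $\mf{g}_i$ so that condition (2) is achieved by whichever of the $N$-contribution $c\cdot \overline{X}_\mf{t}$ or the \v{C}ebotarev-free $W'$-part of $\phi(\sigma_q)_\mf{t}$ is non-zero. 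The multiplicity-free hypothesis is exactly what prevents a forced identification of the $N$-part of $\phi(\sigma_q)_\mf{t}$ with $\ker(\alpha|_\mf{t})$ across all choices of $\alpha$; the principal technical difficulty is matching the $\Fp[\gal{\tF}]$-isotypic decomposition of $N$ with the $T$-weight decomposition across varying tori so that some $(T,\alpha)$ simultaneously meets all three conditions.
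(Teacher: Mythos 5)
Your overall architecture matches the paper's: the same case split on whether $K(\rho_2)\cap K_\phi$ strictly contains $K$, the same use of Assumption \ref{big}(6) plus Schur's lemma to produce a $\gal{F}$-equivariant map (the paper's $\eta$ in Equation (\ref{eta})) that acts by non-zero $k$-scalars on the constituents of $\br(\fgder)$ determined by $\rho_2$, and the same \v{C}ebotarev set-up via Lemma \ref{disjoint} to decouple the $\psi$- and $\mu_{p^2}$-conditions. But there is a genuine gap, and it sits exactly where you park it as ``the principal technical difficulty'': you never prove that some pair $(T,\alpha)$ exists with $\alpha\circ p_{\mf t}\circ\eta(\mf t)\neq 0$ \emph{and} $\langle \mf{g}_\alpha,\im(\psi)\rangle\neq 0$ simultaneously. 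Your fallback --- use the \v{C}ebotarev-free $W'$-part of $\phi(\sigma_q)$ when it is non-zero --- is unavailable in the genuinely hard situation $K_\phi\subseteq K(\rho_2)$ (cf.\ Example \ref{sl2eg}), where $\phi(\sigma_q)$ is entirely dictated by $\rho_2(\sigma_q)$ through $\eta$, so everything hinges on the unproven non-vanishing. This non-vanishing is not formal: as the paper points out, there exist non-trivial linear maps $\eta\colon\mf{sl}_2\to\mf{sl}_2$ with $B(\mf t,\eta(\mf t))=0$ for \emph{every} Cartan subalgebra $\mf t$, so without exploiting the scalar-on-constituents structure one cannot conclude.

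The paper closes this gap with two ingredients you would need to supply. First, Lemma \ref{larsen} (Larsen's trace-form argument): for a map $\eta$ that kills a complement and is a non-zero scalar on each of a family of $\Gamma$-separated summands, there is a Cartan subalgebra $\mf t$ with $p_{\mf t}\circ\eta(\mf t)\neq 0$; the proof uses $B(x,\eta(x))\equiv 0$ on semisimple elements to force $B$ to degenerate, a contradiction. Second, a rationality/point-counting step: the two bad loci $\Phi_1=\{g: p_{\mf t_g}\circ\eta(\mf t_g)=0\}$ and $\Phi_2=\bigcup_\alpha\{g:\langle\Ad(g)\mf g_\alpha,\im(\psi)\rangle=0\}$ are proper closed subschemes of $G$ (properness of $\Phi_2$ from irreducibility of $\fg$ as $G$-module, of $\Phi_1$ from Larsen's lemma), and one needs a $k$-rational point of the complement, which the paper gets from Deligne-type point counts for $p\gg_G 0$; your ``after possibly replacing $T_0$ by a $G(k)$-conjugate'' does not by itself produce such a point, nor the joint avoidance of both loci. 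A smaller loose end: membership in $\Lift^{\alpha}_{\br}(\mc O/p^2)$ forces $\alpha(\rho_2(\sigma_q))=N(q)$, so the value $\alpha(X)$ must be coordinated with $\frac{N(q)-1}{p}$; the paper handles this by fixing $c\in\Fp^\times$, imposing $\alpha(t)=c$, and adding $c=\frac{N(q)-1}{p}$ to the \v{C}ebotarev condition on the $K(\mu_{p^2})$-component, a constraint your sketch omits.
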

\proof
To prove the proposition, we may first replace $\br$ and $\rho_2$ with their projections first to the adjoint quotient $G/Z_{G^0}$, and then to the $G$-orbit of simple factors supporting $\phi$ and $\psi$. Thus we may assume $G^0$ is a product of simple adjoint groups that are permuted transitively by $\pi_0(G)$. Fixing one of these factors $G_1 \subset G^0$, we consider the $\gal{\tF}$-equivariant projection $\im(\phi) \subset \fg \onto \fg_1$ (and likewise for $\im(\psi)$). Since trivial primes are split in $\tF$, it will suffice to prove the proposition with the connected adjoint group $G_1$ in place of $G$: namely, we choose $\sigma_q$ with the desired properties for some torus and root $(T_1, \alpha_1)$ of $G_1$, take any extension to a torus $T$ of $G^0$, and just require that $\rho_2(\sigma_q)$ be valued in $T$ with the previously-constructed projection to $T_1$ (for a root $\alpha$ appearing in $G_1$, the set $\Phi^{\alpha}$ is also contained in the roots of $G_1$). Thus in the rest of the argument we may and do assume that $G$ is a connected adjoint group; in doing so, we replace the irreducible submodules $W_i \subset \br(\fgder)$ with their projections to $\fg_1$. Thus we continue to write $\br(\fg)= \oplus_{i \in I} W_i^{\oplus m_i}$ for irreducible distinct $\Fp[\gal{\tF}]$-modules $W_i$.

We first treat the more difficult case in which $K(\rho_2) \cap K_{\phi}$ properly contains $K$. The $\Fp[\gal{\tF}]$-equivariant quotient $\Gal(K_{\phi}/K) \onto \Gal(K(\rho_2) \cap K_{\phi}/K)$ has image isomorphic to a direct sum $\oplus_{i \in I'} W_i$ (for some subset $I' \subset I$) of simple $\Fp[\gal{\tF}]$-submodules of $\im(\phi)$, and by assumption these particular simple factors $W_i$ (see Part (6) of Assumption \ref{big}) appear with multiplicity $m_i=1$ in $\fg$ and satisfy $\End_{\Fp[\gal{\tF}]}(W_i)=k$. We can then choose an $\Fp[\gal{\tF}]$-equivariant embedding $s \colon \Gal(K(\rho_2) \cap K_{\phi}/K) \to \fg$ such that the diagram
\[
\xymatrix{
\Gal(K_{\phi}/K) \ar[r]^-{\phi} \ar[d]_{\mr{res}} & \fg \ar[d]^{\pr_{I'}} \\
\Gal(K_{\phi} \cap K(\rho_2)/K) \ar[r]_-{\sim}^-{s} & \bigoplus_{i \in I'} W_i
}
\]
commutes, where $\pr_{I'}$ denotes the projection onto the sum of the $W_i$-isotypic components for $i \in I'$. This choice of $s$ induces an $\Fp[\gal{\tF, T}]$-linear map
\begin{equation}\label{eta}
\xymatrix{
\fg \ar@/^2pc/[rrr]^{\eta}  \ar[r]_-{\rho_2^{-1}} & \Gal(K(\rho_2)/K) \ar[r]_-{\mr{res}} & \Gal(K(\rho_2) \cap K_\phi/K) \ar[r]_-{s} & \fg.
}
\end{equation}
The image of $\eta$ is the direct sum $\bigoplus_{i \in I'} W_i$. Since $\eta$ is Galois equivariant, and these $W_i$ appear in $\br(\fg)$ with $\Fp[\gal{\tF}]$-multiplicity one, we must have $\eta(W_i)=W_i$ for $i \in I'$ and $\eta(W_i)=0$ for $i \in I \setminus I'$. Moreover, for $i \in I'$, $\eta|_{W_i}$ is equal to a non-zero element $c_i \in \End_{\Fp[\gal{F, T}]}(W_i)= k$, so $\eta$ is in fact $k$-linear. To satisfy the conclusion of the Proposition, the main step is to find a maximal torus $T$ of $G$ and a root $\alpha \in \Phi(G, T)$ such that, letting $p_{\mf{t}} \colon \fg \to \mf{t}$ denote the $k$-linear projection with respect to the decomposition $\fg= \mf{t} \oplus  \left(\bigoplus_{\beta \in \Phi(G, T)} \mf{g}_\beta\right)$, we have $\alpha \circ p_{\mf{t}} \circ \eta(\mf{t}) \neq 0$ and $\langle \fg_{\alpha}, \im(\psi) \rangle \neq 0$.

Fix a split maximal torus $T_1$ of $G$ over the field $k$. For any $g \in G$, let $T_g= gT_1g^{-1}$ be the conjugate maximal torus. Consider the bad loci
\[
 \Phi_1= \{ g \in G: p_{\mf{t}_g} \circ \eta(\mf{t}_g)=0\},
\]
where $p_{\mf{t}_g}$ is the $\mf{t}_g= \Lie(T_g)$-projection with respect to the root space decomposition, and 
\[
 \Phi_2=  \bigcup_{\alpha \in \Phi(G, T_1)} \{g \in G: \langle \Ad(g)\mf{g}_{\alpha}, \im(\psi) \rangle =0\}.
\]
These are both Zariski-closed in $G$. We will show that $\left(G \setminus(\Phi_1 \cup \Phi_2)\right)$ is a non-empty open subset of $G$, and that for $p \gg_G 0$ it must have a $k$-point. Taking this for granted for the moment, we finish the proof. 

If $g \in  (G \setminus \Phi_1 \cup \Phi_2)(k)$, then there is a root $\alpha \in \Phi(G, T_g)$ such that $\alpha \circ p_{\mf{t}_g} \circ \eta \colon \mf{t}_g \to k$ is non-zero, so its kernel is a hyperplane $H= H_{(g, \alpha)} \subset \mf{t}_g$, and $\langle \mf{g}_{\alpha}, \im(\psi) \rangle \neq0$. Fix a non-zero value $c \in \Fp^\times$. Since $c \neq 0$ (as an element of $k$), $H$ cannot contain the locus $\{\alpha= c\} \subset \mf{t}_g$, so $(\mf{t}_g \setminus H) \cap \{\alpha= c\}$ is a non-empty open subset of $\{\alpha= c\}$. So too are the loci $\{\beta \neq 0 \forall \beta \in \Phi^{\alpha}\} \cap \{\alpha= c\}$. As $\{\alpha= c\}$ is connected, we conclude that
\[
 \left\{\text{$t \in \mf{t}_g \setminus H$ such that $\alpha(t)= c$ and $\beta(t) \neq 0$ for all $\beta \in \Phi^{\alpha}$}\right\}
\]
is non-empty (and open in $\{\alpha= c\}$). Choose any $t$ in the $k$-points of this set (that $t$ can be chosen $k$-rationally holds for $p \gg_G 0$ by an argument very similar to the existence argument for the element $g$), and apply the \v{C}ebotarev density theorem (using Lemma \ref{disjoint}) to the extension $K_{\psi} K(\rho_2(\fg)K_{\phi})K(\mu_{p^2})/F$ to find a positive density set of trivial primes $q$ such that $c= \frac{N_{F/\Q}(q)-1}{p}$, $\langle \fg_{\alpha}, \psi(\sigma_q)\rangle \neq 0$, $\rho_2(\sigma_q)= \exp(p \otimes t)$, and the projection of $\phi(\sigma_q)$ to the isotypic components $W_i^{\oplus m_i}$ for $i \not \in I'$ is trivial. 
The required conditions on $\psi(\sigma_q)$ and $\rho_2(\sigma_q)$ 
are then clearly satisfied, and since we have arranged 
\[
\phi(\sigma_q)= \pr_{I'}(\phi(\sigma_q))=s(\sigma_q)= \eta(t), 
\]
the condition on $\phi(\sigma_q)$ also follows from our choice of $t$.

We next check the above claim that the complement $\Phi \setminus (\Phi_1 \cup \Phi_2)(k)$ is non-empty. First we show that both $\Phi_1$ and $\Phi_2$ have non-empty (open) complement in $G$. We now argue over $\bar{k}$. If $\Phi_2$ were equal to $G$, then the same would hold for one of the closed subschemes $\{g \in G: \langle \Ad(g)\fg_{\alpha}, \im(\psi) \rangle=0\}$, and thus for all $x \in G(\bar{k})$, we would have $\Ad(x)\mf{g}_{\alpha} \subset \langle \im(\psi) \rangle^{\perp}$. This implies $\sum_x \Ad(x) \mf{g}_{\alpha} \subset \langle \im(\psi) \rangle^{\perp}$, which evidently contradicts irreducibility of $\mf{g}$ as a $G$-representation. The corresponding statement that $G\setminus \Phi_1$ is non-empty follows from Lemma \ref{larsen} below, noting that each $W_i$ ($i \in I'$) is in fact absolutely irreducible, since we have $\End_{k[\gal{\tF}]}(W_i)=k$.

Now having shown that $\Phi_1$ and $\Phi_2$ are proper closed subschemes of $G$, we show that $G \setminus(\Phi_1 \cup \Phi_2)(k) \neq \emptyset$. There are integers $N$, $r$, and $d$ effectively bounded in terms of the root datum of $G$ such that $G$, $\Phi_1$, and $\Phi_2$ are closed subschemes of an affine space $\mathbb{A}^N$ cut out by at most $r$ equations of degree at most $d$. Indeed, using the faithful representation of $G$ that defines the trace form $B$, this is clear for $G$ and $\Phi_2$, and it holds for $\Phi_1$ since (see the first paragraph of the proof of Lemma \ref{larsen}) $g\in \Phi_1$ if and only if $B(x, \eta(x))=0$ for all $x \in \mf{t}_g$; taking a basis for $\mf{t}_1$, this amounts to finitely many conditions on the matrix coefficients of $g \in G$ of degree twice those needed to define the faithful representation. By the Grothendieck-Lefschetz trace formula, Deligne's work on the Weil conjectures (\cite{deligne:weil2}, and \cite[Corollary of Theorem 1]{katz:bettisums}, we see that there are constants $c(G)$, $c_1(G)$, and $c_2(G)$, depending only on $G$, such that 
\[
|G \setminus (\Phi_1 \cup \Phi_2)(k)| \geq q^{d_G}-c(G) q^{d_G-1}-c_1(G)q^{d_G-1}-c_2(G) q^{d_G-1},
\]
where $d_G= \dim(G)$. In particular, for $p \gg_G 0$, this complement is non-empty, and the proof of the hard case of the Proposition is complete. (Note that if we bounded in terms of $G$ the number of irreducible components of $\Phi_1$ and $\Phi_2$, we could use a much more elementary argument here, \textit{\`{a} la} Lang-Weil.)

Finally, if $K(\rho_2)$ and $K_{\phi}$ are linearly disjoint over $K$ a much simpler argument suffices, since we can replace the bad locus $\Phi_1$ with $\Phi_1'=\{(g \in G: p_{\mf{t}_g}(\im(\phi))=0\}$; the multiplicities no longer intervene in the argument. We omit the details.
\endproof
We are very grateful to Michael Larsen for explaining the proof of the following lemma, which in turn completes the proof of Proposition \ref{splitcase}:
\begin{lemma}\label{larsen}
Let  $\fg$ be the Lie algebra of a semisimple group $G$ over an algebraically closed field $k$ of characteristic $p \neq 2$; assume $p$ is large enough (relative to the root datum of $G$) that $\fg$ carries a non-degenerate trace form $B$. Suppose there are given $k$-vector space decompositions $\fg= W \oplus W'$ and $W= \oplus_{i \in I} W_i$, and suppose there is a subgroup $\Gamma \subset \mr{GL}(\fg)$ preserving $B$, stabilizing each $W_i$, and separating them in the following sense: for all $i \neq j$, there exists a subgroup $\Gamma_{i, j} \subset \Gamma$ such that $\Gamma_{i, j}$ acts irreducibly and non-trivially on $W_j$ and trivially on $W_i$. Finally, let $\eta \colon \fg \to \fg$ be a $k[\Gamma]$-linear map such that 
\begin{itemize}
\item $\eta(W')=0$.
\item For each $i$, $\eta_{W_i}$ is multiplication by a non-zero scalar $c_i \in k^\times$.
\end{itemize}
Then there is a Cartan sub-algebra $\mf{t}$ of $\fg$ such that $p_{\mf{t}}\circ \eta(\mf{t})$ is non-zero, where $p_{\mf{t}} \colon \fg \to \mf{t}$ denotes the $\mf{t}$-projection whose kernel is the sum of all root spaces with respect to $\mf{t}$.
\end{lemma}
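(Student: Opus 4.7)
The plan is to argue by contradiction: suppose that $p_{\mf{t}} \circ \eta(\mf{t}) = 0$ for every Cartan subalgebra $\mf{t} \subset \fg$. Because $\mf{t}$ is $B$-orthogonal to every root space $\fg_\alpha$ and $B|_{\mf{t}}$ is non-degenerate, the vanishing $p_{\mf{t}} \circ \eta|_{\mf{t}} = 0$ is equivalent to $B(\mf{t}, \eta(\mf{t})) = 0$. Setting $Q(x, y) := B(x, \eta(y))$, the hypothesis reads $Q(x, x) = 0$ for every regular semisimple $x \in \fg$, since such an $x$ lies in its unique centralizing Cartan. Because $Q(x, x)$ is a polynomial function in $x$ and the regular semisimple locus is Zariski-dense in $\fg$ (using that $p$ is large enough for the Chevalley restriction theory and the trace form $B$ to behave well), we conclude $Q(x, x) \equiv 0$ on $\fg$. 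Polarizing---this is where I need $p \neq 2$---gives $Q(x, y) = -Q(y, x)$, and combined with the symmetry of $B$ this translates to $\eta^* = -\eta$, where $\eta^*$ denotes the $B$-adjoint of $\eta$.

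Next I would exploit the separating subgroups $\Gamma_{i, j}$ to show pairwise $B$-orthogonality of the $W_i$. For $i \neq j$, the form $B|_{W_i \times W_j}$ is $\Gamma_{i, j}$-invariant. Fixing $v \in W_i$, the linear functional $y \mapsto B(v, y)$ on $W_j$ is $\Gamma_{i, j}$-equivariant with target the trivial module $k$. Since $W_j$ is an irreducible and non-trivial $k[\Gamma_{i, j}]$-module, so is its dual, and a non-trivial irreducible module admits no non-zero fixed vectors. Hence $B(v, \cdot)|_{W_j} = 0$, i.e., $B(W_i, W_j) = 0$.

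To close the argument, I would combine these two structural outputs. From $\eta^* = -\eta$ we deduce $\mr{image}(\eta)^\perp = \ker(\eta^*) = \ker(\eta)$. But $\eta|_{W_i} = c_i \cdot \mr{id}$ with $c_i \in k^\times$ forces $\mr{image}(\eta) = W$ and $\ker(\eta) = W'$, so $W^\perp = W'$. Combined with $\fg = W \oplus W'$, this says $B|_W$ is non-degenerate, and together with the pairwise orthogonality of the $W_i$ it implies that $B|_{W_i}$ is non-degenerate on each summand. In particular there exist $v, y \in W_i$ with $B(v, y) \neq 0$, and then anti-self-adjointness yields
\[
c_i B(v, y) = B(\eta v, y) = -B(v, \eta y) = -c_i B(v, y),
\]
so $2 c_i = 0$, contradicting $c_i \in k^\times$ and $p \neq 2$. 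The hard part will be the orthogonality step for the $W_i$ (and the subsidiary identification $W^\perp = W'$ needed to promote it to non-degeneracy of each $B|_{W_i}$); once those are in hand, the polarization of $Q$ and the final cancellation $2c_i = 0$ are essentially formal.
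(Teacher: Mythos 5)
Your proof is correct and follows essentially the same strategy as the paper's: negate the conclusion, use $B$-orthogonality of $\mf{t}$ to the root spaces plus Zariski-density of (regular) semisimple elements to get $B(x,\eta(x))\equiv 0$ on $\fg$, and then play the resulting (skew-)symmetry of $(x,y)\mapsto B(x,\eta(y))$ against the decomposition $\fg=W'\oplus\bigoplus_i W_i$ and the separating subgroups to contradict non-degeneracy of $B$. The only difference is organizational: the paper shows directly that $W$ is totally isotropic and orthogonal to $W'$, invoking $\Gamma_{i_1,i_2}$ only when $c_{i_1}=-c_{i_2}$, whereas you first deduce $B(W_i,W_j)=0$ for $i\neq j$ from invariance and separation alone, identify $W^\perp=\ker\eta=W'$ to get non-degeneracy of each $B|_{W_i}$, and then contradict this with $2c_iB|_{W_i}=0$ -- a logically equivalent endgame.
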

\proof
By our assumption on $p$, there is an invariant trace form $B \colon \fg \times \fg \to k$. For all $x, y, z \in \fg$, we then have $B(x, [y, z])=B([x,y], z])$. In particular, if $x$ and $y$ lie in the same Cartan sub-algebra of $\fg$ (and therefore commute), then $B(x, [y, z])=0$ for all $z$. For any Cartan sub-algebra $\mf{t}$, the annihilator of $\mf{t}$ with respect to $B$ is the sum of root spaces in $\fg$ (by the above identity and the fact that each root $\alpha$ is non-vanishing on $\mf{t}$), so $\ker(p_{\mf{t}})$ is the span of the set $\{[y, z]: y \in \mf{t}, z \in \fgder\}$. Thus, for $x \in \mf{t}$, $B(x, \ker(p_{\mf{t}}))=0$.

Assume that $p_{\mf{t}} \circ \eta(\mf{t})=0$ for all Cartan sub-algebras $\mf{t}$ of $\fg$. Any semisimple element $x$ lies in some Cartan $\mf{t}$, and we have assumed $\eta(x) \in \ker(p_{\mf{t}})$, so $B(x, \eta(x))=0$. Since the semisimple elements are dense in $\fg$, we deduce that $B(x, \eta(x))=0$ identically on $\fg$. Applying this observation to any $x=w \in W$ and $x=w+w' \in W \oplus W'$, we see $B(w', \eta(w))=0$ for all $w' \in W'$, $w \in W$, i.e. $B(W', W)=0$ (clearly $\eta(W)=W$). We next apply the identity to any triple of elements $w_1, w_2, w_1+w_2 \in W$ and find that $B(w_1, \eta(w_2))+B(w_2, \eta(w_1))=0$. Taking $w_1 \in W_{i_1}$, $w_2 \in W_{i_2}$ (for any pair of indices $i_1, i_2$), we find $(c_{i_1}+c_{i_2})B(w_1, w_2)=0$. If $c_{i_1}+c_{i_2}$ is non-zero (and in particular if $i_1=i_2$), then we find $B(W_{i_1}, W_{i_2})=0$. If $c_{i_1}=-c_{i_2}$, then for all $\gamma \in \Gamma_{i_1, i_2}$, we rewrite the above identity as
\begin{align*}
0&=B(w_1, \eta(w_2))+B(w_2, \eta(w_1))= B(\gamma w_1, \gamma \eta(w_2))+B(w_2, \eta(w_1))\\&=B(w_1, c_{i_2}\gamma w_2)+B(w_1, c_{i_1}w_2) = c_{i_1}B(w_1, (1-\gamma)w_2).
\end{align*}
Now the hypothesis that $\Gamma_{i_1, i_2}$ acts irreducibly and non-trivially on $W_{i_2}$ implies that $B(W_{i_1}, W_{i_2})=0$ in this case as well.

Assembling these observations, we see that $B(W, W')=0$ and $B(W, W)=0$, contradicting non-degeneracy of $B$.
\endproof
\begin{rmk}
Our proof of Proposition \ref{splitcase} relies on the map $\eta$ of Equation (\ref{eta}) resembling (roughly speaking) a \textit{projection} onto $\im(\phi)$. This causes serious problems when $\br(\fg)$ has constituents with $\Fp[\gal{F}]$-multiplicity greater than one (for simplicity, in this remark take $G=G^0$ to be semisimple). Indeed, suppose that $V \subset \br(\fgder)$ is an isotypic component, isomorphic to $W^{\oplus{m}}$ for some irreducible $\Fp[\gal{F}]$-module $W$ and $m>1$. If there is a cocycle $\phi$ supported on $V$ and having $\im(\phi) \cong W$, then the kernel $W'$ of $\mf{g} \xrightarrow{\rho_2^{-1}} \Gal(K(\rho_2)/K) \to \Gal(K_{\phi}/K)$ has a non-trivial $W$-isotypic component. It will support other cocycles $\phi'$ that arise from the same class in $H^1(\gal{F, T}, W)$ as $\phi$ (but with $W$ embedded into $\br(\fg)$ in two different ways to yield $\phi$ and $\phi'$), and in particular $K_{\phi'}=K_{\phi}$. But then $\eta$ certainly vanishes on $\im(\phi')$; thus our argument would not allow us to kill the cohomology class $\phi'$.

We further remark that the Galois-theoretic control over the map $\eta$, which allows us to apply Lemma \ref{larsen}, is crucial to the argument of Proposition \ref{splitcase}: for instance, there are non-trivial linear maps $\eta \colon \mf{sl}_2 \to \mf{sl}_2$ with the property that $B(\mf{t}, \eta(\mf{t}))=0$ for every Cartan subalgebra $\mf{t}$ of $\mf{sl}_2$. 
\end{rmk}

\section{Some group theory: irreducible $G(k)$-representations for $p \gg_G 0$}\label{groupsection}
Before proving our main theorem, we will prove a few group-theoretic lemmas showing that the image hypotheses, with the exception of the multiplicity-free condition of Assumption \ref{big}, of \S \ref{klrsection} and \S \ref{auxiliarysection} in fact follow from the seemingly simpler assumption that $\br$ is ``absolutely irreducible,'' as long as $p$ is sufficiently large. We note that the explicit bounds extracted here depend on the classification of finite simple groups. Recall that a subgroup $\Gamma \subset G^0(k)$ is absolutely irreducible if $\Gamma$ is not contained in any proper parabolic subgroup of $G^0_{\overline{k}}$.
\begin{lemma}\label{irr}
 Let $\Gamma \subset G^0(k)$ be an absolutely irreducible finite subgroup. Assume $p > 2(\dim_k(\fgder) +1)$. Then:
\begin{enumerate}
 \item $\fgder$ is a semisimple $k[\Gamma]$-module.
 \item $H^1(\Gamma, \fgder)=0$, and the same holds if the action of $\Gamma$ on $\fgder$ is twisted by a character of $\Gamma$.
\end{enumerate}
\end{lemma}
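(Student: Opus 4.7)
The plan is to derive both parts from the fact that, under our hypothesis on $p$, an absolutely irreducible subgroup $\Gamma \subset G^0(k)$ is $G^0$-completely reducible in the sense of Serre, together with classical small-dimension rigidity results. First I would observe that $p > 2(\dim \fgder + 1)$ implies $p \geq 7$ and that $p$ is very good for $G^{\mathrm{der}}$, so the trace form on $\fgder$ is non-degenerate and the running Assumption \ref{minimalp} applies. By Serre's theorem on complete reducibility (valid in good characteristic), the absolute irreducibility of $\Gamma$---i.e., not being contained in any proper parabolic of $G^0_{\overline{k}}$---forces $\Gamma$ to be $G^0$-completely reducible.

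For part (1), I would invoke the theorem of McNinch (on semisimplicity under restriction): any rational $G^0$-module $V$ with $\dim_k V \leq p$ restricts to a semisimple module on every $G^0$-cr subgroup of $G^0$. Since our hypothesis gives $\dim_k \fgder < p - 2 < p$, applying McNinch to $V = \fgder$ and $H = \Gamma$ yields that $\fgder$ is semisimple as $k[\Gamma]$-module.

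For part (2), I would argue as follows. First, $\fgder$ has no nonzero $\Gamma$-fixed vector: such a vector would be fixed by the Zariski closure of $\Gamma$ in $G^0$, and since $\Gamma$ is not contained in any proper parabolic, the Zariski closure acts (through $G^{\mathrm{ad}}$) irreducibly on $\fgder$, so any fixed vector lies in the centre, which vanishes in the derived algebra by semisimplicity of $G^{\mathrm{der}}$. The same argument rules out a $\Gamma$-fixed line in any character twist $\fgder \otimes \chi$, since such a line would give a $\Gamma$-equivariant embedding $\chi^{-1} \hookrightarrow \fgder$, hence a $1$-dimensional $\Gamma$-submodule of $\fgder \otimes \chi \otimes \chi^{-1}= \fgder$ on which $\Gamma$ acts trivially. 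Combined with the semisimplicity of part (1), we decompose $\fgder \otimes \chi = \bigoplus_i W_i$ into nontrivial simple $k[\Gamma]$-summands, and it suffices to show $H^1(\Gamma, W_i)=0$ for each $i$. For this the plan is to: (i) apply Nori's structure theorem for finite subgroups of reductive groups in large characteristic to produce a connected reductive subgroup $H \subset G^0$ with $\Gamma^+ := \langle x \in \Gamma : x^p = 1\rangle$ essentially $H(k)^+$, and $\Gamma/\Gamma^+$ of order prime to $p$; (ii) use Hochschild-Serre and Maschke to reduce the vanishing of $H^1(\Gamma, W_i)$ to that of $H^1(\Gamma^+, W_i)$; (iii) invoke a Cline-Parshall-Scott-van der Kallen-type vanishing, which gives $H^1(H(k), W)=0$ for any rational $H$-composition factor $W$ of $\fgder$ of dimension less than $p$, finishing the argument.

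The main obstacle will be the $H^1$ vanishing in (2): semisimplicity is a clean application of Serre/McNinch, but cohomology vanishing is more delicate and in its sharpest form (as in the appendix by Guralnick-Herzig-Thorne-Taylor to Thorne's adequacy paper) ultimately rests on the classification of finite simple groups. The explicit bound $p > 2(\dim \fgder + 1)$ signals that the authors likely achieve a self-contained argument through Nori's theorem plus an elementary Hochschild-Serre analysis, carefully tracking the $p$-part and the $p'$-part of $\Gamma$; assembling these cleanly while keeping the bound explicit is the technical crux of the lemma.
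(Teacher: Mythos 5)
Your part (1) is essentially the paper's argument: absolute irreducibility gives complete reducibility in Serre's sense, and then a low-dimension/low-Coxeter-number semisimplicity criterion (the paper cites Serre's Corollaire 5.5, valid for $p>2h_G-2$) gives semisimplicity of $\fgder$ as $k[\Gamma]$-module. That part is fine.

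Part (2) is where your proposal breaks down. First, your claim that a $\Gamma$-fixed vector is impossible because ``the Zariski closure acts irreducibly on $\fgder$'' is false: absolute irreducibility of $\Gamma$ means only that it lies in no proper parabolic, and the adjoint module of the Zariski closure can be highly reducible (e.g.\ for a principal $\mr{SL}_2$). In the paper the vanishing of $H^0(\Gamma,\fgder)$ is a separate and genuinely harder statement (Lemma \ref{invariants}), proved via Jordan decomposition and an analysis of centralizers of semisimple and nilpotent elements; it is not needed for the $H^1$ statement. Second, and more seriously, the engine of your step (iii) is not justified: a bound $\dim W < p$ does \emph{not} suffice for $H^1$-vanishing for finite groups of Lie type --- the correct threshold is roughly $\dim W < p/2$, which is precisely why the lemma carries the hypothesis $p>2(\dim_k(\fgder)+1)$ --- and your simple $k[\Gamma]$-summands $W_i$, restricted to the Nori subgroup $\Gamma^+$, are not a priori restrictions of rational composition factors to which a Cline--Parshall--Scott type theorem applies. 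What the paper actually does is much shorter: it uses part (1) to show that $\Gamma$ has no nontrivial normal $p$-subgroup (the invariants of such a subgroup $H$ in any irreducible summand $U$ of $\fg_{\overline{k}}$ are nonzero and $\Gamma$-stable, hence all of $U$; so $H$ acts trivially on $\fg$ and lies in the center $Z_{G^0}(k)$, which has order prime to $p$), and then invokes Guralnick's Theorem A, which gives $H^1(\Gamma,V)=0$ whenever $O_p(\Gamma)=1$ and $p>2(\dim V+1)$, applied to $\Gamma/\Gamma\cap Z_{G^0}(k)$ acting on $\fgder$ (twists by a character are handled the same way). So your final guess that the authors give a self-contained Nori-plus-Hochschild--Serre argument is also off: the bound comes straight from Guralnick's theorem, which itself rests on the classification of finite simple groups, as the paper acknowledges.
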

\begin{proof}
 Let $h_G$ be the maximum of the Coxeter numbers of the simple factors of $G^0$. By \cite[Corollaire 5.5]{serre:CR}, for $p> 2h_G-2$, $\mf{g}$, and hence its summand $\fgder$, is a semisimple $\Gamma$-module. We claim then that $\Gamma$ contains no non-trivial normal subgroup of $p$-power order. Indeed, suppose there were such a subgroup $H \unlhd \Gamma$. Consider any irreducible $\overline{k}[\Gamma]$-summand $U$ of $\fg_{\overline{k}}$. The $\overline{k}$-vector space of invariants $U^H$ is non-trivial (since $H$ is a $p$-group) and is stabilized by $\Gamma$, hence must equal all of $U$. This holds for all $U$, so $\fg$ is a trivial $H$-module, and therefore $H$ is contained in the center $Z_{G^0}(k)$; but the latter clearly has order prime to $p$, a contradiction. Thus $\Gamma$ has no non-trivial normal subgroup of order $p$, and by \cite[Theorem A]{guralnick:CR}, $H^1(\Gamma, \fgder)=0$ for $p>2 (\dim_k(\fgder) +1)$ (to be precise, apply this result to $\Gamma/\Gamma \cap Z_{G^0}(k)$ acting on $\fgder$). 
\end{proof}
The following lemma, with a different proof, also appears in \cite[Lemma 5.1]{bhkt:fnfieldpotaut}:
\begin{lemma}\label{invariants}
Let $G$ be a connected reductive group over $\bar{k}$. Assume $p >5$, and that $p \nmid n+1$ for any simple factor of $G^{\mr{ad}}$ of Dynkin type $A_n$. Let $\Gamma \subset G(\bar{k})$ be absolutely irreducible. Then $H^0(\Gamma, \fgder)=0$.
\end{lemma}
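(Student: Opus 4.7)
I would argue by contradiction: suppose $X \in (\fgder)^\Gamma$ is non-zero. The hypotheses $p>5$ and $p \nmid n+1$ for any $A_n$ factor of $G^{\mathrm{ad}}$ make $p$ very good for $G^{\mathrm{der}}$, so the Jordan decomposition $X = X_s + X_n$ is intrinsically defined, both summands lie in $\fgder$, and both are $\Gamma$-fixed (since $\Gamma$ acts by Lie algebra automorphisms via $\Ad$, which preserves Jordan decomposition). My plan is to rule out each of $X_n \neq 0$ and $X_s \neq 0$ by producing a proper parabolic of $G$ containing $\Gamma$, contradicting absolute irreducibility.

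If $X_n \neq 0$, I would invoke the theory of canonical parabolics for nilpotent elements in good characteristic (the Jacobson--Morozov parabolic attached to an associated cocharacter, refined by Bala--Carter and Premet): the nilpotent $X_n$ determines intrinsically a proper parabolic $P(X_n) \subsetneq G$ containing the scheme-theoretic centralizer $Z_G(X_n)$. Since $\Gamma$ fixes $X_n$ and the assignment $X_n \mapsto P(X_n)$ is canonical, $\Gamma \subset N_G(P(X_n)) = P(X_n)$, contradicting absolute irreducibility.

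In the remaining case $X_n = 0$, $X = X_s \neq 0$, the very good hypothesis yields $\fg = \fgder \oplus \mf{z}(\fg)$ and $\mf{z}(\fgder)=0$, so $X_s$ is non-central in $\fg$. Fixing a maximal torus $T$ with $X_s \in \mf{t}$, the set $\Phi_0 = \{\alpha \in \Phi(G,T) : \alpha(X_s) = 0 \in \bar{k}\}$ is a Levi sub-root-system in very good characteristic (the potential pathologies in rank-two subsystems that could produce a non-Levi closed subsystem are exactly the ones excluded by our hypotheses on $p$). Thus $Z_G(X_s)^\circ = L$ is a proper Levi subgroup. Combined with the standard fact that centralizers of semisimple Lie algebra elements in connected reductive groups in good characteristic are connected, we get $Z_G(X_s) = L$. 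Choosing any parabolic $P \subsetneq G$ with Levi $L$, we conclude $\Gamma \subset Z_G(X_s) = L \subset P$, again a contradiction.

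The main obstacle I anticipate is precisely the structural input needed in the semisimple case---namely that $Z_G(X_s)^\circ$ is honestly a Levi (rather than a pseudo-Levi that is not Levi) and that $Z_G(X_s)$ is connected. Both are load-bearing, standard consequences of working in very good characteristic (cf.\ Jantzen's lectures on nilpotent orbits, or Humphreys' book on conjugacy classes), and both can fail in small bad characteristic. The oddness of $p$, $p > 5$, and the $A_n$ condition are exactly what is needed to invoke them in this generality; the rest of the argument is formal.
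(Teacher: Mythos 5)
Your overall strategy coincides with the paper's: assume a nonzero $\Gamma$-fixed $X\in\fgder$, split it by Jordan decomposition into $\Gamma$-fixed semisimple and nilpotent parts, and in each pure case trap $\Gamma$ inside a proper parabolic via the centralizer. The nilpotent case is the same argument in both treatments: the paper's citation of Jantzen's Proposition 5.9 is exactly the statement you invoke, that the canonical (Jacobson--Morozov/associated-cocharacter) parabolic of a nonzero nilpotent contains its centralizer in good characteristic. The divergence is in the semisimple case, and that is also where your write-up has its one soft spot. You invoke the stronger structural fact that $Z_G(X_s)^{\circ}$ is an honest proper Levi, i.e.\ that $\Phi_0=\{\alpha\in\Phi(G,T):\alpha(X_s)=0\}$ is a Levi subsystem in very good characteristic. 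That fact is true under the stated hypotheses, but your parenthetical justification misidentifies the danger: the obstruction is not a rank-two phenomenon but a Borel--de Siebenthal one. Non-Levi vanishing sets come from pseudo-Levi subsystems, e.g.\ $A_4+A_4\subset E_8$ (realizable by a Lie-algebra element only in characteristic $5$), $A_8\subset E_8$ (characteristic $3$), $D_8\subset E_8$ or $B_4\subset F_4$ (characteristic $2$); they are excluded here because $p>5$ exceeds every bad prime/torsion index, with the $p\nmid n+1$ condition handling type $A$ isogeny issues. As written, this step is asserted rather than proved, and "cf.\ Jantzen or Humphreys" is not a pinpoint reference for exactly this statement, so you should either prove it or cite it precisely. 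The paper deliberately needs less: it only shows that $\Phi_0$ spans a proper subspace of $X^{\bullet}(T)_{\RR}$ --- if it spanned everything, $C_G(X_s)$ would be semisimple, yet $X_s$ is a nonzero central element of $\Lie(C_G(X_s))$, impossible in very good characteristic by Seligman's triviality-of-the-center result --- and then Bourbaki VI.1.7, Prop.\ 24 gives a basis of $\RR\Phi_0\cap\Phi$ extending to a basis of $\Phi$, hence a proper Levi containing $T$ and all $U_\alpha$ with $\alpha\in\Phi_0$, and therefore containing the centralizer (whose connectedness, via Steinberg's torsion theorem for $p>5$, both you and the paper use). So your route is viable and close in spirit, but to make it airtight either supply the pseudo-Levi exclusion argument or substitute the paper's center-of-the-Lie-algebra trick, which sidesteps the "centralizer is a Levi" classification entirely.
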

\begin{proof}

By our characteristic assumptions (which imply that $G^{\mr{der}}$ and
$G^{\mr{ad}}$ have isomorphic Lie algebras), we may and do assume
$G=G^0$ is an adjoint group, and by considering each simple factor of
$G^0$ we may and do further assume that $G$ is simple. Let $X$ be an
element of $\fg^\Gamma$. We have the Jordan decomposition $X=X_s+X_n$
into semisimple and nilpotent parts in $\fg$, and uniqueness of
Jordan decomposition implies that both $X_s$ and $X_n$ are
$\Gamma$-invariant. Since $\Gamma$ is then contained in the
intersection $C_G(X_s) \cap C_G(X_n)$, it suffices to show that
$C_G(X)$ is contained in a proper parabolic when $X$ is either
semisimple or nilpotent. In either case, as long as $p>5$ (for $G$
not of type $A_n$) or $p \nmid n+1$ (for $G$ of type $A_n$), $C_G(X)$
is smooth (by a theorem of Richardson: see \cite[2.5
Theorem]{jantzen:nilporbits}). Assume $X$ is a non-zero
nilpotent. Then \cite[5.9 Proposition]{jantzen:nilporbits} implies
that $C_G(X)$ is contained in a proper parabolic subgroup. Now assume
$X$ is a non-zero semisimple element. There is a maximal torus $T$ of
$G$ such that $X$ belongs to $\mf{t}= \Lie(T)$
(\cite[11.8]{borel:linalg}). As usual, we can diagonalize the
$T$-action on $\fg$ to obtain a root system (in the real vector space
$X^\bullet(T) \otimes_{\Z} \RR$). The subgroup $C_G(X)$ is a connected
reductive group containing $T$: for the connectedness, we use that
$p>5$ (ensuring $p$ is not a ``torsion prime'') so that we can invoke \cite[Theorem 3.14]{steinberg:torsion}. By \cite[3.4 Proposition]{borel-tits:reductive}, $C_G(X)$ is determined by the root subgroups it contains (since it contains a maximal torus of $G$). For a root $\alpha \in \Phi(G, T)$, let $u_{\alpha} \colon \mathbf{G}_a \to G$ be the corresponding root subgroup. For $t \in T$, the relation 
\[
u_{\alpha}(y)t u_{\alpha}(y)^{-1}= t\cdot u_{\alpha}((\alpha(t)^{-1}-1)y)
\] 
lets us compute that (passing to the Lie algebra) $C_G(X)$ precisely contains those $U_{\alpha}= \im(u_{\alpha})$ drawn from the subset
\[
\Phi'= \{\alpha \in \Phi(G, T): d\alpha(X)=0\}
\]
of $\Phi= \Phi(G, T)$. We claim that the semisimple rank of $C_G(X)$ is strictly less than that of $G$. Temporarily granting this, we have that the roots $\Phi'$ span a proper subspace $\RR \Phi' \subset \RR \Phi=X^\bullet(T)_{\RR}$. By \cite[VI.1.7 Proposition 23]{bourbaki:lie456}, $\Phi'$ is a root system in the real vector space $\RR \Phi'$, and we can also consider it as a subsystem of the root system $\Phi''= \RR \Phi' \cap \Phi$. The latter, by \cite[VI.1.7 Proposition 24]{bourbaki:lie456} has a basis $I$ that extends to a basis of $\Phi$; and since $\RR \Phi'$ is strictly contained in $\RR \Phi$ this basis of $\Phi''$ is a proper subset of the extended basis of $\Phi$. It follows that $C_G(X)$ is contained in the (proper) Levi subgroup of $G$ associated to $I$, and therefore that $\Gamma$ is reducible.

To complete the proof, we establish the postponed claim that the inclusion $\RR \Phi' \subseteq \RR \Phi$ is proper. It suffices to show that $C_G(X)$ is not semisimple, i.e. has positive-dimensional center. Suppose it were semisimple. Its root system is a (not necessarily simple) subsystem of that of $G$, and so there are only finitely many possibilities for the root systems of the simple factors $H$ of $C_G(X)^{\mr{ad}}$. Under our assumptions on $p$, each of these simple factors satisfies the following two properties:
\begin{itemize}
\item $H^{\mr{sc}} \to H^{\mr{ad}}$ induces an isomorphism on Lie algebras.
\item $\Lie(H)$ has trivial center.
\end{itemize}
Indeed, note that $\Lie(H)$ has non-trivial center only when $p \leq 3$ or $H$ is of type $A_n$ and $p \vert n+1$: see the discussion of \cite[pp. 47-48]{seligman:modular} (which ensures that $\Lie(H)$ has a nonsingular trace form), and then apply \cite[Theorem I.7.2]{seligman:modular}. Thus under our assumptions on $p$, $\Lie(C_G(X))= C_{\fg}(X)$ must have trivial center. But $X$ visibly lies in the center, and we have therefore contradicted the supposed semi-simplicity of $C_G(X)$.


\end{proof}

In the main theorem, we will use the next three lemmas (Lemma \ref{cyclicq}, specifically) to show that $\br(\fgder)$ and $\br(\fgder)^*$ have no common subquotient.

\begin{lemma} \label{larsenpink} Given integers $n, c_1 >0$, there
  exists an integer $c_2 > 0$ (depending only on $n$ and $c_1$) such
  that if $\Gamma \subset \mr{GL}_n(k)$ is a finite subgroup admitting
  a cyclic quotient of order $c_2$ and not containing any normal
  subgroup of order $p^a$ with $a>0$, then the centre of $\Gamma$
  contains a cyclic subgroup of order prime to $p$ and $\geq c_1$.
\end{lemma}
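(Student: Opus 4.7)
The plan is to combine the Larsen-Pink structure theorem on finite subgroups of $\mr{GL}_n$ in characteristic $p$ with an analysis of the conjugation action of $\Gamma$ on a normal abelian subgroup.

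First I would invoke the Larsen-Pink theorem to obtain a constant $J(n)$ depending only on $n$ and a normal series $1 \subseteq \Gamma_3 \subseteq A \subseteq \Gamma_1 \subseteq \Gamma$, with each term normal in $\Gamma$, such that $[\Gamma : \Gamma_1] \leq J(n)$, the quotient $\Gamma_1/A$ is a product of finite simple groups of Lie type in characteristic $p$, $A/\Gamma_3$ is abelian of order prime to $p$, and $\Gamma_3$ is a $p$-group. Our hypothesis that $\Gamma$ has no nontrivial normal $p$-subgroup forces $\Gamma_3 = 1$, so $A$ is normal abelian of order prime to $p$. The Lie-type factors of $\Gamma_1/A$ have rank bounded in $n$ and are almost all perfect, so there is a constant $J'(n)$ with $|(\Gamma_1/A)^{\mr{ab}}| \leq J'(n)$.

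Next I would descend the cyclic quotient to $A$ and bound the action of $\Gamma$ on $A$. Let $\pi \colon \Gamma \twoheadrightarrow Q$ be the given surjection to a cyclic group of order $c_2$. Using $[\Gamma : \Gamma_1] \leq J(n)$ and $|(\Gamma_1/A)^{\mr{ab}}| \leq J'(n)$, the image $\pi(A)$ has index at most $J(n)J'(n)$ in $Q$, so $|\pi(A)| \geq C := c_2/(J(n)J'(n))$; in particular, $A$ contains an element of order $\geq C$. Since $A$ is abelian of order prime to $p$ in $\mr{GL}_n(k)$, after conjugating in $\mr{GL}_n(\bar{k})$ we may assume $A \subseteq (\bar{k}^\times)^n$, so $A$ has at most $n$ generators as an abelian group. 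The at most $n$ characters of $A$ appearing in $\bar{k}^n$ separate points of $A$ by faithfulness, and the conjugation action of $\Gamma$ on $A$ permutes these characters, yielding a homomorphism $\Gamma \to S_n$ with kernel exactly $C_\Gamma(A)$; hence $H := \Gamma/C_\Gamma(A)$ has order at most $n!$. Since $\pi$ kills $[\Gamma, \Gamma]$ and $Q$ is abelian, $\pi|_A \colon A \to Q$ is $H$-equivariant (for the trivial $H$-action on $Q$), so it factors through the $H$-coinvariants $A_H := A/[H, A]$; thus $|A_H| \geq C$.

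Finally I would invoke the standard identity $|A^H| = |A_H|$, valid for any finite abelian group $A$ acted on by a finite group $H$ (the cyclic case is the triviality of the Herbrand quotient on finite modules; the general case follows by dévissage, or via Pontryagin duality). Thus $|A^H| \geq C$. Since $A^H \subseteq A$ has at most $n$ generators, it contains a cyclic subgroup of order at least $|A^H|^{1/n} \geq C^{1/n}$, and this cyclic subgroup has order prime to $p$ and is contained in $A \cap Z(\Gamma) \subseteq Z(\Gamma)$. Choosing $c_2 := c_1^n \cdot J(n) \cdot J'(n)$ makes $C^{1/n} \geq c_1$, completing the proof. The main obstacles, beyond the bookkeeping of the Larsen-Pink constants $J(n)$ and $J'(n)$, are the verification of the identity $|A^H| = |A_H|$ for general finite $H$ and the reduction to a single cyclic subgroup using the $n$-generator bound coming from $A \subseteq (\bar{k}^\times)^n$.
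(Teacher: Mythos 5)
Your overall strategy parallels the paper's proof: invoke the Larsen--Pink structure theorem, use the no-normal-$p$-subgroup hypothesis to force the $p$-radical to vanish, show that the large cyclic quotient survives restriction (up to bounded index) to the normal abelian prime-to-$p$ subgroup $A$, and then pass from coinvariants to invariants for a boundedly-sized acting group so as to land in the centre. Your way of bounding the acting group --- diagonalize $A$ inside $(\bar{k}^\times)^n$, let $\Gamma$ permute the at most $n$ characters of $A$ in the standard representation, so $H=\Gamma/C_\Gamma(A)$ has order at most $n!$ --- is a legitimate, self-contained alternative to the paper's route, which instead extracts from the Larsen--Pink construction that $\Gamma_2$ is central in $\Gamma_1$ (so the conjugation action factors through the bounded group $\Gamma/\Gamma_1$) and that $\Gamma_2\cap(\Gamma_1)^{\mr{der}}$ has bounded order.

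The genuine problem is your final step: the asserted identity $|A^H|=|A_H|$ for a finite abelian group $A$ with an action of an arbitrary finite group $H$ is false. It holds for cyclic $H$ (triviality of the Herbrand quotient), but not in general: take $H=\Z/2\times\Z/2$ and $A=I$ the augmentation ideal of $\mathbb{F}_2[H]$; then $A^H$ is the line spanned by the norm element $\sum_{h\in H}h$, of order $2$, while $A_H=I/I^2\cong H$ has order $4$. Pontryagin duality only interchanges invariants and coinvariants while replacing $A$ by its dual, and d\'evissage fails because $(-)^H$ and $(-)_H$ are not exact; in general the two sides differ by a ratio of Tate cohomology orders. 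Fortunately the repair is easy and is exactly the paper's closing observation: the norm map $N\colon A_H\to A^H$, $\bar a\mapsto\sum_{h\in H}h\cdot a$, has kernel annihilated by $|H|$. Since $\pi|_A$ factors through $A_H$, the group $A_H$ has a cyclic quotient, hence also a cyclic subgroup, of order at least $C$; restricting $N$ to that cyclic subgroup, the kernel is cyclic of exponent dividing $|H|$, so the image is a cyclic subgroup of $A^H\subseteq Z(\Gamma)$, of order prime to $p$ and at least $C/|H|\ge C/n!$. (Alternatively, bound $|\ker N|\le(n!)^n$ using that $A_H$, being a quotient of $A\subseteq(\bar{k}^\times)^n$, is generated by at most $n$ elements.) Taking $c_2=c_1\cdot n!\cdot J(n)J'(n)$ then completes your argument, without the $n$-th root loss in your last step.
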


\begin{proof}
  By Theorem 0.2 of \cite{larsen-pink:finite}, for any finite subgroup
  $\Gamma \subset \mr{GL}_n(k)$ there exist normal subgroups
  $\Gamma_3 \subset \Gamma_2 \subset \Gamma_1 \subset \Gamma$ such that
  $\Gamma_3$ is a $p$-group, $\Gamma_2/\Gamma_3$ is an abelian group
  of order prime to $p$, $\Gamma_1/\Gamma_2$ is a product of finite
  simple groups of Lie type and $\Gamma/\Gamma_1$ has order bounded
  by a constant depending only on $n$. Our assumptions imply that
  $\Gamma_3$ is trivial. From the proof of the theorem
  \cite[p. 1156]{larsen-pink:finite} this imples that $\Gamma_2$ is in the
  centre of $\Gamma_1$, so the conjugation action of $\Gamma$ on
  $\Gamma_2$ factors through $\Gamma/\Gamma_1$.

  Let $\Gamma' = (\Gamma_1)^{\mr{der}}$. Clearly $\Gamma'$ lies in the
  kernel of any homomorphism from $\Gamma$ to an abelian group and
  $\Gamma_2$ surjects onto $\Gamma_1/\Gamma'$. Furthermore,
  $\Gamma' \cap \Gamma_2$ has order bounded by a constant depending
  only on $n$: this again follows from the construction of $\Gamma_1$ and $\Gamma_2$
  in \cite[p. 1156]{larsen-pink:finite} (note particularly the construction of the group denoted $G_2$ in loc. cit.). 
  Since the order of $\Gamma/\Gamma_1$ is bounded, if
  $\Gamma$ has a large cyclic quotient, the coinvariants of the action
  of $\Gamma/\Gamma_1$ on $\Gamma_1/\Gamma'$ must also have a large
  cyclic quotient, and so also a large cyclic subgroup. The lemma
  follows since if $A$ is any abelian group with an action of a finite
  group $\Delta$, the kernel of the averaging map from $A_{\Delta}$ to $A^{\Delta}$ is
  killed by the order of $\Delta$.
\end{proof}
\begin{rmk}\label{explicit}
The constant $c_2$ can be effectively bounded by invoking an explicit bound on the index $[\Gamma:\Gamma_1]$ obtained by Collins (\cite{collins:modularjordan}) using (unlike \cite{larsen-pink:finite}) the classification of finite simple groups.
\end{rmk}

\begin{lemma} \label{cent} For $G$ any (split) connected reductive
  group over $k$ there exists a constant $n_G$, depending only on the
  root datum of $G$, such that for any semisimple element
  $s \in G(k)$ the centralizer of $s^n$ in $G$ is a (not necessarily
  proper) Levi subgroup of $G$ for some $n$ dividing $n_G$.
\end{lemma}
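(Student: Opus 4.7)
The plan is to reduce the problem to a combinatorial question about sub-root systems of $\Phi(G,T)$. After conjugation I may assume that the semisimple element $s$ lies in a split maximal torus $T$ of $G$; set $\Phi = \Phi(G,T) \subseteq X := X^*(T)$. Recall that for any semisimple $t \in T$, the identity component $C_G(t)^\circ$ is the reductive subgroup containing $T$ with root system $\Phi_t := \{\alpha \in \Phi : \alpha(t) = 1\}$, and $C_G(t)^\circ$ is a Levi subgroup of $G$ if and only if $\Phi_t$ is a \emph{Levi subsystem}, meaning $\Phi_t = \Phi \cap V$ for some $\mathbb{R}$-subspace $V \subseteq X_{\mathbb{R}}$.

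I would introduce the combinatorial invariant
\[
M_G := \mathrm{lcm}\bigl\{\, [\mathbb{Q} S \cap \mathbb{Z}\Phi : \mathbb{Z} S] \ : \ S \subseteq \Phi \,\bigr\},
\]
a positive integer depending only on the root datum, since $\Phi$ is finite. The key point is: if a root $\alpha \in \Phi$ lies in $\mathbb{R}\Phi_s$, then since $\alpha \in X$ the coefficients of $\alpha$ with respect to a maximal $\mathbb{R}$-linearly independent subset of $\Phi_s$ are forced to be rational, so $\alpha \in \mathbb{Q}\Phi_s \cap \mathbb{Z}\Phi$. By the defining property of $M_G$, this gives $M_G \alpha \in \mathbb{Z}\Phi_s$; writing $M_G \alpha = \sum n_i \beta_i$ with $\beta_i \in \Phi_s$ and evaluating at $s$ yields $\alpha(s)^{M_G} = 1$, i.e.\ $\alpha \in \Phi_{s^{M_G}}$. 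In summary,
\[
\Phi \cap \mathbb{R}\Phi_s \ \subseteq\ \Phi_{s^{M_G}}.
\]

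Iterating this observation is the core of the argument. Set $V_i := \mathbb{R}\Phi_{s^{M_G^i}}$ for $i \geq 0$; this yields an ascending chain of $\mathbb{R}$-subspaces of $X_{\mathbb{R}}$, each of dimension at most $r := \mathrm{rank}(G)$. Applying the displayed inclusion to $s^{M_G^i}$ gives $\Phi \cap V_i \subseteq \Phi_{s^{M_G^{i+1}}} \subseteq \Phi \cap V_{i+1}$. The chain $V_0 \subseteq V_1 \subseteq \cdots$ must stabilize at some step $i \leq r$, and when $V_i = V_{i+1}$ the squeeze forces $\Phi_{s^{M_G^{i+1}}} = \Phi \cap V_{i+1}$, which is by construction a Levi subsystem. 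Hence $C_G(s^n)^\circ$ is a Levi for $n := M_G^{i+1}$, and this $n$ divides $n_G := M_G^{r+1}$, an invariant of the root datum.

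The one subtlety is that the argument produces a Levi only at the level of identity components. I would handle this by further multiplying $n_G$ by a constant that bounds the exponent of the possible component groups $C_G(t)/C_G(t)^\circ$ for semisimple $t \in T$; such a bound depends only on the algebraic fundamental group $\pi_1(G^{\mathrm{der}})$, via Steinberg's theorem that semisimple centralizers in simply connected groups are connected together with the standard comparison of centralizers under an isogeny. This is the main piece of bookkeeping: the root-theoretic heart of the proof is the elementary inductive use of the saturation invariant $M_G$, which bounds the price of passing from $\Phi_s$ to its Levi hull $\Phi \cap \mathbb{R}\Phi_s$.
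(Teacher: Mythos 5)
Your root-theoretic argument is a clean and genuinely different route from the paper's: you iterate on the subspaces $V_i = \mathbb{R}\Phi_{s^{M_G^i}}$, using the saturation invariant $M_G$ to control the price of passing from $\Phi_{s}$ to its Levi hull, whereas the paper iterates on the pair $(\Phi(s), W(s))$ and on the diagonalizable group $T^{W(s)}\cap T^{\Phi(s)}$ (the constant $n_G'$ there is the lcm of the orders of the torsion of the character groups of these finitely many subgroups, which is what forces a power of $s$ into a genuine torus). Your $M_G$ is a nice, purely combinatorial replacement for the first half of that, and that part of the argument is correct.

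The gap is in the treatment of connectedness, and it is not a mere piece of bookkeeping. Your iteration only tracks the subspace $\mathbb{R}\Phi_{s^{N}}$, so at its stable point you know $\Phi_{s^{n'}} = \Phi\cap V$ is a Levi subsystem and hence that $C_G(s^{n'})^\circ$ is a Levi, but you have \emph{no} control over $W(s^{n'})$, i.e.\ over the component group. You then propose to fix this with a one-shot multiplication of $n_G$ by a bound $e$ on the exponent of component groups. But raising $t := s^{n'}$ to the power $e$ perturbs both invariants at once: (i) the fact that the component group of $C_G(t)$ has exponent dividing $e$ only yields $C_G(t) \subseteq C_G(t^e)^\circ$; it does not make $C_G(t^e)$ connected, since $C_G(t^e)$ can acquire new components not present in $C_G(t)$; and (ii) $\Phi_{t^e}$ can strictly contain $\Phi_t$, and there is no reason the enlargement is again a Levi subsystem — so even the statement ``$C_G(t^e)^\circ$ is a Levi'' is no longer justified, and one would have to restart the root iteration, which your budget $M_G^{r+1}e$ does not allow. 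What is actually needed is a \emph{joint} iteration: replace $M_G$ by $N := \mathrm{lcm}(M_G, e)$, raise to powers of $N$, and iterate not until the span $V_i$ stabilizes but until the full centralizer $C_G(s^{N^i})$ stabilizes (which happens in at most $|\Phi| + |W|$ steps, since the pair $(\Phi(s^{N^i}), W(s^{N^i}))$ is strictly increasing at every non-terminal step). At the stable point the $e$-argument gives $C_G(t) \subseteq C_G(t^e)^\circ \subseteq C_G(t^N)^\circ = C_G(t)^\circ$, hence connectedness, and the $M_G$-argument gives $\Phi\cap\mathbb{R}\Phi_t \subseteq \Phi_{t^{M_G}} \subseteq \Phi_{t^N} = \Phi_t$, hence the Levi subsystem property; then $n_G = N^{|\Phi|+|W|}$ works. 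This is in effect what the paper does by tracking $(\Phi(t), W(t))$ simultaneously throughout; separating the two corrections the way your sketch does leaves a genuine hole.
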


\begin{proof}
  Let $T$ be a maximal torus of $G$ containing $s$ and let $t$ be any
  element of $T(\bar{k})$. By the theorem in \S 2.2 of
  \cite{humphreys:conjugacy}, $C_G(t)$ is generated by $T$, the root
  subgroups $U_{\alpha}$ for which $\alpha(t) = 1$ and representatives
  (in $N(T)$) of the subgroup $W(t)$ of the Weyl group $W(G,T)$ fixing
  $t$.  Let $\Phi(t)$ be the subset of $\Phi(G,T)$ consisting of all
  roots which are trivial on $t$. Let $T^{W(t)}$ be the subgroup of
  $T$ fixed pointwise by $W(t)$ and let
  $T^{\Phi(t)} = \cap_{\alpha \in \Phi(t)} \; \mr{Ker}(\alpha)$.  Let
  $n_G'$ be the lcm of the orders of the torsion subgroups of all the
  character groups of the groups of multiplicative type
  $T^{W(t)} \cap T^{\Phi(t)}$ for all $t \in T(\bar{k})$; there are
  only finitely many distinct such subgroups since both $W(G,T)$ and
  $\Phi(G,T)$ are finite sets. Then $n_G'$ depends only on the root
  datum of $G$, and the order of the component group of any subgroup
  $T^{W(t)} \cap T^{\Phi(t)}$ divides $n_G'$.

  It follows that $s_1 := s^{n_G'}$ is contained in a torus $T_1$ such
  that $T_1 \subset T^{W(s)} \cap T^{\Phi(s)}$. We clearly have
  $W(s) \subset W(s_1)$ and $\Phi(s) \subset \Phi(s_1)$. If both inclusions
  are equalities then $C_G(s)$ equals $C_G(s_1)$. Since
  $C_G(s_1) \supset C_G(T_1) \supset C_G(s)$ by construction, it would follow that
  $C_G(s)$ is equal to the centralizer of a torus, hence (by \cite[4.15 Th\'{e}or\`{e}me]{borel-tits:reductive}) a Levi
  subgroup. If either of the inclusions is strict, we repeat the
  procedure after replacing $s$ by $s_1$. Since $W(G,T)$ and
  $\Phi(G,T)$ are both finite, after at most
  $m_G := |W(G,T)| + |\Phi(G,T)|$ steps we must have equality. Thus,
  we may take $n_G$ to be $(n'_G)^{m_G}$.
  \end{proof}

  \begin{lemma} \label{cyclicq}
    For $G$ any split semisimple group over $k$ there exists a
    constant $a_G$ depending only on the root datum of $G$ such that
    if $\Gamma \subset G(k)$ is an absolutely irreducible subgroup
    then $\Gamma$ has no cyclic quotient of order $\geq a_G$.
  \end{lemma}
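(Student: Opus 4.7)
The plan is to combine the three preceding lemmas: Lemma \ref{larsenpink} will convert ``large cyclic quotient'' into ``large central cyclic subgroup of order prime to $p$,'' Lemma \ref{cent} will then force a power of such a central element to lie in $Z_G$, and this must contradict absolute irreducibility once the constants are chosen correctly.

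First I would fix once and for all a faithful representation $G \hookrightarrow \mathrm{GL}_n$ whose dimension $n$ depends only on the root datum of $G$ (for example, the adjoint representation composed with the defining embedding of $\mathrm{GL}(\fg)$). Before applying Lemma \ref{larsenpink} I need to check that $\Gamma$ contains no non-trivial normal $p$-subgroup: if $H \trianglelefteq \Gamma$ is a $p$-group, then $H$-invariants on any irreducible $\bar{k}[\Gamma]$-constituent of $\fg_{\bar{k}}$ are non-zero and $\Gamma$-stable, hence equal to the whole constituent; so $H$ acts trivially on $\fg$, lies in $Z_G(k)$, and therefore is trivial (the center of a split semisimple group has order prime to $p$ under our running Assumption \ref{minimalp}). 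This is exactly the argument already used in the proof of Lemma \ref{irr}.

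Next I would set $c_1 := n_G \cdot |Z_G(\bar{k})| + 1$, where $n_G$ is the constant of Lemma \ref{cent}; both $n_G$ and $|Z_G(\bar{k})|$ depend only on the root datum. Lemma \ref{larsenpink}, applied to $\Gamma \subset \mathrm{GL}_n(k)$, produces a constant $c_2$, depending only on $n$ and $c_1$ (and so only on the root datum), such that: if $\Gamma$ admits a cyclic quotient of order $\geq c_2$, then $Z(\Gamma)$ contains an element $z$ of order prime to $p$ and $\geq c_1$. Such a $z$ is semisimple in $G(k)$.

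Finally I would apply Lemma \ref{cent} to $z$: there exists $n \mid n_G$ such that $C_G(z^n)$ is a Levi subgroup of $G$. Since $z \in Z(\Gamma)$, we have $\Gamma \subseteq C_G(z^n)$. Absolute irreducibility of $\Gamma$ rules out $C_G(z^n)$ being a proper Levi (since then $\Gamma$ would sit in a proper parabolic), so $C_G(z^n) = G$, i.e.\ $z^n \in Z_G(\bar{k}) \cap G(k)$. But the order of $z^n$ is at least $c_1 / n \geq c_1 / n_G > |Z_G(\bar{k})|$, a contradiction. Setting $a_G := c_2$ completes the proof. The only subtle point is bookkeeping the dependencies of constants: by choosing $c_1$ strictly larger than $n_G \cdot |Z_G(\bar{k})|$ we ensure that the final order-estimate is strict, and every quantity involved ($n$, $n_G$, $|Z_G(\bar{k})|$) is determined by the root datum alone, as required.
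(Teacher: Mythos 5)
Your main argument (steps 2--4) is the paper's: embed $G$ in $\mr{GL}_n$, apply Lemma \ref{larsenpink} with $c_1$ tied to the constant $n_G$ of Lemma \ref{cent} to get a large central element $z$ of order prime to $p$, and then use Lemma \ref{cent} to place $\Gamma$ inside the Levi $C_G(z^n)$, contradicting irreducibility. Your way of finishing is a harmless variant: the paper first reduces to $G$ adjoint so that a non-identity $z^n$ is automatically non-central, whereas you keep $G$ arbitrary and instead choose $c_1 = n_G\cdot|Z_G(\bar k)|+1$ so that $z^n$ has order too large to lie in $Z_G$. That bookkeeping is correct, and all the constants involved depend only on the root datum.

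There is, however, a genuine gap in your preliminary step ruling out a nontrivial normal $p$-subgroup $H\trianglelefteq\Gamma$. The invariants argument you quote from Lemma \ref{irr} only shows that $H$ acts trivially on each irreducible constituent (composition factor) of $\fg_{\bar k}$; to conclude that $H$ acts trivially on $\fg$ itself you need $\fg$ to be a \emph{semisimple} $\bar k[\Gamma]$-module, and in Lemma \ref{irr} that semisimplicity is exactly what the hypothesis $p>2(\dim_k(\fgder)+1)$ (via Serre's result) is used to secure. Lemma \ref{cyclicq} carries no such largeness hypothesis on $p$, and without semisimplicity the statement ``$H$ is trivial on all composition factors'' only says that $\Ad(H)$ is unipotent --- which is automatic for a $p$-group in characteristic $p$ and gives no contradiction; in particular you cannot conclude $H\subseteq Z_G(k)$. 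The paper avoids this by a different idea: a nontrivial normal $p$-subgroup $U$ lies in the unipotent radical of a Borel, and the Borel--Tits theorem (\cite[3.1 Proposition]{borel-tits:unipotent}) produces a proper parabolic containing $N_G(U)\supseteq\Gamma$, contradicting absolute irreducibility. You should either substitute this Borel--Tits argument or add the hypothesis $p>2(\dim_k(\fgder)+1)$ to the statement (which would suffice for the paper's applications but is not part of the lemma as stated).
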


  \begin{proof}
    We may clearly assume that $G$ is of adjoint type. If $\Gamma$
    contains a nontrivial normal subgroup $U$ of order a power of $p$
    then $U$ is inside a $p$-Sylow of $G(k)$, i.e., the unipotent
    radical of a Borel. By a theorem of Borel-Tits
    \cite[3.1 Proposition]{borel-tits:unipotent}, there is a parabolic
    $P \subset G$ containing $N_G(U)$ whose unipotent radical contains
    $U$. Since $G$ is reductive, $P$ is a proper parabolic if $U$ is
    nontrivial.  Since $U$ is normal in $\Gamma$, this implies
    $\Gamma$ is in a proper parabolic of $G$, contradicting
    irreducibility.

    By embedding $G$ in $\mr{GL}_n$ for some $n$, we may now apply
    Lemma \ref{larsenpink} with $c_1-1 $ equal to the number $n_G$
    obtained from Lemma \ref{cent}, to get $c_2$ such that if $\Gamma$
    has a cyclic quotient of order $\geq c_2$ then the centre of
    $\Gamma$ contains a cyclic subgroup $Z$ of order at least $c_1$
    and of order prime to $p$. By Lemma \ref{cent} there exists an
    integer $n < c_1$ so that $C_G(s^n)$ is a Levi subgroup, where $s$
    is any generator of $Z$. By construction, $s^n$ is not the
    identity and since $G$ is adjoint, it is also not central, so
    $C_G(s^n)$ is a proper Levi subgroup of $G$. But
    $\Gamma \subset C_G(s^n)$ and this contradicts irreducibility once
    again.
  \end{proof}

\section{Completion of the argument}\label{finallift} 
In this section we combine the results of the previous sections with a standard Galois-cohomological argument (originating in \cite{ramakrishna:lifting} and \cite{taylor:icos2}) to prove our main theorem. We must first, however, take a small technical digression, only needed when $\mf{z}_{\mu} \neq 0$, to complement the results of \S \ref{auxiliarysection} and to explain a way to kill Selmer and dual Selmer classes supported on $\br(\mf{z}_{\mu})$ and $\br(\mf{z}_\mu)^*$. Recall that Proposition \ref{splitcase} does not apply to such classes, and indeed such classes cannot be killed with trivial primes. Following the template of \cite[Corollary 2.6.4]{clozel-harris-taylor}, however, we will explain how a hypothesis on the class group $\mr{Cl}(\tF)$ of $\tF$ and some knowledge of the local deformation conditions can rule out the existence of Selmer or dual Selmer classes supported on $\mf{z}_{\mu}$.

Of course, for the primes $S$ of ramification of our residual representation $\br$, we have not defined explicit local deformation conditions: that some such good conditions can be defined will be one of the hypotheses of our main theorem. The argument in \cite{clozel-harris-taylor} crucially depends on knowing that classes in $L_v \cap H^1(\gal{F_v}, \br(\mf{z}_{\mu}))$ (where $L_v$ is the tangent space of the local condition) are \textit{unramified}, and they deduce this from the explicit description of their sets $L_v$. What we will do instead is postulate the existence of good local conditions in the adjoint case, and then show these can be suitably lifted to good local conditions in the general case that will have this ``unramified on $\mf{z}_{\mu}$" property.
\begin{lemma}\label{adlocalcondition}
Let $v \nmid p$ be a finite place, let $\br_v \colon \gal{F_v} \to G(k)$ be a residual representation, and let $\br^{\mr{ad}}_v$ be its image under $G \to G/Z^0_{G^0}$. Assume that there exists a representable, $\wh{(G/Z^0_{G^0})}$-stable, subfunctor $\Lift_{\br^{\mr{ad}}_v}^{\mc{P}_v} \subset \Lift_{\br^{\mr{ad}}_v}$ of the lifting functor for $\br_v^{\mr{ad}}$ such that
\begin{itemize}
\item $\Lift_{\br^{\mr{ad}}_v}^{\mc{P}_v}$ is formally smooth.
\item $L^{\mr{ad}}_v= \Def_{\br^{\mr{ad}}_v}^{\mc{P}_v}(k[\epsilon])$ has dimension $h^0(\gal{F_v}, \br(\fg^{\mr{der}}))$.
\end{itemize} 
Choose a multiplier character $\nu \colon \gal{F_v} \to A(\mc{O})$ lifting $\mu \circ \br_v$ such that $\nu(I_{F_v})$ has order coprime to $p$.\footnote{This is a simplifying hypothesis that can probably be removed. It is always possible globally.} Then there exists a representable, $\wh{G}$-stable, subfunctor $\Lift^{\mc{P}_v}_{\br_v} \subset \Lift^\nu_{\br_v}$ that is formally smooth, has tangent space $L_v$ of dimension $h^0(\gal{F_v}, \br(\fgm))$, and satisfies $L_v \cap H^1(\gal{F_v}, \br(\mf{z}_{\mu}))= H^1_{\mr{unr}}(\gal{F_v}, \br(\mf{z}_{\mu}))$.
\end{lemma}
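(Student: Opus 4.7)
The plan is to lift $\mc{P}_v$ from $G^{\mr{ad}}$-valued to $G$-valued lifts by augmenting it with a Teichm\"uller-type inertial condition on the ``central part'' along $\mf{z}_\mu$. The structural inputs are the direct sum decomposition $\fgm = \fgder \oplus \mf{z}_\mu$ of $G$-modules (Assumption \ref{minimalp}) and the prime-to-$p$ isogeny $G^{\mr{der}} \times Z^0_{G^0_\mu} \to G^0_\mu$, which induces an isomorphism $\wh{G^0_\mu} \cong \wh{G^{\mr{der}}} \times \wh{Z^0_{G^0_\mu}}$ on formal neighborhoods.

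First I would exploit the simplifying hypothesis. Let $q \colon G \to G/G^{\mr{der}}$, and let $T' := \ker((G^0/G^{\mr{der}}) \to A^0)$, so $\Lie(T') = \mf{z}_\mu$. Since $\nu(I_{F_v})$ has prime-to-$p$ order and $(G/G^{\mr{der}})(k)$ is an extension of $\pi_0(G)$ (prime-to-$p$) by the $k$-points of a torus (prime-to-$p$ in characteristic $p$), the residual character $\bar{\chi}_v := q \circ \br_v$ restricted to $I_{F_v}$ has prime-to-$p$ order and thus admits a unique Teichm\"uller lift $\chi_v^0|_{I_{F_v}} \colon I_{F_v} \to (G/G^{\mr{der}})(\mc{O})$ compatible with $\nu|_{I_{F_v}}$. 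I then define $\Lift^{\mc{P}_v}_{\br_v}(R)$ to consist of all $\rho_v \in \Lift^{\nu}_{\br_v}(R)$ that are $\wh{G}(R)$-conjugate to a lift satisfying $\pi \circ \rho_v \in \Lift^{\mc{P}_v}_{\br_v^{\mr{ad}}}(R)$ and $(q \circ \rho_v)|_{I_{F_v}} = \chi_v^0|_{I_{F_v}}$. Representability is immediate as both are closed conditions on the representable $\Lift^\nu_{\br_v}$; $\wh{G}$-stability is built into the definition, and the conditions are otherwise compatible because $T'$ is central in $G^0_\mu$.

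The heart of the lemma is the tangent space analysis. Decomposing $H^1(\gal{F_v}, \br(\fgm)) = H^1(\gal{F_v}, \br(\fgder)) \oplus H^1(\gal{F_v}, \br(\mf{z}_\mu))$, a short infinitesimal calculation shows that perturbing $\rho_v$ to $\exp(\epsilon\phi)\rho_v$ with $\phi = (\phi^{\mr{der}}, \phi^{\mf{z}_\mu})$ multiplies $(q\circ\rho_v)|_{I_{F_v}}$ by $\exp(\epsilon\, \phi^{\mf{z}_\mu}|_{I_{F_v}})$, so the Teichm\"uller condition precisely forces $\phi^{\mf{z}_\mu}$ to be unramified while the adjoint condition cuts out $L_v^{\mr{ad}}$ in the first summand. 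Hence $L_v = L_v^{\mr{ad}} \oplus H^1_{\mr{unr}}(\gal{F_v}, \br(\mf{z}_\mu))$, and
\[
\dim L_v = h^0(\gal{F_v}, \br(\fgder)) + h^0(\gal{F_v}, \br(\mf{z}_\mu)) = h^0(\gal{F_v}, \br(\fgm))
\]
by local duality at $v \nmid p$; the required intersection property $L_v \cap H^1(\gal{F_v}, \br(\mf{z}_\mu)) = H^1_{\mr{unr}}(\gal{F_v}, \br(\mf{z}_\mu))$ is then manifest from the definition of $L_v$.

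Formal smoothness follows by lifting in two stages. Given a small extension $R' \twoheadrightarrow R$ and $\rho_v \in \Lift^{\mc{P}_v}_{\br_v}(R)$, smoothness of $\mc{P}_v$ provides a lift of $\pi\circ\rho_v$ to $R'$, and the central ``$T'$-character'' lifts freely to $R'$ while preserving its fixed inertial Teichm\"uller value (because characters of $\gal{F_v}/I_{F_v}$ into the formal group $\wh{Z^0_{G^0_\mu}}(R')$ lift trivially). Gluing these pieces via the product structure $\wh{G^0_\mu} \cong \wh{G^{\mr{der}}} \times \wh{Z^0_{G^0_\mu}}$ and the multiplier constraint $\mu \circ \tilde{\rho}_v = \nu$ produces the desired $\tilde{\rho}_v$. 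The main obstacle I anticipate is carrying out this gluing carefully in the non-connected case: one must track the prime-to-$p$ kernel of the isogeny and the action of $\pi_0(G)$ on $Z^0_{G^0_\mu}$ to ensure that the recombined lift has multiplier exactly $\nu$, that the ambiguity in the adjoint lift can be absorbed by $\wh{G}$-conjugation without disturbing the inertial Teichm\"uller reference, and that the resulting subfunctor is genuinely $\wh{G}$-stable rather than just $\wh{G^0_\mu}$-stable.
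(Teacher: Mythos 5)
Your approach is essentially the same as the paper's. Both proofs lift $\mc{P}_v$ from the adjoint quotient to $G$ by further constraining the inertial behavior of the $G/G^{\mr{der}}$-valued push-forward to be ``Teichm\"uller," and both tangent-space computations run through $\fgm = \fgder \oplus \mf{z}_\mu$, so that $L_v = L_v^{\mr{ad}} \oplus H^1_{\mr{unr}}(\gal{F_v}, \br(\mf{z}_\mu))$.

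The one place where you deviate in formulation rather than in substance is the definition of the subfunctor. The paper phrases the extra inertial condition as: the image $\Theta$ of $\rho(I_{F_v})$ in $(G/G^{\mr{der}})(R)$ injects into $(G/G_\mu)(R) \times \pi_0(G)$. You instead pin $(q \circ \rho_v)|_{I_{F_v}}$ to the specific Teichm\"uller lift $\chi^0_v|_{I_{F_v}}$. These are equivalent given the multiplier constraint $\mu\circ\rho_v = \nu$ and the prime-to-$p$ hypothesis on $\nu(I_{F_v})$: the paper's injectivity forces $\Theta$ to have prime-to-$p$ order (its image in $A(R)\times\pi_0(G)$ does), so the $(G/G^{\mr{der}})$-inertial character is the unique prime-to-$p$ lift of its reduction, i.e.\ the Teichm\"uller lift. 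Your phrasing is arguably more concrete, the paper's avoids explicitly choosing a section.

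For formal smoothness, the paper first lifts $\rho$ to $\Lift^{\nu}_{\br_v}(R)$ by checking that the two projections of $\mr{obs}_\rho$ (to $H^2$ of $\fgder$ and of $\fg/\fgder$) vanish --- the first by smoothness of $\mc{P}_v$, the second by the Hochschild--Serre/prime-to-$p$ argument --- and then adjusts the resulting lift by cocycles in $\mf{z}_\mu$ and $\fgder$ to place it in the subfunctor. Your ``glue via $\wh{G^0_\mu} \cong \wh{G^{\mr{der}}} \times \wh{Z^0_{G^0_\mu}}$'' proposal, once made precise, is the same thing: the existence of lifts of $\pi\circ\rho_v$ and of $q\circ\rho_v$ kills the two projections of $\mr{obs}_\rho$, whence a lift exists, and then one adjusts by $\fgder$- and $\mf{z}_\mu$-cocycles. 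You correctly flag that the gluing step is the delicate part; the paper's ``lift then adjust by one-cocycles'' presentation sidesteps having to make sense of a bare fiber-product gluing over the isogeny $G \to (G/Z^0_{G^0}) \times_{\pi_0(G)} (G/G^{\mr{der}})$. One small inaccuracy: the equality $\dim H^1_{\mr{unr}}(\gal{F_v}, \br(\mf{z}_\mu)) = h^0(\gal{F_v}, \br(\mf{z}_\mu))$ is not local duality, but the elementary fact that the kernel and cokernel of $\mathrm{Frob}-1$ on a finite module have the same order.
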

\proof 
For an object $R$ of $\mc{C}_{\mc{O}}$, we define $\Lift^{\mc{P}_v}_{\br_v}(R)$ to be the set of lifts $\rho \colon \gal{F_v} \to G(R)$ such that 
\begin{itemize}
\item $\mu \circ \rho= \nu$.
\item The projection of $\rho$ to $G/Z^0_{G^0}(R)$ belongs to $\Lift_{\br^{\mr{ad}}_v}^{\mc{P}_v}(R)$
\item Let $\Theta$ be the image of $\rho(I_{F_v})$ in $G/G^{\mr{der}}(R)$. Then $\Theta$ is isomorphic to its image under the map $G/G^{\mr{der}}(R) \to G/G_{\mu}(R) \times \pi_0(G)$.
\end{itemize}
We now check formal smoothness. Let $R \to R/I$ be a small extension in $\mc{C}_{\mc{O}}^f$, and let $\rho \in \Lift^{\mc{P}_v}_{\br_v}(R/I)$. Write $\rho^{\mr{ad}}$ for its image in $\Lift^{\mc{P}_v}_{\br_v^{\mr{ad}}}(R/I)$. By hypothesis, $\rho^{\mr{ad}}$ lifts to an element $\tilde{\rho}^{\mr{ad}} \in \Lift^{\mc{P}_v}_{\br_v^{\mr{ad}}}(R)$. The obstruction to lifting $\rho$ to an element of $\Lift^{\nu}_{\br_v}(R)$ is an element $\mr{obs}_{\rho} \in H^2(\gal{F_v}, \br(\fgm))\otimes_k I$, and we see that the image of $\mr{obs}_{\rho}$ in $H^2(\gal{F_v}, \br(\fgm/\mf{z}_{\mu}))\otimes_k I$ is zero. Furthermore, the push-forward of $\rho$ to $\gal{F_v} \to G(R/I) \to G/G^{\mr{der}}(R/I)$ factors through $\Gal(L/F_v)$, where $L$ is a finite extension of $F_v^{\mr{ur}}$ of order prime to $p$. A standard argument with the Hochschild-Serre spectral sequence (eg, \cite[Lemma 4.17]{stp:exceptional}) shows that the obstruction to lifting the homomorphism $\Gal(L/F_v) \to G/G^{\mr{der}}(R/I)$ to $G/G^{\mr{der}}(R)$ vanishes, and thus the image of $\mr{obs}_{\rho}$ in $H^2(\gal{F_v}, \br(\fg/\fg^{\mr{der}}))\otimes_k I$ also vanishes. We conclude that $\mr{obs}_{\rho}$ itself is trivial and obtain a lift $\tilde{\rho} \in \Lift_{\br_v}^\nu(R)$. This lift automatically satisfies the first bulleted point above, and we can modify it by an element of $H^1(\gal{F_v}, \br(\mf{z}_{\mu}))$ to satisfy the third bulleted point, since we just saw that there is a lift along $G/G^{\mr{der}}(R) \to G/G^{\mr{der}}(R/I)$ that allows no additional ramification. Moreover, we can modify $\tilde{\rho}$ by any element of $H^1(\gal{F_v}, \br(\fg^{\mr{der}}))\otimes_k I$ without altering either the first or third bulleted properties. In particular, we can alter it by a cocycle $\phi$ such that $\exp(\phi)\tilde{\rho}$ has image mod $Z^0_{G^0}$ lying in $\Lift_{\br^{\mr{ad}}_v}^{\mc{P}_v}(R)$, since the set of lifts of $\rho^{\mr{ad}}$ to $G/Z^0_{G^0}(R)$ is a torsor under $H^1(\gal{F_v}, \br(\fg^{\mr{der}}))$.

Finally, by construction the tangent space of $\Lift^{\mc{P}_v}_{\br_v}$ is isomorphic to the direct sum of $\Def^{\mc{P}_v}_{\br_v^{\mr{ad}}}(k[\epsilon]) \subset H^1(\gal{F_v}, \br(\mf{g}^{\mr{der}}))$ and $H^1_{\mr{unr}}(\gal{F_v}, \br(\mf{z}_{\mu}))$. The last assertions of the lemma follow from the assumption that $\dim L_v^{\mr{ad}}= h^0(\gal{F_v}, \br(\fg^{\mr{der}}))$.
\endproof

\begin{lemma}\label{classgroup}
Let $\br \colon \gal{F, S} \to G(k)$ be a residual representation, for any $G$ as in \S \ref{defprelimsection}. Assume that
\begin{itemize}
\item $\{L_v\}_{v \in S}$ is a collection of subspaces of the $H^1(\gal{F_v}, \br(\fg_{\mu}))$ for $v \in S\setminus \{v \vert p\}$ such that $L_v \cap H^1(\gal{F_v}, \br(\mf{z}_{\mu}))=H^1_{\mr{unr}}(\gal{F_v}, \br(\mf{z}_{\mu}))$.
\item $T$ is a finite set of primes containing $S$ such that $T \setminus S$ consists of trivial primes, and at which we define local subspaces $L_v$ as in either Lemma \ref{extracocycles} or Lemma \ref{ramextracocycles} (i.e. $L_v$ will be one of the subspaces $L^{\alpha}_{\br}$ of those lemmas).
\item At the primes $v \vert p$, we define $L_v$ to be either as in Lemma \ref{regord} or Lemma \ref{ordextracocycles}. Alternatively, if $G_{\mu}$ is a product of copies of general linear groups and $\br|_{\gal{F_v}}$ is Fontaine-Laffaille, we take $L_v$ to be the space of Fontaine-Laffaille infinitesimal deformations. 
\item $\Hom(\mr{Cl}(\tF)/p\mr{Cl}(\tF), \br(\mf{z}_{\mu}))^{\Gal(\tF/F)}=0$.
\end{itemize}
Set $\mc{L}=\{L_v\}_{v \in T}$. Then $H^1_{\mc{L}}(\gal{F, T}, \br(\mf{z}_{\mu}))=0$ and $H^1_{\mc{L}^\perp}(\gal{F, T}, \br(\mf{z}_{\mu})^*)=0$.
\end{lemma}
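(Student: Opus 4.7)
The plan is to mirror the proof of \cite[Corollary 2.6.4]{clozel-harris-taylor}: show any Selmer class is everywhere unramified and kill it via the class-group hypothesis, then deduce the dual Selmer vanishing from Greenberg--Wiles duality.

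First I would verify the following local claim for each $v \in T$: with respect to the direct-sum decomposition $\br(\fgm) = \br(\fgder) \oplus \br(\mf{z}_{\mu})$ (which is orthogonal under local Tate duality), both $L_v \cap H^1(\gal{F_v}, \br(\mf{z}_{\mu}))$ and the projection of $L_v$ to $H^1(\gal{F_v}, \br(\mf{z}_{\mu}))$ coincide with $H^1_{\mr{unr}}(\gal{F_v}, \br(\mf{z}_{\mu}))$. At $v \in S \setminus \{v \mid p\}$ the intersection claim is the given hypothesis, and the projection claim follows by adding arbitrary unramified $\mf{z}_{\mu}$-twists to any element of $L_v$. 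At trivial primes $v \in T \setminus S$, the inclusion $\mf{z}_{\mu} \subseteq \mr{Cent}_{\fg}(\mf{g}_\alpha)$ (since $G^0$ acts trivially on $\mf{z}_{\mu}$) together with direct inspection of Lemmas \ref{extracocycles} and \ref{ramextracocycles}---using that $\mf{g}_\alpha$ and every bracket $[\mf{g}_\alpha, \mf{g}_\beta]$ lies in $\fgder$---shows the inertial value of any cocycle in $L^{\alpha}_{\br}$ has trivial $\mf{z}_{\mu}$-part. At $v \mid p$ in the ordinary cases, the prescribed inertial torus character pins the $\mf{z}_{\mu}$-restriction to inertia; in the Fontaine--Laffaille case of the main theorem, $G_{\mu}$ is semisimple (its factors being $\mr{SL}_N$, $\mr{Sp}_N$, $\mr{O}_N$), so $\mf{z}_{\mu} = 0$ and there is nothing to check.

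For the Selmer vanishing, any $\phi \in H^1_{\mc{L}}(\gal{F, T}, \br(\mf{z}_{\mu}))$ is then unramified at every finite place---at $v \notin T$ by construction, at $v \in T$ by the local claim. Since $\br(\mf{z}_{\mu})$ is a trivial $\gal{\tF}$-module and $|\Gal(\tF/F)| = |\pi_0(G)|$ is prime to $p$, inflation--restriction embeds $\phi$ into the $\Gal(\tF/F)$-invariants of everywhere-unramified homomorphisms $\gal{\tF} \to \br(\mf{z}_{\mu})$, which by global class field theory identify with $\Hom(\mr{Cl}(\tF)/p\mr{Cl}(\tF), \br(\mf{z}_{\mu}))^{\Gal(\tF/F)}$; the hypothesis forces this to vanish, so $\phi = 0$.

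For the dual Selmer vanishing, orthogonality of $\br(\fgm)^* = \br(\fgder)^* \oplus \br(\mf{z}_{\mu})^*$ under local duality reduces the condition $\psi|_{\gal{F_v}} \in L_v^\perp$ to $\psi|_{\gal{F_v}} \in H^1_{\mr{unr}}(\gal{F_v}, \br(\mf{z}_{\mu}))^\perp$, which at $v \nmid p$ is exactly $H^1_{\mr{unr}}(\gal{F_v}, \br(\mf{z}_{\mu})^*)$. In particular $H^1_{\mc{L}^\perp}(\gal{F, T}, \br(\mf{z}_{\mu})^*)$ coincides with the dual Selmer group $H^1_{(L')^\perp}$ of the ``balanced'' Selmer system $L'_v := H^1_{\mr{unr}}(\gal{F_v}, \br(\mf{z}_{\mu}))$ at every finite place. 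Applying the Greenberg--Wiles duality formula to this balanced system, the finite-place Euler factors all vanish; combining this with the Selmer vanishing just proved, the remaining global $H^0$ and archimedean contributions---controllable using that $\br(\mf{z}_{\mu})$ factors through $\Gal(\tF/F)$ of prime-to-$p$ order and that $\mu_{p^2}\not\subseteq K$ (which forces $H^0(\gal{F}, \br(\mf{z}_{\mu})^*) = 0$)---imply $H^1_{(L')^\perp} = 0$. The main obstacle is this final Euler-characteristic bookkeeping, matching the archimedean contribution against the $\Gal(\tF/F)$-invariants of $\br(\mf{z}_{\mu})$, directly mirroring the computation in \cite[Corollary 2.6.4]{clozel-harris-taylor}.
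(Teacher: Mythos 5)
Your Selmer argument agrees with the paper's: show $\phi \in H^1_{\mc{L}}(\gal{F,T},\br(\mf{z}_\mu))$ is everywhere unramified using the intersection hypothesis $L_v \cap H^1(\gal{F_v},\br(\mf{z}_\mu)) = H^1_{\mr{unr}}$, restrict to $\gal{\tF}$ (an isomorphism on invariants since $[\tF:F]$ is prime to $p$), identify the result with $\Hom(\mr{Cl}(\tF)/p,\br(\mf{z}_\mu))^{\Gal(\tF/F)}$, and apply the class-group hypothesis. One remark on your local claim: the assertion that the \emph{projection} $\pr_{\mf{z}_\mu}(L_v)$ equals $H^1_{\mr{unr}}$ at $v \in S \setminus \{v \mid p\}$ does not follow from ``adding arbitrary unramified $\mf{z}_\mu$-twists'' (that only shows $\pr_{\mf{z}_\mu}(L_v) \supseteq H^1_{\mr{unr}}$, and if $L_v$ contained an element with ramified $\mf{z}_\mu$-projection, no unramified twist could remove it). What makes the projection claim hold in practice is that the $L_v$ supplied by Lemma \ref{adlocalcondition} are constructed as direct sums compatible with $\fg_\mu = \fgder \oplus \mf{z}_\mu$; with only the stated intersection hypothesis this is genuinely an extra input.

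For the dual Selmer vanishing you take a different route. The paper disposes of it in one sentence---claiming the ``analogous identity'' $L_v^\perp \cap H^1(\gal{F_v},\br(\mf{z}_\mu)^*) = H^1_{\mr{unr}}$ and implicitly rerunning the same restriction argument---whereas you invoke the Greenberg--Wiles formula with $M_v = H^1_{\mr{unr}}$ at every finite $v$ and try to show the remaining global and archimedean terms vanish. Your approach correctly isolates what those terms are, and in that respect is more explicit than the paper's. But two of your justifications don't hold up:

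\textbf{(i)} The claim that $\mu_{p^2} \not\subseteq K$ forces $H^0(\gal{F},\br(\mf{z}_\mu)^*) = 0$ is a non-sequitur: the $\mu_{p^2}$ hypothesis concerns $p^2$-torsion in $K$ and says nothing about whether $\bar\kappa$ occurs as a constituent of $\mf{z}_\mu$. The statement $H^0(\gal{F}, \br(\mf{z}_\mu)^*) = \Hom_{\gal{F}}(\mf{z}_\mu, \mu_p) = 0$ is true, but for a different reason: a nonzero invariant would force $\bar\kappa$ to factor through $\Gal(\tF/F)$ (since $\mf{z}_\mu$ does), hence $F(\mu_p) \subseteq \tF$, impossible once $p$ is large enough relative to $[\tF : F] \leq |\pi_0(G)|$ and $F$---exactly the regime $p \gg_{G,F} 0$.

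\textbf{(ii)} The archimedean ``bookkeeping'' is not just bookkeeping: after the finite-place Euler factors cancel, the formula yields
\[
h^1_{\mc{M}^\perp}(\br(\mf{z}_\mu)^*) = h^1_{\mc{M}}(\br(\mf{z}_\mu)) + h^0(\gal{F},\br(\mf{z}_\mu)^*) + \sum_{v \mid \infty} h^0(\gal{F_v},\br(\mf{z}_\mu)),
\]
and while the first two terms vanish, $h^0(\gal{F_v},\br(\mf{z}_\mu)) = \dim \mf{z}_\mu^{\,\br(c_v)}$ is not forced to vanish by anything you (or the paper) invoke: the hypothesis $\mf{z}_\mu^{\pi_0(G)} = 0$ does not imply $\mf{z}_\mu^{c_v} = 0$ for each individual involution $c_v$, and the oddness condition of Definition \ref{odd} constrains $\fgder$ only. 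This is a genuine gap in your Greenberg--Wiles route as written (and, one should note, the paper's one-line remark is also imprecise here: $(H^1_{\mr{unr}})^\perp = H^1_{\mr{unr}}$ fails at $v \mid p$ when $\mf{z}_\mu \neq 0$, since the perp has an extra $[F_v:\Q_p]\dim\mf{z}_\mu$ worth of ramified classes). If you want to complete this route you must either prove $\mf{z}_\mu^{\,\br(c_v)} = 0$ for all $v \mid \infty$ under the standing hypotheses, or abandon Greenberg--Wiles and run the class-group/restriction argument directly on the dual module (which requires care, since $\br(\mf{z}_\mu)^*$ only becomes trivial over $\gal{\tF(\mu_p)}$, not $\gal{\tF}$).
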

\proof
We first note that the subspaces $L_v$ that we have defined for $v \in T \setminus S \cup \{v \vert p\}$ all satisfy $L_v \cap H^1(\gal{F_v}, \br(\mf{z}_{\mu}))= H^1_{\mr{unr}}(\gal{F_v}, \br(\mf{z}_{\mu}))$. This is a case-by-case check from the explicit descriptions of these spaces. Moreover note that the analogous inequality then holds for the $L_v^\perp$ (at all places $v \in T$), since the unramified cohomology on the dual side is the annihilator of the unramified cohomology under local duality.

Thus, 
\begin{align*}
H^1_{\mc{L}}(\gal{F, T}, \br(\mf{z}_{\mu})) &= \ker(H^1(\gal{F}, \br(\mf{z}_{\mu}))\to \bigoplus_{v \nmid \infty} H^1(I_{F_v}, \br(\mf{z}))) \\
&
\xrightarrow[\mr{res}]{\sim} \left(\ker(H^1(\gal{\tF}, \br(\mf{z}_{\mu})) \to \bigoplus_{\tv \nmid \infty} H^1(\gal{\tF_{\tv}}, \br(\mf{z}_{\mu}))\right)^{\Gal(\tF/F)} \\
&= \Hom(\mr{Cl}(\tF)/p\mr{Cl}(\tF), \br(\mf{z}_{\mu}))^{\Gal(\tF/F)}=0
\end{align*}
(recall that $[\tF:F]$ is prime to $p$).
\endproof 
Now we come to the main theorem:
\begin{thm}\label{mainthm}
Let $F$ be a totally real field, and let $\br \colon \gal{F, S} \to G(k)$ be a continuous representation unramified outside a finite set $S$ of finite places containing the places above $p$. Let $\tF$ be the smallest extension of $F$ such that $\br(\gal{\tF})$ is contained in $G^0(k)$. Assume that $p \gg_{G, F} 0$ and that $\br$ satisfies the following:
\begin{itemize}
 \item $\br$ is odd, i.e. for all infinite places $v$ of $F$, $h^0(\gal{F_v}, \br(\fgder))= \dim(\mr{Flag}_{G^{\mr{der}}})$.
 \item $\br|_{\gal{\tF(\zeta_p)}}$ is absolutely irreducible.
\item Let $W$ be any simple $\Fp[\gal{\tF}]$-quotient $\br(\fgder)/W' \xrightarrow{\sim} W$. If the group extension
 \[
 1 \to \fg/W' \to \im(\rho_2)/W' \to \im(\br) \to 1
 \]
 splits, then we assume:
 \begin{enumerate}
 \item For each $G$-orbit of simple factors $\br(\oplus_i \fg_i)$ of $\br(\fgder)$, $W$ appears in $\br(\fg_i)$ with $\Fp[\gal{\tF}]$-multiplicity at most 1. 
 \item Moreover, each such $W$ satisfies $\End_{\Fp[\gal{\tF}]}(W_i) \cong k$ (see Lemma \ref{multfreelemma} below).
 \end{enumerate}
 \item For all $v \vert p$, $F_v$ does not contain $\zeta_p$, and $\br|_{\gal{F_v}}$ either 
 \begin{itemize}
 \item is trivial; or
 \item is ordinary in the sense of \S \ref{ordsection} and satisfies the conditions (REG) and (REG*); or
 \item $G^0$ is a product of groups of the form $\mr{GL}_N$, $\mr{GSp}_N$, and $\mr{GO}_N$, and the projection of $\br|_{\gal{F_v}}$ to each factor is Fontaine-Laffaille with distinct Hodge-Tate weights in an interval of length less than $p-1$ in the $\mr{GL}$ case and less than $\frac{p-1}{2}$ in the $\mr{GSp}$ and $\mr{GO}$ cases. 
 \end{itemize}
 \item The field $K= \tF(\br(\fgder), \mu_p)$ does not contain $\mu_{p^2}$ (which follows in many cases from the preceding condition at $v \vert p$: see Remark \ref{mup2}).
 \item For all $v \in S$ not above $p$, there is a formally smooth local deformation condition $\mc{P}_v$ for $\br|_{\gal{F_v}}$ whose tangent space $\Tan^{\mc{P}_v}_{\br|_{\gal{F_v}}} \subset H^1(\gal{F_v}, \br(\mf{g}_\mu))$ has dimension $h^0(\gal{F_v}, \br(\fgm))$. If $G$ is not connected, \emph{and} the center of $G^0$ is positive-dimensional, we instead impose the hypothesis of Lemma \ref{adlocalcondition} above, and further assume that the subspaces $L_v^{\mr{ad}}$ of Lemma \ref{adlocalcondition} satisfy $L_v^{\mr{ad}}= \bigoplus_{i=1}^s \left(L_v^{\mr{ad}} \cap H^1(\gal{F_v}, \br(\mf{y}_i)) \right)$, where $\mf{y}_1, \ldots, \mf{y}_s$ are the distinct $G$-orbits of simple factors of $\fgder$.\footnote{This may be automatic from the conditions on $\mc{P}_v$, but we have not checked this.}
\end{itemize}

Then there exist a finite set of places $T \supset S$ and a geometric lift $\rho$ of $\br$
 \[
 \xymatrix{
 & G(\mc{O}) \ar[d] \\
 \gal{F, T} \ar@{-->}[ur]^\rho \ar[r]_{\br} & G(k).
 }
 \]
 such that $\im(\rho)$ contains $\widehat{G^{\mr{der}}}(\mc{O})$, and in particular the Zariski closure of $\im(\rho)$ contains $G^{\mr{der}}$.
\end{thm}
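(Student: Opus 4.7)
\textbf{Proof proposal for Theorem \ref{mainthm}.} The plan is to combine the group-theoretic results of Section \ref{groupsection}, the mod $p^2$ modification of Section \ref{klrsection}, and the auxiliary prime construction of Section \ref{auxiliarysection}, and then deploy the standard Ramakrishna--Taylor dual Selmer killing strategy, with the twist that the auxiliary primes are trivial primes in the sense of Section \ref{trivialsection}. First I would use Lemmas \ref{irr}, \ref{invariants}, and \ref{cyclicq} to deduce, for $p \gg_{G,F} 0$, that the absolute irreducibility assumption on $\br|_{\gal{\tF(\zeta_p)}}$ implies all the purely group-theoretic hypotheses of Assumption \ref{multfree} and Assumption \ref{big}: the semisimplicity and vanishing of $H^0, H^1$ over $\Gal(K/F)$ for $\br(\fgder)$ and its dual follow from Lemmas \ref{irr}--\ref{invariants}, and the absence of common $\Fp[\gal{\tF}]$-subquotients of $\br(\fgder)$ and $\br(\fgder)^*$ follows from Lemma \ref{cyclicq} (applied to the semisimplification, noting that the two would otherwise share a nonzero morphism giving a large cyclic quotient through $\bar\kappa$). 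The hypothesis that $\mu_{p^2} \not\subset K$ is either assumed or deduced from the local $v \vert p$ conditions via Lemma \ref{p2} or \ref{FLp2}.

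Next I would fix the local conditions. At each $v \in S$ not above $p$, let $\mc{P}_v$ be the given formally smooth lifting condition; in the non-connected case with $Z^0_{G^0}$ nontrivial, use Lemma \ref{adlocalcondition} to lift the adjoint condition $\mc{P}_v^{\mr{ad}}$ to a $\wh{G}$-stable formally smooth condition whose tangent space meets $H^1(\gal{F_v}, \br(\mf{z}_\mu))$ in the unramified cohomology. At each $v \vert p$ use Lemma \ref{regord} in the ordinary case, the sequence of Definition \ref{trivord}--Lemma \ref{ordextracocycles}--Lemma \ref{trivordlim} in the trivial case, and the usual Fontaine--Laffaille condition in the classical case; each is formally smooth (after passing to subsets for the trivial-prime constructions) with tangent space of the correct dimension. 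A routine Euler--characteristic computation using the oddness assumption shows that with this collection $\mc{L} = \{L_v\}_{v \in S}$ and the multiplier character fixed by Hodge--Tate considerations, the resulting Selmer and dual Selmer groups for $\br(\fgm)$ are balanced: $h^1_{\mc{L}}(\br(\fgm)) - h^1_{\mc{L}^\perp}(\br(\fgm)^*)$ vanishes, and in fact the contribution from $\br(\mf{z}_\mu)$ is killed by Lemma \ref{classgroup} (using the hypothesis on $\mr{Cl}(\tF)$ that is forced for $p \gg_{G,F} 0$).

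Now I would run Section \ref{klrsection}: after enlarging $S$ to $T$ by a finite set of trivial primes so that $\Sha^1_T$ and $\Sha^2_T$ vanish, produce a mod $p^2$ lift $\rho_2$ with multiplier $\nu$ (using Lemma \ref{localmodp^2} to see the obstruction vanishes locally), then apply Proposition \ref{doublingprop} to replace $\rho_2$ by a twist so that at each $w \in T \setminus S$ the local restriction lies in $\Lift^{\nu,\alpha_w}_{\br|_w,2,\mr{ram}}(\mc{O}/p^2)$ and at each $w \in S$ it lies in $\mc{P}_w$. By Lemma \ref{bigimage} we may further arrange that $\im(\rho_2)$ contains $\wh{G^{\mr{der}}}(\mc{O}/p^2)$, placing us in the setup of Assumption \ref{big}. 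With this $\rho_2$ in hand, the main inductive step is to annihilate dual Selmer. Starting from a nonzero $\psi \in H^1_{\mc{L}^\perp}(\gal{F,T}, \br(\fgder)^*)$, which by Assumption \ref{big}(5) is supported on some $G$-orbit of simple factors of $\fgder^*$, I would pick a nonzero $\phi \in H^1_{\mc{L}}$ supported on the corresponding orbit of $\fgder$ (or show $\phi$ may be taken zero if Selmer vanishes first) and invoke Proposition \ref{splitcase} to produce a trivial prime $q$ together with $(T,\alpha)$ such that $\rho_2|_q$ lies in $\Lift^\alpha_{\br,2}(\mc{O}/p^2)$, $\phi|_q \notin L^\alpha_q$, and $\psi|_q \notin (L^\alpha_q)^\perp$. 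Adding $q$ to $T$ with local condition $L^\alpha_q$ of Lemma \ref{extracocycles} strictly decreases $h^1_{\mc{L}^\perp}$ while keeping Selmer and dual Selmer balanced (via Wiles's formula, using that $\dim L^\alpha_q = h^0 + h^0$ at a trivial prime where $N(q)\equiv 1 \bmod p$).

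Iterating this step finitely many times kills dual Selmer entirely, so the global deformation ring for the resulting enlarged set of primes and conditions is a quotient of $\mc{O}$, and therefore (by a standard argument) equal to $\mc{O}$; this yields a characteristic zero lift $\rho$ of $\br$ whose restriction to each $v \in S$ lies in $\mc{P}_v$ and whose restriction to each $v \vert p$ is de Rham (by the final statements of Lemma \ref{regord}, Lemma \ref{trivordlim}, or the Fontaine--Laffaille hypothesis), hence geometric. That the induction can be carried out requires checking at each stage that the newly modified $\rho_2' = (1+p^{m-1}h)\rho_2$ (where $h$ is built from the extra cocycles of Lemma \ref{extracocycles}) still satisfies the hypotheses required to re-apply Proposition \ref{splitcase}; this is clear since the conditions $\Lift^\alpha_{\br,2}(\mc{O}/p^m)$ are $\wh{G}(\mc{O}/p^m)$-stable and preserved by the extra cocycle spaces. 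The large image property $\im(\rho) \supset \wh{G^{\mr{der}}}(\mc{O})$ is proven by induction on $m$: given that $\im(\rho_m) \supset \wh{G^{\mr{der}}}(\mc{O}/p^m)$, the image of $\rho_{m+1}$ is a subgroup of $G(\mc{O}/p^{m+1})$ whose reductions cover $\wh{G^{\mr{der}}}(\mc{O}/p^m)$, so its intersection with $p^m \fgder/p^{m+1}$ is a $\gal{F}$-stable (equivalently, $G(k)$-stable) submodule, which by irreducibility of $\fgder$ as a $k[G(k)]$-module (the hypothesis $p \gg_G 0$) is either zero or everything; it cannot be zero because $\im(\rho_2)$ already contained $\wh{G^{\mr{der}}}(\mc{O}/p^2)$ and the cocycles used in the construction were chosen so as to preserve fullness. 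The hardest step, conceptually and technically, is Proposition \ref{splitcase}: the simultaneous choice of a triple $(T,\alpha,q)$ controlling both $\phi$ and $\psi$ when the fields $K(\rho_2)$ and $K_\phi$ are not linearly disjoint over $K$, which is exactly what forces the multiplicity-free hypothesis via Lemma \ref{larsen}; every other ingredient of the above plan is essentially bookkeeping on top of that.
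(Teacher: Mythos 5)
Your overall plan faithfully traces the paper's proof, and the high-level organization (group theory from Section~\ref{groupsection}, mod $p^2$ modification via Proposition~\ref{doublingprop}, then trivial-prime annihilation of dual Selmer via Proposition~\ref{splitcase}, finally the Poitou--Tate lifting) is correct. A minor slip: after Proposition~\ref{doublingprop} it is at the \emph{newly introduced} primes $w \in Q$ that $\rho_2|_w$ lands in $\Lift^{\nu,\alpha_w}_{\br|_w,2,\mr{ram}}(\mc{O}/p^2)$; at the original trivial primes $w \in T\setminus S$ it should land in the unramified sets $\Lift^{\nu,\alpha_w}_{\br|_w,2}(\mc{O}/p^2)$ of Definition~\ref{unrtrivlifts}. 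You also omit the explicit correction step for the $\mf{z}_\mu$-component of $\rho_2$ (adjusting by classes valued in $\br(\mf{z}_\mu)$ so that the local discrepancies $c_w$ are unramified), though you do invoke Lemma~\ref{classgroup} for the corresponding Selmer-group vanishing; both are needed.

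The genuine gap is in your argument that $\im(\rho)\supset\wh{G^{\mr{der}}}(\mc{O})$. You argue that the intersection of $\im(\rho_{m+1})$ with the kernel $p^m\fgder/p^{m+1}\fgder$ is a $\gal{F}$-stable (or $G(k)$-stable) submodule, hence by irreducibility is zero or everything, and ``cannot be zero because $\im(\rho_2)$ contained $\wh{G^{\mr{der}}}(\mc{O}/p^2)$ and the cocycles were chosen to preserve fullness.'' This fails on two counts. First, the intersection is stable only under $\Ad\circ\br(\gal{F})$, not under all of $G(k)$, and as an $\Fp[\gal{F}]$-module $\fgder$ is generally reducible (with multiplicities), so ``zero or everything'' does not hold. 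Second, even granting a dichotomy, the nonvanishing is precisely the point and is not justified by an appeal to fullness of $\rho_2$---the cocycles added at higher levels are chosen to kill dual Selmer classes, not to preserve image. The paper instead gives a short, self-contained computation: any $y\in H$ of the form $1+p^{m-1}X+p^mY$ satisfies $y^p=1+p^mX$ in $\wh{G^{\mr{der}}}(\mc{O}/p^{m+1})$ (for $m\geq 2$), so a subgroup surjecting onto $\wh{G^{\mr{der}}}(\mc{O}/p^m)$ automatically contains the whole kernel, with no irreducibility needed. You should replace your image argument with this $p$-th power trick.
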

\begin{rmk}\label{mup2}
As observed in Lemma \ref{p2} and Lemma \ref{FLp2}, the condition that $K$ not contain $\mu_{p^2}$ follows automatically once there is a single place $v \vert p$ such that $F_v/\Q_p$ is unramified and either $\br|_{\gal{F_v}}$ is trivial, Fontaine-Laffaille, or--supposing $\br|_{\gal{F_v}}$ is ordinary, valued in a Borel $B$ with associated set of positive roots $\Phi^+$, with $\overline{\chi}_{T_G} \colon I_{F_v} \to T_G(k)$ as in \S \ref{ordsection}--for all $\alpha \in \Phi^+$, $\alpha \circ \overline{\chi}_{T_G}$ is a non-trivial power of $\bar{\kappa}$ (this condition is a minor strengthening of the condition (REG)). 
\end{rmk}
\begin{rmk}\label{lnotplocal}
 When $G= \mr{GL}_n$, $\mr{GSp}_{2n}$, or $\mr{GO}_n$, formally smooth local conditions $\mc{P}_v$ for $v \in S \setminus \{w \vert p\}$ with large enough tangent space are always known to exist (for $p \gg_G 0$), by \cite[\S 2.4.4]{clozel-harris-taylor} and \cite[\S 7]{booher:minimal} after possibly replacing $k$ with a finite extension. The global image hypothesis of our theorem does not allow us to make such a replacement, so to apply \cite[\S 2.4.4]{clozel-harris-taylor} or \cite[\S 7]{booher:minimal} one would have to check either that in the case of interest their arguments do not require extending scalars, or one would have to show that their results imply analogous results prior to extending scalars. We have not pursued this. Other examples of good local conditions for general groups are discussed in \cite[\S 4]{stp:exceptional} and \cite[\S 4]{stp:exceptional2}. We expect that it is always possible to find such $\mc{P}_v$, but this remains an interesting open problem. 
\end{rmk}
\begin{rmk}\label{effective}
We make some remarks on the effectivity of the bound $p\gg 0$ in the theorem. The possible need to increase $p$ arises at several points in the paper. In \S \ref{trivialsection}, the bound on $p$ is explicit.
In \S \ref{auxiliarysection} and \S \ref{klrsection} we have not computed an explicit bound on $p$, but we could easily derive one by following the arguments of those sections; the bounds coming from these sections essentially amount to the condition that certain $\Fp$-vector spaces not be covered by a finite (bounded absolutely in terms of $G$) number of hyperplanes. In deducing the image hypotheses of \S \ref{auxiliarysection} and \S \ref{klrsection} from the irreducibility hypothesis of Theorem \ref{mainthm}, there is an explicit bound ensuring the cohomology ($H^0$ and $H^1$) vanishing, and an explicit bound (see Remark \ref{explicit}) to ensure disjointness of $\br(\fgder)$ and $\br(\fgder)^*$. Finally, the dependence on $F$ only intervenes to assume $[F(\zeta_p):F]$ is sufficiently large, and when we need to invoke Lemma \ref{classgroup} to ensure that $\Hom(\mr{Cl}(\tF)/p\mr{Cl}(\tF), \br(\mf{z}_{\mu}))^{\Gal(\tF/F)}=0$ (eg, $\mr{Cl}(\tF)[p]=0$). In particular, if $G$ is connected or has zero-dimensional center, there is no dependence on $F$. In sum, the bound in Theorem \ref{mainthm} can be made effective.
\end{rmk}

\begin{proof}
Under our assumptions on $p$ and absolute irreducibility of $\br|_{\gal{\tF(\zeta_p)}}$, Lemmas \ref{irr} and \ref{invariants} apply, and so we assume those conclusions from now on. Moreover, Lemma \ref{cyclicq} implies that $\br(\fgder)$ and $\br(\fgder)^*$ have no common $\Fp[\gal{\tF}]$-subquotient. To see this, first note that the lemma implies that the fixed field $\tF(\fgder)$ cannot contain $\tF(\zeta_p)$ (for then the adjoint image of $\br(\gal{\tF})$ would have a large cyclic quotient, as we take $p \gg_F 0$). Letting $\{W_i\}_{i \in I}$ be the simple $\Fp[\gal{\tF}]$-module constituents of $\br(\fgder)$, if $\br(\fgder)$ and $\br(\fgder) \cong \br(\fgder)(1)$ had a common constituent, there would be an isomorphism $W_i \cong W_j(1)$ for some $i, j \in I$. We can choose $\sigma \in \gal{\tF}$ acting trivially on $W_i$ and $W_j$ but non-trivially on $\tF(\zeta_p)$, contradicting the equivalence $W_i \cong W_j(1)$. Thus, all the conditions of Assumption \ref{multfree} hold.

Fix a multiplier character $\nu \colon \gal{F, S} \to A(\mc{O})$ lifting $\mu \circ \br$, and if necessary (i.e., if $\mf{z}_{\mu} \neq 0$) satisfying $p \nmid \# \nu(I_{F_v})$ for all $v \nmid p$. In the discussion that follows, $\nu$ will be fixed, but we will omit it from the notation (for, e.g., local deformation conditions). We begin by invoking the results of \S \ref{klrsection}. 
As in the argument surrounding Lemma \ref{localmodp^2}, we choose a finite set of primes $T \supset S$ with $T \setminus S$ consisting of trivial primes so that $\Sha^1_T(\gal{F, T}, \br(\fg^{\mr{der}}))= \Sha^1_T(\gal{F, T}, \br(\fg^{\mr{der}})^*)=0$ and find a lift $\rho_2 \colon \gal{F, T} \to G(\mc{O}/p^2)$ of $\br$ (with multiplier $\nu$). We then fix the following local lifts of $\br|_{\gal{F_w}}$ for all $w \in T$:
\begin{itemize}
\item For each $w \in S$ not above $p$ we consider the local deformation condition $\mc{P}_w$ whose existence is assumed in Theorem \ref{mainthm} (invoking Lemma \ref{adlocalcondition} if necessary), and we fix a lift $\lambda_w \in \Lift^{\mc{P}_w}_{\br|_w}(\mc{O}/p^2)$. 
\item For $w \vert p$, we consider two different cases: if $\br|_{\gal{F_w}}$ is ordinary (valued in some Borel) and satisfies the conditions (REG) and (REG*) of \S \ref{ordsection}, then we fix a character $\chi_w \colon I_{F_w} \to T_G(\mc{O})$ such that for all simple roots $\alpha$, $\alpha \circ \chi_w$ is a positive integer power of $\kappa$. Then we let $\mc{P}_w$ be $\Lift^{\chi_w}_{\br}$ (as in Definition \ref{ordinary}), and let $\lambda_w$ be any element of $\Lift^{\chi_w}_{\br}(\mc{O}/p^2)$. If on the other hand $\br|_{\gal{F_w}}$ is trivial, then we choose $\chi_w$ as in Lemma \ref{ordextracocycles}, so that $\beta \circ \chi_w$ is non-trivial mod $p^2$ for all negative roots $\beta$, and we again further require that for all simple roots $\alpha$, $\alpha \circ \chi_w$ is a positive power of $\kappa$. Having chosen a maximal torus lifting $T_G$ as in Definition \ref{trivord}, we then let $\lambda_w$ be an element of $\Lift^{\chi_w}_{\br, 2}(\mc{O}/p^2)$.
\item For $w \in T\setminus S$, fix a lift $\lambda_w$ in the set $\Lift^{\alpha}_{\br|_w, 2}(\mc{O}/p^2)$ of Definition \ref{unrtrivlifts}, for some pair $(T, \alpha)$ of a maximal torus and root (these depend on $w$, but will be omitted from the notation). Moreover, we if necessary enlarge the set $T$ and choose the lifts $\lambda_w$ so that the elements $\lambda_w(\sigma_w)$ generate $\wh{G}^{\mr{der}}(\mc{O}/p^2)$.
We showed this is possible in Lemma \ref{bigimage}. 
\end{itemize} 
By Lemma \ref{adlocalcondition}, or by inspection as in Lemma \ref{classgroup}, for any $w \in T$ and any $c_w \in H^1_{\mr{unr}}(\gal{F_w}, \br(\mf{z}_\mu))$, the lift $(1+pc_w)\lambda_w$ lies in the same space of local lifts ($\Lift^{\mc{P}_w}_{\br|_w}(\mc{O}/p^2)$, $\Lift^{\chi_w}_{\br}(\mc{O}/p^2)$, etc.) from which we have just drawn $\lambda_w$.

As before, we write $z_T=(z_w)_{w \in T}$ for the element of $\oplus_{w \in T} H^1(\gal{F_w}, \br(\fgm))$ measuring the discrepancy between $\rho_2$ and the collection $(\lambda_w)_{w \in T}$, and we let $z_T^{\mr{der}}$ denote its component in $H^1(\gal{F, T}, \br(\fg^{\mr{der}}))$. Now one of two things can happen: 
\begin{itemize}
\item The local classes $z_T^{\mr{der}}$ may already lie in $\im(\Psi_T)$ (in the notation of \S \ref{klrsection}). In this case, we replace $\rho_2$ by some $(1+ph)\rho_2$, where $h \in H^1(\gal{F, T}, \br(\fgder))$ satisfies $h|_T=z_T^{\mr{der}}$.
\item If $z_T^{\mr{der}}$ is not in $\im(\Psi_T)$, then Proposition \ref{doublingprop} produces a finite set $Q$ of trivial primes and a class $h \in H^1(\gal{F, T \cup Q}, \br(\fgder))$ such that $h|_T= z_T^{\mr{der}}$ and $(1+ph)\rho_2|_w$ has, for all $w \in Q$, the form required in Definition \ref{ramtrivlifts}. We then replace $\rho_2$ by $(1+ph)\rho_2$.
\end{itemize}
Letting $Z$ be either $\emptyset$ or $Q$ in the two cases just described, and now writing $\rho_2$ for the replacement just described, we have now arranged that $\rho_2$ is unramified outside $T \cup Z$ and satisfies:
\begin{itemize}
\item For all $w \in T$, $\rho_2|_{\gal{F_w}}= (1+pc_w)\lambda_w$ for some $c_w \in H^1(\gal{F_w}, \br(\mf{z}_{\mu}))$.
\item For $w \in Z$, $\rho_2|_w$ belongs to a suitable set $\Lift^{\alpha}_{\br|_{\gal{F_w}}, 2, \mr{ram}}(\mc{O}/p^2)$ as in Definition \ref{ramtrivlifts}.
\end{itemize}

Now we explain how to correct the $\mf{z}_\mu$ component of our lift. Recall that the reduction map $G/G^{\mr{der}}(\mc{O}/p^2) \to G/G^{\mr{der}}(k)$ has a (group-theoretic) section $s$. There is then a class $a \in H^1(\gal{F, T}, \br(\mf{a}))$ such that $(1+pa)(s(\br \mod G^{\mr{der}}))$ has multiplier character $\nu$, and then another class $h'$ belonging to $H^1(\gal{F, T \cup Z}, \br(\fg_{\mu}/\fg^{\mr{der}}))= H^1(\gal{F, T \cup Z}, \br(\mf{z}_{\mu}))$ such that (in $G/G^{\mr{der}}(\mc{O}/p^2)$)
\[
(1+pa) s \left( \br \mod {G^{\mr{der}}}\right)= (1+ph')(\rho_2 \mod {G^{\mr{der}}}).
\]
Now replacing $\rho_2$ by $(1+ph')\rho_2$, we may assume that the class $c_w$ above (for all $w \in T \cup Z$) measures the discrepancy between $(\lambda_w \mod G^{\mr{der}})$ and $(1+pa)s(\rho|_w \mod G^{\mr{der}})$; it follows that $c_w$ belongs to $H^1_{\mr{unr}}(\gal{F_w}, \br(\mf{z}_\mu))$, since $(\lambda_w \mod G^{\mr{der}})$ itself differs from $s(\br \mod G^{\mr{der}})$ by a cocyle unramified in the $\mf{z}_{\mu}$ component (by the choice of local condition, as in Lemma \ref{adlocalcondition}). We can then simply modify our choice of $\lambda_w$ by this unramified $c_w$ to arrange $\rho_2|_w= \lambda_w$ for all $w \in T$; moreover, as noted above, the resulting new choice of $\lambda_w$ lies in the same desired space of local lifts we specified at the start of the proof.

In particular, since $\rho_2$ is locally unobstructed, there is a lift $\rho_3 \colon \gal{F, T \cup Z} \to G(\mc{O}/p^3)$ with multiplier $\nu$.

Next we define the following Selmer system $\mc{L}= \{L_w\}_{w \in T \cup Z}$:
\begin{itemize}
 \item If $w \in S \setminus \{v \vert p\}$, let $L_w= \Tan^{\mc{P}_w}_{\br|_{\gal{F_w}}}$.
 \item If $w \vert p$, then there are three cases. If $\br|_{\gal{F_w}}$ satisfies (REG) and (REG*), then let $L_w= \Tan^{\mc{P}_w}_{\br|_{\gal{F_w}}}$; if $\br|_{\gal{F_w}}$ is trivial, then let $L_w$ be the space $L^{\chi_w}_{\br|_{\gal{F_w}}}$ of Lemma \ref{ordextracocycles}; and if $G^0= \mr{GL}_N$, $\mr{GSp}_N$, or $\mr{GO}_N$ (or a product of such), and $\br|_{\gal{F_w}}$ is Fontaine-Laffaille as in the theorem statement, then let $L_w$ be the tangent space of the Fontaine-Laffaille deformation functor as in \cite[\S 2.4.1]{clozel-harris-taylor} or \cite[\S 5]{booher:JNT}.
 \item If $w \in T \setminus S$, then let $L_w$ be the appropriate space $L^{\alpha}_{\br|_{\gal{F_w}}}$ of Lemma \ref{extracocycles}.  
 \item If $w \in Z$, then let $L_w$ be the appropriate space $L^{\alpha}_{\br|_{\gal{F_w}}}$ of Lemma \ref{ramextracocycles}.
\end{itemize}
In what follows, we will be slightly abusive in writing $L_w$ for both this tangent space and its intersection with $H^1(\gal{F_w}, \br(\fgder))$. For $p \gg 0$, the class group assumption of Lemma \ref{classgroup} holds, so $H^1_{\mc{L}}(\gal{F, T \cup Z}, \br(\mf{z}_\mu))=0$ and $H^1_{\mc{L}^\perp}(\gal{F, T \cup Z}, \br(\mf{z}_{\mu})^*)=0$. The Greenberg-Wiles formula, our oddness and global image hypotheses, and the calculations of \S \ref{trivialsection} imply that we are in the ``balanced'' situation, i.e.
\[
 h^1_{\mc{L}}(\gal{F, T \cup Z}, \br(\fgder))= h^1_{\mc{L}^\perp}(\gal{F, T \cup Z}, \br(\fgder)^*),
\]
and consequently
\[
 h^1_{\mc{L}}(\gal{F, T \cup Z}, \br(\fgm))= h^1_{\mc{L}^\perp}(\gal{F, T \cup Z}, \br(\fgm)^*).
\]
In fact, we can say something more precise (this refinement is only relevant when $\pi_0(G) \neq 1$): if $\psi \in H^1_{\mc{L}^\perp}(\gal{F, T \cup Z}, \br(\fgder)^*) \setminus 0$ is supported on a single $G$-orbit of simple factors of $\fgder$, then there is also a non-zero $\phi \in H^1_{\mc{L}}(\gal{F, T \cup Z}, \br(\fgder))$ supported on that $G$-orbit. To see this, we note that the oddness condition restricts to an oddness condition on $G$-orbits, and that the equality
\[
H^1_{\mc{L}}(\gal{F, T \cup Z}, \br(\fgder))= \bigoplus_{l=1}^s \left(H^1_{\mc{L}}(\gal{F, T \cup Z}, \br(\fgder)) \cap H^1(\gal{F, T \cup Z}, \br(\mf{y}_l))\right)
\]
follows from our assumptions on $L_v$, $v \in S \setminus \{v \vert p\}$, and by inspection for the conditions $L_v$ that we have explicitly constructed.

We now use the results of \S \ref{auxiliarysection} to annihilate these Selmer groups by allowing additional ramification. Suppose that $H^1_{\mc{L}^\perp}(\gal{F, T \cup Z}, \br(\fgm)^*)=H^1_{\mc{L}^\perp}(\gal{F, T \cup Z}, \br(\fgder)^*)$ is non-zero (else, as we will see, we can lift as in the theorem's conclusion), and let $\psi$ be any non-zero element, which we may assume to be supported on a single $G$-orbit of simple factors of $\fgder$. From the balanced condition, we see that there is a non-zero $\phi \in H^1_{\mc{L}}(\gal{F, T \cup Z}, \br(\fgder))=H^1_{\mc{L}}(\gal{F, T \cup Z}, \br(\fgm))$, also supported on the same $G$-orbit. We can therefore invoke the results of \S \ref{auxiliarysection}: by Proposition \ref{splitcase}, there is a trivial prime $q \not \in T \cup Z$ and a root $\alpha$ (for some maximal torus of $G$) such that $\rho_2|_{\gal{F_q}} \in \Lift_{\br, 2}^{\alpha}(\mc{O}/p^2)$, $\phi|_q \not \in L_{\br|_{\gal{F_q}}}^{\alpha}$($=L^{\alpha}_q$ to simplify notation), and $\psi|_q \not \in (L_{\br|_{\gal{F_q}}}^{\alpha})^\perp$ ($=(L^{\alpha}_q)^\perp)$). The key point now is that if we let $L_q^{\mr{unr}}$ denote the unramified cohomology at $q$, then $L_q= L^{\mr{unr}}_q \cap L^\alpha_q$ is codimension one in $L_q^{\mr{unr}}$. The argument is now standard, but we recall it for convenience. A double application of the Greenberg-Wiles formula implies that
\[
 h^1_{\mc{L}^\perp \cup L_q^\perp}(\gal{F, T \cup Z \cup q}, \br(\fgder)^*)- h^1_{\mc{L}^\perp}(\gal{F, T \cup Z}, \br(\fgder)^*)= h^1_{\mc{L} \cup L_q}(\gal{F, T \cup Z \cup q}, \br(\fgder))-h^1_{\mc{L}}(\gal{F, T \cup Z}, \br(\fgder))+1,
\]
and the right-hand side of this equation is zero since $\phi|_q$ spans the one-dimensional space $L_q^{\mr{unr}}/L_q$. Thus the inclusion 
\[
H^1_{\mc{L}^\perp}(\gal{F, T \cup Z}, \br(\fgder)^*) \subset H^1_{\mc{L}^\perp \cup L_q^\perp}(\gal{F, T \cup Z \cup q}, \br(\fgder)^*) 
\]
is an equality, and since $\psi \not \in (L_q^{\alpha})^\perp$, the inclusion
\[
 H^1_{\mc{L}^\perp \cup (L_q^{\alpha})^\perp}(\gal{F, T \cup Z \cup q}, \br(\fgder)^*) \subset H^1_{\mc{L}^\perp \cup L_q^\perp}(\gal{F, T \cup Z \cup q}, \br(\fgder)^*)
\]
must be strict. The Selmer groups $H^1_{\mc{L} \cup L_q^\alpha}(\gal{F, T \cup Z \cup q}, \br(\fgder))$ and $H^1_{\mc{L}^\perp \cup (L_q^\alpha)^\perp}(\gal{F, T \cup Z \cup q}, \br(\fgder)^*)$ are still balanced and have dimension strictly smaller than $h^1_{\mc{L}}(\gal{F, T \cup Z}, \br(\fgder))$. Thus we can proceed inductively, finding a finite set $Q$ of trivial primes such that for all $q \in Q$, the restriction $\rho_2|_{\gal{F_q}}$ lies in a set $\Lift^\alpha_{\br, 2}(\mc{O}/p^2)$, and now $h^1_{\mc{L} \cup \{L_q^{\alpha}\}_{q \in Q}}(\gal{F, T \cup Z \cup Q}, \br(\fgder))= h^1_{\mc{L}^\perp \cup \{(L_q^{\alpha})^\perp\}_{q \in Q}}(\gal{F, T \cup Z \cup Q}, \br(\fgder)^*)=0$. Again by Lemma \ref{classgroup}, we likewise conclude that 
\[
h^1_{\mc{L} \cup \{L_q^{\alpha}\}_{q \in Q}}(\gal{F, T \cup Z \cup Q}, \br(\fgm))= h^1_{\mc{L}^\perp \cup \{(L_q^{\alpha})^\perp\}_{q \in Q}}(\gal{F, T \cup Z \cup Q}, \br(\fgm)^*)=0.
\]
Now the Selmer group version of the Poitou-Tate sequence implies that
\begin{equation}\label{selmeriso}
 H^1(\gal{F, T \cup Z \cup Q}, \br(\fgm)) \xrightarrow{\sim} \bigoplus_{w \in T \cup Z} H^1(\gal{F_w}, \br(\fgm))/L_w \oplus \bigoplus_{q \in Q} H^1(\gal{F_q}, \br(\fgm))/L^{\alpha}_q
\end{equation}
is an isomorphism, and
\begin{equation}\label{h2inj}
 H^2(\gal{F, T \cup Z \cup Q}, \br(\fgm)) \to \bigoplus_{w\in T \cup Z \cup Q} H^2(\gal{F_w}, \br(\fgm))
\end{equation}
is injective.

By Equation (\ref{selmeriso}), Lemma \ref{extracocycles}, Lemma \ref{ramextracocycles}, and Lemma \ref{ordextracocycles}, there is a cohomology class $X \in  H^1(\gal{F, T \cup Z \cup Q}, \br(\fgm))$ such that $(1+p^2X)\rho_3$ satisfies the following local conditions:
\begin{itemize}
 \item At primes $w \in S$ not above $p$, it belongs to $\Lift^{\mc{P}_w}_{\br|_{\gal{F_w}}}$.
 \item At primes $w \vert p$, it belongs either to $\Lift^{\chi_w}_{\br|_{\gal{F_w}}}(\mc{O}/p^3)$ (in the (REG) and (REG*) case), or to $\Lift^{\chi_w}_{\br|_{\gal{F_w}},2}(\mc{O}/p^3)$ (in the trivial case), or to the appropriate Fontaine-Laffaille deformation ring (using \cite[\S 2.4.1]{clozel-harris-taylor} and \cite[\S 5]{booher:JNT}) in the Fontaine-Laffaille cases.
 \item At primes $w \in Q \cup (T \setminus S)$, it belongs to $\Lift^{\alpha}_{\br|_{\gal{F_w}}, 2}(\mc{O}/p^3)$.
 \item At primes $w \in Z$, it belongs to $\Lift^{\alpha}_{\br|_{\gal{F_w}}, 2, \mr{ram}}(\mc{O}/p^3)$.
\end{itemize}
Now the procedure for inductively lifting is clear: $(1+p^2X)\rho_3$ is locally unobstructed, so by Equation (\ref{h2inj}) it can be lifted to $\rho_4 \colon \gal{F, T \cup Z \cup Q} \to G(\mc{O}/p^4)$; this lift can again be adjusted by a one-cocycle so that locally it satisfies the above four bulleted conditions, and so on. The result is a compatible system of lifts $(\rho_m)_{m \geq 1}$, each satisfying the four bulleted local conditions, and their $p$-adic limit $\rho= \varprojlim \rho_m \colon \gal{F, T \cup Z \cup Q} \to G(\mc{O})$ is the lift promised in the theorem statement. It is de Rham at $v \vert p$ by Lemma \ref{regord} or Lemma \ref{trivordlim}.

The final claim about the image can be checked by inductively showing that $\im(\rho_n)$ must contain $\widehat{G^{\mr{der}}}(\mc{O}/p^n)$ for all $n \geq 2$, with the base case $n=2$ coming from the construction of $\rho_2$. Suppose the claim is known for $n$. We will show that every element of the kernel of $\wh{G^{\mr{der}}}(\mc{O}/p^{n+1}) \to \wh{G^{\mr{der}}}(\mc{O}/p^n)$ is in any subgroup $H$ surjecting onto $\wh{G^{\mr{der}}}(\mc{O}/p^n)$. Embedding $G$ into some $\mr{GL}_N$, we will argue with matrices. Let $s= 1+p^nX$ be in the above kernel. By assumption there is some element $y \in H$ of the form $y= 1+p^{n-1}X+p^n Y$. Then $H$ also contains
\[
y^p= (1+p^{n-1}X+p^n Y)^p= \sum_{i=0}^p {p \choose i} (p^{n-1}X + p^n Y)^i = 1+p^nX, 
\]
since $n \geq 2$, and we are working modulo $p^{n+1}$.
\end{proof}
We also note that the method of proof allows us, without assuming oddness of $\br$, to construct possibly non-geometric $p$-adic deformations, since the arguments of Theorem \ref{mainthm} only require that whenever we have a non-trivial dual Selmer class, we can also find a non-trivial Selmer class,:
\begin{thm}\label{notodd}
Let $F$ be any number field, and let $\br \colon \gal{F, S} \to G(k)$ be a continuous representation unramified outside a finite set of places $S$ containing those above $p$. Let $\tF$ be as in Theorem \ref{mainthm}. Assume that $p \gg_{G, F} 0$, and that $\br$ satisfies the following:
\begin{itemize}
\item $\br|_{\gal{\tF(\zeta_p)}}$ is absolutely irreducible.
 \item For each $G$-orbit of simple factors $\br(\oplus_i \fg_i)$ of $\br(\fgder)$, each $\br(\fg_i)$ is multiplicity-free as $\Fp[\gal{\tF}]$-module. Moreover, each simple $\Fp[\gal{\tF}]$-constituent $W_i$ of $\br(\fgder)$ satisfies $\End_{\Fp[\gal{\tF}]}(W_i) \cong k$.
 \item For all $v \vert p$, $H^2(\gal{F_v}, \br(\fgm))=0$.
 \item The field $K= \tF(\br(\fgder), \mu_p)$ does not contain $\mu_{p^2}$ (again, see Remark \ref{mup2}).
 \item For all $v \in S$ not above $p$, there is a formally smooth local deformation condition $\mc{P}_v$ for $\br|_{\gal{F_v}}$ whose tangent space $\Tan^{\mc{P}_v}_{\br|_{\gal{F_v}}} \subset H^1(\gal{F_v}, \br(\mf{g}_\mu))$ has dimension $h^0(\gal{F_v}, \br(\fgm))$. If $G$ is not connected, \emph{and} the center of $G^0$ is positive-dimensional, we instead impose the hypothesis of Lemma \ref{adlocalcondition} above.
\end{itemize}
Then for some finite set of primes $T \supset S$, $\br$ admits a lift $\rho \colon \gal{F, T} \to G(\mc{O})$.
\end{thm}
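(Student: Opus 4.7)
The plan is to follow the proof of Theorem \ref{mainthm} with three substitutions: oddness is replaced by the hypothesis $H^2(\gal{F_v}, \br(\fgm)) = 0$ at $v \mid p$; the ordinary or Fontaine-Laffaille local condition at $v \mid p$ is replaced by the unrestricted lifting functor; and we aim only to annihilate the dual Selmer group, not Selmer itself. First I would apply Lemmas \ref{irr}, \ref{invariants}, and \ref{cyclicq} of \S\ref{groupsection} to deduce the semisimplicity, cohomology-vanishing, and no-common-subquotient parts of Assumption \ref{big} from absolute irreducibility of $\br|_{\gal{\tF(\zeta_p)}}$ and $p \gg_{G, F} 0$. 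Since the multiplicity-free hypothesis is imposed unconditionally here (rather than only in the splitting case of part (6) of Assumption \ref{big}), the remaining conditions of Assumption \ref{big} are automatic.

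Next, following \S\ref{klrsection}, fix a multiplier character $\nu$ and enlarge $S$ to a finite set $T$ of trivial primes killing $\Sha^1_T(\gal{F, T}, \br(\fgder))$ and its dual. Choose local lifts $\lambda_w$: in $\Lift^{\mc{P}_w}_{\br|_w}(\mc{O}/p^2)$ at $w \in S \setminus \{v \mid p\}$; any element of $\Lift^{\nu}_{\br|_w}(\mc{O}/p^2)$ at $w \mid p$ (which exists because $H^2(\gal{F_w}, \br(\fgm)) = 0$); and trivial-prime lifts $\lambda_w \in \Lift^{\nu, \alpha}_{\br|_w, 2}(\mc{O}/p^2)$ elsewhere, enlarged per Lemma \ref{bigimage} so that the $\lambda_w(\sigma_w)$ generate $\wh{G^{\mr{der}}}(\mc{O}/p^2)$. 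Proposition \ref{doublingprop} then produces a global $\rho_2$ matching $\lambda_w$ on the $\fgder$-component for $w \in T$, at the cost of additional ramification at a finite set $Z$ of trivial primes with restrictions in $\Lift^{\nu, \alpha}_{\br|_w, 2, \mr{ram}}(\mc{O}/p^2)$. The $\mf{z}_\mu$-component is corrected exactly as in the proof of Theorem \ref{mainthm}, using Lemma \ref{classgroup} (valid for $p \gg_{G, F} 0$).

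I would then define the Selmer system $\mc{L} = \{L_w\}_{w \in T \cup Z}$ with $L_w = \Tan^{\mc{P}_w}_{\br|_w}$ at $w \in S \setminus \{v \mid p\}$, $L_w = H^1(\gal{F_w}, \br(\fgm))$ at $w \mid p$, and $L_w^\alpha$ at the trivial primes. The Greenberg-Wiles formula yields
\[
\dim H^1_{\mc{L}}(\gal{F, T \cup Z}, \br(\fgm)) - \dim H^1_{\mc{L}^\perp}(\gal{F, T \cup Z}, \br(\fgm)^*) = \sum_{v \mid p} [F_v : \Q_p] \dim \fgm - \sum_{v \mid \infty} h^0(\gal{F_v}, \br(\fgm)) \geq 0,
\]
since $r_1 + r_2 \leq [F:\Q]$. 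Hence Selmer dominates dual Selmer globally, and the same inequality holds separately on each $G$-orbit $\mf{y}_i$ of simple factors of $\fgder$. Whenever a nonzero dual Selmer class $\psi$ exists, supported on a single orbit, a corresponding nonzero Selmer class $\phi$ supported on that same orbit can be selected, and Proposition \ref{splitcase} then produces a trivial prime $q$ at which replacing the condition by $L_q^\alpha$ strictly shrinks the dual Selmer group. Iterating finitely many times annihilates dual Selmer, and then the Selmer version of Poitou-Tate combined with vanishing of all local $H^2$ obstructions---$H^2 = 0$ at $v \mid p$, formal smoothness of $\mc{P}_v$ at $v \in S \setminus \{v \mid p\}$, and Lemmas \ref{trivsmooth}, \ref{extracocycles}, \ref{ramextracocycles} at the trivial primes---yields inductive lifts $\rho_n \colon \gal{F, T \cup Z \cup Q} \to G(\mc{O}/p^n)$ for all $n$, whose $p$-adic limit is the desired $\rho$.

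The hard part will be ensuring the per-$G$-orbit form of the Selmer/dual-Selmer inequality, since Proposition \ref{splitcase} requires $\phi$ and $\psi$ to be supported on a common orbit. At $v \mid p$ (where $L_v$ is the full local $H^1$) and at the trivial primes (where $L_v^\alpha$ is built from unramified cohomology plus a rank-one ramified extension in a single root direction), the decomposition $L_v = \bigoplus_i (L_v \cap H^1(\gal{F_v}, \br(\mf{y}_i)))$ holds automatically; at $v \in S \setminus \{v \mid p\}$ this should either be built into the framework of the hypotheses on $\mc{P}_v$ (as in the analogous explicit requirement of Theorem \ref{mainthm}) or be verified from the structure of these local conditions case by case.
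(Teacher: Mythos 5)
Your proposal is correct and follows essentially the same route as the paper: the paper's proof of this theorem simply runs the argument of Theorem \ref{mainthm} with the unrestricted lifting functor at $v \mid p$ (formally smooth by the $H^2$-vanishing hypothesis), notes via Greenberg--Wiles that Selmer minus dual Selmer equals $\sum_{v \mid p}[F_v:\Q_p]\dim_k(\fgder) - \sum_{v \mid \infty} h^0(\gal{F_v}, \br(\fgder)) \geq 0$, and observes that this inequality (rather than equality) suffices since the method only needs a nonzero Selmer class whenever a nonzero dual Selmer class exists. Your additional attention to the per-$G$-orbit refinement needed for Proposition \ref{splitcase} is a point the paper handles inside the proof of Theorem \ref{mainthm} and leaves implicit here, so your version is, if anything, slightly more careful on that detail.
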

\begin{proof}
The argument is the same as that of Theorem \ref{mainthm}, except at places $v \vert p$ we take the local deformation condition to be all lifts of $\br|_{\gal{F_v}}$. Our hypothesis ensures that this condition is formally smooth. The corresponding application of the Greenberg-Wiles formula (notation as in the proof of Theorem \ref{mainthm}) yields 
\[
h^1_{\mc{L}}(\gal{F, T \cup Z}, \br(\fgder))- h^1_{\mc{L}^\perp}(\gal{F, T \cup Z}, \br(\fgder)^*)= \sum_{v \vert p} [F_v:\Q_p]\dim_k(\fgder)- \sum_{v \vert \infty} h^0(\gal{F_v}, \br(\fgder)) \geq 0.
\]
(Equality holds when $F$ is totally real, and $\br(c_v)=1$ for all complex conjugations $c_v$). This inequality suffices to proceed as in the proof of Theorem \ref{mainthm}. 
\end{proof}
\section{Examples}\label{examples}
In this section we gather a few examples to which our method applies.
\subsection{The principal $\mr{SL}_2$}
In \cite{stp:exceptional} (and \cite{stp:exceptional2}) it was shown how the original lifting argument of \cite{ramakrishna02} and \cite{taylor:icos2} could be adapted to prove lifting results for $\br \colon \gal{F} \to G(k)$ whose image was (approximately) a principal $\mr{SL}_2$. In fact, the argument in that paper was carried out for the exceptional groups, at one point relying on a brute-force Magma computation (see \cite[Lemma 7.6]{stp:exceptional}); for the classical Dynkin types except for $D_{2n}$, case-by-case matrix calculations (not carried out in \cite{stp:exceptional}, but some of which appear in \cite{tang:thesis}) complete the argument. The arguments of the present paper apply to these examples without relying on case-by-case calculation; the multiplicity-free restriction in Theorem \ref{mainthm} still requires that we exclude type $D_{2n}$, however.

Let $G^0$ be a split connected reductive group over $\Z_p$. Recall that for $p \gg_{G^0} 0$, there is a unique conjugacy class of principal homomorphisms $\varphi \colon \mr{SL}_2 \to G^0$ defined over $\Z_p$ (see \cite{serre:principalsl2}). Assume that $G= {}^L H$, the L-group of a connected reductive group $H$ over $F$; that is, we choose over $\overline{F}$ a maximal torus and Borel subgroup $T_{\overline{F}} \subset B_{\overline{F}} \subset H_{\overline{F}}$ to obtain a based root datum, and then a choice of pinning allows us to define an $L$-group $G= {}^L H= H^\vee \rtimes \Gal(\tF/F)$ for some finite extension $\tF/F$. The principal $\mr{SL}_2$ extends to a homomorphism $\varphi \colon \mr{SL}_2 \times \gal{F} \to {}^L H$ (\cite[\S 2]{gross:principalsl2}), and we assume that $\varphi$ extends to a homomorphism $\mr{GL}_2 \times \gal{F} \to {}^L H$ (this is always the case if, eg, $H$ is simply-connected, and in general it can be arranged by enlarging the center of $H$). The following crucial assumption is needed to use the principal $\mr{SL}_2$ to produce \textit{odd} homomorphisms valued in ${}^L H$:
\begin{assumption}\label{oddgroup}
Assume that $\tF/F$ is contained in a quadratic totally imaginary extension of the totally real field $F$, and that the automorphism of $H^\vee$ given by projecting any complex conjugation $c \in \gal{F}$ to $\Gal(\tF/F)$ preserves each simple factor $\mf{h}^\vee_i$ of $\mf{h}^\vee= \Lie(H^\vee)$, and acts on $\mf{h}^\vee_i$ as the identity if $-1 \in W_{\mf{h}_i}$ and as the opposition involution if $-1 \not \in W_{\mf{h}_i}$. 
\end{assumption}
This assumption leads to the following archimedean calculation:
\begin{lemma}
Let $\theta_v \in {}^L H(k)$ be the element 
\[
\theta_v= \varphi \left( \begin{pmatrix}
-1 &0 \\ 0 & 1
\end{pmatrix} \times c_v \right).
\]
Then 
\[
\dim_k(\fgder)^{\Ad(\theta_v)=1}= \dim_k (\mf{n}),
\]
where $\mf{n}$ is the unipotent radical of a Borel subgroup of $G$ (or of $G^{\mr{der}}$).
\end{lemma}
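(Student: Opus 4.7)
The plan is to reduce the identity to a trace computation and then use the principal $\mr{SL}_2$ decomposition. Since $\mathrm{diag}(-1,1)$ and $c_v$ each have order two and commute in $\mr{GL}_2 \times \gal{F}$, $\theta_v$ is an involution of $G$, so
$\dim_k(\fgder)^{\Ad(\theta_v)=1} = \tfrac{1}{2}(\dim\fgder + \tr\Ad(\theta_v))$.
Combined with $\dim\fgder = 2|\Phi^+|+\ell$ and $\dim\mr{Flag}_{G^{\mr{der}}} = |\Phi^+|$, where $\ell = \mr{rank}(G^{\mr{der}})$, the desired equality becomes $\tr\Ad(\theta_v) = -\ell$. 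By Assumption \ref{oddgroup}, $c_v$ preserves each simple factor $\mf{h}_i^\vee$ of $\fgder$, so it suffices to verify $\tr\Ad(\theta_v)|_{\mf{h}_i^\vee} = -\ell_i$ on each simple summand of rank $\ell_i$.

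Fix such a factor with exponents $m_1,\ldots,m_{\ell_i}$, and decompose $\mf{h}_i^\vee = \bigoplus_j V(2m_j)$ under the principal $\mr{SL}_2$. The differential of the $\mr{GL}_2$-extension sends $e_{11} = (h+I)/2 \in \mf{gl}_2$ to $\rho^\vee + z/2$ for some $z$ in the center of $\mf{g}$, so the adjoint action of $\varphi(\mathrm{diag}(-1,1))$ on $\fgder$ factors through the adjoint group as the action of $\rho^\vee(-1)$. Thus $\Ad(\varphi(\mathrm{diag}(-1,1)))$ acts on the weight-$2k$ subspace of $V(2m_j)$ by $(-1)^k$, with trace $\sum_{k=-m_j}^{m_j}(-1)^k = (-1)^{m_j}$ on $V(2m_j)$. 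If $c_v$ acts trivially on $\mf{h}_i^\vee$ then $-1 \in W_{\mf{h}_i^\vee}$, equivalently all $m_j$ are odd (by the classical correspondence between presence of $-1$ in $W$ and parity of exponents), so $\tr\Ad(\theta_v)|_{\mf{h}_i^\vee} = \sum_j (-1)^{m_j} = -\ell_i$. Otherwise $c_v|_{\mf{h}_i^\vee} = -w_0$; I first normalize $\varphi$ so that the principal $\mf{sl}_2$-triple is $-w_0$-stable, which is possible because the principal nilpotent orbit is preserved by $-w_0$ and $-w_0$ permutes the simple roots. Then $-w_0$ commutes with the $\mr{SL}_2$-action and, by Schur's lemma, acts on each $V(2m_j)$ as a scalar $\epsilon_j \in \{\pm 1\}$, giving $\tr\Ad(\theta_v)|_{\mf{h}_i^\vee} = \sum_j \epsilon_j (-1)^{m_j}$.

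The crux is the sign formula $\epsilon_j = (-1)^{m_j+1}$; granting it, the sum collapses to $\sum_j (-1)^{2m_j+1} = -\ell_i$ as required. I would prove it using the Kostant slice: the highest-weight line of $V(2m_j) \subset \mf{h}_i^\vee$ is identified (via the Killing form) with the differential at the $-w_0$-fixed principal nilpotent of the $j$-th fundamental $W$-invariant on $\mf{t}$, which is homogeneous of degree $d_j = m_j+1$. Since $w_0$ fixes any $W$-invariant, $-w_0$ acts on a homogeneous invariant of degree $d_j$ by $(-1)^{d_j} = (-1)^{m_j+1}$, which transfers to the stated action on $V(2m_j)$. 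This uniform sign determination is the main obstacle; as a concrete backup, since $-1 \notin W$ occurs only in the simple types $A_n$ ($n \geq 2$), $D_{2n+1}$, and $E_6$, the sign formula can also be verified directly in each type via an explicit Chevalley basis.
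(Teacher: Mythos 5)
Your proof is correct, and it is genuinely more self-contained than the paper's, which simply cites \cite[Lemma 4.19, Lemma 10.1]{stp:exceptional} for the two cases distinguished by whether $-1$ lies in the relevant Weyl group. Your reduction to the trace identity $\tr \Ad(\theta_v) = -\ell$ via the involution formula, the exponent decomposition $\fgder = \bigoplus_j V(2m_j)$, and the trace $\sum_{k=-m_j}^{m_j} (-1)^k = (-1)^{m_j}$ for $\Ad(\rho^\vee(-1))$ are all standard and correct, and the classical fact that $-1 \in W$ iff all exponents are odd immediately handles the inner case. The real content lies in the outer case, where your Kostant-slice argument for the sign $\epsilon_j = (-1)^{m_j+1}$ is the right idea: normalizing the $\mf{sl}_2$-triple to be $\tau$-stable (legitimate since a pinned automorphism fixes $e = \sum X_{\alpha_i}$ and $h = \sum \alpha_i^\vee$), using that distinct exponents in types $A_n$, $D_{2n+1}$, $E_6$ make the decomposition multiplicity-free so Schur applies, identifying the highest-weight line of $V(2m_j)$ with $\kappa^{-1}(dp_j|_e)$ (this has $\ad h$-weight $2m_j$ by homogeneity and invariance of $p_j$), and finally reading off the $\tau$-eigenvalue $(-1)^{d_j}$ from the Chevalley isomorphism and $W$-invariance of $p_j|_{\mf t}$, $\tau|_{\mf t} = -w_0$. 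This gives $\sum_j \epsilon_j(-1)^{m_j} = -\ell_i$ uniformly. Two small presentational remarks: you should say explicitly that $\tau^*$ sends $p_j$ to a polynomial in $p_1,\dots,p_j$ of degree $d_j$, and that because the $p_k$ vanish at $e$, only the $c\,p_j$ term survives in $d(\tau^* p_j)|_e$ — this is implicit in your argument but worth a sentence; and the phrase ``differential at the principal nilpotent of the $j$-th fundamental $W$-invariant on $\mf t$'' should be reworded to make clear you mean the differential at $e \in \fg$ of the $\Ad$-invariant polynomial corresponding to the $W$-invariant under Chevalley restriction. Your proposed backup — direct verification in types $A_n$, $D_{2n+1}$, $E_6$ — would also work and is in the spirit of the case analysis in the cited reference.
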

\begin{proof}
Combining \cite[Lemma 4.19, 10.1]{stp:exceptional}, we find that $\dim_k (\fgder)^{\Ad(\theta_v)=1}= \dim_k (\mf{n})$. 
\end{proof}
\begin{thm}
Let $G= {}^L H$ be constructed as above, satisfying Assumption \ref{oddgroup}, let $p \gg_G 0$, let $S$ be a finite set of places of the totally real field $F$ containing all $v \vert p$, assume for simplicity that all places in $S$ are split in $\tF/F$, and let $\bar{r} \colon \gal{F, S} \to \mr{GL}_2(k)$ be a continuous representation satisfying the following properties:
\begin{enumerate}
\item The projective image of $\bar{r}$ contains $\mr{PSL}_2(k)$. 
\item $\det \bar{r}(c)=-1$ for all complex conjugations $c \in \gal{F}$.
\item For each $v \vert p$, $\bar{r}|_{\gal{F_v}}$ is
\begin{enumerate}
\item trivial;
\item ordinary, i.e. of the form
\[
\bar{r}|_{\gal{F_v}} \sim \begin{pmatrix} 
\chi_{1, v} & * \\ 0 & \chi_{2, v} 
\end{pmatrix}
\]
where $\frac{\chi_{1, v}}{\chi_{2, v}}|_{I_{F_v}}= \bar{\kappa}^{r_v}$ for some integer $r_v$, and $\bar{r}|_{\gal{F_v}}$ also has the property that $\varphi \circ \bar{r}|_{\gal{F_v}}$ satisfies both of the conditions (REG) and (REG*) in Lemma \ref{regord} (see \cite[Theorem 7.4]{stp:exceptional} and \cite[Proposition 4.2]{stp:exceptional2} for easily-checkable conditions under which these are both satisfied); 
\item or, in the case $G^0=\mr{GL}_N$, Fontaine-Laffaille with distinct Hodge-Tate weights in an interval of length less than $\frac{p-1}{N-1}$.
\end{enumerate}
\end{enumerate}
Then there exists a finite set of trivial primes $Q$ such that $\br= \varphi \circ \bar{r} \colon \gal{F, S \cup Q} \to G(k)$ has a geometric lift $\rho \colon \gal{F, S \cup Q} \to G(\mc{O})$, with the Zariski closure of $\im(\rho)$ containing $G^{\mr{der}}$. 
\end{thm}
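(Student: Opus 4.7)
The plan is to verify that $\br = \varphi \circ \bar{r}$ satisfies all the hypotheses of Theorem \ref{mainthm}, and then invoke that theorem directly (after possibly enlarging $S$ to handle auxiliary local conditions at primes where $\bar{r}$ ramifies, for which suitable formally smooth local lifts for $\mr{GL}_2$ can be transported to $G$ via $\varphi$, or where we simply use unramified lifts if $\bar{r}$ is unramified at $v \in S$ not above $p$).

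First I would check oddness: the hypothesis $\det \bar{r}(c_v) = -1$ forces $\bar{r}(c_v)$ to be $\mr{GL}_2(k)$-conjugate to $\mr{diag}(-1,1)$, so $\br(c_v) = \varphi(\bar{r}(c_v) \times c_v)$ is $G(k)$-conjugate to the element $\theta_v$ of the lemma immediately preceding the theorem; the lemma then yields the needed equality $h^0(\gal{F_v}, \br(\fgder)) = \dim \mf{n} = \dim \mr{Flag}_{G^{\mr{der}}}$. Next I would verify that $\br|_{\gal{\tF(\zeta_p)}}$ is absolutely irreducible: since $\mr{PSL}_2(k) \subset \bar{r}(\gal{F})^{\mr{proj}}$ and $[F(\zeta_p):F]$ is bounded independently of $p$, for $p \gg 0$ the restriction $\bar{r}|_{\gal{\tF(\zeta_p)}}$ still has projective image containing $\mr{PSL}_2(k')$ for some subfield $k' \subset k$, and in particular contains a regular unipotent; composing with $\varphi$ gives a regular unipotent element of $G^0(k)$ in the image of $\br|_{\gal{\tF(\zeta_p)}}$, which forces that image not to lie in any proper parabolic of $G^0_{\overline{k}}$. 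The decomposition $\br(\fgder) \otimes_k \overline{k} = \bigoplus_i \mr{Sym}^{2m_i} \otimes \det^{-m_i}(\bar{r})$ indexed by the exponents $m_i$ of $G^{\mr{der}}$ (by \cite{gross:principalsl2}) shows that each simple $k[\gal{\tF}]$-constituent is absolutely irreducible with $k$-rational endomorphism ring and, provided the exponents of $G^{\mr{der}}$ are pairwise distinct (which excludes precisely the factors of type $D_{2n}$, as noted in the introduction to this section), that each $G$-orbit of simple factors of $\fgder$ is multiplicity-free as $\Fp[\gal{\tF}]$-module; Lemma \ref{multfreelemma} then confirms the endomorphism ring condition.

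Next I would verify the local conditions at $v \mid p$: in the trivial case this is immediate; in the ordinary case, composing with $\varphi$ maps the $\mr{GL}_2$-Borel to a Borel of $G$, and the assumed (REG)/(REG*) for $\varphi \circ \bar{r}|_{\gal{F_v}}$ is exactly the input needed; in the Fontaine-Laffaille case with $G^0 = \mr{GL}_N$, the $(N-1)$-st symmetric power of a Fontaine-Laffaille representation with HT weights in an interval of length less than $\frac{p-1}{N-1}$ is Fontaine-Laffaille with HT weights in an interval of length less than $p-1$, as required. The condition that $K = \tF(\br(\fgder), \mu_p)$ not contain $\mu_{p^2}$ then follows from Remark \ref{mup2}, using that either $\br|_{\gal{F_v}}$ is trivial, or satisfies the regularity strengthening of (REG) that holds automatically for the principal $\mr{SL}_2$ of an ordinary representation (the roots evaluate on $\chi_{T_G}$ to non-trivial powers of $\kappa$ because the principal $\mr{SL}_2$'s cocharacter pairs non-trivially with every simple root), or is Fontaine-Laffaille in the bounded range.

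For the formally smooth local conditions at $v \in S$ not above $p$, the hypothesis that places in $S$ split in $\tF/F$ reduces the question to $G^0$-valued local residual representations; in the classical-type cases one may invoke \cite{clozel-harris-taylor,booher:minimal}, and in the remaining exceptional type cases the analysis of \cite[\S 4.3,4.4]{stp:exceptional} and \cite[\S 4.2]{stp:exceptional2} supplies the needed conditions (or, in the simplest instance where $\bar{r}$ is unramified at $v$, an unramified condition suffices). The main obstacle throughout the argument is the verification of the splitting/multiplicity dichotomy in part (6) of Assumption \ref{big} at the level of Theorem \ref{mainthm}'s hypothesis: for types other than $D_{2n}$, the distinctness of the exponents makes the multiplicity-free condition unconditional and bypasses the need to analyze whether the extension $1 \to \fg/W' \to \im(\rho_2)/W' \to \im(\br) \to 1$ splits, and this is precisely where case $D_{2n}$ is excluded. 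With all hypotheses verified, Theorem \ref{mainthm} produces the desired geometric lift $\rho$ with $\im(\rho) \supset \widehat{G^{\mr{der}}}(\mc{O})$, concluding the argument.
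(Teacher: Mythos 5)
Your overall route is the same as the paper's---verify the hypotheses of Theorem \ref{mainthm} and apply it---and your treatment of oddness, of the $v\mid p$ conditions, of the Fontaine--Laffaille compatibility with $\Sym^{N-1}$, and of $\mu_{p^2}\not\subset K$ matches the paper. But the proposal has a genuine gap exactly where the paper's proof does its real work: the local conditions at ramified places $v\in S$, $v\nmid p$. Theorem \ref{mainthm} only \emph{hypothesizes} a formally smooth condition $\mc{P}_v$ with tangent space of dimension $h^0(\gal{F_v},\br(\fgm))$, so for this example to be unconditional one must actually construct such conditions for $\br=\varphi\circ\bar r$. Invoking \cite{clozel-harris-taylor} and \cite{booher:minimal} is precisely what Remark \ref{lnotplocal} warns cannot be done directly (those constructions may require enlarging $k$, which the global image hypothesis forbids), and ``transporting'' smooth $\mr{GL}_2$-lifts through $\varphi$ does not by itself produce a $G$-deformation condition with the required tangent-space dimension. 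The paper instead argues by a trichotomy on $\bar r|_{I_{F_v}}$: if it is decomposable, then $\br(I_{F_v})$ has order prime to $p$ and minimal deformations in the sense of \cite[Lemma 4.17]{stp:exceptional} apply; the irreducible case with $p\mid\#\bar r(I_{F_v})$ is excluded for $p\gg 0$ by Dickson's theorem; and in the indecomposable reducible case one shows $\bar r|_{\gal{F_v}}$ is an extension with character ratio $\bar{\kappa}$ and uses the minimal (Steinberg-type) condition of \cite[Definition 4.4, Lemma 4.5]{stp:exceptional2}. This case analysis is the needed verification and is absent from your write-up.

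Two further points. The multiplicity-free hypothesis is about $\Fp[\gal{\tF}]$-modules, so pairwise distinctness of the exponents does not suffice: one must also rule out isomorphisms with Frobenius twists, i.e. $\sigma\bigl(\Sym^{2m}\otimes{\det}^{-m}(\bar r)\bigr)\cong \Sym^{2n}\otimes{\det}^{-n}(\bar r)$ for $\sigma\in\Aut(k/\Fp)$; the paper does this by comparing eigenvalues of $\varphi(\mr{diag}(x,x^{-1}))$, $x$ a generator of $k^\times$, and this simultaneously yields $\End_{\Fp[\gal{\tF}]}\cong k$ (your appeal to Lemma \ref{multfreelemma} presupposes the $\Fp$-multiplicity-freeness being proved, and in any case only gives that the endomorphism algebra is a finite extension of $k$). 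Finally, your irreducibility step is invalid as stated: containing a regular unipotent does not prevent the image from lying in a proper parabolic, since a regular unipotent lies in a Borel. The correct input is that the projective image of $\bar r|_{\gal{\tF(\zeta_p)}}$ still contains $\mr{PSL}_2(k)$ (because $\Gal(\tF(\zeta_p)/F)$ is abelian and $\mr{PSL}_2(k)$ is perfect; note that $[F(\zeta_p):F]$ is certainly not bounded independently of $p$), combined with the $G$-irreducibility of the principal $\mr{SL}_2$ for $p\gg_G 0$ as in \cite{stp:exceptional}.
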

\begin{proof}
It is straightforward to verify the hypotheses of Theorem \ref{mainthm} (compare \cite[Theorem 7.4, Theorem 10.4]{stp:exceptional}). To check that $\br(\fgder)$ is in fact $\Fp[\gal{\tF}]$-multiplicity free with simple factors having endomorphism algebra $k$, we just compare eigenvalues of elements $\varphi \begin{pmatrix} x & 0 \\ 0 & x^{-1} \end{pmatrix}$, for $x$ a generator of $k^\times$, on $\sigma(\Sym^{2m}(k^2))$ and $\Sym^{2n}(k^2)$, for $\sigma \in \Aut(k/\Fp)$. The claim follows from the fact that $p$ is sufficiently large compared to (the Coxeter number of) $G$. To satisfy the local hypotheses of Theorem \ref{mainthm}, first assume $v \in S \setminus \{v \vert p\}$. If $\br(I_{F_v})$ has order prime to $p$, we take minimal deformations in the sense of \cite[Lemma 4.17]{stp:exceptional}. In general, there are three cases to consider:
\begin{itemize}
\item If $\bar{r}|_{I_{F_v}}$ is decomposable, then $\br(I_{F_v})$ has order prime to $p$, so the above remark applies.
\item If $\bar{r}|_{I_{F_v}}$ is irreducible and $p \vert \#\bar{r}(I_{F_v})$, then Dickson's theorem forces $p \leq 5$, excluded by our assumption that $p \gg_G 0$.
\item If $\bar{r}|_{I_{F_v}}$ is indecomposable but reducible, then it has a unique $I_{F_v}$-stable line, which must therefore be $\gal{F_v}$-stable as well. In other words, $\bar{r}|_{\gal{F_v}}$ is reducible, of the form 
\[
\bar{r}|_{\gal{F_v}} \sim \begin{pmatrix}
\psi_1 & u \\
0 & \psi_2
\end{pmatrix}
\]
where $u$ defines an element of $H^1(\gal{F_v}, k(\psi_1/\psi_2))$ having non-trivial restriction to $I_{F_v}$. This restriction corresponds (since $v$ does not divide $p$) to an element of $\Hom_{\gal{F_v}}(\mu_p(\overline{F}_v), k(\psi_1/\psi_2))$, so in order to be non-zero we must have $\bar{\kappa}= \psi_1/\psi_2$, and $\br(I_{F_v}) \cong u(I_{F_v}) \cong \Z/p$. We then use the ``minimal" deformation condition for $\br|_{\gal{F_v}}$ described in \cite[Definition 4.4, Lemma 4.5]{stp:exceptional2}.
\end{itemize}
Finally, if $v \vert p$, we just need to say one thing about the Fontaine-Laffaille case. For $G^0=\mr{GL}_N$, the principal $\mr{SL}_2$ is equivalent to $\Sym^{N-1}(k^2)$, so we have to check that $\Sym^{N-1}(\bar{r}|_{\gal{F_v}}$ is also Fontaine-Laffaille. This follows from the fact that the Fontaine-Laffaille functor is compatible with tensor products in the range we have restricted to (see \cite[Fact 4.12]{booher:JNT}, which is proven in an appendix included in the arXiv version of \textit{ibid.}).
\end{proof}
\begin{rmk}
In particular, starting with $\bar{r} \colon \gal{\Q} \to \mr{GL}_2(k)$ coming either from classical modular forms or elliptic curves, we can construct geometric representations $\rho \colon \gal{\Q} \to G(\mc{O})$ whose image has Zariski closure containing $G^{\mr{der}}$. In fact, if $k= \Fp$, the fact that the mod $p^2$ lifts $\rho_2$ that we construct satisfy $\rho_2(\gal{\Q}) \supset \wh{G}^{\mr{der}}(\mc{O}/p^2)$ implies (for $p \gg_G 0$) that the image of $\rho$ even contains an open subgroup of $\wh{G}^{\mr{der}}(\Z_p)$.
\end{rmk}
\subsection{Normalizers of tori}
In this subsection we make no effort to be maximally general. For simplicity we assume that $G^0/Z_{G^0}$ is simple. Let $T$ be a (split) maximal torus of $G^0$. Residual representations valued in $N_G(T)(k)$ lift to $G(\mc{O})$, since (provided $p$ does not divide $\# W_{G^0}$) the image of $\br$ has order prime to $p$. Our main theorem shows that non-trivial lifts (with Zariski-dense image in $G$) also exist under suitable hypotheses on $\br$.

\begin{thm}
Let $p\gg_{G, F} 0$. Let $\br \colon \gal{F, S} \to N_G(T)(\Fp)$ satisfy the following:
\begin{itemize}
\item $\br(\gal{\tF})$ is equal to $N_G(T)(\Fp)$.
\item $\br$ is odd. For instance, we can make one of the following assumptions:
\begin{itemize}
\item If $-1 \in W_{G^0}$, then for all $v \vert \infty$, $\br(c_v)$ either projects to $-1 \in W_{G^0}$, or projects to $\rho^\vee(-1) \in G^{\mr{ad}}$ (where $\rho^\vee$ is the usual half-sum of the positive co-roots of $G$). 
\item If $-1 \not \in W_{G^0}$, then for all $v \vert \infty$, $\br(c_v)$ either equals $(w_0, \tau) \in G^0 \rtimes \pi_0(G)$, where $w_0$ lifts the longest element of $W_{G^0}$, and $\tau$ is a pinned outer automorphism of $G^0$ acting as the opposition involution on $T \cap G^{\mr{der}}$; or it projects to $(\rho^\vee(-1), \tau) \in G^{\mr{ad}} \rtimes \pi_0(G)$.
\end{itemize}
\item For all $v \vert p$, $\br|_{\gal{F_v}}$ satisfies one of the hypotheses (trivial, ordinary, Fontaine-Laffaille) in Theorem \ref{mainthm}.
\end{itemize}
Then for some finite set of places $T \supset S$, $\br$ admits a geometric lift $\rho \colon \gal{F, T} \to G(\mc{O})$ whose image has Zariski closure containing $G^{\mr{der}}$.
\end{thm}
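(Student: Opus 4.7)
The plan is to apply Theorem \ref{mainthm}, so we verify each of its hypotheses. Oddness and the local conditions at $v\mid p$ are built into the assumptions, and since $S \supset \{v \mid p\}$, Remark \ref{mup2} supplies the condition $\mu_{p^2} \not\subseteq K$. There remain three things to check: (a) the existence of formally smooth local deformation conditions $\mc{P}_v$ at finite $v \in S$ not dividing $p$; (b) absolute irreducibility of $\br|_{\gal{\tF(\zeta_p)}}$; and (c) the multiplicity-free hypothesis (3) of Theorem \ref{mainthm}. For (a), note that $N_G(T)(\Fp)$ has order $(p-1)^{\rk T}\cdot|W_{G^0}|\cdot \#\pi_0(G)$, coprime to $p$ for $p\gg_G 0$; hence $\br(\gal{F_v})$ has order prime to $p$ at every finite $v \in S$, and \cite[Lemma 4.17]{stp:exceptional} supplies the required minimal deformation condition with tangent space of dimension $h^0(\gal{F_v}, \br(\fgm))$.

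For (b), set $N := N_{G^0}(T)(\Fp)$. The composite $\gal{\tF} \xrightarrow{\br} N \twoheadrightarrow W_{G^0}$ cuts out a Galois extension $L/\tF$ of degree $|W_{G^0}|$, independent of $p$; for $p \gg_{G, F} 0$ we have $L \cap \tF(\zeta_p) = \tF$ (only finitely many primes $p$ can produce a nontrivial intersection, since $L$ has finitely many subfields over $\tF$ and each occurs in at most finitely many cyclotomic extensions). Hence $H := \br(\gal{\tF(\zeta_p)})$ still surjects onto $W_{G^0}$, and $T(\Fp)/(H \cap T(\Fp))$ is cyclic of order dividing $p-1$, so $H \cap T(\Fp) = \ker(\chi|_{T(\Fp)})$ for a character $\chi \colon T(\Fp) \to \mu_{p-1}$ determined by $\br$ and the cyclotomic extension. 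For $p \gg_{G, F} 0$, $\chi$ is not a nonzero integer multiple (modulo $p-1$) of any root of $(G^0, T)$, so $H \cap T(\Fp)$ is not contained in the kernel of any single root; an elementary counting argument in the abelian group $T(\Fp)$ then produces a regular element $t \in H \cap T(\Fp)$, i.e.\ one with $\alpha(t) \neq 1$ for all $\alpha \in \Phi(G^0, T)$. Any parabolic of $G^0_{\overline{k}}$ containing such a $t$ must contain its unique maximal torus $T_{\overline{k}}$, hence is a standard parabolic with respect to $T$ and has as Weyl subgroup a proper $W_L \subsetneq W_{G^0}$; combined with $H \twoheadrightarrow W_{G^0}$, this forces $H$ to not lie in any proper parabolic, proving (b).

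For (c), since $\br(\gal{\tF})=N$, the $\Fp[\gal{\tF}]$-module $\br(\fgder)$ coincides with $\fgder$ as an $\Fp[N]$-module. Decomposing by $T(\Fp)$-weights, $\fgder = \mf{t}^{\mr{der}} \oplus \bigoplus_{\alpha\in\Phi(G^0,T)}\mf{g}_\alpha$, with pairwise distinct non-trivial characters $\alpha|_{T(\Fp)}$ for $p \gg_G 0$. The summand $\mf{t}^{\mr{der}}$ is the reflection representation of $W_{G^0}$ on which $T(\Fp)$ acts trivially, absolutely irreducible over $\Fp$ with endomorphism ring $\Fp$ (for $p \nmid |W_{G^0}|$ and $G^0/Z_{G^0}$ simple). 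For each $W_{G^0}$-orbit $\Omega \subset \Phi$, the summand $\bigoplus_{\alpha\in\Omega}\mf{g}_\alpha \cong \Ind^N_{\Stab_{W_{G^0}}(\alpha)\ltimes T(\Fp)}(\mf{g}_\alpha)$ is, by Mackey's criterion, irreducible with endomorphism ring $\Fp$ for $p\gg_G 0$, since the $W_{G^0}$-conjugates of $\alpha$ yield distinct $T(\Fp)$-characters. Different $W_{G^0}$-orbits contribute summands supported on disjoint sets of $T(\Fp)$-characters, so $\br(\fgder)$ decomposes as a direct sum of pairwise non-isomorphic simple $\Fp[\gal{\tF}]$-modules with endomorphism ring $\Fp=k$, verifying condition (3) unconditionally of the splitting hypothesis. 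The main obstacle is (b), where one must control the arithmetic position of the cyclotomic extension relative to both $L$ and the root lattice of $G^0$; once (a)--(c) are in place, the conclusion follows directly from Theorem \ref{mainthm}.
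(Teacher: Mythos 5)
Your overall strategy is the paper's: reduce to Theorem \ref{mainthm}, take minimal (prime-to-$p$) deformation conditions at $v \in S$, $v \nmid p$, and verify the multiplicity-free hypothesis by decomposing $\br(\fgder)$ into $\mf{t}^{\mr{der}}$ plus the sums of root spaces in each Weyl orbit (the paper records this as $\mf{t}^{\mr{der}} \oplus \mf{g}_{\mr{long}} \oplus \mf{g}_{\mr{short}}$, and your Clifford--Mackey argument fills in its ``easy to check,'' modulo the small point that $1 \to T(\Fp) \to N_{G^0}(T)(\Fp) \to W_{G^0} \to 1$ need not split, so you should induce from the preimage of $\Stab_{W_{G^0}}(\alpha)$ rather than from a semidirect complement). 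However, your step (b) has a genuine gap. The field $L$ cut out by $\gal{\tF} \to W_{G^0}$ is \emph{not} independent of $p$: it is determined by $\br$, which is a different representation for each $p$, so the assertion that only finitely many $p$ can give $L \cap \tF(\zeta_p) \neq \tF$ is a non sequitur, and likewise the claim that the character $\chi$ (which also depends on $\br$) is not a multiple of a root for $p \gg 0$ has no justification. The failure is not merely formal: for $G^0$ of type $A_1$, say $G=\mr{PGL}_2$ and $F=\Q$, take $\br$ to be the projectivization of the induction from the quadratic subfield of $\Q(\zeta_p)$ of a character $\theta$ whose ratio with its Galois conjugate surjects onto $\Fp^\times$; then $\br(\gal{\Q}) = N_G(T)(\Fp)$ while $\br(\gal{\Q(\zeta_p)}) \subset T(\Fp)$ lies in a Borel, and such $\br$ exist for every $p$. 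Whether these examples can be made to satisfy the conditions at $v \vert p$ is beside the point: your argument for (b) uses only the image hypothesis and $p \gg 0$, and those alone do not imply irreducibility over $\tF(\zeta_p)$.

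If you want an argument for (b), the usable structure is that $H=\br(\gal{\tF(\zeta_p)})$ is normal in $N_G(T)(\Fp)$ with cyclic quotient, hence contains the commutator subgroup; for $p \gg_G 0$ that subgroup contains a regular element of $T(\Fp)$, so any proper parabolic containing $H$ must contain $T$, forcing the image of $H$ in $W_{G^0}$ into a proper parabolic subgroup of $W_{G^0}$ --- impossible when $[W_{G^0}, W_{G^0}]$ has no nonzero fixed vector in the reflection representation, i.e.\ when the rank is at least two; the rank-one example shows the subtlety is real there. (For comparison, the paper's own proof is silent on this hypothesis: it only verifies the local conditions, that the listed $\br(c_v)$ are involutions, the $\mu_{p^2}$ condition, and the decomposition of $\br(\fgder)$.) A second, smaller problem: Remark \ref{mup2} does not apply as you invoke it, since it requires a place $v \vert p$ with $F_v/\Q_p$ unramified and a specific local shape (trivial, Fontaine--Laffaille, or ordinary with every $\alpha \circ \overline{\chi}_{T_G}$ a nontrivial power of $\bar{\kappa}$, which is stronger than (REG)), none of which is assumed here. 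The paper's argument is simpler and works: $\#\br(\gal{F})$ is prime to $p$, so $[K:F]$ is prime to $p$, while $[F(\mu_{p^2}):F]$ is divisible by $p$ for $p \gg_F 0$, whence $\mu_{p^2} \not\subset K$.
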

\begin{proof}
First we check the local hypotheses of Theorem \ref{mainthm}. At primes in $S \setminus \{v \vert p\}$ we take a minimal deformation condition as in \cite[Lemma 4.4]{stp:exceptional}, since the order of $\im(\br)$ is prime to $p$. At $v \vert p$, we take the local condition as in Theorem \ref{mainthm}. That the examples given of possible $\br(c_v)$ are in fact involutions follows from \cite[Lemma 2.3]{yun:exceptional}, \cite[Lemma 10.1]{stp:exceptional}, and a similar check in the case $\br(c_v)= (w_0, \tau)$.

Now we proceed to the global hypotheses. The field $K= \tF(\br(\fgder), \mu_p)$ does not contain $\mu_{p^2}$, again since $\#\im(\br)$ is prime to $p$. Since $\br(\gal{\tF})$ equals $N_G(T)(\Fp)$, it is easy to check that $\br(\fgder)$ decomposes (for $p \gg_G 0$) as absolutely irreducible, non-isomorphic summands $\mf{t}^{\mr{der}} \oplus \mf{g}_{\mr{long}} \oplus \mf{g}_{\mr{short}}$ consisting of the (intersection with $G^{\mr{der}}$ of the) maximal torus and the subspaces of all long or short roots (if $\fg$ is simply-laced, then there are just two constituents in this decomposition). 
\end{proof}
This is not the most useful result, since it can be difficult to realize $N_G(T)(\Fp)$ as a Galois group over $\Q$ (the sequence $1 \to T \to N_G(T) \to W_G \to 1$ need not split). Theorem \ref{mainthm} is easily seen to apply when $\br(\gal{\tF})$ equals certain somewhat smaller subgroups of $N_G(T)(\Fp)$. For instance, in \cite{tang:thesis}, Tang classifies those connected reductive groups $G$ that arise as the Zariski closure of the image of a homomorphism $\gal{\Q} \to G(\overline{\Q}_p)$. The main theorem of \cite{tang:thesis} gives a complete answer to this question (for $p \gg 0$) modulo some elusive cases, consisting of certain simply-connected groups (e.g. $E_7^{\mr{sc}}$) for $p$ failing to satisfy some congruence condition (see \cite[Theorem 1.3]{tang:thesis}). As explained in \cite[Theorem 1.5, \S 3.4]{tang:thesis}, our main theorem allows Tang to treat these remaining cases. 
\subsection{An open case: deforming exotic finite subgroups}
We conclude by constructing some odd irreducible representations $\br \colon \gal{\Q} \to G(k)$ of a less Lie-theoretic flavor that neither our Theorem \ref{mainthm} nor potential automorphy theorems will succeed in lifting to Zariski-dense representations $\br \colon \gal{\Q} \to G(\mc{O})$. Our results do not apply because in these examples $\#\br(\gal{\Q})$ is coprime to $p$ (so all of the potentially problematic group extensions as in Assumption \ref{big} split), and $\gal{\Q}$ acts on $\br(\fg)$ with multiplicities greater than 1. Recall that over $\CC$ we have an embedding $F_4(\CC) \into E^{\mr{sc}}_6(\CC)$ given by identifying $F_4$ to the stabilizer of a vector in one of the 27-dimensional minuscule representations $V_{\mr{min}}$ of $E_6^{\mr{sc}}$. Letting $H$ and $G$ be the split groups (over $\Z$) of type $F_4$ and $E_6^{\mr{sc}}$, we can realize this embedding $H \into G$ over $R= \mc{O}_E[\frac{1}{N}]$ for some number field $E$ and integer $N$ (a quantitative refinement of this soft ``spreading-out" assertion is of course possible). By \cite[1.1 Main Theorem]{cohen-wales}, the finite groups $A_6$ and $\mr{PSL}_2(\mathbb{F}_{13})$ embed into $F_4(\CC)$, and, perhaps after replacing $E$ (and hence $R$) by a finite extension, we may assume these groups are embedded into $H(R)$. This theorem also tells us the characters of $A_6$ and $\mr{PSL}_2(\mathbb{F}_{13})$ in $V_{\mr{min}}$ and the adjoint representation of $E_6$. Recalling the decompositions as $F_4$-representations
\begin{align*}
\Lie(E_6)&= \Lie(F_4) \oplus U, \\
V_{\mr{min}}&= \mathbbm{1} \oplus U,
\end{align*}
where $U$ is the irreducible 26-dimensional representation of $F_4$, we compute the following decompositions of $\Lie(F_4)$ as $A_6$ and $\mr{PSL}_2(\mathbb{F}_{13})$-representations:
\[
\Lie(F_4) \cong \begin{cases}
\chi_4 \oplus 3 \cdot \chi_5 \oplus 2\cdot \chi_7 \quad \text{(case $A_6$)}; \\
\chi_4 \oplus \{\text{$\chi_5$ or $\chi_6$}\} \oplus 2 \cdot \chi_9 \quad \text{(case $\mr{PSL}_2(\mathbb{F}_{13})$)},
\end{cases}
\]
where we use the ATLAS notation (\cite{ATLAS}) for characters. It turns out that for our purposes knowing whether $\chi_5$ or $\chi_6$ appears in the decomposition in the $\mr{PSL}_2(\mathbb{F}_{13})$ case is irrelevant. In particular, letting $c$ denote the unique conjugacy class of order 2 in either case, the ATLAS character tables tell us that the trace of $c$ acting on $\Lie(F_4)$ is $-4= -\rk(F_4)$, and so
\[
\dim \Lie(F_4)^{\Ad(c)=1}= \frac{\dim(F_4)- \rk(F_4)}{2}= \dim \mr{Flag}_{F_4},
\]
i.e. $\Ad(c)$ is an odd involution of $\Lie(F_4)$.
\begin{eg}\label{f4eg}
For a positive density set of primes $p$, there are representations $\br_1 \colon \gal{\Q} \to F_4(\overline{\mathbb{F}}_p)$ and $\br_2 \colon \gal{\Q} \to F_4(\overline{\mathbb{F}}_p)$ that have images $\im(\br_1) \cong A_6$, $\im(\br_2) \cong \mr{PSL}_2(\mathbb{F}_{13})$, and that satisfy all the hypotheses of Theorem \ref{mainthm} except the multiplicity-free condition on $\br(\fg)$. Moreover, composing either $\br_i$ with any faithful representation of $\mr{F}_4$ yields a residual representation that does not satisfy the hypotheses of the potential automorphy theorems of \cite{blggt:potaut}.
\end{eg}
\begin{proof}
There are Galois extensions $L_1/\Q$ and $L_2/\Q$ satisfying $\Gal(L_1/\Q) \cong A_6$, $\Gal(L_2/\Q) \cong \mr{PSL}_2(\mathbb{F}_{13})$, and complex conjugation $c$ is non-trivial in each $\Gal(L_i/\Q)$: the constructions of $L_1$ and $L_2$ are due to Hilbert and Shih, respectively, and both are explained in \cite[\S 4.5, Theorem 5.1.1]{serre:topics}. It is easy to see that we can take $c$ to be non-trivial, and note that to apply Shih's theorem we use that $\left(\frac{2}{13}\right)=-1$. Let $p$ be any sufficiently large (in the sense of Theorem \ref{mainthm} for $F_4$, not dividing $N$, and not dividing $\# \im(\br_i)$) prime that is split in $L_i/\Q$. Reducing the inclusions $\Gal(L_i/\Q) \into H(R)$ modulo a prime of $R$ above $p$, we obtain residual representations $\br_i \colon \gal{\Q} \to H(\overline{\Fp})$ satisfying the conditions of Assumption \ref{multfree} (note that $L_i/\Q$ is disjoint from $\Q(\mu_p)/\Q$, and that none of the characters in the above decompositions of $\Lie(F_4)$ are trivial) \textit{except for} the multiplicity-free requirement. Moreover, by the character calculation preceding Proposition \ref{f4eg}, both $\br_i$ are odd. At the prime $p$, $\br_i|_{\gal{\Q_p}}$ is trivial, and at primes $\ell$ of ramification of $L_i/\Q$, $\br_i|_{I_{\Ql}}$ has order prime to $p$, so to satisfy the local hypotheses of Theorem \ref{mainthm} we can take minimal deformations at $\ell$ as in \cite[Lemma 4.17]{stp:exceptional}. 

Finally, we note that every non-trivial irreducible representation of $F_4$ has some multiplicity greater than 1 in its formal character, so $\br_i$ with such a representation cannot have any geometric lift whose composite with such a representation is Hodge-Tate regular. Thus we cannot apply the potential automorphy theorems of \cite{blggt:potaut} (as in \cite{bcelmpp}) to lift our $\br_i$. (Even worse, the actions of the subgroups $A_6$ and $\mr{PSL}_2(\mathbb{F}_{13})$ on $U$ (the irreducible 26-dimensional representation of $F_4$) are reducible.)
\end{proof}
We conclude by noting that, of course, the representations $\br_i$ just constructed do satisfy the hypotheses of Corollary \ref{infiniteram}, so they do admit (non-geometric) lifts to $G(\mc{O})$ with image as large as possible.
\bibliographystyle{amsalpha}
\bibliography{biblio.bib}
\end{document}